\documentclass{amsart}
\usepackage[english]{babel}
\usepackage[T1]{fontenc}
\usepackage{indentfirst}
\usepackage{latexsym}
\usepackage{mathtools}
\usepackage{hyperref}
\hypersetup{colorlinks,breaklinks,
            linkcolor=[rgb]{0,0.55,0.9},
            citecolor=[rgb]{0,0.55,0.9},
            urlcolor=[rgb]{0,0,0}}
\usepackage[dvips]{graphicx}
\DeclareGraphicsExtensions{.pdf,.png,.jpg,.eps}
\usepackage{amsmath} %per il testo matematico
\usepackage{amsfonts,amsthm,amssymb,amsbsy}
\usepackage{textcomp}
\usepackage[usenames,dvipsnames]{color}
\usepackage{color}
\usepackage{pgf}
\usepackage{enumitem}
\usepackage{pgfplots}
\usepackage{multirow}
\usepackage{esint}
\usepackage[hang,sf,small]{caption}
\usepackage{graphicx}
\usepackage{wrapfig}
\usepackage{verbatim}
\usepackage[utf8]{inputenc} %prima avevi latin

%\usepackage{babel}%roba biblio
%\usepackage[babel]{csquotes}
%\usepackage[bibstyle=numeric,backend=bibtex, isbn=false,doi=false,url=false]{biblatex}
%\bibliography{Biblio}

\usepackage{fancyhdr}

\usepackage[color = white, linecolor = black, shadow]{todonotes}

\usepackage{stackengine}
\stackMath

\usepackage{color}
\usepackage[active]{srcltx}
\setlength{\topmargin}{16pt} \setlength{\headheight}{20pt}
\setlength{\headsep}{30pt}
\setlength{\textwidth}{15cm}
\setlength{\textheight}{19cm}
\setlength{\oddsidemargin}{1cm} %odd pages
\setlength{\evensidemargin}{1cm} %even pages

\newcommand{\brd}[1]{\mathbb{#1}}

\newcommand{\R}{\brd{R}}
\newcommand{\N}{\brd{N}}
\newcommand{\CCC}{\brd{C}}

\newcommand{\abs}[1]{\left\lvert {#1} \right\rvert}
\newcommand{\norm}[2]{\left\Vert {#1} \right\Vert_{#2}}

\newcommand{\loc}{{{\tiny{\mbox{loc}}}}}

\newcommand\ddfrac[2]{\frac{\displaystyle #1}{\displaystyle #2}}
\newtheorem{teo}{Theorem}[section]
\newtheorem{corollary}[teo]{Corollary}
\newtheorem{lemma}[teo]{Lemma}
\newtheorem{theorem}[teo]{Theorem}
\newtheorem{proposition}[teo]{Proposition}
\theoremstyle{definition}
\newtheorem{definition}[teo]{Definition}

%integrali con sbarra
\usepackage{cfr-lm}
\usepackage[osf]{libertine}

\begin{document}

\title[On the nodal set of solutions to degenerate-singular elliptic equations]{On the nodal set of solutions to degenerate or singular elliptic equations with an application to $s-$harmonic functions}
\date{\today}

\author[Y. Sire]{Yannick Sire}\thanks{}
\address{Yannick Sire \newline \indent
Johns Hopkins University  \newline \indent
Department of Mathematics,
 \newline \indent
 Baltimore, MD 21218, USA}
\email{sire@math.jhu.edu }

\author[S. Terracini]{Susanna Terracini}\thanks{}
\address{Susanna Terracini \newline \indent
 Dipartimento di Matematica ``Giuseppe Peano'', Universit\`a di Torino, \newline \indent
Via Carlo Alberto, 10,
10123 Torino, Italy}
\email{susanna.terracini@unito.it}

\author[G. Tortone]{Giorgio Tortone}\thanks{}
\address{Giorgio Tortone \newline \indent
Dipartimento di Matematica ``Giuseppe Peano'', Universit\`a degli Studi di Torino  \newline \indent
Via Carlo Alberto, 10,
10123 Torino, Italy
 \newline \indent
 and Politecnico di Torino (Italy)}
\email{giorgio.tortone@unito.it}

\date{\today} %%  this cancels date in article format

\subjclass[2010] {
35J70, % degenerate elliptic equations
35J75,  % singular elliptic equations
35R11, % Fractional partial differential equations
35B40, % Asymptotic behavior of solutions
35B44, % Blow-up
35B53, % Liouville theorems
%35K67, % Singular parabolic equations
}

\keywords{Nodal set, degenerate or singular elliptic equations, nonlocal diffusion, monotonicity formulas, blow-up classification}
%
%
%
%
%
%
%%%%%%%%%%%%%%%%%%%%%%%%%%%%%%%%%%%%%%%%%%%%%%%%%%%%%%%%%%%%%%%%%%%%%%%%%%%%%%%%%%%%%%%%%%%%%%%%%%%%%%%%
%%%%%%%%%%%%%%%%%%%%%%%%%%%%%%%%%%%%%%%%%%%%%%%%%%%%%%%%%%%%%%%%%%%%%%%%%%%%%%%%%%%%%%%%%%%%%%%%%%%%%%%%
%
%
%
%
%
%%%%%%%%%%%%%%%%%%%%%%%%%%%%%%%%%%%%%%%%%%%%%%%%%%%%%%%%%%%%%%%%%%%%%%%%%%%%%%%%%%%%%%%%%%%%%%%%%%%%%%%%

\thanks{Work partially supported by the ERC Advanced Grant 2013 n.~339958 Complex Patterns for Strongly Interacting Dynamical Systems - COMPAT }

\maketitle

\begin{abstract}
This work is devoted to the geometric-theoretic analysis of the nodal set of solutions to degenerate or singular equations involving a class of operators including
$$
L_a = \mbox{div}(\abs{y}^a \nabla),
$$
with $a\in(-1,1)$ and their perturbations.

As they belong to the Muckenhoupt class $A_2$, these operators appear in the seminal works of Fabes, Kenig, Jerison and Serapioni \cite{fkj,fjk2,fks} and have recently attracted a lot of attention in the last decade due to their link to the localization of the fractional Laplacian via the extension in one more dimension \cite{CS2007}. Our goal in the present paper is to develop a complete theory of the stratification properties for the nodal set of solutions of such equations in the spirit of the seminal works of Hardt, Simon, Han and Lin \cite{MR1010169,MR1305956,MR1090434}.
\end{abstract}

\tableofcontents

\section{Introduction}

In the last decades the question of the structure of the nodal set of solutions of elliptic equations has been brought to the attention of the scientific community (see e.g. \cite{MR943927,MR1305956,MR1639155, MR1090434}), with a special focus on the measure theoretical features of its singular part, also in connection with the validity of a strong unique continuation principle, in order to ensure the existence of a finite vanishing order, as pointed out in \cite{MR833393, MR882069, MR1090434}. Recently, major progress have been done on the study of nodal sets of eigenfunctions (or critical sets of harmonic functions) by Logunov and Malinnikova \cite{logu1,logu2,LM} in connection with conjectures by Yau and Nadirashvili. It would be a challenge to adapt their techniques to more general equations like ours.
\\

In this paper we consider the nodal set in $\R^{n+1}$ of solution of a peculiar class of degenerate-singular operators which have recently become very popular in the study of the fractional powers of the Laplacian,  and firstly studied in the pioneering works \cite{fkj,fjk2,fks}. Given $a \in (-1,1)$ and $X=(x,y) \in \R^{n}_x\times \R_y$ we consider a class of operators including
$$
L_a = \mbox{div}(\abs{y}^a \nabla),
$$
and their perturbations, where  we denote by $\mbox{div}$ and $\nabla$ respectively the divergence and the gradient operator in $\R^{n+1}$. Our main purpose is to fully understand the local behaviour of $L_a$-harmonic functions near their nodal set and to develop a geometric analysis of its structure and regularity, in order to comprehend how the degenerate or singular character of the coefficients  can affect the local picture of the nodal set itself. Thus, we introduce the notion of \emph{characteristic manifold} $\Sigma$ associated with the operator $L_a$, as the set of points where the coefficient either vanishes or blows up, and we study the properties of the nodal set $\Gamma(u)$ of solutions to equation
\begin{equation*}
  -L_a u =0 \quad \mbox{in }B_1\subset \R^{n+1}.
\end{equation*}
In particular, since the operator $L_a$ is locally uniformly elliptic on $\R^{n+1}\setminus \Sigma$, we restrict our attention on the structure of the nodal set neighbouring the characteristic manifold $\Sigma$, trying to understand the structural difference between the whole nodal set $\Gamma(u)=\left \{ x \in B_1,\,\,\,u(x)=0 \right \}$ and its restriction on  $\Sigma$.\\

As a further motivation,  this analysis will be the starting point of the study of competition-diffusion systems of populations under an anomalous diffusion. More precisely, we can imagine that the characteristic manifold $\Sigma$ is playing a major
role in the diffusion phenomenon by penalizing or encouraging the diffusion across $\Sigma$, according with the value of $a \in (-1,1)$. Inspired by \cite{tvz1, tvz2,
MR2393430, MR2146353, MR2599456}, in the case of strong competition, the limiting segregated configurations will satisfy a refection law which represents the
only interaction between the different densities through the common free boundary. Thanks to
this reflection property, the free boundary will be locally described as the nodal set of $L_a$-harmonic function.\\

As already mentioned, our  operators belong to the class introduced in the 80's by  Fabes, Jerison, Kenig and Serapioni in \cite{fkj,fjk2,fks}, where they established H\"older continuity of solutions within a general class of degenerate-singular elliptic operators $L= \mbox{div}(A(X)\nabla \cdot)$ whose coefficient $A(X)=(a_{ij}(X))$ are defined starting from a symmetric matrix valued function such that
$$
\lambda \omega(X)\abs{\xi}^2\leq (A(X)\xi, \xi)\leq \Lambda \omega(X)\abs{\xi}^2, \quad \mbox{for some } \lambda,\Lambda >0,
$$
where the weight $\omega$ may either vanish, or be infinite, or both. In particular, the prototypes of weights considered in their analysis where in the Muckenhoupt $A_2$-class, i.e. such that
$$
\sup_{B\subset \R^{n+1}}\left(\frac{1}{\abs{B}}\int_{B}\omega(X)\mathrm{d}X\right)\left(\frac{1}{\abs{B}}\int_{B}\omega^{-1}(X)\mathrm{d}X\right)<\infty.
$$

Our case correspond to the choice $\omega(X)=|y|^a$ and is  Muckenhoupt whenever  $a\in(-1,1)$.  Note however that this class of $A_2-$weights is not the optimal one to have H\"older regularity as noticed in \cite{fks}. However, it provides a good model with applications for our purposes.\\

%We note that the choice to study the case of $1$-dimensional homogeneous weights $\omega(X)=\abs{y}^a$, allows to extend our analysis to the case when $\Sigma$ is an $n$-dimensional manifold properly embedded in $\R^{n+1}$ and the weights take the form $\omega(X)=\mbox{dist}(X,\Sigma)^a$, for $a\in (-1,1)$, with a proper blow-up analysis of the weight $\omega \in A_2$ (see \cite{conformal}, where the local expansion of the defining function belongs to our class of weights). Furthermore, our result is the initial step in the case of general monomial weights (see for example \cite{rosotoncabre,texeirasire}).\\

Our approach is based upon the validity of an Almgren and Weiss type monotonicity formul\ae, the existence and uniqueness of non trivial tangent maps at every point of the nodal set, and on a complete classification of the possible homogenous configurations appearing at the blow-up limit. Nevertheless, the starting point of our analysis relies on the decomposition of an $L_a$-harmonic function with respect to the orthogonal direction to the characteristic manifold $\Sigma$. Indeed, denoting by $H^{1,\beta}(B_1)$ the Sobolev space w.r.t. the measure $|y|^{\beta}\,dy \,dx$, we have (see also \cite{GZ,conformal})
\begin{proposition}
Given $a \in (-1,1)$ and $u$ an $L_a$-harmonic function in $B_1$, there exist two unique  functions $u_e^a \in H^{1,a}(B_1), u_e^{2-a}\in H^{1,2-a}(B_1)$ symmetric with respect to $\Sigma$ respectively $L_a$ and $L_{2-a}$ harmonic in $B_1$ and locally smooth, such that
\begin{equation*}
  u(X)=u_e^a(X)+u_e^{2-a}(X)y\abs{y}^{-a}\quad\mbox{in }B_1.
\end{equation*}
\end{proposition}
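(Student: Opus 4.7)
My plan is to exploit the reflection symmetry $y\mapsto -y$ of $L_a$ and to factor the antisymmetric part of $u$ through the ``conjugate'' function $\phi(y):=y|y|^{-a}$. Since $|y|^a$ is even in $y$ and $\Delta$ commutes with the reflection, the symmetric and antisymmetric parts
$$u_s(x,y):=\tfrac12\bigl(u(x,y)+u(x,-y)\bigr),\qquad u_o(x,y):=\tfrac12\bigl(u(x,y)-u(x,-y)\bigr)$$
of $u$ are themselves $L_a$-harmonic in $B_1$. I would simply take $u_e^a:=u_s$: it is automatically even in $y$, lies in $H^{1,a}(B_1)$, and is locally smooth away from $\Sigma$ by the Fabes--Kenig--Jerison--Serapioni theory.

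For the odd part, the guiding observation is an algebraic identity, which I would verify by a short direct calculation using $|y|^a\phi'(y)\equiv 1-a$ and $L_a\phi\equiv 0$: for every smooth $\psi$ and every $y\neq 0$,
$$L_a(\psi\,\phi)\;=\;y\,\Delta\psi+(2-a)\,\partial_y\psi\;=\;\phi(y)^{-1}\,L_{2-a}\psi.$$
Thus multiplication by $\phi$ intertwines $L_{2-a}$-harmonic even functions with $L_a$-harmonic odd ones. I would then set
$$u_e^{2-a}(X)\;:=\;\frac{u_o(X)}{\phi(y)}\;=\;\mathrm{sgn}(y)\,|y|^{a-1}\,u_o(X)\qquad\text{for }y\neq 0.$$
Being the ratio of two odd functions, this is automatically even in $y$, and the identity above immediately gives $L_{2-a}u_e^{2-a}=0$ on $B_1\setminus\Sigma$.

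The main obstacle, where I expect the real work to concentrate, is to show that $u_e^{2-a}$ extends across $\Sigma$ as a locally smooth function in $H^{1,2-a}(B_1)$ which is $L_{2-a}$-harmonic in the whole ball. For this I would appeal to the fine boundary behavior of odd $L_a$-harmonic functions: in the spirit of the Caffarelli--Silvestre extension \cite{CS2007} and of the weighted Schauder theory developed in \cite{GZ, conformal}, such a $u_o$ admits a one-term expansion $u_o(x,y)=w(x)\,\phi(y)+o(\phi(y))$ near $\Sigma$, where $w(x)$ is (a multiple of) the conormal trace $\lim_{y\to 0^+}y^a\partial_y u_o(x,y)$ and is smooth along $\Sigma$. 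This gives the continuous extension $u_e^{2-a}|_\Sigma=w$; higher-order analogues of the same expansion upgrade the regularity to local smoothness and yield the required weighted Sobolev bound. Once this is in place, the $L_{2-a}$-equation passes across $\Sigma$ by a standard density/removability argument, using that $|y|^{2-a}$ is locally integrable for $a\in(-1,1)$.

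Uniqueness is then immediate: any representation $u=v_1+v_2\,\phi$ with $v_1,v_2$ even in $y$ identifies $v_1$ with the even part and $v_2\phi$ with the odd part of $u$, so $v_1=u_s$ and $v_2=u_o/\phi$ off $\Sigma$, with the values on $\Sigma$ forced by continuity. In summary, the algebra and the symmetry decomposition are essentially free; the only genuinely nontrivial ingredient is the expansion of odd $L_a$-harmonic functions at $\Sigma$, where all the boundary regularity theory for $A_2$-degenerate equations is brought to bear.
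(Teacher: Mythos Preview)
Your overall strategy coincides with the paper's: split $u$ into its even and odd parts with respect to $y$, set $u_e^a:=u_s$, and recover $u_e^{2-a}$ from the odd part via the intertwining identity $L_a(\psi\phi)=\phi^{-1}L_{2-a}\psi$ (which is exactly the paper's pointwise relation $L_{2-a}v=y|y|^{-a}L_a u$ on $B_1\setminus\Sigma$). Uniqueness by parity is handled the same way.

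The one genuine difference is in how you push $u_e^{2-a}:=u_o/\phi$ across $\Sigma$. You invoke the boundary expansion $u_o=w(x)\phi(y)+o(\phi(y))$ coming from weighted Schauder theory to obtain a smooth extension directly. The paper instead proceeds at the energy level: it first shows $u_e^{2-a}\in H^{1,2-a}(B_1)$ by the trivial identity $\int |y|^{2-a}(u_e^{2-a})^2=\int |y|^a u_o^2$ together with a weighted Hardy inequality controlling $\int |y|^{a-2}u_o^2$ by $\|u_o\|_{H^{1,a}}^2$, and then passes the weak $L_{2-a}$-equation across $\Sigma$ by a cutoff argument (test with $\eta_\delta\varphi$, use $|y|^{2-a}\in L^1_{\loc}$ to kill the error as $\delta\to 0$). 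Local smoothness is obtained afterwards, as a separate regularity statement for symmetric $L_{2-a}$-harmonic functions.

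Both routes are valid. The paper's is more elementary and self-contained---Hardy plus a one-line cutoff---and cleanly separates the existence of the decomposition in $H^{1,2-a}$ from the subsequent smoothness. Your route is shorter to state but front-loads the Schauder/expansion machinery that the paper prefers to quote only after the decomposition is in hand; in particular, you are implicitly assuming the very regularity (smoothness of the conormal trace $w$) that the paper derives as a corollary. There is no gap, but if you want a proof that stands on its own, the Hardy-plus-cutoff argument is the cleaner choice.
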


With this decomposition in mind, we can reduce the classification of the possible blow-up limits to the symmetric ones and  eventually recover all the possible cases. In particular, it is worthwhile introducing a new notion of \emph{tangent field} $\Phi^{X_0}$ of $u$ at a nodal point, which takes care of the different behaviour of both the symmetric and antisymmetric part of an $L_a$-harmonic function. Namely, by the decomposition and the Definition \ref{tangent.map} of the notion of tangent map, i.e.  the unique nonzero map $\varphi^{X_0} \in \mathfrak{B}^a_k(u)$ such that
  $$
  u_{X_0,r}(X)=\frac{u(X_0+rX)}{r^k} \longrightarrow \varphi^{X_0}(X),
  $$
with $k$ the vanishing order of $u$ at $X_0$, we introduce the following concept.
\begin{definition}
  Let $a \in (-1,1), u$ be an $L_a$-harmonic function in $B_1$ and $X_0 \in \Gamma_k(u)\cap\Sigma$, for some $k \geq \min\{1,1-a\}$. We define as \emph{tangent field} of $u$ at $X_0$ the unique nontrivial vector field $\Phi^{X_0} \in (H^{1,a}_\loc(\R^{n+1}))^2$ such that
  $$
  \Phi^{X_0}= (\varphi^{X_0}_e,\varphi^{X_0}_o),
  $$
  where $\varphi^{X_0}_e$ and $\varphi^{X_0}_o$ are respectively the tangent map of the symmetric part $u_e$ of $u$ and of the antisymmetric one $u_o$.
\end{definition}
At first, the notion of tangent field allows us  to describe the topology of the nodal set by proving in Proposition \ref{generalized.upper} a \emph{vectorial} counterpart of the classic result of upper semi-continuity of the vanishing order. In order to define properly the relevant subsets, we define
$$
\partial^a_y u = \begin{cases}
                   \abs{y}^a \partial_y u & \mbox{if } X \not \in \Sigma\\
                   \lim_{y \to 0} \abs{y}^a \partial_y u(x,y) & \mbox{if }X \in \Sigma
                 \end{cases}.
$$
This quantity, as observed already in previous works, is the nontrivial one to be considered as far as the derivative in $y$ is concerned.

In the light of this observation, it is natural to define the regular part $\mathcal{R}(u)$ and the singular part $\mathcal{S}(u)$ of the nodal set as follows:

\begin{align*}
\mathcal{R}(u)&= \left\{ X \in \Gamma(u)\colon \abs{\nabla_x u(X)}^2 + \abs{\partial^a_y u(X)}^2 \neq 0 \right\},\\
\mathcal{S}(u)&= \left\{ X \in \Gamma(u)\colon \abs{\nabla_x u(X)}^2 + \abs{\partial^a_y u(X)}^2 = 0 \right\},
\end{align*}
we developed a blow-up analysis in order to fully understand the structure of $\Gamma(u)$ in $\R^{n+1}$ and its restriction on $\Sigma$. The following is a summary of our main result describing the stratified structure of both the regular and singular parts of the nodal set.
\begin{theorem}
   Let $a \in (-1,1), a \neq 0$ and $u$ be an $L_a$-harmonic function in $B_1$. Then the regular set $\mathcal{R}(u)$ is locally a $C^{k,r}$ hypersurface on $\R^{n+1}$ in the variable $(x,y\abs{y}^{-a})$ with
   $$
   k=\left\lfloor \frac{2}{1-a}\right\rfloor \quad\mbox{and}\quad r = \frac{2}{1-a} - \left\lfloor \frac{2}{1-a}\right\rfloor.
   $$
   On the other hand, there holds
  $$
  \mathcal{S}(u)\cap \Sigma = \mathcal{S}^*(u) \cup \mathcal{S}^a(u)
  $$
  where $\mathcal{S}^*(u)$ is contained in a countable union of $(n-2)$-dimensional $C^1$ manifolds and $\mathcal{S}^a(u)$ is contained in a countable union of $(n-1)$-dimensional $C^1$ manifolds. Moreover
  $$
   \mathcal{S}^*(u)=\bigcup_{j=0}^{n-2} \mathcal{S}^*_j(u)\quad\mbox{and}\quad
   \mathcal{S}^a(u)=\bigcup_{j=0}^{n-1}\mathcal{S}^a_j(u),
  $$
  where both $\mathcal{S}^*_j(u)$ and $\mathcal{S}^a_j(u)$ are contained in a countable union of $j$-dimensional $C^1$ manifolds.
 \end{theorem}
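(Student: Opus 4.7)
The plan is to combine the decomposition $u = u_e^a + u_e^{2-a}\,y|y|^{-a}$, the Almgren--Weiss monotonicity formul\ae, the uniqueness of tangent fields and the classification of homogeneous blow-up profiles carried out earlier in the paper. The regular and singular parts of the statement require quite different tools: the regular set is handled by an implicit function theorem argument after a change of variables that absorbs the singular/degenerate character of $L_a$, while the singular set requires a Federer--Almgren dimension-reduction stratification combined with a Whitney extension argument.

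For the regular set, fix $X_0 \in \mathcal{R}(u)$. If $X_0 \notin \Sigma$ the operator is locally uniformly elliptic and $u$ is smooth, so the standard implicit function theorem applies directly. The substantive case is $X_0 \in \Sigma$: here I would introduce the ``conformal'' change of coordinates $z := y|y|^{-a}$, which is a $C^{k,r}$ homeomorphism of $\R$ with exactly the regularity $k = \lfloor 2/(1-a) \rfloor$, $r = 2/(1-a) - k$ announced in the theorem, and which converts the antisymmetric factor $y|y|^{-a}$ into the smooth variable $z$ while transforming the equation for the symmetric part $u_e^a$ into a uniformly elliptic one across $\{z=0\}$. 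In these new variables the gradient of $u$ at $X_0$ equals, up to multiplicative constants, $(\nabla_x u(X_0), \partial^a_y u(X_0))$, which is nonzero by the very definition of $\mathcal{R}(u)$; the implicit function theorem then yields a $C^{k,r}$ hypersurface, and no higher regularity can be expected since $z\mapsto y$ itself has precisely this H\"older class at the origin.

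For the singular set on $\Sigma$, at every $X_0 \in \mathcal{S}(u)\cap\Sigma$ the tangent field $\Phi^{X_0} = (\varphi_e^{X_0},\varphi_o^{X_0})$ is well defined and nontrivial by the previously established existence/uniqueness results. I would split
$$
\mathcal{S}^a(u) := \{X_0 \in \mathcal{S}(u)\cap \Sigma : \varphi_o^{X_0} \not\equiv 0\}, \qquad \mathcal{S}^*(u) := (\mathcal{S}(u)\cap \Sigma)\setminus \mathcal{S}^a(u),
$$
and for each $X_0$ consider the spine $S(\Phi^{X_0})$, i.e.\ the maximal linear subspace of translations leaving $\Phi^{X_0}$ invariant. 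The classification of homogeneous tangent fields forces $S(\Phi^{X_0}) \subset \{y=0\}$; moreover its dimension takes values in $\{0,\dots,n-1\}$ on $\mathcal{S}^a$, whereas on $\mathcal{S}^*$ the fact that the singular nodal set of a homogeneous \emph{symmetric} $L_a$-harmonic function inside $\Sigma$ has codimension at least two gives $\dim S(\Phi^{X_0}) \le n-2$. Defining $\mathcal{S}^*_j$ and $\mathcal{S}^a_j$ as the subsets where this dimension equals $j$, Federer's dimension-reduction argument together with the vectorial upper semi-continuity of the vanishing order from Proposition \ref{generalized.upper} produces the $j$-dimensional stratification. The $C^1$ regularity of each stratum follows from a Whitney extension argument once one verifies that the tangent field varies continuously with the base point within $\mathcal{S}^*_j$ or $\mathcal{S}^a_j$, which in turn is a consequence of the Almgren--Weiss monotonicity and the uniqueness of $\Phi^{X_0}$.

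The main obstacle I anticipate is precisely the classification of admissible spines in the mixed regime, and in particular the codimension-two bound on $\mathcal{S}^*$: the interplay between the symmetric and antisymmetric parts, and the degeneracy occurring when $\varphi_o^{X_0}$ vanishes identically, must be handled through the full list of homogeneous profiles developed earlier, and only then does the dichotomy between the top dimensions $n-1$ and $n-2$ in the two families emerge. A secondary but genuinely delicate point is tracking the non-smooth regularity of $z = y|y|^{-a}$ and of the induced Schauder theory, in order to land precisely on the $C^{k,r}$ threshold claimed for the regular set rather than on some rougher class.
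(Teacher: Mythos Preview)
Your treatment of the regular set is essentially the paper's argument: the change of variable $z=y|y|^{-a}$ followed by the implicit function theorem. One technical slip: the map $z\mapsto y$ itself has H\"older exponent $1/(1-a)$, not $2/(1-a)$. The sharper exponent $2/(1-a)$ in the statement arises only because $u_e^a$ and $u_e^{2-a}$ are \emph{even} in $y$, hence smooth functions of $y^2$, and $y^2\sim |z|^{2/(1-a)}$. This is a small point but it is precisely where the announced threshold comes from.

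The singular set argument, however, has a genuine gap in the very definition of the splitting. You set
\[
\mathcal{S}^a(u)=\{X_0:\varphi_o^{X_0}\not\equiv 0\},\qquad \mathcal{S}^*(u)=\{X_0:\varphi_o^{X_0}\equiv 0\},
\]
i.e.\ you split according to the \emph{antisymmetric} component of the tangent field. But $\Gamma(u)\cap\Sigma=\Gamma(u_e)\cap\Sigma$: the antisymmetric part vanishes identically on $\Sigma$ and carries no information about the trace geometry there. The paper's splitting is instead based on the \emph{symmetric} tangent map $\varphi_e^{X_0}$, namely
\[
\Gamma_k^*(u)=\{X_0:\varphi_e^{X_0}\in\mathfrak{sB}^*_k\},\qquad \Gamma_k^a(u)=\{X_0:\varphi_e^{X_0}\in\mathfrak{sB}^a_k\setminus\mathfrak{sB}^*_k\},
\]
where $\mathfrak{sB}^*_k$ consists of the polynomials that do not depend on $y$ (equivalently, whose restriction to $\Sigma$ is harmonic in $x$). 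Your claim that ``the singular nodal set of a homogeneous symmetric $L_a$-harmonic function inside $\Sigma$ has codimension at least two'' is false: take $\varphi_e(x,y)=x_1^2-\frac{1}{1+a}y^2\in\mathfrak{sB}^a_2\setminus\mathfrak{sB}^*_2$. Its trace on $\Sigma$ is $x_1^2$, whose singular zero set $\{x_1=0\}$ is $(n-1)$-dimensional. Such a $\varphi_e$ can perfectly well arise with $\varphi_o^{X_0}\equiv 0$, so your $\mathcal{S}^*$ would contain points with $(n-1)$-dimensional spine, contradicting the $(n-2)$ bound you assert. The codimension-two estimate only holds once $\varphi_e^{X_0}|_\Sigma$ is genuinely harmonic in $x$, which is exactly the paper's criterion. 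After this correction the Whitney-extension and dimension-reduction steps you outline match the paper's proof of Theorems~\ref{regularity.singular} and~\ref{general.singular}.
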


A key step will be the complete classification of the spectrum of the tangent field. Recently, in \cite{toroengel} the authors studied the geometry of sets that admit arbitrarily good local approximations
by zero sets of harmonic polynomials. In the light of the previous Theorem, it would be interesting to adapt their strategies to our degenerate-singular framework.\\

In the last Section of this paper, we  provide applications of our theory in the context of nonlocal elliptic equations. In particular, inspired by \cite{CS2007,conformal,stingatorrea}, we exploit the local realisation of the fractional Laplacian, and more generally of fractional power of divergence form operator $L$ with Lipschitz leading coefficient, in order to study the structure and the regularity of the nodal set of $(-L)^s$-harmonic functions, for $s \in (0,1)$. More precisely, we combine the extension technique with a geometric reduction introduced in
\cite{MR0140031} and exploited in the seminal papers \cite{MR833393,MR882069}. This  will allow us to extend our analysis to fractional powers $(-\Delta_g)^s$ of the Laplace-Beltrami operator on a Riemannian
manifold $M$, also for the case of Lipschitz metrics $g$. Our techniques are quite robust and, we believe, can apply to a wider class of operators on manifolds like the conformally covariant ones of fractional order introduced via the scattering work in \cite{GZ} and reformulated in \cite{conformal} via a suitable extension property on some asymptotic hyperbolic manifolds. However, the equations under consideration in \cite{conformal} involve curvature terms that are to be controlled and this is probably not a completely trivial adaptation of our techniques in this context.  Finally, in \cite{audrito} the authors developed a similar analysis in the context of solutions to a class of nonlocal parabolic equations involving fractional powers of the Heat operator.\\

Our results show some genuinely nonlocal features in the Taylor expansion of $(-L)^s$-harmonic functions near their zero set and their deep impact on the structure of the nodal set itself. We prove that the first term of the Taylor expansion of an $(-L)^s$-harmonic function is either an homogeneous harmonic polynomial or any possible homogeneous polynomial. In particular, this implies
\begin{theorem}
Given  $L$, a divergence form operator with Lipschitz leading coefficients, and $s\in (0,1)$, let $u$ be $(-L)^s$-harmonic in $B_1$. Then there holds
  $$
  \mathcal{S}(u)= \mathcal{S}^*(u) \cup \mathcal{S}^s(u)
  $$
  where $\mathcal{S}^*(u)$ is contained in a countable union of $(n-2)$-dimensional $C^1$ manifolds and $\mathcal{S}^s(u)$ is contained in a countable union of $(n-1)$-dimensional $C^1$ manifolds. Moreover
  $$
   \mathcal{S}^*(u)=\bigcup_{j=0}^{n-2} \mathcal{S}^*_j(u)\quad\mbox{and}\quad
   \mathcal{S}^s(u)=\bigcup_{j=0}^{n-1}\mathcal{S}^s_j(u),
  $$
  where both $\mathcal{S}^*_j(u)$ and $\mathcal{S}^s_j(u)$ are contained in a countable union of $j$-dimensional $C^1$ manifolds.
\end{theorem}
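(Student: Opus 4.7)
The plan is to combine the two ingredients emphasized in the introduction: the Caffarelli-Silvestre-Stinga-Torrea extension, which realizes $(-L)^s$ as the Dirichlet-to-Neumann map of a degenerate-singular operator in one more dimension, together with the geometric reduction of \cite{MR0140031,MR833393,MR882069}, which absorbs the Lipschitz dependence of the coefficients of $L$. Set $a = 1-2s \in (-1,1)$. Given an $(-L)^s$-harmonic function $u$ in $B_1 \subset \R^n$, its extension $U$ to $B_1^+ \subset \R^{n+1}_+$, continued evenly across $\Sigma = \{y=0\}$, solves a divergence-form equation
\[
\mathrm{div}\bigl(|y|^a \widetilde A(x,y)\nabla U\bigr) = 0 \quad \text{in } B_1 \subset \R^{n+1},
\]
where $\widetilde A$ is symmetric with Lipschitz entries reducing to the coefficients of $L$ on $\Sigma$ and to the identity in the $y$-direction. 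Since $u(x) = U(x,0)$, one has $\Gamma(u) = \Gamma(U)\cap\Sigma$ and $\mathcal{S}(u) = \mathcal{S}(U)\cap\Sigma$, so the stratification of the singular set of $u$ is inherited from that of $U$ restricted to the characteristic manifold.

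The second step is to reduce this perturbed problem to the model operator $L_a$ by flattening the principal symbol of $\widetilde A$ along $\Sigma$ via a local Lipschitz change of variables in the tangential directions, following the trick of \cite{MR0140031} exploited in \cite{MR833393,MR882069}. The resulting lower-order Lipschitz perturbations are absorbed into the Almgren and Weiss monotonicity formulae established in the body of the paper, so that existence, uniqueness and classification of tangent fields on $\Sigma$, together with the symmetric/antisymmetric decomposition recalled in the Proposition above, all carry over. At this point one directly applies the stratification theorem for $L_a$-harmonic functions stated in the previous theorem of the excerpt to $U$: the set $\mathcal{S}(U)\cap\Sigma$ splits as $\mathcal{S}^*(U)\cup\mathcal{S}^a(U)$ with the stated countable $C^1$ stratification, and intersection with the smooth hypersurface $\Sigma$ preserves the property of being contained in a countable union of $j$-dimensional $C^1$ manifolds. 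The dichotomy of strata is exactly that between tangent fields whose antisymmetric component is trivial (contributing the higher codimension stratum $\mathcal{S}^*(u)$, of codimension at least two) and those whose antisymmetric component is nontrivial (contributing the codimension-one stratum $\mathcal{S}^s(u)$, whose label reflects $s$ through the identity $a = 1-2s$).

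The main obstacle is to carry the monotonicity and blow-up machinery through the Lipschitz perturbation with quantitative control: almost-monotonicity of the Almgren frequency up to a small exponential correction, uniqueness of tangent fields at every point of $\Gamma(U)\cap\Sigma$, and the rate of convergence of blow-ups needed to run the Federer-Whitney arguments producing the countable $C^1$ stratifications. These facts are the Lipschitz-perturbed analogs of statements already proved earlier in the paper for pure $L_a$-operators, so the remaining work is largely a matter of transferring them to the extension equation; once this is in place the theorem follows by direct application of the $L_a$ stratification result to $U$ and intersection with $\Sigma$.
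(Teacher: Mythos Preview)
Your overall strategy is correct and matches the paper's: extend via Stinga--Torrea, absorb the Lipschitz coefficients through the metric reduction of \cite{MR0140031,MR833393,MR882069}, establish perturbed Almgren/Weiss monotonicity, and then invoke the stratification machinery already developed for $L_a$. However, your identification of the dichotomy between $\mathcal{S}^*(u)$ and $\mathcal{S}^s(u)$ is wrong, and this is not a cosmetic point---it is precisely what produces the different dimensional bounds $(n-2)$ versus $(n-1)$.

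You write that the split is between tangent fields with trivial versus nontrivial antisymmetric component. But the extension $U$ of an $(-L)^s$-harmonic function is obtained by \emph{even} reflection across $\Sigma$ (this is what encodes the vanishing Neumann-type datum $\partial_y^a U = 0$), so $U$ is purely symmetric and every tangent map $\varphi^{X_0}$ lies in $\mathfrak{sB}^a_k(\R^{n+1})$. The antisymmetric component is identically zero at every point, and your criterion would therefore place the entire singular set into $\mathcal{S}^*(u)$, yielding the false bound $\dim_{\mathcal H}\mathcal{S}(u)\le n-2$. The actual dichotomy, as set up earlier in the paper, is whether the symmetric tangent map depends on the extension variable $y$: if $\varphi^{X_0}\in\mathfrak{sB}^*_k(\R^{n+1})$ (equivalently, its trace on $\Sigma$ is a \emph{harmonic} polynomial in $\R^n$) one lands in $\mathcal{S}^*(u)$ with spine dimension $\le n-2$; if $\varphi^{X_0}\in\mathfrak{sB}^a_k(\R^{n+1})\setminus\mathfrak{sB}^*_k(\R^{n+1})$ (trace a general, non-harmonic homogeneous polynomial) one lands in $\mathcal{S}^s(u)$, where spine dimension $n-1$ genuinely occurs---see Proposition~\ref{example} for explicit two-variable profiles. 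Once you correct this, the rest of your outline goes through essentially as the paper does.
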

Finally, we prove what can be seen as the nonlocal counterpart of a conjecture that Lin proposed in \cite{MR1090434}. Following his strategy, we give an explicit estimate on the $(n-1)$-Hausdorff measure of the nodal set $\Gamma(u)$ of $s$-harmonic functions in terms of the Almgren frequency previously introduced. We have

\begin{theorem}\label{barLin}
    Given $s\in (0,1)$, let $u$ be an $s$-harmonic function in $B_1$ and $0 \in \Gamma(u)$. Then
  $$
  \mathcal{H}^{n-1}\left(\Gamma(u)\cap B_{\frac{1}{2}}\right) \leq C(n,s) N,
  $$
where $v$ is the $L_a$-harmonic extension of $u$ in $B_1^+$ and $N=N(0,v,1)$ is the frequency defined by
  $$
  N=\ddfrac{\int_{B_1^+}{\abs{y}^a \abs{\nabla v}^2\mathrm{d}X}}{\int_{\partial B_1^+}{\abs{y}^a v^2\mathrm{d}\sigma}}.
  $$
\end{theorem}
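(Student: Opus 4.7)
The plan is to transport Lin's integral--geometric slicing argument from \cite{MR1090434} to the $s$-harmonic setting by working with the Caffarelli--Silvestre extension $v$ and exploiting the $L_a$-machinery (monotonicity formula, doubling, blow-up classification) developed in the earlier sections.

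\textbf{Step 1: Frequency and doubling.} The Almgren monotonicity formula for $L_a$-harmonic functions yields that $r \mapsto N(0,v,r)$ is nondecreasing, so $N(0,v,r) \le N$ for every $r \in (0,1]$. Differentiating the associated height function (or integrating the logarithmic derivative), this upgrades to a quantitative doubling estimate
\[
 \int_{\partial B_{2r}^+} |y|^a v^2 \,\mathrm{d}\sigma \;\le\; 2^{2N+C(n,a)} \int_{\partial B_r^+} |y|^a v^2 \,\mathrm{d}\sigma
\]
uniformly in $r \le 1/4$, and the analogous solid $L^2_a$-version on balls.

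\textbf{Step 2: Propagation of the frequency bound.} A three-spheres / Carleman-type inequality, combined with the doubling estimate from Step 1, transfers the frequency control from the origin to any base point $X_0 \in B_{1/2}^+$: there exists a dimensional constant such that $N(X_0, v, \rho) \le C(n,a)\,N$ for $\rho$ comparable to one. In particular the vanishing order of $v$ at every nodal point inside $B_{1/2}$ is at most $C(n,a)\,N$.

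\textbf{Step 3: Slicing and one-dimensional zero counts.} For an affine line $\ell_{\omega,\xi} \subset \R^n$ meeting $B_{1/2}$, consider the two-dimensional half-plane $\Pi_\ell := \ell_{\omega,\xi} \times [0,+\infty)$: the restriction $v_\ell := v|_{\Pi_\ell}$ is again $L_a$-harmonic (now in a two-dimensional half-space) and $\Gamma(u) \cap \ell_{\omega,\xi} = \{x \in \ell_{\omega,\xi}\colon v_\ell(x,0)=0\}$. Using the explicit $2$D spectral structure of $L_a$ on the half-disc --- which underlies the blow-up classification of the previous sections --- together with the frequency bound from Step 2, one establishes the pointwise count
\[
 \#\bigl(\Gamma(u)\cap \ell_{\omega,\xi}\cap B_{1/2}\bigr)\;\le\;C(n,s)\,N.
\]

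\textbf{Step 4: Integral-geometric conclusion.} By the Cauchy--Crofton formula,
\[
 \mathcal{H}^{n-1}\bigl(\Gamma(u)\cap B_{1/2}\bigr)\;\le\;C(n)\int_{S^{n-1}}\!\!\int_{\omega^\perp} \#\bigl(\Gamma(u)\cap \ell_{\omega,\xi}\cap B_{1/2}\bigr)\,\mathrm{d}\xi\,\mathrm{d}\mathcal{H}^{n-1}(\omega),
\]
and integrating the pointwise bound of Step 3 over the slicing parameters gives the desired inequality with constant $C(n,s)$.

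\textbf{Main obstacle.} The delicate point is Step 3, namely producing an honestly linear-in-$N$ count of zeros on the $1$D trace $v_\ell(\cdot,0)$. In Lin's classical harmonic framework this follows from sharp real-analyticity/complexification estimates, which are unavailable here; one must instead exploit the $2$D blow-up classification and a quantitative compactness statement that upgrades infinitesimal tangent-map information into zero counts on slices of definite size, keeping the constants uniform in $(\omega,\xi)$. A secondary technical hurdle is the three-spheres propagation in Step 2: it must be proved in the weighted $L^2_a$-setting and made uniform over base points $X_0 \in \Sigma$, since the frequency hypothesis is only assumed at the origin.
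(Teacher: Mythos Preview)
Your Steps 1, 2, and 4 are essentially what the paper does. The real divergence---and the genuine gap---is in Step 3.

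First, you assert that the restriction $v_\ell := v|_{\Pi_\ell}$ to a two-dimensional half-plane $\Pi_\ell = \ell_{\omega,\xi}\times[0,+\infty)$ is again $L_a$-harmonic. This is false in general: if $v$ solves $L_a v = 0$ in $\R^{n+1}_+$, its restriction to a slice $\{x_2=\cdots=x_n=\text{const}\}$ need not satisfy any PDE (take $v(x,y)=x_1^2-x_2^2$, whose restriction to $x_2=0$ is $x_1^2$). So the ``explicit 2D spectral structure of $L_a$'' is not available on $v_\ell$, and your proposed zero-counting mechanism has no foundation.

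Second, and more importantly, you say that ``sharp real-analyticity/complexification estimates\dots are unavailable here.'' This is precisely what the paper exploits. An $s$-harmonic function on $B_1$ is given by convolution against the explicit Poisson kernel $P(x,y)=c_{n,s}(1-|x|^2)^s(|y|^2-1)^{-s}|x-y|^{-n}$, and is therefore real analytic on compact subsets of $B_1$. The paper then runs Lin's argument \emph{directly on $u$ in $\R^n$}, never touching the extension $v$ in the zero-counting step: after obtaining from the doubling estimate a point $x_{p_i}$ in each ball of a finite cover with $|u(x_{p_i})|\ge 2^{-C(n,s)N}$, one complexifies $t\mapsto u(x_{p_i}+t\omega)$ to a holomorphic function on a disc in $\mathbb{C}$ and applies the Jensen-type Lemma of Donnelly--Fefferman (Lemma~\ref{tech}) to bound the number of real zeros on each line by $C(n,s)N$. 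Integrating this over directions via Cauchy--Crofton gives the result.

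So the quantitative-compactness/blow-up route you sketch is neither needed nor, as written, correct; the missing ingredient is simply the observation that $s$-harmonic functions are real analytic, which makes Lin's original complexification argument go through verbatim once Steps 1--2 supply the doubling constant $2^{C(n,s)N}$.
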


The paper is organized as follows. In Section \ref{sec.dec} we prove some preliminary result about $L_a$-harmonic functions. Next, in Section \ref{sec.Alm}, we prove the validity of an Almgren's type monotonicity formula which allows in Section \ref{section.blowup} to prove the existence of blow-up limit in every point of the nodal set $\Gamma(u)$.\\
Finally, in Section \ref{sec.weiss} we prove a Weiss type monotonicity formula, which allows to introduce the notion of tangent map and tangent field at every point of the nodal set. In Section \ref{sec.strat} we
present some useful result on the stratification of the nodal set and finally in Section \ref{singular.sec} we prove a general result on the regularity of the whole nodal set $\Gamma(u)$ and on its restriction on the characteristic manifold $\Sigma$.
The last two Sections are devoted to the applications of the previous results to solutions of fractional powers of divergence form operator, with Lipschitz leading coefficient. In particular, in Section \ref{section.diverg} we apply our technique in order to study the nodal set of $s$-harmonic function and, more generally, of solutions of $(-L)^s$ operators and more general nonlocal equations, and in Section \ref{sec.measure} we give a new estimate of the Hausdorff measure of the nodal set of $s$-harmonic functions.

%%%%%%%%%%%%%%%%%%%%%%%%%%
\section{Decomposition of $\uppercase{L}_a$-harmonic functions}\label{sec.dec}

In this Section we state some general results on $L_a$-harmonic function and we introduce some basic auxiliary concepts that will be often use through the paper in order to describe the structure of the nodal set $\Gamma(u)$.
In particular, given the  definition of characteristic manifold $\Sigma$ for a degenerate-singular operator, we consider the  decomposition of $L_a$-harmonic function with respect to the orthogonal direction to $\Sigma$,
which will turn out to be crucial in proving regularity of $L_a$-harmonic functions. It is worthwhile pointing put here that our theory is linear in nature because of this decomposition between symmetric and anti-symmetric solutions w.r.t. the characteristic manifold. Even though this can be seen as a major drawback, it still provides us a complete picture of the nodal set for our equation. We refer also to the work \cite{CNV} where the authors consider also linear equations, introducing a quantitative stratification technique.\\

To start with, we need recalling the already cited  pioneering works \cite{fkj,fks}, where the authors introduced a class of degenerate-singular operator strictly correlated to some weighted Sobolev spaces with Muckenhoupt $A_p$-weights. %In particular, they introduced the class of operators $L = \mbox{div}(\omega \nabla)$ and the associated Sobolev spaces $H^1(\Omega,\omega\mathrm{d}X)$ as the closure of $C^\infty(\overline{B_1})$ with respect to the norm
%$$
%\norm{u}{H^{1}(\Omega,\omega\mathrm{d}X)}^2 = \int_{\Omega}{\omega u^2\mathrm{d}X}+ \int_{\Omega}{\omega \abs{\nabla u}^2\mathrm{d}X}.
%$$
In \cite[Section 2]{fks} they gave six general properties that the weight must satisfy in order to have existence of weak solutions, Sobolev embeddings, Poincar\'e inequality, Harnack inequality, local solvability in H\"older spaces and estimates on the Green's function and in particular they found a sufficient condition in the definition of the Muckenhoupt $A_2$-class. Hence, they introduced for $a \in (-1,1)$ the weighted Sobolev spaces $H^{1,a}(B_1)$ as the closure of $C^{\infty}(\overline{B_1})$ functions under the norm
$$
\norm{u}{H^{1,a}(B_1)}^2 = \int_{B_1}{\abs{y}^a u^2\mathrm{d}X}+ \int_{B_1}{\abs{y}^a \abs{\nabla u}^2\mathrm{d}X}.
$$
Anyway, as the authors in \cite{fks} pointed out in the study of a special classes of elliptic problem associated with quasi-conformal maps, properties as the Sobolev embeddings, Poincar\'e inequality, Harnack inequality and local solvability in H\"older spaces still hold for every $a \in (-1,+\infty)$. Thus, the following definition is  consistent  for every $a \in (-1,+\infty)$.
\begin{definition}[\cite{fks}]\label{energy}
  Given $F=(f_1,\cdots,f_n)$ on $B_1$ such that $\abs{F} \in L^{2,-a}(B_1)$, we say that $u\in H^{1,a}(B_1)$ is a (energy) solution of $L_a u = \mbox{div}F$ if for every $\varphi \in C^\infty_c(B_1)$ we have
  $$
  \int_{B_1}{\abs{y}^a \langle \nabla u , \nabla \varphi\rangle \mathrm{d}X} = \int_{B_1}{\langle F,\nabla\varphi \rangle \mathrm{d}X}.
  $$
\end{definition}
Similarly, a function $u \in H^{1,a}(B_1)$ is said to be $L_a$-harmonic in $B_1$ if for every $\varphi \in C^\infty_c(B_1)$ we have
$$
\int_{B_1}{\abs{y}^a \langle \nabla u, \nabla \varphi \rangle \mathrm{d}X}=0.
$$

Since the operator $L_a$ is uniformly elliptic on every compact subset of $\R^{n+1}\setminus \Sigma$,  we shall concentrate on the study of the nodal set near the \emph{characteristic manifold $\Sigma$} associated with $L_a$. This remark explains why, throughout the paper, we will focus on the case $X_0 \in \Sigma$ and we will simply compare the result on $\Sigma$ with the case $\R^{n+1}\setminus \Sigma$, avoiding all the technical details.\\

%
%Now, we can finally state the concept of characteristic manifold associated with the operator $L_a$. We obviously remark that the following definition can be easily generalized to the whole class of Muckenhoupt $A_2$-weights, where in general $\Sigma$ can be any possible non-smooth subset of $\R^{n+1}$ with dimension $0\leq d \leq n$.
%\begin{definition}
%Let $a \in (-1,1)$ and $L_a$ the weighted divergence form operator in $\R^{n+1}$ associated with the weight $\omega(X)=\abs{y}^a$. Then we call as ``\emph{characteristic manifold}'' associated with $L_a$ the collection of points $\Sigma \subset \R^{n+1}$ where the weight takes value zero (degeneracy, $a>0$) or infinite (singularity, $a<0$).
%\end{definition}
%We remark that in a more general case, on the characteristic manifold the weight could attains both zero and infinite values.\\

In order to better understand the structure of the nodal set and the local behaviour of  $L_a$-harmonic functions, it is convenient to decompose them into their symmetric and antisymmetric parts with respect to the orthogonal direction to $\Sigma$. For the sake of precision,  $u$ is said to be \emph{symmetric} with respect to $\Sigma$ if
  $$
  u(x,-y) = u(x,y) \quad \mbox{in }\R^{n+1}.
  $$
  Conversely, the function $u$ is said to be \emph{antisymmetric} with respect to $\Sigma$ if
  $$
  u(x,-y) = -u(x,y) \quad \mbox{in }\R^{n+1}.
  $$
We shall denote
$$
u_e(x,y)=\frac{u(x,y)+u(x,-y)}{2} \quad\mbox{and} \quad u_o(x,y)=\frac{u(x,y)-u(x,-y)}{2}
$$
the symmetric and antisymmetric (with respect to $\Sigma$) parts of $u$ such that
$$
u(X) = u_e(X) + u_o(X).
$$
Note that, $u$ is $L_a$-harmonic if and only if so are $u_e$ and $u_o$.\\

At first sight, the previous decomposition seems to be harmless and disconnected from the degenerate-singular character of the operator, but with the following Propositions it would be clear the complete picture of how the presence of a set where the coefficients take value zero of infinite affect the local behaviour of the solutions.
Next proposition shows that it is enough to  characterise only the blow-up limits of the symmetric $L_a$-harmonic functions.
\begin{proposition}\label{even.odd}
  Let $a \in (-1,1)$ and $u$ be an $L_a$-harmonic function in $B_1$ antisymmetric with respect to $\Sigma$. Thus, there exists a unique $L_{2-a}$-harmonic function $v$ symmetric with respect to $\Sigma$ such that
  $$
  u(x,y)=v(x,y)y\abs{y}^{-a}\quad \mbox{in }\R^{n+1}.
  $$
\end{proposition}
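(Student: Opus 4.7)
The key idea is that the function $w(x,y):=y|y|^{-a}$ is itself antisymmetric and $L_a$-harmonic, so the natural candidate is the quotient $v:=u/w$, which is automatically symmetric since both $u$ and $w$ are antisymmetric. The plan is to (i) verify $L_a w=0$ weakly; (ii) use a product-rule/conjugate function identity to show $v$ is $L_{2-a}$-harmonic, by testing the weak formulations of $u$ and $w$ against suitable cross-products; (iii) use the regularity theory of $L_a$-harmonic antisymmetric functions to extend $v$ across $\Sigma$ and place it in $H^{1,2-a}_{\mathrm{loc}}(B_1)$; (iv) note that uniqueness is immediate.

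For (i), a direct computation gives $|y|^{a}\nabla w=(1-a)\mathbf{e}_y$, so for every $\varphi\in C^{\infty}_c(B_1)$ one has $\int_{B_1}|y|^a\nabla w\cdot\nabla\varphi\,dX=(1-a)\int_{B_1}\partial_y\varphi\,dX=0$. For (ii), the crucial algebraic fact is that $|y|^{2-a}=|y|^{a}w^2$, which yields on $\{y\neq 0\}$
$$
|y|^{2-a}\nabla v \;=\; |y|^{2-a}\frac{w\nabla u-u\nabla w}{w^2} \;=\; w\cdot|y|^{a}\nabla u \;-\; u\cdot|y|^{a}\nabla w.
$$
Hence for any symmetric $\psi\in C^{\infty}_c(B_1)$,
$$
\int_{B_1}|y|^{2-a}\nabla v\cdot\nabla\psi\,dX \;=\; \int_{B_1}w\,|y|^{a}\nabla u\cdot\nabla\psi\,dX \;-\; \int_{B_1}u\,|y|^{a}\nabla w\cdot\nabla\psi\,dX.
$$
Using $\psi w$ as a test function in the weak formulation for $u$ gives $\int w|y|^a\nabla u\cdot\nabla\psi=-\int\psi|y|^a\nabla u\cdot\nabla w$, and using $\psi u$ as a test function in the weak formulation for $w$ gives $\int u|y|^a\nabla w\cdot\nabla\psi=-\int\psi|y|^a\nabla w\cdot\nabla u$. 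Subtracting, the cross-term cancels and the right-hand side vanishes, proving $L_{2-a}v=0$ in the weak sense.

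The admissibility of the test functions $\psi w$ and $\psi u$ in $H^{1,a}_0(B_1)$ is then the technical issue to be settled: the bound $|\nabla w|^2\leq C|y|^{-2a}$ together with $|y|^a\cdot|y|^{-2a}=|y|^{-a}\in L^{1}_{\mathrm{loc}}$ for $a<1$ yields $\psi w\in H^{1,a}_0(B_1)$, while $\psi u\in H^{1,a}_0(B_1)$ follows from $u\in H^{1,a}(B_1)$ and $\psi\in C^{\infty}_c(B_1)$. To close the argument, I will extend $v$ across $\Sigma$ by invoking the regularity theory for antisymmetric $L_a$-harmonic functions, which guarantees that $u$ vanishes at $\Sigma$ with a well-defined conormal trace $\partial^{a}_y u|_\Sigma$; passing to the limit in the identity $|y|^{a}\partial_y u=y\,\partial_y v+(1-a)v$ as $y\to 0^{\pm}$ gives $v|_\Sigma=(1-a)^{-1}\partial^{a}_y u|_\Sigma$, so $v$ extends continuously to all of $B_1$, is locally smooth by the $L_{2-a}$-equation in the symmetric class, and belongs to $H^{1,2-a}_{\mathrm{loc}}(B_1)$.

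Uniqueness follows at once, since any two such $v$ must coincide on $\{y\neq 0\}$ and hence everywhere by continuity. The main obstacle is clearly step (iii): one must leverage the local Schauder/Hölder theory for the weighted operator (already available in the $A_2$ framework of Fabes--Kenig--Jerison--Serapioni) to guarantee that the quotient $v=u/w$ does not develop pathologies on $\Sigma$. Everything else is an algebraic manipulation built around the observation that $w$ is a conjugate function turning $L_a$ into $L_{2-a}$.
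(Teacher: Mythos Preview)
Your algebraic approach in step (ii) is genuinely different from the paper's and quite elegant. The paper works with the pointwise identity $L_{2-a}v = y|y|^{-a}L_a u$ on $\{y\neq 0\}$ and then uses a cutoff $\eta_\delta$ vanishing in a $\delta$-strip around $\Sigma$, testing with $\eta_\delta\varphi$ and sending $\delta\to 0$; the key estimate is that the error term is $O(\delta^{(1-a)/2})$. You instead observe that $w=y|y|^{-a}$ is itself a weak $L_a$-harmonic function and exploit the ``conjugation'' identity $|y|^{2-a}\nabla v = w\,|y|^{a}\nabla u - u\,|y|^{a}\nabla w$, then use the two weak formulations with the cross-test functions $\psi w$ and $\psi u$ to obtain cancellation. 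This bypasses the cutoff completely; it is the standard trick for showing that the quotient of two solutions of $\mathrm{div}(A\nabla\cdot)=0$ solves a conjugate divergence-form equation, here adapted to the weighted setting.

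There is, however, a real gap in step (iii). To call $v$ an $L_{2-a}$-harmonic function in the sense required by the proposition you must place $v$ in $H^{1,2-a}(B_1)$, and your argument does not do this. Continuity of $v$ on $\Sigma$ (via $v|_\Sigma=(1-a)^{-1}\partial^a_y u|_\Sigma$) gives $v\in L^{2,2-a}_{\mathrm{loc}}$, but not control of the gradient. Writing out $|y|^{2-a}|\nabla v|^2$ from the quotient rule yields
\[
|y|^{2-a}|\nabla v|^2 \;\le\; 2\Big(|y|^{a}|\nabla u|^{2} \;+\; (1-a)^{2}\,|y|^{a}\,\frac{u^{2}}{y^{2}}\Big),
\]
and the second term is exactly a Hardy term. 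The paper handles this directly by invoking a weighted Hardy inequality (citing Kufner), which is the missing ingredient in your proof. Your appeal to ``the $L_{2-a}$-equation in the symmetric class'' to conclude $v\in H^{1,2-a}_{\mathrm{loc}}$ is circular: that regularity theory takes $v\in H^{1,2-a}$ as a hypothesis. You can repair this either by quoting the Hardy inequality as the paper does, or by pushing the regularity route further: once you know $\partial^a_y u$ is symmetric and $L_{-a}$-harmonic, it is locally smooth, and the expansion $u(x,y)=(1-a)^{-1}\partial^a_y u(x,0)\,y|y|^{-a}+O(|y|^{2-a})$ then gives $v\in C^{0,1}_{\mathrm{loc}}$, which is more than enough. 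Either way, this step needs one explicit estimate that your proposal currently omits.
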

\begin{proof}
Given $v(x,y)=u(x,y)\abs{y}^a y^{-1}$, let us first prove that $v \in H^{1,2-a}(B_1)$, where $2-a \in (1,3)$. By direct computations we obtain
\begin{equation}\label{L2norm.even.odd}
  \int_{B_1}{\abs{y}^{2-a}v^2\mathrm{d}X}  = \int_{B_1}{\abs{y}^{a}u^2\mathrm{d}X},
\end{equation}
and similarly
\begin{align*}
  \int_{B_1}{\abs{y}^{2-a}\abs{\nabla v}^2\mathrm{d}X} & = \int_{B_1}{\abs{y}^{a}\abs{\nabla u}^2\mathrm{d}X}+(a-1)^2\int_{B_1}{\abs{y}^{a}\frac{u^2}{y^2}\mathrm{d}X}\\
  &\leq C \left(\int_{B_1}{\abs{y}^{a} u^2\mathrm{d}X}+\int_{B_1}{\abs{y}^{a}\abs{\nabla u}^2\mathrm{d}X}\right),
\end{align*}
where in the last inequality we used the validity of an Hardy type inequality (see \cite{kufner}). Next, let us prove that $v$ is $L_{2-a}$-harmonic in $B_1$ in the sense of Definition \ref{energy}. At first, observe that, for $X \in B_1\setminus \Sigma$, we have
\begin{equation}\label{equali}
L_{2-a}v =\mbox{div}(\abs{y}^{2-a}\nabla v)=(a-1)\partial_y u + \mbox{div}(y\nabla u) = y\abs{y}^{-a} L_{a}u.
\end{equation}
 For every $\varphi \in C^\infty_c(B_1)$ and $0<\delta<1$, let $\eta_\delta\in C^{\infty}(B_1)$ be a family of compactly supported cut-off functions such that $0\leq\eta_\delta\leq 1$ and
$$
\eta_\delta(x,y) =
\begin{cases}
  0 & \mbox{ on } \{(x,y) \in B_1 \colon \abs{y}\leq \delta\}, \\
  1 & \mbox{ on }\{(x,y) \in B_1 \colon \abs{y}\geq 2\delta\},
\end{cases}
$$
with $\abs{\nabla \eta_\delta} \leq 1/\delta$.
Thus, by testing \eqref{equali} with $\varphi \eta_\delta$ we obtain, for every $\delta \in (0,1)$,
\begin{align*}
  \int_{B_1}{\abs{y}^{2-a} \langle \nabla v, \nabla(\eta_\delta \varphi)\rangle \mathrm{d}X} & = -\int_{B_1}{\eta_\delta \varphi L_{2-a}v\mathrm{d}X} \\
   & = -\int_{B_1}{\left(y\abs{y}^{-a}\eta_\delta \varphi\right) L_{a}u\mathrm{d}X} = 0,
\end{align*}
where in the last equality we used that $y\abs{y}^{-a}\eta_\delta \varphi \in C^\infty_c(B_1)$. Now,  integration by parts yields
\begin{equation}\label{integration}
\int_{B_1}{\abs{y}^{2-a} \langle \nabla v, \nabla(\eta_\delta \varphi)\rangle \mathrm{d}X} = \int_{B_1}{\abs{y}^{2-a} \eta_\delta \langle \nabla v, \nabla \varphi\rangle \mathrm{d}X}+ \int_{B_1}{\abs{y}^{2-a} \varphi \langle \nabla v, \nabla \eta_\delta \rangle \mathrm{d}X},
\end{equation}
whereas by the the Dominated convergence we obtain that
$$
\lim_{\delta\to 0^+} \int_{B_1}{\abs{y}^{2-a} \eta_\delta\langle \nabla v, \nabla \varphi\rangle \mathrm{d}X} = \int_{B_1}{\abs{y}^{2-a} \langle \nabla v, \nabla \varphi\rangle \mathrm{d}X}
$$
and by H\"older inequality
\begin{align*}
\left|\int_{B_1}{\abs{y}^{2-a} \varphi \langle \nabla v, \nabla \eta_\delta \rangle \mathrm{d}X}\right| &\leq  \norm{\varphi}{L^\infty(B_1)}\left(\int_{B_1}{\abs{y}^{2-a} \abs{\nabla v}^2\mathrm{d}X}\right)^{1/2}\left( \int_{B_1}{\abs{y}^{2-a}\abs{\nabla \eta_\delta}^2\mathrm{d}X}\right)^{1/2}\\
&\leq  C\norm{\varphi}{L^\infty(B_1)}\norm{v}{H^{1,a}(B_1)}\frac{1}{\delta}\left( \int_{\delta}^{2\delta}{\abs{y}^{2-a}\mathrm{d}y}\right)^{1/2}\\
&\leq  C\norm{\varphi}{L^\infty(B_1)}\norm{v}{H^{1,a}(B_1)}\left(\frac{2^{3-a}-1}{3-a}\right)^{1/2}\delta^{\frac{1-a}{2}},
\end{align*}
which imply, passing through $\delta \to 0$ in \eqref{integration}, that
$$
\int_{B_1}{\abs{y}^{2-a}\langle \nabla v, \nabla \varphi \rangle \mathrm{d}X} = 0 \quad \mbox{ for }\varphi \in C^\infty_c(B_1),
$$
since we are dealing with $a<1$.
\end{proof}
As a straightforward consequence, for $a \in (-1,1)$ and every $L_a$-harmonic function $u \in H^{1,a}(B_1)$ there exist $u_e^a \in H^{1,a}(B_1)$ and $u_e^{2-a}\in H^{1,2-a}(B_1)$ two symmetric function with respect to $\Sigma$ respectively $L_a$ and $L_{2-a}$ harmonic in $B_1$ such that
\begin{equation}\label{decompos}
  u(X)=u_e^a(X)+u_e^{2-a}(X)y\abs{y}^{-a}\quad\mbox{in }B_1.
\end{equation}
Thus, through the following Sections we will restrict the classification of the blow-up limit, i.e. the entire homogenous $L_a$-harmonic functions, to the symmetric with respect to $\Sigma$ and in the final part of the work we will collect all the result for a generic $L_a$-harmonic function.\\
Secondly, the previous decomposition combined with the following result gives a complete picture of the regularity of an $L_a$-harmonic function.
\begin{proposition}[{\cite{tesivita}}]\label{smooth}
  Let $a \in (-1,1)$ and $u$ be an $L_a$-harmonic function in $B_1$. Then there holds:
  \begin{itemize}
    \item if $u$ is symmetric with respect to $\Sigma$, we have $u \in C^{1,\alpha}_\loc(B_1)$, for any $\alpha \in (0,1)$;
    \item if $u$ is antisymmetric with respect to $\Sigma$,  we have $u \in C^{0,\alpha}_\loc(B_1)$, for any $\alpha \in (0,\alpha^*)$ with $\alpha^*=\min\{1,1-a\}$.
  \end{itemize}
  Moreover, if $a \in (-1,+\infty)$ and $u$ is symmetric with respect to $\Sigma$, we  have that $u \in C^\infty_\loc(B_1)$.
\end{proposition}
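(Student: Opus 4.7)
My plan is to treat the symmetric case first and then deduce the antisymmetric one via Proposition \ref{even.odd}. For the symmetric case, I would start from the Hölder continuity given by the Fabes--Kenig--Jerison--Serapioni theory, which applies since for $a\in(-1,1)$ the weight $|y|^a$ belongs to the Muckenhoupt class $A_2$, and then bootstrap. Since $L_a$ commutes with translations in the $x$--variables, a difference quotient argument shows that every tangential derivative $\partial_{x_i} u$ is again symmetric and $L_a$--harmonic in the sense of Definition \ref{energy}, hence $C^{0,\beta}_\loc$ by the same theory; iterating, $u$ is smooth in the tangential directions with uniform estimates.

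For the normal direction, the key tool is the equation rewritten, off $\Sigma$, as
\begin{equation*}
  \partial_y(|y|^a \partial_y u) = -|y|^a \Delta_x u,
\end{equation*}
combined with the condition $\lim_{y\to 0^+} y^a \partial_y u = 0$ coming from the symmetry of $u$. Integrating from $0$ to $y>0$,
\begin{equation*}
  y^a \partial_y u(x,y) = -\int_0^y t^a \Delta_x u(x,t)\,dt,
\end{equation*}
whence $\partial_y u(x,y)=O(y)$ with Hölder dependence on $x$ inherited from the previous step. This already yields $u\in C^{1,\alpha}_\loc$ for every $\alpha\in(0,1)$. Bootstrapping the same integral identity together with the tangential smoothness, and using that the symmetry of $u$ forces all odd $y$--derivatives to vanish on $\Sigma$, one obtains a convergent power series representation of $u$ in $y^2$ with smooth coefficients, hence $u\in C^\infty_\loc(B_1)$. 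Because this bootstrap only uses $a>-1$ (to keep the conormal integral convergent), it applies in the whole range $a\in(-1,+\infty)$.

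For the antisymmetric case, Proposition \ref{even.odd} provides a symmetric $L_{2-a}$--harmonic function $v$ such that $u(x,y)=v(x,y)\,y|y|^{-a}$. Since $2-a\in(1,3)\subset(-1,+\infty)$, the previous step applies to $v$ and yields $v\in C^\infty_\loc(B_1)$. The factor $y|y|^{-a}=\mbox{sgn}(y)|y|^{1-a}$ is precisely of class $C^{0,\min\{1,1-a\}}$ at $y=0$, so the product $u=v\cdot y|y|^{-a}$ inherits $u\in C^{0,\alpha}_\loc(B_1)$ for every $\alpha\in(0,\alpha^*)$ with $\alpha^*=\min\{1,1-a\}$.

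The main obstacle is the $C^\infty$ step for symmetric solutions once $a$ leaves the Muckenhoupt range $(-1,1)$, since one can no longer invoke the elliptic regularity theory of \cite{fks} directly. Here symmetry is essential: it forces the Neumann--type condition $\lim_{y\to 0^+} y^a\partial_y u=0$ which, combined with the one--dimensional integration above, converts the degenerate Bessel direction into a regular operator on the even subspace. The delicate point is to show that at each bootstrap level the tangential Hölder estimates transfer uniformly through the integral formula, so as to gain normal derivatives of every order without loss.
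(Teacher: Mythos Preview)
The paper does not prove this proposition; it is quoted from \cite{tesivita} and the only methodological remark is that the results there are obtained via ``some new approximation technique and Liouville type theorem for a class of degenerate-singular elliptic problems.'' Your route---tangential difference quotients plus a conormal integral identity, followed by the reduction of the antisymmetric case to the symmetric one through Proposition~\ref{even.odd}---is therefore a genuinely different, more hands-on argument than the approximation/Liouville scheme alluded to. The reduction of the antisymmetric case is exactly right and is the same mechanism the paper exploits elsewhere.

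There is, however, a real gap in your symmetric argument once $a\geq 1$. Your bootstrap needs a starting point: you must know \emph{some} regularity of $u$ (and then of $\partial_{x_i}u$) before $\Delta_x u$ makes pointwise sense and the integral identity $y^a\partial_y u=-\int_0^y t^a\Delta_x u\,dt$ can be used. For $a\in(-1,1)$ this comes from the $A_2$ theory of \cite{fks}, but for $a\geq 1$ you flag the obstacle without resolving it. Two ways out: first, the paper itself notes (in the paragraph preceding Definition~\ref{energy}) that the relevant pieces of the \cite{fks} theory---Harnack, Poincar\'e, local H\"older solvability---persist for all $a\in(-1,+\infty)$, which would supply your missing initial regularity; second, and more self-contained, you can run the tangential difference quotients purely at the energy level (Caccioppoli plus translation invariance gives $\partial_{x_i}u\in H^{1,a}_\loc$ with no pointwise input), iterate to obtain $\Delta_x u\in L^{2,a}_\loc$, and only then invoke the integral formula. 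A minor point: the ``convergent power series in $y^2$'' claim is stronger than $C^\infty$ and is not justified by your outline; it is enough to show inductively that each even $y$-derivative extends continuously across $\Sigma$, which follows from iterating $\partial_{yy}u+ay^{-1}\partial_y u=-\Delta_x u$ together with the vanishing of odd $y$-derivatives forced by symmetry.
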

\begin{proposition}[{\cite{tesivita}}]
Let $a \in (-1,1)$ and $u$ be $L_a$-harmonic in $B_1$. Then, for every $i=1,\dots,n$, we have that $\partial_{x_i} u$ is $L_a$-harmonic in $B_1$, while $\partial^a_y u$ is $L_{-a}$-harmonic in $B_1$, where
$$
\partial^a_y u = \begin{cases}
                   \abs{y}^a \partial_y u & \mbox{if } X \not \in \Sigma\\
                   \lim_{y \to 0} \abs{y}^a \partial_y u(x,y) & \mbox{if }X \in \Sigma
                 \end{cases}.
$$
\end{proposition}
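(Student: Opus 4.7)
The plan is to reduce both claims to the symmetric/antisymmetric decomposition $u = u_e^a + u_e^{2-a}\, y|y|^{-a}$ from \eqref{decompos}, coupled with the regularity supplied by Proposition \ref{smooth}: both $u_e^a$ and $u_e^{2-a}$ are symmetric, $L$-harmonic with weight exponent in $(-1,+\infty)$, hence $C^\infty_{\loc}(B_1)$. This furnishes enough smoothness to manipulate the equations pointwise on $B_1\setminus\Sigma$ and to justify integration by parts near $\Sigma$.

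For the tangential derivatives the crucial observation is that the coefficient $|y|^a$ does not depend on $x_i$, so $L_a$ commutes with $\partial_{x_i}$. At the level of the weak formulation (Definition \ref{energy}), for any $\varphi\in C^\infty_c(B_1)$ one has $\partial_{x_i}\varphi\in C^\infty_c(B_1)$ and an integration by parts in $x_i$ gives
$$
\int_{B_1}|y|^a\langle\nabla(\partial_{x_i}u),\nabla\varphi\rangle\,\mathrm{d}X \;=\; -\int_{B_1}|y|^a\langle\nabla u,\nabla(\partial_{x_i}\varphi)\rangle\,\mathrm{d}X \;=\; 0,
$$
the last equality being the equation for $u$. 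Smoothness of both pieces of the decomposition ensures $\partial_{x_i}u\in H^{1,a}_{\loc}(B_1)$, closing this case.

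For the weighted normal derivative I would first argue classically on $B_1\setminus\Sigma$, where everything is smooth and $L_a u=|y|^a\bigl(\Delta u+\tfrac{a}{y}\partial_y u\bigr)=0$. A direct computation, expanding $L_{-a}=\mathrm{div}(|y|^{-a}\nabla\cdot)$, yields the pointwise identity
$$
L_{-a}\bigl(|y|^a\partial_y u\bigr) \;=\; \partial_y\!\left(\Delta u+\tfrac{a}{y}\partial_y u\right) \;=\; 0 \qquad \text{in } B_1\setminus\Sigma.
$$
To pass across $\Sigma$, I would rewrite $\partial_y^a u$ through the decomposition: the antisymmetric contribution is $|y|^a\partial_y u_e^a$, vanishing on $\Sigma$ because $\partial_y u_e^a$ is smooth and antisymmetric, hence $O(y)$; the symmetric contribution simplifies, via an explicit differentiation of $u_e^{2-a}\, y|y|^{-a}$, to
$$
y\,\partial_y u_e^{2-a} + (1-a)\,u_e^{2-a},
$$
a smooth function whose trace on $\Sigma$ is $(1-a)u_e^{2-a}(x,0)$, matching the limit definition of $\partial_y^a u$ on $\Sigma$. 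Together with the bound $\partial_y u_e^a=O(y)$, this shows $\partial_y^a u\in H^{1,-a}_{\loc}(B_1)$ as soon as $a<1$ (so that $|y|^{-a}$ is locally integrable and the singular term $a|y|^{a-1}\mathrm{sgn}(y)\partial_y u_e^a$ appearing in its gradient is in $L^{2,-a}_{\loc}$).

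The final step is to promote the classical identity off $\Sigma$ to the full weak formulation on $B_1$, following the cut-off scheme already used in the proof of Proposition \ref{even.odd}. Given $\varphi\in C^\infty_c(B_1)$ and a cut-off $\eta_\delta$ vanishing in $\{|y|<\delta\}$ and equal to $1$ in $\{|y|>2\delta\}$, testing $L_{-a}(\partial_y^a u)=0$ against $\eta_\delta\varphi$ is legitimate; letting $\delta\to 0^+$, dominated convergence recovers the full integral $\int_{B_1}|y|^{-a}\langle\nabla(\partial_y^a u),\nabla\varphi\rangle\,\mathrm{d}X$, while Cauchy--Schwarz bounds the error driven by $\nabla\eta_\delta$ by a constant times $\delta^{(1-a)/2}\to 0$. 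The main obstacle is precisely this crossing of $\Sigma$: one needs simultaneously the sharp asymptotics $\partial_y u_e^a=O(y)$ and the smoothness of $u_e^{2-a}$ to guarantee the $L^{2,-a}$-integrability of $\partial_y^a u$ and its distributional gradient near $\Sigma$, and the constraint $a\in(-1,1)$ is the common thread that makes the Muckenhoupt weights on both sides (for $L_a$ and $L_{-a}$) behave well.
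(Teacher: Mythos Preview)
The paper does not prove this proposition at all; it cites \cite{tesivita,sttv} and remarks that the result there is obtained ``using some new approximation technique and Liouville type theorem''. Your route via the decomposition \eqref{decompos} and the smoothness from Proposition~\ref{smooth} is therefore a genuinely different (and more direct) approach, and the argument for $\partial_{x_i}u$ is correct.

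There is, however, a real gap in your treatment of $\partial^a_y u$. In the final cut-off step you assert that the error term coming from $\nabla\eta_\delta$ is controlled, via Cauchy--Schwarz, by $\delta^{(1-a)/2}$. This is the exponent from the proof of Proposition~\ref{even.odd}, but there the weight was $|y|^{2-a}$; here the weight is $|y|^{-a}$, and the same computation gives
\[
\Bigl(\int_{B_1}|y|^{-a}|\nabla\eta_\delta|^2\,\mathrm{d}X\Bigr)^{1/2}\;\lesssim\;\frac{1}{\delta}\Bigl(\int_\delta^{2\delta}|y|^{-a}\,\mathrm{d}y\Bigr)^{1/2}\;\sim\;\delta^{-(1+a)/2},
\]
which blows up for every $a\in(-1,1)$. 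Even inserting the sharp pointwise bound $|\partial_y(\partial^a_y u)|\le C|y|^a$ directly into the error integral only yields $O(1)$, not $o(1)$.

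The argument can be repaired, but it requires the parity structure you set up and then did not use. Write $w=\partial^a_y u=w_1+w_2$ with $w_1=|y|^a\partial_y u_e^a$ antisymmetric and $w_2$ smooth symmetric. Then $\partial_y w_1$ is symmetric with $\partial_y w_1=O(|y|^a)$, while $\partial_y w_2$ is antisymmetric with $\partial_y w_2=O(|y|)$. Since $\partial_y\eta_\delta$ is antisymmetric in $y$, only the pairings $\varphi_e\cdot\partial_y w_2$ and $\varphi_o\cdot\partial_y w_1$ survive the $y$-integration (here $\varphi=\varphi_e+\varphi_o$). In the first, $\partial_y w_2=O(|y|)$ yields an error $O(\delta^{1-a})$; in the second, $\varphi_o=O(|y|)$ supplies the missing factor and the error is $O(\delta)$. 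Equivalently, one can bypass the cut-off entirely, integrate by parts on $\{|y|>\varepsilon\}$, and check that the two boundary contributions at $|y|=\varepsilon$ vanish as $\varepsilon\to 0$ for the same parity reasons. Either way, the crossing of $\Sigma$ hinges on this cancellation, not on a blanket Cauchy--Schwarz estimate.
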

These results have been recently obtained in \cite{tesivita,sttv} using
some new approximation technique and Liouville type theorem for a class of degenerate-singular elliptic problems.% The main idea is to consider degenerate-singular operator as asymptotic limit of a specific class of uniformly elliptic operator, where the (sub)-optimal exponent of H\"older regularity can be reached by a blow-up argument combined with some Almgren's type monotonicity formula.
\\
We recall here some general result about $L_a$-harmonic functions.
First we introduce the following Caccioppoli inequality, which enables us to give a priori estimates of the $L^{2,a}$ norm of the derivatives of the solution $u$ in terms of the $L^{2,a}$-norm of u.
\begin{proposition}[\cite{fks}]\label{caccioppoli}
  Let $a \in (-1,1)$ and $u$ an $L_a$-harmonic function in $B_1$. Then, for each $X_0 \in B_1\cap \Sigma$ and $0<r<R\leq 1-\abs{X_0}$ we have
  \begin{equation}\label{caccio}
      \int_{B_r(X_0)}{\abs{y}^a \abs{\nabla u}^2\mathrm{d}X}\leq \frac{C}{(R-r)^2}\int_{B_R(X_0)\setminus B_r(X_0)}{\abs{y}^a\abs{u-\lambda}^2\mathrm{d}X},
  \end{equation}
  for every $\lambda \in \R$.
\end{proposition}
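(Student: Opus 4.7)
The plan is to run the classical Caccioppoli argument in the weighted setting, being careful to justify that an appropriate test function is admissible in the weak formulation of Definition \ref{energy}.

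First I would fix a smooth radial cut-off $\eta \in C^\infty_c(B_R(X_0))$ with $0 \le \eta \le 1$, $\eta \equiv 1$ on $B_r(X_0)$, and $|\nabla \eta| \le C/(R-r)$; such a $\eta$ exists because $R>r$. For $\lambda \in \R$ fixed, I would consider the test function $\varphi = \eta^2 (u-\lambda)$. Since $u \in H^{1,a}(B_1)$ and $\eta^2 \in C^\infty_c(B_R(X_0))$, a standard approximation using the fact that $C^\infty(\overline{B_1})$ is dense in $H^{1,a}(B_1)$ (which is built into the definition of the space) shows that $\varphi \in H^{1,a}_0(B_R(X_0))$, so it is an admissible test function in the weak formulation of $L_a u = 0$. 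This density step is where all the $A_2$-Muckenhoupt machinery from \cite{fks} enters, but for this purpose it is routine.

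Testing against $\varphi$ and computing $\nabla \varphi = 2\eta(u-\lambda)\nabla\eta + \eta^2 \nabla u$ gives
\begin{equation*}
\int_{B_R(X_0)} |y|^a \eta^2 |\nabla u|^2 \, \mathrm{d}X = -2 \int_{B_R(X_0)} |y|^a \eta (u-\lambda) \langle \nabla u, \nabla \eta \rangle \, \mathrm{d}X.
\end{equation*}
By Cauchy--Schwarz and Young's inequality applied to the right-hand side (with a factor that absorbs half of the gradient term into the left),
\begin{equation*}
\left| 2\int |y|^a \eta (u-\lambda) \langle \nabla u, \nabla \eta \rangle \, \mathrm{d}X \right| \le \tfrac{1}{2}\int |y|^a \eta^2 |\nabla u|^2 \, \mathrm{d}X + 2\int |y|^a (u-\lambda)^2 |\nabla \eta|^2 \, \mathrm{d}X.
\end{equation*}

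Absorbing the first term on the right into the left and using $\eta \equiv 1$ on $B_r(X_0)$ together with the support and the bound $|\nabla \eta| \le C/(R-r)$, which localizes the right-hand side to $B_R(X_0) \setminus B_r(X_0)$, yields the claimed estimate \eqref{caccio}. The only genuine obstacle is the admissibility of $\eta^2 (u-\lambda)$ as a test function: one must know that the weak formulation of Definition \ref{energy}, originally stated for $\varphi \in C^\infty_c(B_1)$, extends by density to $H^{1,a}_0(B_R(X_0))$; this is exactly one of the six fundamental properties Fabes--Kenig--Serapioni verify for $A_2$ weights, so the rest is algebra.
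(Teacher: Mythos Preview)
Your argument is correct and is precisely the standard Caccioppoli computation in a weighted Sobolev space. The paper itself does not supply a proof of this proposition; it simply attributes the result to \cite{fks} and moves on, so there is no ``paper's proof'' to compare against beyond noting that the argument you wrote is exactly the one found in that reference.
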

%\begin{proof}\sus{forse si pu\`o omettere, citando fks?} \yan{I agree with Susie this is classical}
%  Fix $0<r<R\leq 1-\abs{X_0}$ and consider a smooth cut-off function $\eta \in C^\infty_c(B_1)$ such that $0\leq \eta \leq 1$ and $\eta\equiv 1$ on $B_r(X_0)$ and $\eta \equiv 0$ on $B_R(X_0)\setminus B_r(X_0)$. Moreover, it is not restrictive to suppose that
%  $$
%  \abs{\nabla \eta}\leq \frac{2}{R-r} \quad \mbox{in } B_R(X_0).
%  $$
%  Now, by testing the equation $-L_a u=0$ with the test function $\varphi = (u-\lambda)\eta^2$ and integrating by parts, we \sus{obtain}
%  $$
%  \int_{B_1}{\abs{y}^a \eta^2 \abs{\nabla u}^2\mathrm{d}X}+   2\int_{B_1}{\abs{y}^a \eta\langle \nabla\eta,\nabla u\rangle(u-\lambda)\mathrm{d}X}=0,
%  $$
%  and using the H\"older inequality
%  $$
%  \int_{B_R(X_0)}{\abs{y}^a \eta^2\abs{\nabla u}^2\mathrm{d}X} \leq \left(\int_{B_R(X_0)}{\abs{y}^a \eta^2\abs{\nabla u}^2\mathrm{d}X}\right)^{1/2}\left(\int_{B_R(X_0)}{4\abs{y}^a \abs{u-\lambda}^2 \abs{\nabla\eta}^2 \mathrm{d}X}\right)^{1/2}.
%  $$
%  Hence, dividing by the first term in the right hand side and taking into account the properties of $\eta$ we obtain
%  \begin{align*}
%  \int_{B_r(X_0)}{\abs{y}^a \abs{\nabla u}^2\mathrm{d}X} &\leq   \int_{B_R(X_0)}{\abs{y}^a \eta^2\abs{\nabla u}^2\mathrm{d}X}\\
%&\leq \frac{16}{(R-r)^2}\int_{B_R(X_0)\setminus B_r(X_0)}{\abs{y}^a \abs{u-\lambda}^2\mathrm{d}X}.
%  \end{align*}
%\end{proof}
Now, for $a\in(-1,1)$ let us fix
$$
\abs{S^{n}}_a=\int_{\partial B_1}{\abs{y}^a\mathrm{d}\sigma},
$$
which implies
$$
\abs{S^n_r}_a = r^{n+a} \abs{S^n}_a \quad \mbox{and}\quad\abs{B^{n+1}_r}_a = \frac{r^{n+a+1}}{n+a+1}\abs{S^n}_a.
$$
\begin{lemma}[{\cite[Lemma A.1]{ww}}]
Let $a\in(-1,1)$ ad $u$ be an $L_a$-harmonic function in $B_1$. Then, for each ball $B_r(X_0)$, with $X_0 \in \Sigma$ and $r \in (0,1-\abs{X_0})$, we have
$$
u(X_0)= \frac{1}{\abs{S^n}_a r^{n+a}}\int_{\partial B_r(X_0)}{\abs{y}^a u\mathrm{d}\sigma} = \frac{1}{\abs{B^{n+1}}_a r^{n+a+1}}\int_{B_r(X_0)}{\abs{y}^a u\mathrm{d}X}.
$$
\end{lemma}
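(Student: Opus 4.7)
The plan is to establish the spherical mean value first and then derive the solid one by coarea. First I would reduce to the symmetric case: writing $u = u_e + u_o$ as in the decomposition \eqref{decompos}, both $\partial B_r(X_0)$ and $B_r(X_0)$ are invariant under $y\mapsto -y$ (since $X_0\in\Sigma$) and $\abs{y}^a$ is even, so the antisymmetric part contributes nothing to either integral. Hence it suffices to prove the formula when $u$ is symmetric, in which case Proposition \ref{smooth} gives $u\in C^{1,\alpha}_\loc(B_1)$ for every $\alpha\in(0,1)$ and $L_a u=0$ classically on $B_1\setminus\Sigma$.

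Set
$$
\phi(r) := \frac{1}{\abs{S^n}_a r^{n+a}} \int_{\partial B_r(X_0)} \abs{y}^a u \,\mathrm{d}\sigma .
$$
Rescaling by $X = X_0+rY'$ (so $y = ry'$ since $X_0\in\Sigma$) gives
$$
\phi(r) = \frac{1}{\abs{S^n}_a} \int_{\partial B_1} \abs{y'}^a u(X_0 + rY') \,\mathrm{d}\sigma(Y'),
$$
and differentiating in $r$, then undoing the change of variables, yields
$$
\phi'(r) = \frac{1}{\abs{S^n}_a r^{n+a}} \int_{\partial B_r(X_0)} \abs{y}^a \partial_\nu u \,\mathrm{d}\sigma .
$$

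The core step is to prove $\phi'(r)=0$. Since $\abs{y}^a$ fails to be smooth on $\Sigma$, I would apply the divergence theorem not on $B_r(X_0)$ directly but on the truncated domain $\Omega_\epsilon := B_r(X_0)\setminus\{\abs{y}<\epsilon\}$, where $L_a u = 0$ classically and the weight is smooth. The boundary $\partial\Omega_\epsilon$ consists of a spherical part and two flat slabs $\{y=\pm\epsilon\}\cap B_r(X_0)$. By symmetry $\partial_y u(x,-\epsilon) = -\partial_y u(x,\epsilon)$, so the two flat pieces combine into $-2\int_{\{y=\epsilon\}\cap B_r(X_0)}\epsilon^a \partial_y u\,\mathrm{d}x$; using $\partial_y u(x,0)=0$ (an immediate consequence of symmetry and $u\in C^{1,\alpha}$) we get $\abs{\partial_y u(x,\epsilon)}\le C\epsilon^\alpha$, and choosing $\alpha$ close enough to $1$ that $\alpha+a>0$ (possible since $a>-1$) makes this term vanish as $\epsilon\to 0$. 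Meanwhile, dominated convergence on the spherical part (valid since $\abs{y}^a \in L^1(\partial B_r(X_0))$ for $a>-1$ and $\partial_\nu u$ is bounded) yields $\int_{\partial B_r(X_0)}\abs{y}^a\partial_\nu u\,\mathrm{d}\sigma = 0$.

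Thus $\phi$ is constant on $(0, 1-\abs{X_0})$. Continuity of $u$ at $X_0$ and integrability of $\abs{y'}^a$ on $\partial B_1$ give
$$
\lim_{r\to 0^+}\phi(r) = \frac{u(X_0)}{\abs{S^n}_a}\int_{\partial B_1}\abs{y'}^a\,\mathrm{d}\sigma = u(X_0),
$$
establishing the first identity. The second follows from the coarea formula,
$$
\int_{B_r(X_0)}\abs{y}^a u\,\mathrm{d}X = \int_0^r u(X_0)\abs{S^n}_a\rho^{n+a}\,\mathrm{d}\rho = u(X_0)\,\abs{B^{n+1}}_a\, r^{n+a+1},
$$
together with $\abs{B^{n+1}}_a = \abs{S^n}_a/(n+a+1)$. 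I expect the main obstacle to be the careful vanishing of the $\{\abs{y}=\epsilon\}$ contribution, which is precisely why the reduction to symmetric $u$ is indispensable: without it one only has Hölder regularity and no a priori decay of $\partial_y u$ toward $\Sigma$ to kill the boundary flux.
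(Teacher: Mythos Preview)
The paper does not supply its own proof of this lemma; it is quoted from \cite{ww} without argument. Your proof is correct and follows the classical route for weighted mean value identities: reduce to the symmetric part (where the antisymmetric part contributes nothing to the weighted averages and vanishes at $X_0$), differentiate the spherical mean, and use the divergence theorem on the slab-truncated ball $B_r(X_0)\setminus\{\abs{y}<\epsilon\}$ to show the flux $\int_{\partial B_r(X_0)}\abs{y}^a\partial_\nu u\,\mathrm{d}\sigma$ vanishes. The only delicate point, as you identify, is killing the flat boundary contribution $-2\int\epsilon^a\partial_y u(x,\epsilon)\,\mathrm{d}x$; your use of $C^{1,\alpha}$ regularity with $\alpha>-a$ handles this, and in fact Proposition~\ref{smooth} even gives $u\in C^\infty_\loc$ in the symmetric case, so you have more room than needed. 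The coarea step for the solid mean is routine.

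A slightly more economical variant would bypass the truncation and the regularity input altogether: testing the weak formulation $\int_{B_1}\abs{y}^a\langle\nabla u,\nabla\varphi\rangle\,\mathrm{d}X=0$ against radial cutoffs $\varphi=\eta_\epsilon(\abs{X-X_0})$ approximating $\mathbf{1}_{B_r(X_0)}$ yields $\int_{\partial B_r(X_0)}\abs{y}^a\partial_\nu u\,\mathrm{d}\sigma=0$ for a.e.\ $r$ directly at the $H^{1,a}$ level, which then integrates to the same conclusion. This avoids the symmetric/antisymmetric split and the $\epsilon$-slab bookkeeping, but your argument is equally valid and perhaps more transparent about where the symmetry is being used.
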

%\begin{proof}\sus{forsesi pu\`o omettere citando ww}\yan{this is in Caffa-Salsa-Silvestre Inventiones I think}
%Let us consider the case $X_0=0$ since the problem is invariant under translation on $B_1\cap \Sigma$. Set
%$$
%\Phi(r)=\frac{1}{r^{n+a}} \int_{\partial B_r}{\abs{y}^a u\mathrm{d}\sigma}=\int_{\partial B_1}{\abs{y}^a u(rx)\mathrm{d}\sigma},
%$$
%then
%$$
%\frac{d}{dr}\Phi(r)= \frac{1}{r^{n+a}}\int_{\partial B_r}{\abs{y}^a \partial_r u\mathrm{d}\sigma}.
%$$
%Since $L_a u = 0$ on $B_1$, by a Gauss-Green formula we \sus{obtain}
%  $$
%  \int_{\partial B_r}{\abs{y}^a \partial_r u \mathrm{d}\sigma}=\int_{B_r}{L_a u\mathrm{d}X} =0
%  $$
%which directly implies that $r\mapsto\Phi(r)$ is constant and consequently
%\begin{align*}
%\frac{1}{r^{n+a}}\int_{\partial B_r}{\abs{y}^a u\mathrm{d}\sigma} &= \lim_{r \to 0} \int_{\partial B_1}{\abs{y}^a \lim_{r\to 0}u(rx)\mathrm{d}\sigma} \\
%&= u(0)\abs{S^{n}}_a.
%\end{align*}
%Similarly, by integrating from $0$ to $r$ the function $\Phi(r)$, we \sus{obtain}
%$$
%\int_0^r{\int_{\partial B_t}{{\abs{y}^a u\mathrm{d}\sigma}\mathrm{d}t}} = \int_{B_r}{\abs{y}^a u\mathrm{d}x}
%$$
%and secondly
%$$
%\int_0^r{\left(\int_{\partial B_t}{{\abs{y}^a u\mathrm{d}\sigma}}\right)\mathrm{d}t} = u(0)\abs{S^{n}}_a \int_0^r{t^{n+a}\mathrm{d}t},
%$$
%from which we \sus{obtain} the claimed result.
%\end{proof}
We remark that in the case of $L_a$-subharmonic function, i.e. $-L_a u \leq 0$, the previous result holds true in the form of inequality. Finally, by standard Moser's iteration technique, the following bound holds true.
\begin{lemma}[{\cite[Lemma A.2.]{ww}}]\label{moser}
Let $a \in (-1,1)$ and $u$ be a $L_a$-subharmonic function in $B_1$. Then, for $X_0 \in B_1\cap\Sigma$ and $r \in (0,1-\abs{X_0})$ we obtain
$$
\norm{u}{L^\infty (B_{r/2}(X_0))} \leq C(n,a)\left( \frac{1}{r^{n+1+a}}\int_{B_r(X_0)}{\abs{y}^a u^2\mathrm{d}X}\right)^{1/2},
$$
where $C(n,a)$ is a constant depending only on $n$ and $a$.
\end{lemma}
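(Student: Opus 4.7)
The plan is a De Giorgi–Moser iteration adapted to the Muckenhoupt $A_2$ weight $\abs{y}^a$, leveraging the weighted Sobolev and Poincar\'e inequalities proved for this class in \cite{fks}. By translation along $\Sigma$ and rescaling (both preserving $L_a$-subharmonicity, since $\abs{y}^a$ is homogeneous in $y$), I may reduce to $X_0 = 0$ and $r = 1$, so that the goal becomes
$$
\norm{u_+}{L^\infty(B_{1/2})} \leq C(n,a) \left( \int_{B_1} \abs{y}^a u_+^2 \mathrm{d}X \right)^{1/2}.
$$
The argument naturally controls $u_+$; the stated $L^\infty$ bound for $\abs{u}$ then follows since in the uses of this lemma (e.g.\ to $\abs{u}$ with $u$ an $L_a$-harmonic function) the relevant quantity is a non-negative $L_a$-subsolution, and undoing the normalization $r=1$ by $u(X) \mapsto u(X_0+rX)$ introduces the factor $r^{-(n+1+a)}$ appearing in the statement.

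The first step is a Caccioppoli inequality for powers. Testing the weak subharmonicity $\int_{B_1} \abs{y}^a \langle \nabla u, \nabla \varphi \rangle \mathrm{d}X \leq 0$ with the nonnegative function $\varphi = u_+^{2\beta - 1} \eta^2$, for $\beta \geq 1$ and $\eta \in C_c^\infty(B_1)$ a cutoff, expanding $\nabla \varphi$ (using that $\nabla u_+$ vanishes on $\{u \leq 0\}$) and absorbing the cross-term by Young's inequality, one obtains for $w = u_+^\beta$
$$
\int_{B_1} \abs{y}^a \abs{\nabla w}^2 \eta^2 \mathrm{d}X \leq C \beta^2 \int_{B_1} \abs{y}^a w^2 \abs{\nabla \eta}^2 \mathrm{d}X.
$$

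The second step combines this with the $A_2$ weighted Sobolev inequality of \cite{fks}: there exists $\kappa = \kappa(n,a) > 1$ such that, on any ball $B_\rho \subset B_1$ centred on $\Sigma$ and for $f \in H^{1,a}_0(B_\rho)$,
$$
\left( \frac{1}{\abs{B_\rho}_a} \int_{B_\rho} \abs{y}^a f^{2\kappa} \mathrm{d}X \right)^{\!1/(2\kappa)} \leq C \rho \left( \frac{1}{\abs{B_\rho}_a} \int_{B_\rho} \abs{y}^a \abs{\nabla f}^2 \mathrm{d}X \right)^{\!1/2}.
$$
Applied to $f = w \eta$ with $\eta$ a cutoff between concentric balls $B_{\rho'} \subset B_\rho$ satisfying $\abs{\nabla \eta} \leq 2/(\rho - \rho')$, and combined with the power-Caccioppoli above, this produces the reverse-H\"older iteration
$$
\left( \int_{B_{\rho'}} \abs{y}^a u_+^{2\kappa\beta} \mathrm{d}X \right)^{\!1/(2\kappa\beta)} \leq \left( \frac{C\beta}{\rho - \rho'} \right)^{\!1/\beta} \left( \int_{B_\rho} \abs{y}^a u_+^{2\beta} \mathrm{d}X \right)^{\!1/(2\beta)}.
$$

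The third step is the iteration itself: choose $\beta_k = \kappa^k$ and $\rho_k = \tfrac{1}{2}(1 + 2^{-k})$, so $\rho_k - \rho_{k+1} = 2^{-k-2}$. Applying the previous inequality with $\beta = \beta_k$, $\rho = \rho_k$, $\rho' = \rho_{k+1}$ and composing gives
$$
\left( \int_{B_{\rho_{k+1}}} \abs{y}^a u_+^{2\beta_{k+1}} \mathrm{d}X \right)^{\!1/(2\beta_{k+1})} \leq \prod_{j=0}^k \bigl( C \beta_j 2^{j} \bigr)^{1/\beta_j} \left( \int_{B_1} \abs{y}^a u_+^2 \mathrm{d}X \right)^{\!1/2}.
$$
Because $\beta_j$ grows geometrically, the series $\sum_j \beta_j^{-1}(\log \beta_j + j)$ converges and the infinite product is a finite constant $C(n,a)$. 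Letting $k \to \infty$, the left-hand side tends to $\norm{u_+}{L^\infty(B_{1/2})}$, proving the bound. The main technical obstacle is purely bookkeeping: ensuring that the Sobolev constant, the doubling constant of $\abs{y}^a$, and the $A_2$-constant are uniform across all balls centred on $\Sigma$ appearing in the iteration. This uniformity is guaranteed by the fact that $\abs{y}^a$ depends only on $y$ (so it is invariant under translations along $\Sigma$) and by the scale invariance of the $A_2$-condition, both of which are built into the framework of \cite{fks}.
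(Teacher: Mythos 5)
Your proof is correct and follows the same route the paper invokes: a De Giorgi--Moser iteration built on the Fabes--Kenig--Serapioni weighted Sobolev inequality for the $A_2$ weight $\abs{y}^a$ (the paper itself gives no proof, attributing the bound to \cite[Lemma A.2]{ww} ``by standard Moser's iteration technique''). Your observation that the argument intrinsically bounds $\sup u_+$ rather than $\norm{u}{L^\infty}$, so that the statement is meant to be applied to nonnegative subsolutions such as $\abs{u}$ for $u$ an $L_a$-solution, is a fair reading of how the lemma is actually used in the paper.
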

\section{Almgren  monotonicity formula}\label{sec.Alm}
In this Section, following \cite{CS2007}, we introduce the degenerate-singular counterpart of the classical Almgren monotonicity formula for harmonic functions (cfr \cite{almgren}). Since we intend to investigate the structure and regularity of the nodal set of $L_a$-harmonic function near the characteristic manifold $\Sigma$, let us fix $X_0 = (x_0,0) \in \Sigma$. Hence, for every $r \in (0,R)$, where $R>0$ will be defined later, consider
\begin{align*}
  E(X_0,u,r) & = \frac{1}{r^{n+a-1}}\int_{B_r(X_0)}{\abs{y}^a \abs{\nabla u}^2 \mathrm{d}X} \\
  H(X_0,u,r) & = \frac{1}{r^{n+a}}\int_{\partial B_r(X_0)}{\abs{y}^a u^2 \mathrm{d}\sigma}
\end{align*}
and the Almgren  quotient (also called frequency),
\begin{equation}\label{Almgren.a}
N(X_0,u,r)= \ddfrac{E(X_0,u,r)}{H(X_0,u,r)}= \ddfrac{r\int_{B_r(X_0)}{\abs{y}^a \abs{\nabla u}^2 \mathrm{d}X} }{\int_{\partial B_r(X_0)}{\abs{y}^a u^2 \mathrm{d}\sigma}}.
\end{equation}
Since $u \in H^{1,a}_{\loc}(B_1)$, both the functions $r\mapsto E(X_0,u,r)$ and $r\mapsto H(X_0,u,r)$ are locally absolutely continuous on $(0,+\infty)$, that is that both their derivatives are in $L^1_{\loc}((0,+\infty))$.
 \begin{proposition}[\cite{CS2007}]\label{Almgren.formula}
%  For every $x_0 \in \R^n \setminus \Sigma$, there exists a constant $C>0$ such that for every $r \in (0, \abs{y_0})$ the map
%  $$
%  r \mapsto e^{Cr}N(x_0,u,r)
%  $$
%  is monotone nondecreasing.\\
  Let $a\in(-1,1)$ and $u$ be an $L_a$-harmonic function on $B_1$. Then, for every $X_0\in B_1 \cap \Sigma$ we have that the map $r \mapsto N(X_0,u,r)$ is absolutely continuous and monotone nondecreasing on $(0,1-\abs{X_0})$.\\
  Hence, there always exists finite the limit
$$
N(X_0,u,0^+) = \lim_{r\to 0^+}N(X_0,u,r) = \inf_{r>0}N(X_0,u,r),
$$
to which we will refer as the (Almgren) limiting frequency. Moreover,
\begin{equation}\label{doubling1}
\frac{d}{dr} \log H(X_0,u,r) = \frac{2}{r}N(X_0,u,r).
\end{equation}

%If $a\leq 0$, the map $r\mapsto N(X_0,u,r)$ is well defined on $(0,+\infty)$ and for every $X_0 \in \R^n$, while if $a>0$ it is defined on $(0,\abs{y_0})$ for $X_0 \not\in \Sigma$ and on $(0,+\infty)$ for $X_0 \in \Sigma$.

\end{proposition}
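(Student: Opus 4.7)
The plan is to follow the classical Almgren scheme, adapted to the $A_2$-weighted setting: compute explicit derivatives of $H$ and $E$, establish a Rellich--Pohozaev identity for $L_a$, and then conclude monotonicity via Cauchy--Schwarz. All integrations by parts need a preliminary truncation away from $\Sigma$, since $|y|^a$ may blow up there.

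First I would compute $H'(X_0,u,r)$ by a change of variable to the unit sphere: writing $X=X_0+r\omega$, one has
\[
H(X_0,u,r)=\int_{\partial B_1}|\omega_{n+1}|^{a}u(X_0+r\omega)^2\,d\sigma,
\]
so differentiation under the integral gives $H'(X_0,u,r)=2r^{-(n+a)}\int_{\partial B_r(X_0)}|y|^a u\,\partial_\nu u\,d\sigma$. Using $L_a u=0$ together with the cutoff scheme $\eta_\delta$ from the proof of Proposition \ref{even.odd} (to deal with the degeneracy/singularity on $\Sigma$), I would justify the weighted integration by parts
\[
\int_{B_r(X_0)}|y|^a|\nabla u|^2\,dX=\int_{\partial B_r(X_0)}|y|^a u\,\partial_\nu u\,d\sigma,
\]
which yields $H'(X_0,u,r)=\frac{2}{r}E(X_0,u,r)$ and hence the doubling identity \eqref{doubling1}.

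Next I would establish the Rellich--Pohozaev identity by testing $L_a u=0$ against $(X-X_0)\cdot\nabla u$ on $B_r(X_0)$. The key algebraic ingredient is $\mathrm{div}((X-X_0)|y|^a)=(n+1+a)|y|^a$, coming from $(X-X_0)\cdot\nabla|y|^a=a|y|^a$ since $X_0\in\Sigma$. After two weighted integrations by parts (again justified by the same cutoff argument, since the boundary terms on $\{|y|=\delta\}$ vanish as $\delta\to 0^+$ due to $a\in(-1,1)$ and the Caccioppoli bound of Proposition \ref{caccioppoli}), one obtains
\[
r\!\int_{\partial B_r(X_0)}\!|y|^a|\nabla u|^2\,d\sigma-(n+a-1)\!\int_{B_r(X_0)}\!|y|^a|\nabla u|^2\,dX
=2r\!\int_{\partial B_r(X_0)}\!|y|^a(\partial_\nu u)^2\,d\sigma.
\]
Differentiating $E(X_0,u,r)=r^{-(n+a-1)}\int_{B_r(X_0)}|y|^a|\nabla u|^2\,dX$ and using this identity gives
\[
E'(X_0,u,r)=\frac{2}{r^{n+a-1}}\int_{\partial B_r(X_0)}|y|^a(\partial_\nu u)^2\,d\sigma.
\]

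With $H'$ and $E'$ in hand, the monotonicity is immediate: assuming $H(X_0,u,r)>0$ (which can be obtained from the doubling identity and unique continuation, or directly from the energy identity to exclude degenerate cases on a sub-interval), I would compute
\[
\frac{N'(X_0,u,r)}{N(X_0,u,r)}=\frac{E'}{E}-\frac{H'}{H}
=\frac{2}{E\cdot H}\left[\Bigl(\!\int_{\partial B_r}\!|y|^a(\partial_\nu u)^2\Bigr)\!\Bigl(\!\int_{\partial B_r}\!|y|^a u^2\Bigr)-\Bigl(\!\int_{\partial B_r}\!|y|^a u\,\partial_\nu u\Bigr)^{\!2}\right]\!\frac{1}{r^{?}},
\]
which is nonnegative by Cauchy--Schwarz on $\partial B_r(X_0)$ with respect to the measure $|y|^a\,d\sigma$. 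Absolute continuity of $r\mapsto N(X_0,u,r)$ follows since $E,H$ are absolutely continuous with $H$ bounded away from $0$ on each compact subinterval of $(0,1-|X_0|)$. Monotonicity then gives the existence of $N(X_0,u,0^+)=\inf_{r>0}N(X_0,u,r)$, which is finite (nonnegative and bounded above by $N$ evaluated at any fixed radius).

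The main obstacle will be rigorously justifying the two weighted integrations by parts up to the characteristic manifold $\Sigma$, since $|y|^a$ is not smooth and, for $a\in(-1,0)$, actually blows up. I would handle this uniformly by introducing the cutoff $\eta_\delta$ (vanishing in a $\delta$-neighborhood of $\Sigma$), carrying out the computation in the classical uniformly elliptic region $\{|y|>\delta\}$, and controlling the error terms by Cauchy--Schwarz combined with $\int_{\delta}^{2\delta}|y|^{2-a}\,dy=O(\delta^{3-a})$ and $|\nabla\eta_\delta|\le 1/\delta$. These estimates, together with the Caccioppoli bound on $|\nabla u|^2|y|^a$ from Proposition \ref{caccioppoli}, drive all boundary contributions on $\{|y|=\delta\}$ to zero as $\delta\to 0^+$, exactly as in Proposition \ref{even.odd}.
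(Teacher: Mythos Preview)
Your proposal is correct and follows precisely the classical Almgren scheme that the paper invokes by citing \cite{CS2007}; the paper itself does not write out a proof of this proposition, but your derivative formulas for $H$ and $E$ match those the authors use later (see the proof of Proposition~\ref{blow.N}) and the logarithmic-derivative/Cauchy--Schwarz argument coincides with the structure of the off-$\Sigma$ case in Proposition~\ref{Almgren.formula.fuori}. The cutoff justification near $\Sigma$ that you outline is exactly the mechanism already employed in Proposition~\ref{even.odd}, so nothing new is needed there; the only cosmetic issue is the placeholder $r^{?}$, which should simply be absorbed into the normalizations of $E$ and $H$ (both powers of $r$ cancel once you use $E=r^{-(n+a-1)}\int_{\partial B_r}|y|^a u\,\partial_\nu u$).
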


  As a direct consequence of the monotonicity formula, we infer that the Almgren limiting frequency map $X\mapsto N(X,u,0^+)$ on $\Sigma$ is upper semi-continuous since it is defined as the infimum of a continuous family of functions.    Another simple consequence of the monotonicity result and \eqref{doubling1} is the following comparison property (which, with $r_2 = 2r_1$, is the so called doubling property, which is classical).
    \begin{corollary}\label{doubling.corollary.Sigma}
      Let $a \in (-1,1)$ and $u$ be $L_a$-harmonic on $B_1$. Hence, there given $N=N(X_0,u,1-\abs{X_0})$ such that for every $X_0 \in B_1 \cap \Sigma$,
      $$
      H(X_0,u,r_2) \leq H(X_0,u,r_1) \left(\frac{r_2}{r_1} \right)^{2N}
      $$
      for $0<r_1<r_2<1-\abs{X_0}$.
    \end{corollary}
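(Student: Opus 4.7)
The plan is to derive the stated inequality by integrating the logarithmic derivative identity \eqref{doubling1} from $r_1$ to $r_2$ and exploiting the monotonicity of $N(X_0,u,\cdot)$ furnished by Proposition \ref{Almgren.formula}.

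More precisely, first I would fix $X_0 \in B_1 \cap \Sigma$ and set $N = N(X_0, u, 1-|X_0|)$. By the monotonicity statement in Proposition \ref{Almgren.formula}, we have $N(X_0, u, r) \leq N$ for all $r \in (0, 1-|X_0|)$. Next, since $r \mapsto H(X_0, u, r)$ is (locally) absolutely continuous and nonvanishing on the relevant interval (as guaranteed by Proposition \ref{Almgren.formula}, which implicitly requires $H > 0$ for the quotient $N$ to be well-defined and the identity \eqref{doubling1} to hold), the function $r \mapsto \log H(X_0, u, r)$ is absolutely continuous on $[r_1, r_2]$.

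Then I would integrate the identity \eqref{doubling1} on $[r_1, r_2]$:
\begin{equation*}
\log \frac{H(X_0, u, r_2)}{H(X_0, u, r_1)} \;=\; \int_{r_1}^{r_2} \frac{2}{r}\, N(X_0, u, r)\, \mathrm{d}r \;\leq\; 2N \int_{r_1}^{r_2} \frac{\mathrm{d}r}{r} \;=\; 2N \log \frac{r_2}{r_1},
\end{equation*}
where the inequality uses the pointwise bound $N(X_0, u, r) \leq N$. Exponentiating both sides yields the claimed estimate.

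There is no serious obstacle here: the corollary is essentially a one-line consequence of the monotonicity and the logarithmic derivative formula. The only minor subtlety worth flagging is the positivity of $H(X_0, u, \cdot)$ on the integration interval, needed to legitimately take logarithms; this is implicit in the statement of Proposition \ref{Almgren.formula} (otherwise $N$ would not be finite at the upper endpoint), so the argument goes through without further work.
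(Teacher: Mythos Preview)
Your proof is correct and follows exactly the approach indicated by the paper, which describes the corollary as ``a simple consequence of the monotonicity result and \eqref{doubling1}'' without spelling out further details. Your integration of \eqref{doubling1} over $[r_1,r_2]$ together with the bound $N(X_0,u,r)\leq N$ from Proposition~\ref{Almgren.formula} is precisely what is intended.
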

%    \begin{proof}
%      Fixed $R=1-\abs{X_0}$ we have that $N(X_0,u,r) \leq N(X_0,u,R)$ for every $r \in (0,R)$ and integrating \eqref{doubling1} between $r_1$ and $r_2$, with $0<r_1<r_2 \leq R$, we obtain
%      $$
%      \frac{H(X_0,u,r_2)}{H(X_0,u,r_1)} \leq  \left(\frac{r_2}{r_1} \right)^{2N}
%      $$
%      whit $N= N(X_0,u,R)$.
%     \end{proof}
     In other words, for every $X_0 \in B_1\cap \Sigma$
     $$
     \frac{1}{R^{n+a}}\int_{\partial B_R(X_0)}{\abs{y}^a u^2\mathrm{d}\sigma}\leq \left(\frac{R}{r}\right)^{2N}\frac{1}{r^{n+a}}\int_{\partial B_r(X_0)}{\abs{y}^a u^2\mathrm{d}\sigma}
     $$
     with $0<r<R<1-\abs{X_0}$ and $N=N(X_0,u,1-\abs{X_0})$, and integrating the previous inequality we obtain
     \begin{equation}\label{doublin.ball}
     \frac{1}{R^{n+a+1}}\int_{B_R(X_0)}{\abs{y}^a u^2\mathrm{d}X}\leq \left(\frac{R}{r}\right)^{2N-1}\frac{1}{r^{n+a+1}}\int_{ B_r(X_0)}{\abs{y}^a u^2\mathrm{d}X}.
     \end{equation}
     In order to justify the analysis of the local behaviour of $L_a$-harmonic functions, we prove the validity of the strong unique continuation property for the degenerate-singular operator $L_a$. In general, a function $u$, is said to
\emph{vanish of infinite order} at a point $X_0 \in \Gamma(u)$ if
$$
\int_{\abs{X-X_0}<r}{u^2\mathrm{d}X} = O(r^k), \quad \mbox{for every }k\in \N,
$$
as $r\to 0$. Given an elliptic operator $L$, $L$ is said to have the \emph{strong unique continuation property} in $B_1$ if the only solution of $Lu = 0$ in $H^{1}_\loc(B_1)$ which vanishes of infinite
order at a point $X_0 \in \Gamma(u)$ is $u = 0$. Moreover, $L$ is said to have the \emph{unique continuation
property} in $B_1$ if the solution of $Lu = 0$ in $H^{1}_\loc(B_1)$ which can vanish in an open
subset of $B_1$ is $u = 0$. (see \cite{MR833393,MR882069} for more details for the uniformly elliptic case).
\begin{corollary}[{\cite[Theorem 1.4]{MR882069}}]\label{unique.continuation}
       Let $a \in (-1,1)$ and $u$ be $L_a$-harmonic in $B_1$. Then $u$ cannot vanish of infinite order at $X_0 \in \Gamma(u)\cap B_1$ unless $u\equiv 0$ in $B_1$.
 \end{corollary}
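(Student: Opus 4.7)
My plan is to argue by contradiction using the Almgren monotonicity (Proposition \ref{Almgren.formula}) and its integrated doubling consequence \eqref{doublin.ball}. Suppose $u\not\equiv 0$ in $B_1$, and let $X_0\in\Gamma(u)\cap B_1$ be a point of infinite-order vanishing. If $X_0\notin\Sigma$, then $L_a$ is uniformly elliptic with Lipschitz coefficients in a neighbourhood of $X_0$, so the classical Aronszajn--Carleman strong unique continuation principle forces $u\equiv 0$ on the connected component of $B_1\setminus\Sigma$ containing $X_0$; propagating across $\Sigma$ by continuity and applying Aronszajn again on the opposite half then gives $u\equiv 0$ in $B_1$. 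Hence I may assume $X_0\in\Sigma$.

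Pick $R\in(0,1-\abs{X_0})$ with $\int_{B_R(X_0)}\abs{y}^a u^2\,dX>0$; otherwise $u\equiv 0$ on a neighbourhood of $X_0$ and the previous propagation argument concludes. Setting $N:=N(X_0,u,R)<\infty$, the monotonicity of the frequency together with \eqref{doublin.ball} yield
\begin{equation*}
\int_{B_r(X_0)}\abs{y}^a u^2\,dX \;\geq\; C_0\, r^{\,n+a+2N}, \qquad 0<r<R,
\end{equation*}
with $C_0 = R^{-(n+a+2N)}\int_{B_R(X_0)}\abs{y}^a u^2\,dX>0$. This is the key quantitative lower bound driven by the finiteness of the limiting frequency.

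To derive a contradiction I would transfer the hypothesis $\int_{B_r(X_0)}u^2\,dX = O(r^k)$ from Lebesgue to weighted measure. Proposition \ref{smooth} and the decomposition \eqref{decompos} give $u\in L^\infty_{\mathrm{loc}}(B_1)$; set $M=\norm{u}{L^\infty(B_R(X_0))}$. When $a\in[0,1)$ the pointwise bound $\abs{y}^a\leq r^a$ on $B_r(X_0)$ yields directly $\int_{B_r(X_0)}\abs{y}^a u^2\,dX \leq r^a\int_{B_r(X_0)}u^2\,dX = O(r^{k+a})$ for every $k$. When $a\in(-1,0)$ pick $p\in(1,(n+1)/\abs{a})$ so that $\abs{y}^{ap}\in L^1_{\mathrm{loc}}$, and apply H\"older with conjugate exponent $q$ together with $u^{2q}\leq M^{2q-2}u^2$:
\begin{align*}
\int_{B_r(X_0)}\abs{y}^a u^2\,dX &\leq \Bigl(\int_{B_r(X_0)}\abs{y}^{ap}\,dX\Bigr)^{1/p}\Bigl(M^{2q-2}\int_{B_r(X_0)}u^2\,dX\Bigr)^{1/q}\\
&= O\bigl(r^{\,(n+1)/p \,+\, a \,+\, k(1-1/p)}\bigr)
\end{align*}
for every $k\in\mathbb{N}$. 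Either way, the weighted integral decays faster than any polynomial, contradicting the lower bound as soon as the exponent exceeds $n+a+2N$. Hence $u\equiv 0$ on a neighbourhood of $X_0$, and the first-paragraph propagation argument upgrades this to $u\equiv 0$ in $B_1$.

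The only nontrivial ingredient is the Lebesgue-to-weighted conversion in the third paragraph, since for $a<0$ the weight $\abs{y}^a$ is unbounded near $\Sigma$ and the unweighted infinite-order hypothesis does not a priori imply its weighted counterpart; local boundedness of $u$ combined with the Muckenhoupt-type local integrability of $\abs{y}^{ap}$ bridges the gap cleanly.
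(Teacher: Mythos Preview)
The paper gives no proof here; it cites Garofalo--Lin \cite{MR882069}. Your argument is exactly that mechanism made explicit: Proposition~\ref{Almgren.formula} bounds the frequency, \eqref{doublin.ball} turns this into the polynomial lower bound $\int_{B_r(X_0)}|y|^a u^2\geq C_0\,r^{\,n+a+2N}$, and your unweighted-to-weighted conversion (local boundedness plus H\"older) produces the contradiction. The core is correct and is the cited approach.

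One small correction: for $a<0$, local integrability of $|y|^{ap}$ near $\Sigma$ requires $ap>-1$, i.e.\ $p<1/|a|$, not $p<(n+1)/|a|$. Since $|a|<1$ the interval $(1,1/|a|)$ is nonempty and the H\"older step is unaffected.

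The genuine gap is the propagation across $\Sigma$ in your first paragraph (and in the ``otherwise'' clause). If $u\equiv 0$ on $B_1\cap\{y>0\}$, continuity gives only $u=0$ on $B_1\cap\Sigma$; that is zero Dirichlet data, not infinite-order vanishing at any point of the lower half-space, so ``applying Aronszajn again on the opposite half'' is unjustified. The clean repair is to establish \emph{weak} unique continuation first, directly from the frequency identity: if $u\not\equiv 0$ but $u\equiv 0$ on some open set, choose $X_*$ interior to $\{u=0\}$ with $d:=\mathrm{dist}(X_*,\partial\{u=0\})<1-|X_*|$; then $H(X_*,u,d)=0$ while $H(X_*,u,r)>0$ just beyond $d$, and \eqref{doubling1} (or its off-$\Sigma$ analogue from Proposition~\ref{Almgren.formula.fuori}) shows $H$ solves a linear ODE $H'=(2N/r)H$ with bounded coefficient and zero initial value, forcing $H\equiv 0$ --- a contradiction. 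This is precisely the argument the paper carries out later in Proposition~\ref{empty.interior}, and it uses only tools already available at this point. With weak UCP in hand, your hypothesis $u\not\equiv 0$ guarantees $\int_{B_R(X_0)}|y|^a u^2>0$ for every $R<1-|X_0|$, and your main paragraph then concludes directly; no separate treatment of $X_0\notin\Sigma$ (just use Proposition~\ref{Almgren.formula.fuori} there) and no ad hoc propagation across $\Sigma$ are needed.
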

 In \cite{MR882069} the authors stated the proof for analytic nonnegative weights and pointed out the validity for more general, even degenerate, weighted elliptic equations.\\
     %We want to highlight that since we are performing a local analysis of the nodal set, by the local invariance of $L_a$-harmonic function by translation and rotation on $\Sigma$, we can always suppose that $X_0 = 0 \in B_1\cap \Sigma$, and consequently obtain maximal radius $R$ can be taken equal to $1$.\\
     The previous result implies that the nodal set $\Gamma(u)$ has empty interior in $\R^{n+1}$. Hence, as a consequence of our blow-up analysis, we will prove a posteriori unique continuation property for the restriction of $\Gamma(u)$ on $\Sigma$. Using \eqref{doubling1} and integrating fro $0$ to $R$, one easily obtains
          \begin{corollary}
       Let $a \in (-1,1)$ and $u$ be an $L_a$-harmonic function on $B_1$. Then, for every $X_0 \in B_1\cap \Sigma$ given $R=1-\abs{X_0}$ we have
       $$
       \frac{1}{n+a+1+2N}\int_{\partial B_R(X_0)}{\abs{y}^a u^2\mathrm{d}\sigma} \leq \int_{B_R(X_0)}{\abs{y}^a u^2\mathrm{d}X} \leq \frac{1}{n+a+1}\int_{\partial B_R(X_0)}{\abs{y}^a u^2\mathrm{d}\sigma},
       $$
       where $N=N(X_0,u,R)$.
     \end{corollary}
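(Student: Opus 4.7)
The plan is to integrate the logarithmic derivative of $H(X_0,u,r)$ against the polar decomposition of the volume integral. Introduce $\varphi(r)=\int_{\partial B_r(X_0)}|y|^a u^2\,\mathrm{d}\sigma=r^{n+a}H(X_0,u,r)$ so that, by coarea,
$$
\int_{B_R(X_0)}|y|^a u^2\,\mathrm{d}X=\int_0^R\varphi(r)\,\mathrm{d}r.
$$
From the definition of $H$ and identity \eqref{doubling1} we immediately read off
$$
\frac{\mathrm{d}}{\mathrm{d}r}\log\varphi(r)=\frac{n+a}{r}+\frac{\mathrm{d}}{\mathrm{d}r}\log H(X_0,u,r)=\frac{n+a+2N(X_0,u,r)}{r},
$$
valid for a.e.\ $r\in(0,R)$ since $r\mapsto H(X_0,u,r)$ is absolutely continuous.

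For the upper bound I would simply use the non-negativity $N(X_0,u,r)\ge 0$, which implies $\frac{\mathrm{d}}{\mathrm{d}r}\log\varphi(r)\ge(n+a)/r$, i.e.\ $r\mapsto r^{-(n+a)}\varphi(r)$ is non-decreasing. Hence $\varphi(r)\le (r/R)^{n+a}\varphi(R)$ on $(0,R)$, and integrating in $r$ yields the right-hand estimate (up to the expected $R$-factor coming from $\int_0^R(r/R)^{n+a}\mathrm{d}r=R/(n+a+1)$).

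For the lower bound I would invoke the monotonicity of the Almgren frequency given by Proposition \ref{Almgren.formula}: $N(X_0,u,r)\le N(X_0,u,R)=N$ for every $r\in(0,R)$. Plugging this into the logarithmic derivative gives $\frac{\mathrm{d}}{\mathrm{d}r}\log\varphi(r)\le (n+a+2N)/r$, so that $r\mapsto r^{-(n+a+2N)}\varphi(r)$ is non-increasing, producing the reverse inequality $\varphi(r)\ge(r/R)^{n+a+2N}\varphi(R)$. One final integration $\int_0^R(r/R)^{n+a+2N}\mathrm{d}r=R/(n+a+1+2N)$ closes the estimate.

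No real obstacle is expected: the whole argument reduces to integrating the ODE-type identity \eqref{doubling1} against the trivially non-negative lower bound $N(r)\ge 0$ and against the just-established monotone upper bound $N(r)\le N$; the only mild technical point is ensuring absolute continuity of $\varphi$ so that the fundamental theorem of calculus applies, which is already guaranteed by the regularity of $H$ discussed right before Proposition \ref{Almgren.formula}.
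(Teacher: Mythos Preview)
Your argument is exactly the one the paper intends: it says only ``Using \eqref{doubling1} and integrating from $0$ to $R$, one easily obtains'' the corollary, which is precisely the computation you carry out with $\varphi(r)=r^{n+a}H(X_0,u,r)$ together with $N(X_0,u,r)\ge 0$ for the upper bound and $N(X_0,u,r)\le N(X_0,u,R)$ for the lower bound. Your remark about the missing $R$-factor is also well taken: what the integration actually yields is
\[
\frac{R}{n+a+1+2N}\int_{\partial B_R(X_0)}|y|^a u^2\,\mathrm{d}\sigma \ \le\ \int_{B_R(X_0)}|y|^a u^2\,\mathrm{d}X \ \le\ \frac{R}{n+a+1}\int_{\partial B_R(X_0)}|y|^a u^2\,\mathrm{d}\sigma,
\]
and the paper's stated form simply drops this $R$ (harmless for the upper bound since $R\le 1$, but the lower bound as printed should carry the factor $R$).
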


     The following result can be viewed as the degenerate-singular counterpart of  \cite[Theorem 1.6]{MR2351641}, which gives us a sufficient condition for the presence of the nodal set in the unitary ball.
     \begin{proposition}\label{import}
       Let $a \in (-1,1)$ and $u$ be an $L_a$-harmonic function on $B_1$. Then, for any $R\in (0,1)$ there exists $N_0=N_0(R)\ll 1$ such that the following holds:
       \begin{enumerate}
         \item if $N(0,u,1)\leq N_0$, then $u$ does not vanish in $B_R$;
         \item if $N(0,u,1) > N_0$, then
         $$
         N\left(X_0,u,\frac{1-R}{2}\right) \leq C N(0,u,1) \quad\mbox{for any }X_0 \in B_R \cap \Sigma,
         $$
         where $C$ is a positive constant depending only on $n,a$ and $R$.
       \end{enumerate}
       Moreover, the vanishing order, i.e. the Almgren frequency formula, of $u$ at any point of $B_R$ never exceeds $C N(0,u,1)$.
     \end{proposition}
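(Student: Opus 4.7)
The plan is to prove (2) first and derive (1) from it via a quantitative lower bound on vanishing orders. Fix $X_0 \in B_R \cap \Sigma$, set $r_* = (1-R)/2$, and choose $R_0 \in (2r_*, 1 - |X_0|]$; such an $R_0$ exists because $|X_0| \leq R$ implies $1 - |X_0| \geq 2 r_*$. Integrating \eqref{doubling1} on $(r_*, R_0)$ and using the monotonicity of $t \mapsto N(X_0, u, t)$ from Proposition \ref{Almgren.formula} gives
\begin{equation*}
2 N(X_0, u, r_*) \log 2 \leq 2 N(X_0, u, r_*) \log(R_0/r_*) \leq \log \frac{H(X_0, u, R_0)}{H(X_0, u, r_*)},
\end{equation*}
so (2) reduces to bounding this $H$-ratio by $C N(0, u, 1) + C$ with $C = C(n, a, R)$.

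The core is a \emph{shifted} $L^{2,a}$ doubling estimate. At the origin, combining the coarea identity $\int_{B_r(0)} |y|^a u^2 \,dX = \int_0^r t^{n+a} H(0, u, t) \,dt$ with the pointwise bound $H(0, u, t) \geq H(0, u, r)(t/r)^{2 N(0, u, 1)}$ for $t \leq r \leq 1$ (integrated from \eqref{doubling1}) and with $H(0, u, \cdot)$ nondecreasing yields the volumic doubling at the origin,
\begin{equation*}
\int_{B_{2r}(0)} |y|^a u^2 \,dX \leq 2^{\alpha N(0, u, 1) + \beta} \int_{B_r(0)} |y|^a u^2 \,dX, \qquad 0 < 2r \leq 1,
\end{equation*}
with $\alpha, \beta$ depending only on $n, a$. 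To transfer this to balls centered at $X_0 \in B_R$, I cover a segment from $0$ to $X_0$ by a chain of finitely many overlapping balls of radii comparable to $r_*$; the $A_2$ character of $|y|^a \,dX$ makes the weighted volumes of consecutive balls uniformly comparable, and iterating the origin-doubling along the chain (a number of iterations controlled by $R$ alone) produces the analogous shifted doubling
\begin{equation*}
\int_{B_{R_0 + \epsilon}(X_0)} |y|^a u^2 \,dX \leq 2^{\alpha' N(0, u, 1) + \beta'} \int_{B_{r_*/4}(X_0)} |y|^a u^2 \,dX
\end{equation*}
for a small $\epsilon > 0$ and $\alpha', \beta'$ depending only on $n, a, R$. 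Pushing this shifted doubling back through the coarea identity together with the monotonicity of $H(X_0, u, \cdot)$ converts the volumic ratio into the required $H$-ratio bound, yielding $N(X_0, u, r_*) \leq C N(0, u, 1) + C'$; choosing $N_0$ large enough that $C' \leq C N_0$ absorbs the additive term and finishes (2).

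For (1) I argue by contraposition: if $u$ vanishes at some $X_0 \in B_R$, then the vanishing order $N(X_0, u, 0^+)$ is bounded below by $\nu(a) := \min\{1, 1-a\}$ when $X_0 \in \Sigma$, this being the smallest positive degree of a homogeneous $L_a$-harmonic field (via the decomposition of Proposition \ref{even.odd} and the regularity in Proposition \ref{smooth}), and by $1$ when $X_0 \notin \Sigma$, where $L_a$ is uniformly elliptic. Combining with (2) and monotonicity,
\begin{equation*}
\nu(a) \leq N(X_0, u, 0^+) \leq N(X_0, u, r_*) \leq C N(0, u, 1) \leq C N_0,
\end{equation*}
so choosing $N_0 < \nu(a)/C$ precludes any nodal point in $B_R$. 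The ``moreover'' clause is the same chain of inequalities read forward.

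The main obstacle is the shifted doubling step: the direct ball-inclusion argument familiar from the uniformly elliptic setting breaks down when $R$ is close to $1$, because then $B_{r_*}(X_0)$ can avoid every ball centered at $0$ of comparable size. The chain/covering remedy bypasses this by relying only on the $A_2$ doubling of $|y|^a$, which is where the hypothesis $a \in (-1, 1)$ enters essentially.
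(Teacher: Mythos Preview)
The paper omits its own proof and refers to \cite{MR2351641,MR1090434}, so the comparison is against the standard Han--Lin argument. Your outline for part (2) is essentially that argument: integrate \eqref{doubling1} to reduce the frequency bound to an $H$-ratio, then control the ratio via a shifted volumic doubling obtained from a chain. One imprecision: the phrase ``iterating the origin-doubling along the chain'' is not quite right. Origin-doubling alone cannot be transported; what one actually iterates is a \emph{local} frequency bound. Along the chain $0=p_0,\dots,p_M=X_0$ one proves inductively that $N(p_j,u,r_*)\le C_j(N(0,u,1)+1)$: the bound at $p_{j-1}$ yields doubling at $p_{j-1}$ at scales $\le r_*$, and the overlap $B_{r_*-\delta}(p_{j-1})\subset B_{r_*}(p_j)$ then gives the lower bound on $\int_{B_{r_*}(p_j)}$ needed for the next step. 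The constants $C_j$ grow geometrically in $j$, but $M=M(R)$ is fixed, so the final constant depends only on $n,a,R$.

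Your derivation of (1) from (2), however, has a genuine gap. The chain argument produces $N(X_0,u,r_*)\le C\,N(0,u,1)+C'$ with an \emph{additive} constant $C'=C'(n,a,R)>0$; there is no mechanism in your proof making $C'<\nu(a)=\min\{1,1-a\}$, and in fact $C'$ grows without bound as $R\to 1$ (since it is exponential in the chain length). Consequently the inequality $\nu(a)\le C\,N(0,u,1)+C'$ gives no positive lower bound on $N(0,u,1)$ once $C'\ge\nu(a)$, and the two constraints you impose on $N_0$ (large enough to absorb $C'$ in (2), yet $<\nu(a)/C$ for (1)) are incompatible. The standard remedy is to prove (1) by a separate compactness argument: if $N(0,u_k,1)\to 0$ with $\|u_k\|_{L^{2,a}(\partial B_1)}=1$ and $u_k(X_k)=0$ for some $X_k\in\overline{B_R}$, then $\int_{B_1}|y|^a|\nabla u_k|^2\to 0$, so (up to a subsequence) $u_k\to c\neq 0$ in $C^0_{\loc}$ by the regularity in Proposition~\ref{smooth}, contradicting $u_k(X_k)=0$. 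Note also that (2) is stated only for $X_0\in\Sigma$, so your contrapositive would not cover nodal points off $\Sigma$ even if the additive-constant issue were absent; the compactness argument handles all of $B_R$ at once.
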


    This proof follows directly from the one in \cite{MR2351641,MR1090434}, so we omit it.

    \begin{corollary}\label{lower.N}
      Let $u$ be $L_a$-harmonic on $B_1$, then for every $X_0 \in \Gamma(u) \cap \Sigma$ we have
      \begin{equation}\label{eqlower.N}
        N(X_0,u,0^+)\geq \min\{1,1-a \}.
      \end{equation}
      More precisely
      \begin{itemize}
        \item if $u$ is symmetric with respect to $\Sigma$, we have $N(X_0,u,0^+)\geq 1$,
        \item if $u$ is antisymmetric with respect to $\Sigma$ we have $N(X_0,u,0^+)\geq 1-a$.
      \end{itemize}
    \end{corollary}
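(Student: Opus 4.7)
The plan is to reduce the claim to the two sharper statements on the symmetric and antisymmetric parts, exploiting the $L^{2,a}$ and $H^{1,a}$ orthogonality of such components under the reflection $y \mapsto -y$. Writing $u = u_e + u_o$ as in \eqref{decompos}, the fact that $u_e^2$, $u_o^2$, $|\nabla u_e|^2$ and $|\nabla u_o|^2$ are all symmetric in $y$, while $u_e u_o$ and $\langle \nabla u_e, \nabla u_o\rangle$ are antisymmetric in $y$, yields for $X_0 \in \Sigma$ the splittings
\[
H(X_0,u,r) = H(X_0,u_e,r) + H(X_0,u_o,r), \qquad E(X_0,u,r) = E(X_0,u_e,r) + E(X_0,u_o,r).
\]
Hence $N(X_0,u,r)$ is a convex combination of $N(X_0,u_e,r)$ and $N(X_0,u_o,r)$, so $N(X_0,u,r) \geq \min\{N(X_0,u_e,r),\, N(X_0,u_o,r)\}$, and letting $r \to 0^+$ together with the monotonicity from Proposition \ref{Almgren.formula} reduces the corollary to the bounds $N(X_0,u_e,0^+) \geq 1$ and $N(X_0,u_o,0^+) \geq 1-a$. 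Whenever either component is identically zero, Corollary \ref{unique.continuation} makes the remaining estimate trivial on that side, so I may assume both $u_e$ and $u_o$ nontrivial.

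For the symmetric bound, Proposition \ref{smooth} gives $u_e \in C^{1,\alpha}_\loc(B_1)$, and since $u_o \equiv 0$ on $\Sigma$ the assumption $u(X_0)=0$ forces $u_e(X_0)=0$. Therefore $|u_e(X)|^2 \leq C|X-X_0|^2$ near $X_0$, which integrated against $|y|^a\,d\sigma$ on $\partial B_r(X_0)$ gives $H(X_0,u_e,r) \leq C r^2$ for all small $r$. Setting $\gamma := N(X_0,u_e,0^+)$ and assuming by contradiction $\gamma < 1$, fix $\varepsilon>0$ with $\gamma+\varepsilon<1$ and, by the monotonicity of the frequency, some $r_0$ with $N(X_0,u_e,\rho) \leq \gamma + \varepsilon$ for $\rho \leq r_0$. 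Integrating \eqref{doubling1} between $r$ and $r_0$ produces $H(X_0,u_e,r) \geq H(X_0,u_e,r_0)\,(r/r_0)^{2(\gamma+\varepsilon)}$, which combined with the upper bound $Cr^2$ forces $r^{2(1-\gamma-\varepsilon)}$ to stay bounded below as $r\to 0^+$, a contradiction.

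For the antisymmetric bound I use the pointwise representation $u_o(x,y) = v(x,y)\,y|y|^{-a}$ of Proposition \ref{even.odd}, with $v$ symmetric and $L_{2-a}$-harmonic, and hence $v \in C^\infty_\loc(B_1)$ by Proposition \ref{smooth} in the range $2-a \in (1,3)$. A direct polar change of variables gives
\[
\int_{\partial B_r(X_0)} |y|^a \bigl(y|y|^{-a}\bigr)^2 \, d\sigma \;=\; c_{n,a}\, r^{n+2-a}, \qquad c_{n,a} := \int_{S^n} |\omega_{n+1}|^{2-a}\,d\omega,
\]
so that $H(X_0,u_o,r) = c_{n,a}\,v(X_0)^2\, r^{2(1-a)} + o(r^{2(1-a)})$ when $v(X_0) \neq 0$, and $H(X_0,u_o,r) \leq C r^{2(2-a)}$ when $v(X_0)=0$ by a first-order Taylor expansion of $v$ at $X_0$. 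In either case the same doubling-vs-upper-bound contradiction scheme as above, now with target exponent $1-a$ (respectively $2-a \geq 1-a$), forces $N(X_0,u_o,0^+) \geq 1-a$.

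The main obstacle I expect is the bookkeeping of the $y$-parity against the singular weight, especially in the antisymmetric part, where the multiplicative factor $y|y|^{-a}$ is precisely what produces the genuinely degenerate exponent $1-a$; the rest of the argument is a soft comparison between the growth of $H$ and the monotonicity of $N$, together with an orthogonality remark that decouples $u_e$ from $u_o$ at the level of $H$ and $E$.
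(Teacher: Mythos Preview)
Your argument is correct. The core mechanism---an upper bound on $H(X_0,\cdot,r)$ coming from regularity, pitted against the lower bound $H(r)\gtrsim r^{2(\gamma+\varepsilon)}$ obtained by integrating \eqref{doubling1}---is exactly the paper's approach. The paper simply runs this contradiction directly on $u$ using the optimal H\"older exponent $\alpha^*=\min\{1,1-a\}$ supplied by Proposition~\ref{smooth}, and then specialises to the symmetric and antisymmetric cases via the sharper exponents $1$ and $1-a$.

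Your route differs in two organisational respects. First, you reduce the general inequality to the two sharp cases by observing that $N(X_0,u,r)$ is a convex combination of $N(X_0,u_e,r)$ and $N(X_0,u_o,r)$; the paper instead treats the general case directly (implicitly invoking the same splitting only to justify the $C^{0,\alpha}$ regularity of $u$). Second, for the antisymmetric part you do not quote the $C^{0,1-a^-}$ regularity of $u_o$ but rather factor $u_o=v\,y|y|^{-a}$ via Proposition~\ref{even.odd} and read off the bound $H(X_0,u_o,r)\leq C r^{2(1-a)}$ from the boundedness (or vanishing) of the smooth function $v$ at $X_0$. This is a slightly more explicit computation that in effect reproves the relevant piece of Proposition~\ref{smooth}; it buys you the exact asymptotics of $H$ rather than only an upper bound, at the cost of one more reference. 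Both variants are equally short and yield the same conclusion.
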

    \begin{proof}
    This result follows by Proposition \ref{smooth}. More precisely, et $\alpha^* = \min\{1,1-a \}$ be the coefficient of optimal H\"older regularity for $L_a$-harmonic function, and suppose by contradiction that \eqref{eqlower.N} is not satisfied.\\ Since the limit $N(X_0,u,0^+)$ exists, we obtain the existence of $R>0$ and $\varepsilon>0$ such that $N(X_0,u,r) \leq \alpha^* -\varepsilon$ for all $0\leq r\leq R$. By \eqref{doubling1}, up to consider a smaller interval of $(0,R)$, we have
      $$
     \frac{d}{dr} \log H(X_0,u,r) = \frac{2}{r}N(X_0,u,r)\leq \frac{2}{r}(\alpha^*-\varepsilon).
      $$
      Integrating this inequality between $r$ and $R$ yields
      $$
      \frac{H(X_0,u,R)}{H(X_0,u,r)}\leq \left(\frac{R}{r} \right)^{2(\alpha^* - \varepsilon)}
      $$
      which, together with the fact that $u$ is $\alpha^*$-H\"older continuous and $u(X_0)=0$, implies
      $$
      C_1 r^{2(\alpha^*-\epsilon)}\leq H(X_0,u,r)\leq C_2 r^{2\alpha^*}.
      $$
      The contradiction follows for small value of $r>0$. If an addition we suppose that $u$ is symmetric or antisymmetric with respect to $\Sigma$, we obtain respectively that $u$ is Lipschitz continuous or $(1-a)$-H\"older continuous, and the lower bound on the Almgren frequency formula follows immediately.
    \end{proof}

%Now we wish to show how the different scaling of the operator on $\Sigma$ and on $\R^{n+1}\setminus \Sigma$ affects the Almgren \sus{quotient}.  Let $X_0 = (x_0,y_0) \in B_1$ and $r>0$ and consider $u$, an $L_a$-harmonic function in $B_1$. If we define $u_{X_0,r}(X)=u(X_0+rX)$ we directly see that
%$$
%\frac{\abs{y_0+ry}^a}{r^2}\left(\Delta_{X}u_{X_0,r} + \frac{ar}{y_0+ry}\partial_y u_{X_0,r}\right) = 0 \quad \mbox{for }X \in \frac{B_1 - X_0}{r},
%$$
%and
%$$
%\int_{B_r(X_0)}{L_a u \mathrm{d}X} =
%\begin{cases}
%r^{n-1+a}\displaystyle\int_{B_1}{\mbox{div}\left(\abs{y}^a \nabla u_{X_0,r}\right) \mathrm{d}X} & \mbox{if }X_0 \in \Sigma\\
%r^{n-1}\displaystyle\int_{B_1}{\mbox{div}\left(\abs{y_0+ry}^a \nabla u_{X_0,r}\right) \mathrm{d}X}& \mbox{if }X_0 \not\in \Sigma\\
%\end{cases}.
%$$
%Inspired by the different scalings of $L_a$ operator,
Now we focus our attention to the balls centred outside the characteristic manifold; for $X_0 \in B_1\setminus \Sigma$, i.e. $y_0 \neq 0$, and $r\in (0,\abs{y_0})$, let us consider, as usual,

\begin{align*}
 E(X_0,u,r) =& \frac{1}{r^{n-1}}\int_{B_r(X_0)}{\abs{y}^a\abs{\nabla u}^2 \mathrm{d}X},\\
 H(X_0,u,r) =& \frac{1}{r^{n}}\int_{\partial B_r(X_0)}{\abs{y}^a u^2 \mathrm{d}\sigma},
\end{align*}
and consequently the associated Almgren frequency
\begin{equation}\label{Almgren.a}
N(X_0,u,r)= \ddfrac{E(X_0,u,r)}{H(X_0,u,r)}= \ddfrac{r\int_{B_r(X_0)}{\abs{y}^a \abs{\nabla u}^2 \mathrm{d}X} }{\int_{\partial B_r(X_0)}{\abs{y}^a u^2 \mathrm{d}\sigma}}.
\end{equation}

Now we prove the validity of a perturbed monotonicity formula also in the case of balls centred outside $\Sigma$. Note that,  by  the results in \cite{MR833393,MR882069} we already know the validity of an Almgren type monotonicity formula associated with uniformly elliptic operator with Lipschitz leading coefficients.  For completeness, we provide below a slightly different monotonicity result which holds for every $X_0 \in \R^{n+1}$,  bypassing the change of coordinates introduced in \cite{MR833393, MR882069}.
 \begin{proposition}\label{Almgren.formula.fuori}
  Let $a\in(-1,1)$ and $u$ be an $L_a$-harmonic function on $B_1$. Then, for every $X_0\in B_1 \setminus \Sigma$ there exists $C>0$ such that $r\mapsto e^{Cr}N(X_0,u,r)$ is absolutely continuous and monotone nondecreasing on $(0,\abs{y_0})$.\\
  Hence, there always exists finite the limit
$$
N(X_0,u,0^+) = \lim_{r\to 0^+}N(X_0,u,r),
$$
which we will call as the \emph{Almgren type frequency formula}.
\end{proposition}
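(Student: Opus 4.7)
The plan is to adapt the classical Almgren monotonicity computation to the weighted setting, exploiting the fact that on $B_r(X_0)$ with $r\in(0,\abs{y_0})$ the weight $\omega(X)=\abs{y}^a$ is smooth and bounded above and below, with $\abs{\nabla\omega}/\omega\leq \abs{a}/(\abs{y_0}-r)$. Thus $L_a$ is uniformly elliptic on such a ball, and the novelty with respect to the case $X_0\in\Sigma$ is only the appearance of lower-order error terms proportional to $\nabla\omega$, which vanish when $a=0$ and force the exponential prefactor $e^{Cr}$.

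First, passing to spherical coordinates centered at $X_0$ and using the divergence theorem together with $L_a u=0$, I would derive
\[
H'(X_0,u,r) = \frac{2}{r}E(X_0,u,r) + r^{-n}\int_{\partial B_r(X_0)}(\nabla\omega\cdot\nu)\,u^2\,\mathrm{d}\sigma,
\]
with the second term dominated in absolute value by $\tfrac{\abs{a}}{\abs{y_0}-r}H(X_0,u,r)$. Second, multiplying the equation against $(X-X_0)\cdot\nabla u$ and integrating by parts twice (a Pohozaev-type identity) yields
\[
E'(X_0,u,r) = 2r^{1-n}\int_{\partial B_r(X_0)}\omega(\partial_\nu u)^2\,\mathrm{d}\sigma + r^{-n}\int_{B_r(X_0)}(X-X_0)\cdot\nabla\omega\,\abs{\nabla u}^2\,\mathrm{d}X,
\]
the error being bounded by $\tfrac{\abs{a}r}{\abs{y_0}-r}E(X_0,u,r)$. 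Combining these identities and applying Cauchy--Schwarz on $\partial B_r(X_0)$ to
\[
\Bigl(\int_{\partial B_r(X_0)}\omega u\partial_\nu u\,\mathrm{d}\sigma\Bigr)^{\!2} \leq \int_{\partial B_r(X_0)}\omega u^2\,\mathrm{d}\sigma \cdot \int_{\partial B_r(X_0)}\omega(\partial_\nu u)^2\,\mathrm{d}\sigma
\]
produces $\tfrac{d}{dr}\log N(X_0,u,r)\geq -C$ for a suitable $C>0$, whence $r\mapsto e^{Cr}N(X_0,u,r)$ is nondecreasing and nonnegative, yielding the existence of the limit $N(X_0,u,0^+)$.

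The main obstacle is the management of the error constant: the bound $\abs{a}/(\abs{y_0}-r)$ blows up as $r\to\abs{y_0}^-$, so a uniform $C$ on the entire interval $(0,\abs{y_0})$ does not come for free from the naive estimate. Since the proposition targets the limit as $r\to 0^+$, however, it suffices to secure a finite $C=C(a,\abs{y_0})$ on sub-intervals of the form $(0,R)$ with $R<\abs{y_0}$, which is guaranteed by the estimates above. Secondary technicalities---absolute continuity of $r\mapsto N(X_0,u,r)$ and strict positivity of $H$ ensuring $\log H$ is well defined---follow respectively from the local integrability of $E'$ and $H'$ and from the strong unique continuation property (Corollary~\ref{unique.continuation}).
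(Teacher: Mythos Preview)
Your proposal is correct and follows the same strategy as the paper: differentiate $E$ and $H$ (the latter via a Rellich--Pohozaev identity), isolate the error terms proportional to $\nabla\omega/\omega$, bound them by a quantity of order $\abs{a}/(\abs{y_0}-r)$, and apply Cauchy--Schwarz to obtain $(\log N)'\geq -C$. The only cosmetic differences are that the paper derives the explicit lower bound $(\log N)'\geq -2\abs{ay_0}/(y_0^2-r^2)$ and secures $H(X_0,u,r)>0$ on $(0,\abs{y_0})$ through a self-contained doubling/bootstrap argument (integrating $(\log H)'=\tfrac{2}{r}N+O(1)$ backward) rather than by invoking Corollary~\ref{unique.continuation}; both routes are valid, and you correctly flag that the constant degenerates as $r\to\abs{y_0}^-$, which the paper likewise only controls on compact sub-intervals.
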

\begin{proof}
%Since $u\in H^{1,a}_{\loc}(\R^n, \R^h)$, both $r\mapsto E(X_0,u,r)$ and $r\mapsto H(X_0,u,r)$ are locally absolutely continuous functions on $(0,+\infty)$, that is both $E'$ and $H'$ are $L^1_{\loc}(0,+\infty)$ as function of $r \in (0,+\infty)$.\\
%We have also that $u\in H^1_{\loc}(B_{R}(X_0),\R^h)$ for $R< \abs{x_{0,1}}$ and both $E'$ and $H'$ are $L^1_{\loc}(0,R)$.
The strategy of the proof is similar to the one for the case $X_0 \in B_1\cap \Sigma$. By \eqref{deriv.num} and \eqref{deriv.dem}, we already know that passing to the logarithmic derivatives we obtain from the Cauchy-Schwarz inequality
\begin{align*}
\frac{d}{dr}\log{N(X_0,u,r)} =& \frac{1}{r}+\ddfrac{\frac{d}{dr}\int_{B_r(X_0)}{\abs{y}^a \abs{\nabla u}^2 \mathrm{d}X} }{\int_{B_r(X_0)}{\abs{y}^a \abs{\nabla u}^2 \mathrm{d}X} } - \ddfrac{\frac{d}{dr}\int_{\partial B_r(X_0)}{\abs{y}^a u^2 \mathrm{d}\sigma}}{\int_{\partial B_r(X_0)}{\abs{y}^a u^2 \mathrm{d}\sigma}}\\
&\geq \frac{a  y_0\displaystyle\int_{\partial B_r(X_0)}{\frac{\abs{y}^a}{y} u^2\mathrm{d}\sigma}}{r\displaystyle\int_{\partial B_r(X_0)}{\abs{y}^a u^2\mathrm{d}\sigma}}-\frac{a  y_0\displaystyle\int_{B_r(X_0)}{\frac{\abs{y}^a}{y} \abs{\nabla u}^2\mathrm{d}\sigma}}{r\displaystyle\int_{B_r(X_0)}{\abs{y}^a \abs{\nabla u}^2\mathrm{d}\sigma}}
\end{align*}
for $r \in (r_1,r_2)$. This remainders reflect the presence of the weight $\omega(X)=\abs{y}^a$ and its homogeneity with respect to $\Sigma$. Now, for every $r \in (r_1,r_2)$
$$
\frac{d}{dr}\log N(X_0,u,r) \geq
\begin{cases}
\ddfrac{a y_0}{r}\left( \min_{\partial B_r(X_0)}{\frac{1}{y}} -  \max_{B_r(X_0)}{\frac{1}{y}}  \right) & \mbox{ if } a\cdot y_0>0\\
-\ddfrac{a y_0}{r}\left( \min_{B_r(X_0)}{\frac{1}{y}} -  \max_{\partial B_r(X_0)}{\frac{1}{y}}  \right) & \mbox{ if } a \cdot y_0<0
\end{cases}
$$
which is equivalent to
$$
\frac{d}{dr}\log N(X_0,u,r) \geq
-\ddfrac{2\abs{a y_0}}{y_0^2 -r^2}\geq -\ddfrac{2\abs{a y_0}}{y_0^2 -r^2_2} \quad \mbox{for }r \in (r_1,r_2),
$$
from which we learn that necessary $r_2<\abs{y_0}$. Consider now
$$
H(X_0,u,r)= \frac{1}{r^{n-1}}\int_{\partial B_r(X_0)}{\abs{y}^a u^2 \mathrm{d}\sigma}
$$
such that
$$
\frac{d}{dr}\log H(X_0,u,r)=\frac{2}{r}N(X_0,u,r) +\frac{a}{r}\left(1-y_0\frac{  \displaystyle\int_{\partial B_r(X_0)}{\frac{\abs{y}^a}{y} u^2\mathrm{d}\sigma}}{\displaystyle\int_{\partial B_r(X_0)}{\abs{y}^a u^2\mathrm{d}\sigma}}\right).
$$
Let us prove the existence of the limit of the Almgren frequency formula as $r\to 0^+$, so suppose by contradiction that $r_1 = \inf\{r>0: H(X_0,u,r)>0 \mbox{ on } (r,\abs{y_0})\}>0$ and consider $r \in (r_1,\abs{y_0})$. By the previous inequality, we have that there exists a positive constant $C>0$ such that
$$
r \mapsto e^{Cr} N(X_0,u,r)
$$
is monotone nondecreasing on $(r_1,\abs{y_0})$.
Then, let $r_1<r<2r_1 \leq \abs{y_0}$,  since
\begin{equation}\label{bound.R}
\frac{a  y_0\displaystyle\int_{\partial B_r(X_0)}{\frac{\abs{y}^a}{y} u^2\mathrm{d}\sigma}}{r\displaystyle\int_{\partial B_r(X_0)}{\abs{y}^a u^2\mathrm{d}\sigma}} \geq
\begin{cases}
\ddfrac{1}{r}\ddfrac{a y_0}{y_0+2r_1}& \mbox{if }a\cdot y_0 >0 \\
\ddfrac{1}{r}\ddfrac{a y_0}{y_0-2r_1}& \mbox{if }a\cdot y_0 <0
\end{cases}
\end{equation}
we have
\begin{equation}\label{doubling2}
\frac{d}{dr}\log H(X_0,u,r)%&= \frac{2}{r}N(X_0,u,r) - \frac{R_2(X_0,u,r)}{H(X_0,u,r)}\\
%&
\leq  \frac{2}{r} e^{2C r_1}N(X_0,u,2r_1) %- a\frac{r_1}{y_0+2r_1}\right)
\end{equation}
%where we used the monotonicity result and $\Lambda=\Lambda(X_0,a,2r_1)$ from the lower bound of \eqref{bound.R}.
By integrating \eqref{doubling2}, it follows
$$
\frac{H(X_0,u,2r_1)}{H(X_0,u,r)} \leq \left(\frac{2r_1}{r} \right)^{2 e^{2Cr_1}N(X_0,u,2r_1)}
$$
and since $r\mapsto H(X_0,u,r)$ is continuous, $H(X_0,u,r_1)>0$ and we seek the contradiction.%\\
%In this last part, let us prove that if $a\leq 0$, then $r_2=+\infty$, namely the monotonicity result holds for every $r>0$. Since $x \in B_r(X_0)$ and $a\leq 0$ we have
%\begin{equation}
%\frac{R_2(X_0,u,r)}{H(X_0,u,r)} \leq
%\begin{cases}
%\ddfrac{1}{r}\ddfrac{a y_0}{y_0-r}& \mbox{if }y_0 <0 \\
%\ddfrac{1}{r}\ddfrac{a y_0}{y_0+r}& \mbox{if } y_0 >0
%\end{cases}
%\end{equation}
%for every $r\in (0,+\infty)$, which directly implies that
%$$
%\frac{d}{dr}\log H(X_0,u,r) \geq 0,
%$$
%namely if $H(X_0,u,r_2/2)>0$ we have $H(X_0,u,r)>0$ for every $r>r_2$.
%

  \end{proof}
  As before, a simple consequence of the monotonicity result %and \eqref{doubling1}
  is the following comparison property (which, with $r_2 = 2r_1$, is the so called doubling property).
    \begin{corollary}\label{doubling.corollary}
      Let $u$ be an $L_a$-harmonic function in $B_1$. For every $X_0 \in B_1\setminus \Sigma$, there exists $C>0$ and $R>0$ such that
      $$
      H(X_0,u,r_2) \leq H(X_0,u,r_1) \left(\frac{r_2}{r_1} \right)^{2C}
      $$
      for every $0<r_1<r_2<R$.
    \end{corollary}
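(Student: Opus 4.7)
The plan is to derive the corollary directly from Proposition \ref{Almgren.formula.fuori} together with the explicit logarithmic derivative of $H$ computed inside its proof. Fix $X_0 \in B_1 \setminus \Sigma$ and choose some $R \in (0, |y_0|)$ bounded away from $|y_0|$, for instance $R := |y_0|/3$. The whole argument takes place for $r \in (0,R)$, where on each sphere $\partial B_r(X_0)$ we have $|y| \geq |y_0| - r \geq 2|y_0|/3$, so $y$ does not change sign and $|y_0/y|$ stays bounded by a constant $M = M(y_0)$.

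First I would bound the frequency on $(0,R)$. By Proposition \ref{Almgren.formula.fuori}, the map $r \mapsto e^{Cr} N(X_0,u,r)$ is nondecreasing on $(0,|y_0|)$ for some constant $C>0$; therefore for every $r \in (0,R)$,
$$
N(X_0,u,r) \leq e^{CR}\, N(X_0,u,R) =: N^*.
$$
Here $N(X_0,u,R)$ is a finite quantity depending on $X_0$ and $u$.

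Next I would use the identity (already recorded just above \eqref{doubling2} in the proof of Proposition \ref{Almgren.formula.fuori})
$$
\frac{d}{dr}\log H(X_0,u,r) = \frac{2}{r} N(X_0,u,r) + \frac{a}{r}\!\left( 1 - y_0 \frac{\int_{\partial B_r(X_0)} \tfrac{|y|^a}{y} u^2\, \mathrm{d}\sigma}{\int_{\partial B_r(X_0)} |y|^a u^2 \,\mathrm{d}\sigma}\right).
$$
On $\partial B_r(X_0)$ with $r<R$ the pointwise estimate $|y_0/y| \leq M$ yields
$$
\left| y_0 \frac{\int_{\partial B_r(X_0)} \tfrac{|y|^a}{y} u^2 \,\mathrm{d}\sigma}{\int_{\partial B_r(X_0)} |y|^a u^2 \,\mathrm{d}\sigma}\right| \leq M,
$$
so the remainder term is dominated by $|a|(1+M)/r$. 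Combining with the frequency bound gives
$$
\frac{d}{dr}\log H(X_0,u,r) \leq \frac{2 N^* + |a|(1+M)}{r} =: \frac{2C'}{r},
$$
for all $r \in (0,R)$ where $H(X_0,u,r)>0$ (by the argument at the end of the proof of Proposition \ref{Almgren.formula.fuori}, the set where $H$ vanishes is empty for $r \in (0,|y_0|)$ unless $u \equiv 0$ locally, a case which can be excluded).

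Finally, integrating the differential inequality between $r_1$ and $r_2$ with $0<r_1<r_2<R$ yields
$$
\log \frac{H(X_0,u,r_2)}{H(X_0,u,r_1)} \leq 2C' \log\!\frac{r_2}{r_1},
$$
which is exactly the claimed doubling inequality with constant $C=C'$. The only mildly delicate point is the choice of $R$: it must be small enough relative to $|y_0|$ so that both the pointwise control $|y_0/y|\leq M$ and the monotonicity of $e^{Cr}N(X_0,u,r)$ are available; taking $R = |y_0|/3$ (or any fixed fraction) suffices and makes all constants depend only on $n$, $a$, $y_0$ and the scale-invariant quantity $N(X_0,u,R)$.
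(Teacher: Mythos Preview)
Your argument is correct and follows exactly the route the paper indicates: use the logarithmic-derivative identity for $H$ recorded in the proof of Proposition~\ref{Almgren.formula.fuori}, bound the frequency via the perturbed monotonicity $r\mapsto e^{Cr}N(X_0,u,r)$, control the weight remainder by $|y_0/y|\le M$ on $\partial B_r(X_0)$, and integrate. The only cosmetic point is that $R$ should also satisfy $R<1-|X_0|$ so that $B_R(X_0)\subset B_1$; since the statement only asks for the existence of some $R>0$, taking $R=\min\{|y_0|/3,\,(1-|X_0|)/2\}$ covers this.
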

    %\begin{proof} \sus{This proof can be omitted}
%     Let us consider $R< \abs{y_0}$ and $0<r_1<r_2 \leq R$. In order to use the monotonicity of $r\mapsto N(X_0,u,r)$ in this case we need to fix $C,R>0$ depending on the distance of $X_0$ from $\Sigma$. \\By \eqref{bound.R} we obtain
%      $$
%      \frac{d}{dr} \log H(X_0,u,r) \leq \frac{2}{r} e^{2C R}N(X_0,u,R).
%      $$
%      Now, by integrating the previous inequality we obtain the claimed result.
%    \end{proof}
     Moreover, since the operator $L_a$ is uniformly elliptic outside $\Sigma$, we can apply the same reasoning using the Lipschitz optimal regularity in $\R^n\setminus \Sigma$ and prove
    \begin{corollary}
      Let $u$ be an $L_a$-harmonic function in $B_1$. For every $X_0 \in \Gamma(u) \setminus \Sigma$ we have $N(X_0,u,0^+)\geq 1$.
    \end{corollary}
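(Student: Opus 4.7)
The plan is to mirror the strategy of Corollary~\ref{lower.N}, replacing the H\"older-type upper bound on $H$ by the Lipschitz one available away from $\Sigma$, and carefully tracking the extra remainder appearing in the off-$\Sigma$ version of the log-derivative identity for $H$. Since $X_0=(x_0,y_0)$ has $y_0\neq 0$, the weight $\abs{y}^a$ is smooth and uniformly bounded above and below on any ball $B_r(X_0)$ with $r<\abs{y_0}/2$, so $L_a$ reduces there to a uniformly elliptic operator with smooth coefficients. Classical Schauder theory gives $u\in C^\infty$ on a neighborhood of $X_0$, and since $X_0\in\Gamma(u)$ we have $u(X_0)=0$, hence $|u(X)|\leq C\,\abs{X-X_0}$ locally. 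This immediately yields
$$
H(X_0,u,r)=\frac{1}{r^{n}}\int_{\partial B_r(X_0)}\abs{y}^a u^2\,\mathrm{d}\sigma \leq C_2\, r^2
$$
for every sufficiently small $r>0$.

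Next, suppose by contradiction that $N(X_0,u,0^+)<1$. Since the limit exists by Proposition~\ref{Almgren.formula.fuori}, there exist $\varepsilon>0$ and $R_0\in(0,\abs{y_0}/2)$ such that $N(X_0,u,r)\leq 1-\varepsilon$ for every $r\in(0,R_0)$. The identity derived in the proof of Proposition~\ref{Almgren.formula.fuori} gives
$$
\frac{d}{dr}\log H(X_0,u,r)=\frac{2}{r}N(X_0,u,r)+\frac{a}{r}\left(1-y_0\frac{\int_{\partial B_r(X_0)}\frac{\abs{y}^a}{y}u^2\,\mathrm{d}\sigma}{\int_{\partial B_r(X_0)}\abs{y}^a u^2\,\mathrm{d}\sigma}\right).
$$
The key quantitative observation is that the parenthesized factor is $O(r)$ as $r\to 0$: on $\partial B_r(X_0)$ we have $\bigl|\tfrac{1}{y}-\tfrac{1}{y_0}\bigr|\leq \tfrac{r}{\abs{y_0}(\abs{y_0}-r)}$, so the weighted average of $y^{-1}$ differs from $y_0^{-1}$ by $O(r)$, hence $y_0$ times this average equals $1+O(r)$. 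Thus the remainder term is bounded by an absolute constant $C'=C'(a,y_0)$ on $(0,R_0)$, and we obtain
$$
\frac{d}{dr}\log H(X_0,u,r)\leq \frac{2(1-\varepsilon)}{r}+C'.
$$

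Integrating between $r$ and $R_0$ and exponentiating produces a lower bound $H(X_0,u,r)\geq C_1\, r^{2(1-\varepsilon)}$ for every sufficiently small $r>0$, which combined with the Lipschitz upper bound $H(X_0,u,r)\leq C_2\, r^2$ yields $C_1\leq C_2\, r^{2\varepsilon}$, a contradiction as $r\to 0^+$. The only delicate point is the control of the remainder term coming from the non-invariance of $\abs{y}^a$ under translations centered off $\Sigma$; however, since that term is bounded (rather than integrable-singular) in $r$, its contribution to the integrated inequality is only an additive $O(1)$, which is swallowed into the multiplicative constant $C_1$ and does not affect the exponent, so the argument closes in complete analogy with Corollary~\ref{lower.N}.
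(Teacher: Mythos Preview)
Your proof is correct and follows essentially the same approach the paper indicates: mirror the argument of Corollary~\ref{lower.N} using Lipschitz regularity in place of H\"older, which is available since $L_a$ is uniformly elliptic with smooth coefficients away from $\Sigma$. The only extra work compared to the on-$\Sigma$ case is controlling the remainder in the log-derivative identity for $H$ coming from Proposition~\ref{Almgren.formula.fuori}; your observation that the parenthesized factor is $O(r)$ (so the whole remainder is bounded and contributes only to the multiplicative constant after integration) is exactly what is needed, and the paper leaves this implicit.
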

\section{Compactness of blow-up sequences }\label{section.blowup}
    All the arguments exposed in the following Sections involve a local analysis of the solutions, which will be performed via a blow-up procedure. Fix $a\in (-1,1)$ and $u$ an $L_a$-harmonic function in $B_1$; let us consider now $X_0 \in \Gamma(u)$ a point on the nodal set of $u$,  and define, for any $r_k \downarrow 0^+$ , the normalized  blow-up sequence as
    \begin{equation*}\label{blow.up.almgren}
    u_k(X)= \ddfrac{u(X_0 + r_k X)}{\sqrt{H(X_0,u,r_k)}}\quad \mbox{for } X \in X\in B_{X_0,r_k}=\frac{B_1 - X_0}{r_k},
    \end{equation*}
    such that $L_a u_k = 0$ and $\norm{u_k}{L^{2,a}(\partial B_1)}=1$. We stress that we will always apply a blow-up analysis centered at point of the nodal set $\Gamma(u)$ on the characteristic manifold $\Sigma$, since as we already remarked the local behaviour of $L_a$-harmonic function is known outside the characteristic manifold.\\

    In this Section we will prove the convergence of the blow-up sequence and we provide the classification of the blow-up limits, starting from the following general convergence result.
    \begin{theorem}\label{blowup.convergence}
    Let $a \in (-1,1)$ and $\alpha^*=\min\{1,1-a\}$. Given $X_0 \in \Gamma(u)\cap \Sigma$ and a blow-up sequence $u_k$ centered in $X_0$ and associated with some $r_k \downarrow 0^+$, there exists $p \in H^{1,a}_{\loc}(\R^n)$ such that, up to a subsequence, $u_k\to p$ in $C^{0,\alpha}_{\loc}(\R^n)$ for every $\alpha \in (0,\alpha^*)$ and strongly in $H^{1,a}_{\loc}(\R^n)$. In particular, the blow-up limit is an entire $L_a$-harmonic function, i.e.
    $$
    L_a p = 0\quad\mbox{ in }\R^{n+1}.
    $$
    \end{theorem}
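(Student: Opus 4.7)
The plan is to extract a convergent subsequence by combining the doubling property derived from Almgren's monotonicity formula with the interior regularity theory of $L_a$-harmonic functions, and then to pass to the limit in the weak formulation. Since $L_a$ is invariant under dilations centred on $\Sigma$, each rescaled $u_k$ is itself $L_a$-harmonic on the expanding ball $B_{X_0, r_k}=(B_1 - X_0)/r_k$, which eventually contains any fixed $B_R$. The normalisation $\|u_k\|_{L^{2,a}(\partial B_1)} = 1$ combined with the doubling property (Corollary \ref{doubling.corollary.Sigma}) applied to $u$ yields
\begin{equation*}
\frac{1}{R^{n+a}}\int_{\partial B_R}|y|^a u_k^2\,d\sigma \;=\; \frac{H(X_0, u, R r_k)}{H(X_0, u, r_k)} \;\leq\; R^{2N},
\end{equation*}
where $N=N(X_0, u, 1-|X_0|)$. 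Integrating in $R$ (or invoking \eqref{doublin.ball} directly) produces a uniform $L^{2,a}(B_R)$-bound on $u_k$, which the Caccioppoli inequality (Proposition \ref{caccioppoli}) upgrades to a uniform bound on $\int_{B_R}|y|^a|\nabla u_k|^2\,dX$ for every fixed $R$.

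Moser's iteration (Lemma \ref{moser}) applied to $u_k^2$ then produces a uniform $L^\infty(B_R)$ bound, while the interior Hölder regularity of Proposition \ref{smooth}, applied separately to the symmetric (smooth) and antisymmetric ($C^{0,\alpha}_\loc$ for every $\alpha<\alpha^*$) components of the decomposition \eqref{decompos}, supplies uniform $C^{0,\alpha}(\overline{B_R})$ bounds. Here it is essential that these interior estimates carry the scale-invariant form
\begin{equation*}
\|u_k\|_{C^{0,\alpha}(B_{R/2})} \leq C(R,n,a,\alpha)\,\|u_k\|_{L^{2,a}(B_R)}, \qquad \alpha \in (0,\alpha^*),
\end{equation*}
so that the constants remain stable along the rescaled sequence. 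A diagonal Arzelà-Ascoli argument then extracts a subsequence converging in $C^{0,\alpha'}_\loc(\R^{n+1})$ for every $\alpha'<\alpha^*$ to some $p$, while the uniform weighted Sobolev bound and the reflexivity of $H^{1,a}$ (a Muckenhoupt $A_2$ space) simultaneously give weak $H^{1,a}_\loc$-convergence to the same limit $p \in H^{1,a}_\loc(\R^{n+1})$.

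Passing to the limit in $\int |y|^a \nabla u_k \cdot \nabla \varphi\,dX = 0$ for $\varphi \in C^\infty_c(\R^{n+1})$ via the weak convergence of $\nabla u_k$ in $L^{2,a}_\loc$ gives $L_a p = 0$ in $\R^{n+1}$. To upgrade to strong $H^{1,a}_\loc$-convergence, fix a cut-off $\eta \in C^\infty_c(B_{2R})$ with $\eta \equiv 1$ on $B_R$, set $w_k = u_k - p$, and test the equation $L_a w_k = 0$ against $\eta^2 w_k \in H^{1,a}_0(B_{2R})$ to obtain
\begin{equation*}
\int|y|^a\eta^2|\nabla w_k|^2\,dX \;=\; -2\int|y|^a\eta\, w_k\, \nabla w_k\cdot\nabla\eta\,dX,
\end{equation*}
whose right-hand side tends to zero by Cauchy-Schwarz together with the uniform $H^{1,a}$-bound on $w_k$ and the strong $C^0_\loc$-convergence $u_k \to p$; hence $\nabla u_k \to \nabla p$ strongly in $L^{2,a}(B_R)$. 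The step I expect to be most delicate is precisely the uniform, scale-invariant Hölder estimate used in the second paragraph: $L_a$ is only uniformly elliptic away from $\Sigma$, and the constants must remain bounded across the characteristic manifold where the weight degenerates or blows up. This is exactly the point where the approximation and Liouville arguments of \cite{tesivita,sttv} invoked by Proposition \ref{smooth} are crucial, and once they are available the remaining compactness and passage-to-the-limit steps proceed by standard weighted Sobolev arguments.
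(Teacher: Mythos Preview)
Your proposal is correct and follows essentially the same approach as the paper: the paper organises the proof into Lemma \ref{bounded.H1} (the $H^{1,a}$ and $L^\infty$ bounds via the doubling property and Moser iteration), an unnamed lemma for strong $H^{1,a}$-convergence by testing the equation, and Lemma \ref{holder} for the uniform $C^{0,\alpha}$ estimate, all of which you reproduce with only cosmetic differences (you use Caccioppoli where the paper uses the Almgren quotient directly, and you supply explicit details for the strong-convergence step that the paper leaves as ``standard''). Your identification of the scale-invariant H\"older estimate across $\Sigma$ as the delicate point is exactly right and matches the paper's deferral to the Liouville arguments of \cite{tesivita,sttv} in Lemma \ref{holder}.
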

In particular, the previous result can be easily improved in the case of $L_a$-harmonic function purely symmetric with respect to $\Sigma$. More precisely, as suggested by Proposition \ref{smooth}, in the first case the convergence holds in $C^{1,\alpha}_\loc$ for every $\alpha \in (0,1)$, and this difference relies on the Liouville type theorems introduced in \cite{tesivita,sttv}.\\

The proof will be presented in a series of lemmata.
\begin{lemma}\label{bounded.H1}
  Let $X_0 \in \Gamma(u)\cap\Sigma$. For any given $R>0$, we have  $$\norm{u_{k}}{H^{1,a}(B_R)}\leq C\quad\mbox{and}\quad\norm{u_{k}}{L^\infty(\overline{B_R})}\leq C,
$$
where $C>0$ is a constant independent on $k>0$.
\end{lemma}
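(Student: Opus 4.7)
The plan is to exploit the Almgren-type monotonicity formula in order to control the normalized blow-up sequence uniformly on every ball $B_R$. The cornerstone of the argument is the scaling identity
$$H(0,u_k,\rho) \;=\; \frac{H(X_0,u,r_k\rho)}{H(X_0,u,r_k)},$$
which I would verify directly via the change of variables $Y=X_0+r_kX$: since $X_0\in\Sigma$ one has $|y_Y|=r_k|y_X|$, and the weight factor $r_k^a$ together with the surface-measure factor $r_k^n$ compensates exactly the $r_k^{-(n+a)}$ in the definition of $H$. In particular, this already gives $H(0,u_k,1)=1$, which is simply the normalization $\|u_k\|_{L^{2,a}(\partial B_1)}=1$.

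First I would establish a uniform upper bound on $H(0,u_k,\rho)$ for $\rho\in(0,R]$ and for $k$ large enough that $r_kR<1-|X_0|$. Set $N_0:=N(X_0,u,1-|X_0|)$; this is a finite constant independent of $k$. For $\rho\ge 1$ the doubling statement of Corollary \ref{doubling.corollary.Sigma} applied with $r_1=r_k$ and $r_2=r_k\rho$ yields $H(0,u_k,\rho)\le \rho^{2N_0}$. For $\rho\le 1$, the identity \eqref{doubling1} of Proposition \ref{Almgren.formula} combined with the nonnegativity of the Almgren frequency shows that $r\mapsto H(X_0,u,r)$ is monotone nondecreasing, whence $H(0,u_k,\rho)\le H(0,u_k,1)=1$. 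Thus $H(0,u_k,\rho)\le \max(1,\rho^{2N_0})$ uniformly in $k$.

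The $L^{2,a}$-bound on $B_R$ then follows by integration in polar coordinates,
$$\int_{B_R}|y|^a u_k^2\,\mathrm{d}X \;=\; \int_0^R \rho^{n+a}\,H(0,u_k,\rho)\,\mathrm{d}\rho \;\le\; C(R,N_0,n,a),$$
and the $H^{1,a}$-bound is an immediate application of Caccioppoli's inequality (Proposition \ref{caccioppoli}) with $\lambda=0$ on the concentric pair $B_R\subset B_{2R}$, which reduces the gradient estimate to the $L^{2,a}$-bound just obtained. Finally, since $u_k$ is $L_a$-harmonic, both $u_k$ and $-u_k$ are $L_a$-subharmonic, and applying Lemma \ref{moser} to each separately yields the $L^\infty$-estimate on $\overline{B_R}$ in terms of $\|u_k\|_{L^{2,a}(B_{2R})}$, hence a constant independent of $k$.

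The only substantive step is the uniform control of $H(0,u_k,\rho)$: this is precisely where the Almgren monotonicity formula is indispensable, since it converts the single-point information ``$N(X_0,u,1-|X_0|)<\infty$'' into a doubling estimate which is scale-invariant and therefore survives the blow-up. Once this is in hand, all remaining estimates are direct consequences of the Caccioppoli and Moser inequalities already recorded in Section \ref{sec.dec}.
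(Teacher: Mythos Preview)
Your proof is correct and follows essentially the same route as the paper: both arguments rest on the scaling identity for $H$ together with the doubling estimate coming from the Almgren monotonicity formula, and then finish with the Moser bound from Lemma~\ref{moser}. The only cosmetic difference is in the gradient step: the paper writes $\int_{B_R}|y|^a|\nabla u_k|^2 = N(0,u_k,R)\,R^{-1}\int_{\partial B_R}|y|^a u_k^2$ and bounds the frequency via $N(0,u_k,R)=N(X_0,u,Rr_k)\le N(X_0,u,R)$, whereas you obtain the same conclusion through Caccioppoli on $B_R\subset B_{2R}$---either device works equally well here.
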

\begin{proof}
Let us consider $\rho^2_k = H(X_0,u,r_k)$, then by definition of the blow-up sequence $u_k$  and Corollary \ref{doubling.corollary.Sigma} we obtain
 \begin{align*}
    \int_{\partial B_R}{\abs{y}^a u^2_k \mathrm{d}\sigma} & = \frac{1}{\rho_k^2}\int_{\partial B_{R}}{\abs{y}^a u^2(X_0 + r_k X) \mathrm{d}\sigma} \\
&=\frac{1}{\rho_k^2 r_k^{n+a}}\int_{\partial B_{Rr_k}(X_0)}{\abs{y}^a u^2 \mathrm{d}\sigma}\\
    &= R^{n+a}\frac{H(X_0,u,Rr_k)}{H(X_0,u,r_k) }\\
    & \leq R^{n+a}\left( \frac{R r_k}{r_k}\right)^{\!2\tilde{C}}
\end{align*}
which gives us $\norm{u_k}{L^{2,a}(\partial B_R)}^2\leq C(R) R^{n+a}$. Similarly
\begin{align}\label{c.overline}
\begin{aligned}
\int_{B_{R}}{\abs{y}^a \abs{\nabla u_k}^2 \mathrm{d}\sigma} & = N(0,u_k,R)\frac{1}{R}\int_{\partial B_R}{\abs{y}^a u^2_k \mathrm{d}\sigma}\\
&\leq C(R) R^{n-1+a} N(X_0,u,Rr_k)\\
& \leq C(R) R^{n-1+a} N(X_0,u,R)
\end{aligned}
\end{align}
where in the last inequality we used the monotonicity result of Proposition \ref{Almgren.formula}. Since the map $u_k$ is $L_a$- harmonic, by \cite[Lemma A.2]{ww} we obtain
\begin{align*}
  \sup_{\overline{B_{R/2}}} u_k &\leq C(n,s) \left(\frac{1}{R^{n+1+a}}\int_{B_R}{\abs{y}^a u_k^2\mathrm{d}X} \right)^{1/2} \\
  &\leq   C(n,s) \left(\frac{H(0,u_k,R)}{n+a+1}\right)^{1/2},
\end{align*}
where in the second inequality we used the monotonicity of $r\mapsto H(0,u_k,r)$ in $(0,R)$. Finally, the estimate follows directly from the one the $L^{2,a}(\partial B_R)$-norm.
\end{proof}
So far we have proved the existence of a nontrivial function $p\in H^{1,a}_{\loc}(\R^{n+1})\cap L^\infty_{\loc}(\R^{n+1})$ such that, up to a subsequence, we have $u_k \rightharpoonup p$ weakly in $H^{1,a}_{\loc}(\R^{n+1})$ and $L_a p = 0$ in $\mathcal{D}'(\R^{n+1})$. The next step is to prove that for $X_0 \in \Gamma(u) \cap \Sigma$ the convergence $u_k \to p$ is indeed strong in $H^{1,a}_{\loc}$ and in $C^{0,\alpha}_{\loc}$ for $\alpha \in (0,\alpha^*)$. This is a standard argument based on testing the weak formulation and using the fact that $p$ belongs to $H^{1,a}_{loc}(\mathbb R^n)$.

\begin{lemma}
  For every $R>0$, up to a subsequence, $u_k \to p$ strongly in $H^{1,a}(B_R)$.
\end{lemma}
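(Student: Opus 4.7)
The plan is to upgrade the weak $H^{1,a}_{\loc}$ convergence $u_k \rightharpoonup p$ (already established by the uniform bound in Lemma \ref{bounded.H1} together with Banach-Alaoglu) to strong convergence via a standard Caccioppoli-type identity together with the compact embedding of weighted Sobolev spaces. Fix $R>0$ and choose $R' > R$ together with a cutoff $\eta \in C_c^\infty(B_{R'})$ satisfying $0\le \eta \le 1$ and $\eta \equiv 1$ on $B_R$. The key observation is that, since $u_k$ and $p$ are both $L_a$-harmonic on $B_{R'}$ and $\eta^2(u_k - p) \in H^{1,a}_0(B_{R'})$ (obtained by approximating $u_k - p$ by smooth functions and multiplying by the compactly supported smooth function $\eta^2$), we may use this as a test function in the weak formulation of both equations.

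Subtracting the two resulting identities gives
\begin{equation*}
\int_{B_{R'}} |y|^a \nabla(u_k - p)\cdot \nabla\bigl(\eta^2(u_k-p)\bigr)\,\mathrm{d}X = 0,
\end{equation*}
which after expansion becomes
\begin{equation*}
\int_{B_{R'}} |y|^a \eta^2 |\nabla(u_k-p)|^2\,\mathrm{d}X = -2\int_{B_{R'}} |y|^a \eta(u_k-p)\,\nabla\eta\cdot\nabla(u_k-p)\,\mathrm{d}X.
\end{equation*}
Applying the Cauchy-Schwarz inequality to the right-hand side and absorbing one factor into the left-hand side, I obtain
\begin{equation*}
\int_{B_{R'}} |y|^a \eta^2 |\nabla(u_k-p)|^2\,\mathrm{d}X \;\le\; 4\,\|\nabla\eta\|_{L^\infty}^2 \int_{B_{R'}} |y|^a (u_k-p)^2\,\mathrm{d}X.
\end{equation*}

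To conclude, I would invoke the compact embedding $H^{1,a}(B_{R'}) \hookrightarrow\! L^{2,a}(B_{R'})$, which is valid because $|y|^a$ lies in the Muckenhoupt class $A_2$ (this is one of the six basic properties of $A_2$ weights recalled from \cite{fks} at the beginning of Section \ref{sec.dec}). Since the sequence $\{u_k\}$ is bounded in $H^{1,a}(B_{R'})$ and weakly converges to $p$, compactness yields $u_k \to p$ strongly in $L^{2,a}(B_{R'})$, so the right-hand side of the displayed inequality tends to zero. Restricting the integral on the left to $B_R$, where $\eta \equiv 1$, yields
\begin{equation*}
\int_{B_R} |y|^a |\nabla(u_k - p)|^2\,\mathrm{d}X \longrightarrow 0,
\end{equation*}
which together with the strong $L^{2,a}$ convergence gives $u_k \to p$ strongly in $H^{1,a}(B_R)$.

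The only subtle point, and in my view the main technical obstacle, is the justification that $\eta^2(u_k-p)$ is indeed an admissible test function in the weak formulation (Definition \ref{energy}), i.e.\ that it is approximable by $C^\infty_c(B_{R'})$ functions in the $H^{1,a}$-norm. This requires the density of smooth compactly supported functions in $H^{1,a}_0(B_{R'})$, which again rests on the $A_2$-Muckenhoupt character of $|y|^a$ and is part of the weighted Sobolev space machinery developed in \cite{fkj,fks}.
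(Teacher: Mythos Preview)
Your proof is correct and follows precisely the approach the paper indicates. The paper omits the argument entirely, writing only ``This is a standard argument based on testing the weak formulation and using the fact that $p$ belongs to $H^{1,a}_{\loc}(\R^n)$''; your Caccioppoli-with-cutoff computation combined with the compact embedding $H^{1,a}(B_{R'})\hookrightarrow L^{2,a}(B_{R'})$ for $A_2$ weights is exactly the standard execution of that sketch.
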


\begin{lemma}\label{holder}
  For every $R > 0$ there exists $C > 0$, independent of $k$, such that
  $$
  \left[ u_k \right]_{C^{0,\alpha}(B_R)} = \sup_{X_1,X_2 \in \overline{B}_R}\frac{\abs{u(X_1)-u(X_2)}}{\abs{X_1-X_2}^\alpha} \leq C
  $$
  for every $\alpha \in (0,\alpha^*)$.
\end{lemma}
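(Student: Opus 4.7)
The plan is to reduce everything to the decomposition introduced in Section \ref{sec.dec} and then invoke the (scale-invariant) regularity estimates of Proposition \ref{smooth}. Since $u_k$ is $L_a$-harmonic on the growing domains $B_{X_0,r_k}$, for each fixed $R>0$ and every $k$ large enough so that $B_{2R}\subset B_{X_0,r_k}$ we may split
\[
u_k = u_k^e + u_k^o, \qquad u_k^e(X)=\tfrac12\bigl(u_k(x,y)+u_k(x,-y)\bigr), \quad u_k^o(X)=\tfrac12\bigl(u_k(x,y)-u_k(x,-y)\bigr),
\]
and both $u_k^e, u_k^o$ remain $L_a$-harmonic on $B_{2R}$. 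By Lemma \ref{bounded.H1} the sequences $(u_k^e)$ and $(u_k^o)$ are uniformly bounded in $L^\infty(\overline{B_{2R}})$, so it suffices to produce a uniform $C^{0,\alpha}$ bound on $B_R$ for each piece separately.

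For the symmetric part, Proposition \ref{smooth} gives $u_k^e\in C^{1,\alpha}_\loc(B_{2R})$ for every $\alpha\in(0,1)$, and the quantitative form of that estimate (which follows from the De Giorgi--Nash--Moser theory of Fabes--Kenig--Serapioni \cite{fks} applied to the uniformly elliptic operator satisfied by symmetric $L_a$-harmonic functions after reflection across $\Sigma$) is scale invariant: one has
\[
\|u_k^e\|_{C^{0,\alpha}(B_R)} \le C(n,a,R,\alpha)\,\|u_k^e\|_{L^\infty(B_{2R})} \le C(n,a,R,\alpha),
\]
for every $\alpha\in(0,1)$. For the antisymmetric part I would use Proposition \ref{even.odd}: write $u_k^o(x,y)=v_k(x,y)\,y|y|^{-a}$ with $v_k$ symmetric and $L_{2-a}$-harmonic on $B_{2R}$. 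The same interior-regularity argument applied to $v_k$ (which, being symmetric, belongs to $C^\infty_\loc$) yields a uniform Lipschitz bound $\|\nabla v_k\|_{L^\infty(B_{3R/2})}\le C(n,a,R)$, controlled by $\|v_k\|_{L^\infty(B_{2R})}$, which in turn is controlled by $\|u_k^o\|_{L^\infty(B_{2R})}$ through \eqref{L2norm.even.odd} and Lemma \ref{moser}. An explicit computation of the Hölder seminorm of the product $v_k(X)\,y|y|^{-a}$ then gives, for every $\alpha\in(0,\alpha^*)$,
\[
[u_k^o]_{C^{0,\alpha}(B_R)} \le C(n,a,R,\alpha)\bigl(\|v_k\|_{L^\infty(B_{3R/2})}+\|\nabla v_k\|_{L^\infty(B_{3R/2})}\bigr) \le C(n,a,R,\alpha),
\]
using that the elementary factor $y\,|y|^{-a}$ is itself $C^{0,\alpha^*}$ with a universal seminorm on any bounded set.

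Adding the two contributions yields the desired uniform estimate $[u_k]_{C^{0,\alpha}(B_R)}\le C$ for every $\alpha\in(0,\alpha^*)$. The only delicate point, and the main obstacle, is making the Hölder constants for the auxiliary function $v_k$ (and hence for $u_k^e$) genuinely independent of $k$: this rests on the fact that the interior estimates of \cite{fks} for weighted elliptic operators in the Muckenhoupt $A_2$ class are scaling invariant, so the constants depend only on $n$, $a$, $R$ and $\alpha$, not on the particular solution or on the radius $r_k$. Combined with the earlier $L^\infty$ bound of Lemma \ref{bounded.H1}, this closes the argument.
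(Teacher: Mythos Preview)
Your argument is correct, but it follows a genuinely different route from the paper's. The paper does not use the symmetric/antisymmetric decomposition here at all; instead it invokes the contradiction--compactness scheme of \cite{tvz1,tvz2}: assuming the $C^{0,\alpha}$ seminorms of $(u_k)$ blow up, one performs a further rescaling normalized on the seminorm, extracts a limit which is an entire $L_a$-harmonic function with growth at most $\abs{X}^\alpha$, and then appeals to the Liouville theorems of \cite{tesivita,sttv} (which are precisely what single out the critical exponent $\alpha^*=\min\{1,1-a\}$) to conclude the limit must be constant, a contradiction. Your approach is more direct and makes the origin of $\alpha^*$ transparent: it is exactly the H\"older exponent of the explicit factor $y\abs{y}^{-a}$ multiplying the smooth symmetric function $v_k$. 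The price you pay is that you must apply Lemma~\ref{moser} and the quantitative interior $C^1$ estimates of Proposition~\ref{smooth} to $v_k$, which is $L_{2-a}$-harmonic with $2-a\in(1,3)$ lying outside the $A_2$ range $(-1,1)$ in which those statements are written; this is legitimate in view of the remark preceding Definition~\ref{energy} (the relevant estimates persist for all exponents $>-1$ in the symmetric setting), but you should say so explicitly rather than citing \cite{fks} for the $A_2$ class. The paper's Liouville route, by contrast, is more robust---it adapts without change to the variable-coefficient operators $L_a^A$ treated later---whereas your decomposition argument relies on the specific algebraic structure of the constant-coefficient $L_a$.
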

\begin{proof}
    The proof follows essentially the ideas of the similar results in \cite{tvz1, tvz2}:  the critical exponent $\alpha^*=\min\{1,1-a\}$ is related to a Liouville type theorem for $L_a$-harmonic function as given in \cite{tesivita,sttv}.
\end{proof}
If instead we consider the general case $X_0 \in \Gamma(u)$ we can prove, as a direct consequence of the blow-up analysis of \cite[Theorem 3.3]{MR2984134}, the following general result
    \begin{theorem}
     Let $u$ be an $L_a$-harmonic function in $B_1$ and $X_0 \in \Gamma(u)$ a point on its nodal set. Given the blow-up sequence $u_k$ centered in $X_0$ and associated with $r_k \downarrow 0^+$ we have these two cases:
     \begin{enumerate}
     \item if $X_0 \in \Sigma$, there exists $p \in H^{1,a}_{\loc}(\R^n)$ such that $u_k \to p$ in $C^{0,\alpha}_{\loc}(\R^n)$ for every $\alpha \in (0,\alpha^*)$ and strongly in $H^{1,a}_{\loc}(\R^n)$. In particular the blow-up limit solves
         $$
         -L_a p=0 \ \mbox{ in }\  \R^n.
         $$
     \item if $X_0 \not\in \Sigma$, there exists $p \in H^{1}_{\loc}(\R^n)$ such that $u_k \to p$ in $C^{0,\alpha}_{\loc}(\R^n)$ for every $\alpha \in (0,1)$ and strongly in $H^{1}_{\loc}(\R^n)$. In particular the blow-up limit solves
         $$
         -\Delta p=0 \ \mbox{ in }\  \R^n.
         $$
     \end{enumerate}
    \end{theorem}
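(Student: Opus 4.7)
My plan is to treat the two cases separately; the bulk of the work for Case (1) is already implicit in the lemmas preceding this theorem. Case (1), $X_0 \in \Sigma$, is effectively a reformulation of Theorem \ref{blowup.convergence}: Lemma \ref{bounded.H1} delivers uniform bounds on $\norm{u_k}{H^{1,a}(B_R)}$ and $\norm{u_k}{L^\infty(\overline{B_R})}$, starting from the doubling bound of Corollary \ref{doubling.corollary.Sigma} applied to $H(0,u_k,R) = R^{n+a} H(X_0,u,Rr_k)/H(X_0,u,r_k)$, the energy identity $E = N \cdot H$ combined with Proposition \ref{Almgren.formula}, and the weighted Moser inequality of Lemma \ref{moser}. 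Weak compactness in $H^{1,a}_{\loc}$ yields a limit $p$ that is $L_a$-harmonic on $\R^{n+1}$; strong convergence in $H^{1,a}_{\loc}$ follows by testing $L_a(u_k - p) = 0$ against $(u_k-p)\eta$ for a compactly supported cut-off $\eta$, exactly as in the cut-off argument of Proposition \ref{even.odd}; and Lemma \ref{holder} together with Arzel\`a--Ascoli promotes this to $C^{0,\alpha}_{\loc}$ convergence for every $\alpha<\alpha^*$.

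Case (2), $X_0 = (x_0,y_0)\notin\Sigma$, is where the argument requires genuinely new input. The key observation is that $L_a$ is uniformly elliptic near $X_0$: once $r_k < |y_0|/4$, on any fixed ball $B_R$ the weight $\omega_k(y) := |y_0 + r_k y|^a$ is bounded above and below by positive constants independent of $k$ and converges uniformly on compact subsets of $\R^{n+1}$ to the positive constant $|y_0|^a$. The rescaled $u_k$ satisfies $\mbox{div}(\omega_k \nabla u_k) = 0$ in $B_R$ with smooth, uniformly elliptic coefficients, so the analysis reduces to the classical uniformly elliptic compactness theory, the only subtlety being the uniform-in-$k$ control on the normalising factor $H(X_0,u,r_k)$ coming from the perturbed monotonicity.

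To obtain these bounds I would invoke Proposition \ref{Almgren.formula.fuori} and Corollary \ref{doubling.corollary} on the interval $(0,|y_0|/2)$: since $R r_k \to 0$, for $k$ large one has
$$
H(X_0,u,R r_k) \leq H(X_0,u,r_k)\, R^{2C}
$$
with a $k$-independent $C$, which rescales to $H(0,u_k,R) \leq R^{n+2C}$. Combining this with $E(0,u_k,R) = R \cdot N(0,u_k,R) \cdot H(0,u_k,R) \leq R^{n+2C+1} e^{C|y_0|} N(X_0,u,|y_0|/2)$ gives $H^1(B_R)$-bounds on $u_k$. Standard De~Giorgi--Nash--Moser estimates for uniformly elliptic equations with smooth coefficients upgrade these to uniform $L^\infty$ and $C^{0,\alpha}$ bounds for every $\alpha<1$. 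The limit $p$ then inherits $H^1_{\loc}$ regularity, the convergence is strong in $H^1_{\loc}$ (via the usual test-function argument applied to $(u_k-p)\eta$) and in $C^{0,\alpha}_{\loc}$ (via Arzel\`a--Ascoli), and since $\omega_k \to |y_0|^a$ uniformly on compact sets, passage to the limit in the weak formulation produces $-|y_0|^a \Delta p = 0$, i.e.\ $-\Delta p = 0$ on $\R^{n+1}$.

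The main obstacle, and the only step that truly demands care, is the bookkeeping in Case (2): one must check that the $e^{Cr}$-correction of Proposition \ref{Almgren.formula.fuori} and the exponent $C$ in Corollary \ref{doubling.corollary} remain bounded independently of $k$ once $k$ is large, and that the resulting estimates on $H$ and $E$ survive the normalisation by $H(X_0,u,r_k)$. This is essentially a matter of noting that $e^{C R r_k}\to 1$, so all the perturbative factors can be absorbed into a $k$-independent constant. Once this is dispatched the argument is entirely standard, and indeed the conclusion can alternatively be read off from \cite[Theorem~3.3]{MR2984134}, as the excerpt itself points out.
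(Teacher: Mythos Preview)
Your proposal is correct and follows essentially the same approach as the paper: Case~(1) is indeed Theorem~\ref{blowup.convergence}, proved via the preceding lemmata, and Case~(2) is exactly the uniformly elliptic reduction you describe, which the paper simply attributes to \cite[Theorem~3.3]{MR2984134} without spelling out the details. Your explicit treatment of Case~(2)---tracking the weight $\omega_k(y)=|y_0+r_ky|^a\to|y_0|^a$, invoking the perturbed doubling of Corollary~\ref{doubling.corollary}, and noting that the $e^{Cr}$-corrections are harmless once $r_k$ is small---is in fact more detailed than what the paper provides, and is the natural argument behind the cited reference.
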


Next we focus our attention on the blow-up limit itself in the more challenging case $X_0 \in \Gamma(u)\cap \Sigma$ and we investigate the connection between the value of the  limiting frequency and the local behaviour of the solution. More precisely, we have
\begin{proposition}\label{blow.N}
Let $X_0 \in \Gamma(u)\cap \Sigma$ and $p$ be a blow-up limit of $u$ centered in $X_0$, as previously defined. Then $N(0,p,r)=N(X_0,u,0^+)=:k$ for every $r>0$ and $p$ is $k$-homogeneous, i.e.
$$
p(X)= \abs{X}^k p\left(\frac{X}{\abs{X}} \right)\ \mathrm{for}\ \mathrm{every}\ X \in \R^{n+1}.
$$
\end{proposition}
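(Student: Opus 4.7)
The plan is to transfer the Almgren frequency under the rescaling, pass to the limit, and then exploit the equality case in the monotonicity formula of Proposition \ref{Almgren.formula}.

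First, I would record the scaling identity. A direct change of variables in the definitions of $E$ and $H$ shows that for every $r>0$ and every $k$ large enough so that $B_{r r_k}(X_0)\subset B_1$, there holds
\begin{equation*}
N(0,u_k,r)=N(X_0,u,r_k r).
\end{equation*}
Combined with the monotonicity of $\rho\mapsto N(X_0,u,\rho)$ and the existence of the limit $N(X_0,u,0^+)=k$ given by Proposition \ref{Almgren.formula}, we get
\begin{equation*}
\lim_{j\to\infty} N(0,u_{k_j},r) = \lim_{j\to\infty} N(X_0,u,r_{k_j} r) = k
\end{equation*}
for every $r>0$ along the subsequence realising the blow-up convergence.

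Next I would pass to the limit in $N(0,u_k,r)$ using the strong $H^{1,a}_{\loc}(\R^{n+1})$ convergence $u_k\to p$ established in Theorem \ref{blowup.convergence} (together with the $L^{2,a}$-convergence of traces on $\partial B_r$, which follows from the trace theory for Muckenhoupt weights, and the fact that the denominator $H(0,u_k,1)=1$ stays bounded away from zero on the scales considered thanks to the doubling property of Corollary \ref{doubling.corollary.Sigma}, propagated to generic radii by the same doubling estimate). One thus obtains
\begin{equation*}
N(0,p,r)=\lim_{j\to\infty} N(0,u_{k_j},r)=k\qquad\text{for every }r>0,
\end{equation*}
which is the first statement. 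The delicate point here is to make sure that $H(0,p,r)>0$ for all $r>0$, so that the quotient $N(0,p,r)$ is well defined: this is a consequence of the doubling inequality \eqref{doublin.ball} applied at each scale, which gives a uniform lower bound on $H(0,u_k,r)$ along the blow-up and hence on $H(0,p,r)$ in the limit, combined with the strong unique continuation property (Corollary \ref{unique.continuation}) that excludes $p\equiv 0$.

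Finally, to obtain $k$-homogeneity of $p$ I would inspect the proof of Proposition \ref{Almgren.formula}. The monotonicity of the frequency comes from a Cauchy--Schwarz inequality between $p$ and its radial derivative $\partial_\nu p$ on spheres centered at the origin, together with the Rellich--Pohozaev identity for $L_a$ at a point of $\Sigma$. Since $r\mapsto N(0,p,r)$ is constant, that Cauchy--Schwarz inequality must be an equality at almost every radius, forcing $\partial_\nu p(X)=\alpha(r)\, p(X)$ for $X\in\partial B_r$, for some scalar factor $\alpha(r)$. Using the logarithmic-derivative identity \eqref{doubling1}, which still holds for $p$, we identify
\begin{equation*}
\frac{d}{dr}\log H(0,p,r)=\frac{2k}{r},
\end{equation*}
so that $H(0,p,r)=H(0,p,1)\,r^{2k}$. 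Integrating the pointwise relation $\partial_\nu p=(k/r)\,p$ along rays through the origin (which is legitimate on $\R^{n+1}\setminus\Sigma$ where $p$ is smooth, and extends to $\Sigma$ by continuity, $p$ being at least $\alpha^*$-H\"older by Proposition \ref{smooth}) yields $p(X)=|X|^k\,p(X/|X|)$, concluding the proof. The main technical obstacle is justifying the limit $N(0,u_k,r)\to N(0,p,r)$: it requires both strong energy convergence and a non-degeneracy lower bound on the boundary $L^{2,a}$-norms, both of which follow from the tools established in Sections \ref{sec.Alm} and \ref{section.blowup}.
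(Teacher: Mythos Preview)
Your proposal is correct and follows essentially the same approach as the paper: the scaling identity $N(0,u_k,r)=N(X_0,u,r_k r)$, passage to the limit via the strong $H^{1,a}_{\loc}$ convergence of Theorem~\ref{blowup.convergence}, and then the equality case in the Cauchy--Schwarz step of the Almgren monotonicity formula to obtain $\partial_r p=(k/r)p$ via \eqref{doubling1}. Your additional care about the nondegeneracy of $H(0,p,r)$ and the integration along rays is a welcome clarification, but the core argument is the same.
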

\begin{proof}
  First of all we prove that $r \mapsto N(0,p,r)$ is constant. Let us observe that
$N(0,u_k,r)=N(X_0,u,r r_k)$ and that Theorem \ref{blowup.convergence} yields that $N(0,p,r)=\lim_k N(0,u_k,r)$. Similarly, for the right hand side we obtain $\lim_k N(X_0,u,r r_k )=N(X_0,u,0^+)$ by Proposition \ref{Almgren.formula}.\\
We now compute the derivative of $r\mapsto N(0,p,r)$, in order to prove that $p$ is $k$-homogeneous, where obviously $k=N(X_0,u,0^+)$ is the Almgren frequency formula. As in the proof of Proposition \ref{Almgren.formula}, we know that
$$
\frac{d}{dr} H(0,p,r)= \frac{2}{r^{n+a-1}}\int_{\partial B_r}{\abs{y}^a p\partial_r p \mathrm{d}\sigma}
$$
and by integration by parts that
$$
\frac{d}{dr} E(0,p,r)= \frac{1}{r^{n+a-1}}\int_{\partial B_r}{\abs{y}^a \left(\partial_r p \right)^2 \mathrm{d}\sigma}.
$$
Hence, these two equalities imply
$$
0=\frac{d}{dr}N(0,p,r)= \frac{2}{r^{2n+2a-2}}\frac{1}{H^2(0,p,r)}\left[\int_{\partial B_r}{\abs{y}^a p^2 \mathrm{d}\sigma} \int_{\partial B_r}{\abs{y}^a \abs{\partial_r p}^2 \mathrm{d}\sigma}  - \left(\int_{\partial B_r}{\abs{y}^a p\partial_r p \mathrm{d}\sigma} \right)^2\right]
$$
for $r>0$. This equality yields the existence of $C=C(r)>0$ such that $\partial_r p=C(r)p$ for every $r>0$. Using this fact in \eqref{doubling1} we infer
$$
2C(r)=\frac{\int_{\partial B_r}{\abs{y}^a p \partial_r p \mathrm{d}\sigma}}{\int_{\partial B_r}{\abs{y}^a p^2 \mathrm{d}\sigma}}=\frac{d}{dr}\log{H(0,p,r)}=\frac{2}{r}{N(0,p,r)}=\frac{2}{r}k
$$
and thus $C(r)=k/r$ and $p$ is $k$-homogenous as we claimed.
\end{proof}
In the final part of this Section we classify the possible values of the Almgren limiting frequencies on the restriction $\Gamma(u) \cap \Sigma$ and consequently the possible blow-up limits. This classification will be the key to  understand the structure and the stratification of the nodal set of $u$. A crucial consequence of our analysis is that
the blow-up process discriminates between the symmetric and antisymmetric cases (with respect to $\Sigma$).
\\

At this point, we already know that given an $L_a$-harmonic function $u$ on $B_1$, for every $X_0 \in \Gamma(u) \cap \Sigma$ and $r_k \downarrow 0^+$ we have, up to a subsequence, that
$$
u_k(X)= \ddfrac{u(X_0 + r_k X)}{\sqrt{H(X_0,u,r_k)}} \to p(X),
$$
where $p\in H^{1,a}_{\loc}(\R^{n+1})$ is a nonconstant entire $L_a$-harmonic function homogenous of order $k \in \R$ with $\norm{p}{L^{2,a}(\partial B_1)}=1$. In particular, by Proposition \ref{blow.N} we already know that $k= N(X_0,u,0^+)$.\\
Inspired by Proposition \ref{even.odd}, let us consider separately the case when $u$ is symmetric with respect to $\Sigma$ and the antisymmetric one.
\begin{lemma}\label{gap.even}
  Let $a \in (-1,1)$ and $u$ be an $L_a$-harmonic function symmetric with respect to $\Sigma$. Then, for every $X_0 \in \Gamma(u) \cap \Sigma$, we have $$N(X_0,u,0^+) \in 1+\N.$$
\end{lemma}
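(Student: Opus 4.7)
The plan is to combine the blow-up classification with the improved smoothness of symmetric $L_a$-harmonic functions from Proposition \ref{smooth}. Fix $X_0 \in \Gamma(u)\cap\Sigma$ and set $k:=N(X_0,u,0^+)$. By Corollary \ref{lower.N}, the symmetry of $u$ already yields $k\geq 1$, so everything reduces to showing that $k$ is an integer.

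First, I would extract a blow-up sequence
\[
u_j(X)=\frac{u(X_0+r_jX)}{\sqrt{H(X_0,u,r_j)}},\qquad r_j\downarrow 0^+.
\]
By Theorem \ref{blowup.convergence}, up to a subsequence $u_j\to p$ strongly in $H^{1,a}_{\loc}(\R^{n+1})$ with $L_a p=0$ on $\R^{n+1}$ and $\|p\|_{L^{2,a}(\partial B_1)}=1$, while Proposition \ref{blow.N} guarantees that $p$ is $k$-homogeneous. The symmetry of $u$ with respect to $\Sigma$, together with $X_0\in\Sigma$, is inherited by each $u_j$ and then passes to the limit $p$.

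Second, I would invoke the second bullet of Proposition \ref{smooth}: a symmetric $L_a$-harmonic function is of class $C^\infty_{\loc}$ on the whole of $\R^{n+1}$; in particular $p$ is smooth across the origin. At this point, the integrality of $k$ follows from the elementary principle that a nontrivial smooth $k$-homogeneous function on $\R^{n+1}$ (with $k\geq 0$) forces $k\in\N$. Explicitly, choose $X^*\neq 0$ with $p(X^*)\neq 0$ and consider $g(t):=p(tX^*)$, which lies in $C^\infty(\R)$; but for $t>0$ we have $g(t)=t^k p(X^*)$, and the $j$-th right derivative of $t\mapsto t^k$ at $0$ blows up whenever $k\notin \N$ and $j=\lceil k\rceil$. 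Hence $k\in\N$, and combined with $k\geq 1$ this gives $k\in 1+\N$.

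The core of the argument is essentially packaging two results already proved earlier, so the only possible obstacle is confirming that the blow-up limit inherits the two key properties used: symmetry (clear, since reflection commutes with the rescaling centered at a point of $\Sigma$) and $C^\infty$-smoothness at the origin (provided by Proposition \ref{smooth} applied to the entire, symmetric, $L_a$-harmonic blow-up). An alternative route would be to decompose $p$ via a spherical eigenfunction expansion of a weighted Laplace--Beltrami operator on $S^n$ and read off the integer homogeneities from a gap in its spectrum, but this is more cumbersome than exploiting the smoothness already available in the symmetric case.
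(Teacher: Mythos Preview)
Your argument is correct and actually takes a cleaner route than the paper's. Both proofs start the same way: blow up at $X_0$ to obtain a nontrivial, symmetric, $k$-homogeneous $L_a$-harmonic function $p$ with $k=N(X_0,u,0^+)\geq 1$ by Corollary~\ref{lower.N}. From here the paper iteratively differentiates $p$ in the tangential variables $x_1,\dots,x_n$ a total of $\lfloor k\rfloor$ times, producing a symmetric $L_a$-harmonic function that is $(k-\lfloor k\rfloor)$-homogeneous, and then invokes Corollary~\ref{lower.N} again to rule out a homogeneity in $(0,1)$. Your approach instead uses the $C^\infty$ regularity of symmetric $L_a$-harmonic functions (the ``Moreover'' clause of Proposition~\ref{smooth}, rather than its second bullet) to conclude directly that a smooth nontrivial $k$-homogeneous function must have $k\in\N$, via the one-variable observation that $t\mapsto t^k$ fails to be $C^{\lceil k\rceil}$ at the origin when $k\notin\N$. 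This sidesteps the mild bookkeeping the paper's argument requires (checking that not all $\lfloor k\rfloor$-order $x$-derivatives vanish, which reduces to the fact that a symmetric $L_a$-harmonic function depending only on $y$ is constant). Conversely, the paper's differentiation argument has the virtue of feeding the conclusion back into the Almgren lower bound rather than a separate smoothness principle, which makes the mechanism behind the integer gap more transparent.
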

\begin{proof}
Let $X_0 \in \Gamma(u) \cap \Sigma$ and $k=N(X_0,u,0^+)$ be the limiting frequency at $X_0$. For every $r_k \to 0^+$ we already know that, up to a subsequence, by Theorem \ref{blowup.convergence} and Proposition \ref{blow.N} that
$$
u_k(X)= \ddfrac{u(X_0 + r_k X)}{\sqrt{H(X_0,u,r_k)}} \to p(X),
$$
where $p$ is an $L_a$-harmonic $k$-homogenous function symmetric with respect to $\Sigma$.\\
Since, by Corollary \ref{lower.N} we already know that $k\geq 1$, let us suppose by contradiction that there exists an homogenous $L_a$-harmonic function of order $k >1$ such that $k \not\in \N$. Since for every $i = 1,\dots ,n$, we have
$$
L_a (\partial_{x_i} p ) = \partial_{x_i} L_a p =0,
$$
fixed $k= \lfloor k\rfloor$, by Euler's homogeneous function Theorem, we already know that any $k$-order partial derivative of $p$ with respect to the variables $x_1,\dots ,x_n$ must be an homogenous $L_a$-harmonic of order $\alpha=k- \lfloor k\rfloor \in (0,1)$. The contradiction follows from Proposition \ref{lower.N}, since the homogeneity of an homogenous function is equal to the Almgren frequency formula evaluated in the origin, hence in the symmetric case it must be greater or equal to 1.
\end{proof}
\begin{lemma}\label{gap.odd}
  Let $a \in (-1,1)$ and $u$ be an $L_a$-harmonic function antisymmetric with respect to $\Sigma$. Then, for every $X_0 \in \Gamma(u) \cap \Sigma$, we have
  $$N(X_0,u,0^+) \in 1-a+\N.$$
\end{lemma}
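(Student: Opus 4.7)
The plan is to mirror Lemma \ref{gap.even} but first convert the antisymmetric $L_a$-harmonic setting into a symmetric $L_{2-a}$-harmonic one via Proposition \ref{even.odd}; the key point is that symmetric solutions of $L_{2-a}$ are smooth across $\Sigma$, whereas antisymmetric $L_a$-solutions are not, and this smoothness will rigidify the admissible homogeneities.

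First I would pass to a blow-up limit at $X_0$: set $k = N(X_0,u,0^+)$ and pick any $r_j \downarrow 0^+$. By Theorem \ref{blowup.convergence} and Proposition \ref{blow.N}, a subsequence of the blow-ups converges in $H^{1,a}_\loc$ to a nontrivial entire $L_a$-harmonic function $p$ which is positively $k$-homogeneous, with $\norm{p}{L^{2,a}(\partial B_1)}=1$. Antisymmetry with respect to $\Sigma$ is manifestly preserved under scaling, so $p$ itself is antisymmetric with respect to $\Sigma$.

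Next I would apply Proposition \ref{even.odd} to $p$, obtaining a unique entire $L_{2-a}$-harmonic function $q$, symmetric with respect to $\Sigma$, with $p(x,y) = q(x,y)\, y\abs{y}^{-a}$ on $\R^{n+1}$. A one-line scaling check, using that $y\abs{y}^{-a}$ is positively $(1-a)$-homogeneous in $X=(x,y)$, shows that $q$ is positively homogeneous of degree $m := k-1+a$; moreover $q$ is nontrivial because $p$ is and the decomposition is injective. Since $2-a \in (1,+\infty) \subset (-1,+\infty)$, the second part of Proposition \ref{smooth} applies to the symmetric solution $q$ and yields $q \in C^\infty_\loc(\R^{n+1})$.

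The final step is the rigidity observation that a nontrivial $C^\infty$ function on $\R^{n+1}$ which is positively $m$-homogeneous must have $m \in \N$: indeed, choosing $X_\ast$ with $q(X_\ast) \neq 0$, the one-variable function $t \mapsto q(tX_\ast) = t^m q(X_\ast)$ must extend smoothly across $t=0$, which forces $m$ to be a nonnegative integer. Therefore $k = (1-a) + m \in 1-a+\N$, as claimed. I expect the only subtle point to be the legitimacy of invoking Proposition \ref{smooth} at the exponent $2-a \geq 1$, which lies outside the Muckenhoupt $A_2$ range that drives the preceding monotonicity theory; this is however exactly the range for which the $C^\infty$ statement in Proposition \ref{smooth} was formulated, so the argument closes without further work.
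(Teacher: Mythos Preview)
Your proof is correct. The overall strategy matches the paper's: blow up at $X_0$ to obtain an antisymmetric $k$-homogeneous $L_a$-harmonic $p$, then invoke Proposition \ref{even.odd} to write $p = q\,y\abs{y}^{-a}$ with $q$ symmetric, $L_{2-a}$-harmonic, and $(k-1+a)$-homogeneous. The two arguments diverge only in how one forces $m := k-1+a \in \N$.

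The paper splits into the cases $q(0)\neq 0$ (giving $m=0$) and $q(0)=0$, and in the latter case appeals to Lemma \ref{gap.even} applied to $q$ to conclude $N(0,q,0^+)\in 1+\N$. Your route is more direct: you use the $C^\infty$ regularity of symmetric $L_{2-a}$-harmonic functions (the $a\in(-1,+\infty)$ clause of Proposition \ref{smooth}) and then the elementary one-variable observation that a nontrivial smooth positively $m$-homogeneous function must have $m\in\N$. This avoids invoking Lemma \ref{gap.even} at the exponent $2-a\in(1,3)$, which lies outside the range in which that lemma is stated (even though its proof goes through there). In that sense your argument is slightly cleaner and more self-contained; the paper's version, on the other hand, makes explicit the parallel with Lemma \ref{gap.even} and the case split $q(0)\neq 0$ versus $q(0)=0$, which foreshadows the later stratification results.
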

\begin{proof}
As in the previous Lemma, let $X_0 \in \Gamma(u) \cap \Sigma$ and $k=N(X_0,u,0^+)$ be the limiting frequency at $X_0$. For every $r_k \to 0^+$ we already know that, up to a subsequence, we have by Theorem \ref{blowup.convergence} and Proposition \ref{blow.N} that $u_k \to p$ where $p$ is an $L_a$-harmonic $k$-homogenous function antisymmetric with respect to $\Sigma$.\\
By Proposition \ref{even.odd}, there exists $q \in H^{1,2-a}_\loc(\R^{n+1})$ and $L_{2-a}$-harmonic function symmetric with respect to $\Sigma$, such that $p=q y\abs{y}^{-a}$. Since $p$ is $k$-homogenous, we already know that $q$ must be $(k-1+a)$-homogenous, i.e.
$$
q(X)=p(X)y^{-1}\abs{y}^{a} = \abs{X}^{k-1+a}p\left(\frac{X}{\abs{X}}\right)\frac{y^{-1}\abs{y}^{a}}{\abs{X}^{-1+a}} = \abs{X}^{k-1+a}q\left(\frac{X}{\abs{X}}\right)
$$
for every $X \in \R^{n+1}$.\\
Obviously if $q(0)\neq 0$, then $k=1-a$ and $q$ is zero-homogenous, i.e. $q\equiv q(0)$ on $\R^{n+1}$, instead, if $q(0)=0$, by Lemma \ref{gap.even} we know that $N(0,q,0^+) \in 1+\N$ and consequently $k \in 2-a+\N$. Similarly, since these two cases correspond to $N(0,q,0^+)=0$ and $N(0,q,0^+)\in 1+\N$, the final result on $k$ can be formulated as $N(X_0,u,0^+)\in 1-a+\N$.
\end{proof}
The proof is a direct consequence of the previous Lemmas and we leave it to the readers.
\begin{proposition}\label{gap}
  Let $a \in (-1,1)$ and $u$ be an $L_a$-harmonic function. Given $X_0 \in \Gamma(u)\cap \Sigma$ and a blow-up sequence $u_k$ centered in $X_0$ and associated with some $r_k\downarrow 0^+$. Then the blow-up limit $p \in H^{1,a}_\loc(\R^{n+1})$ is either symmetric or antisymmetric with respect to $\Sigma$ and
  $$
  N(X_0,u,0^+) \in
  \begin{cases}
    1 + \N, & \mbox{if } $p$ \mbox{ is symmetric}, \\
    1-a+\N, & \mbox{if } $p$ \mbox{ is antisymmetric}.
  \end{cases}
  $$
\end{proposition}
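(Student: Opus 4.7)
The plan is to reduce the statement to Lemmas \ref{gap.even} and \ref{gap.odd} via the symmetric/antisymmetric decomposition of $u$ with respect to $\Sigma$. First I would write $u = u_e + u_o$ with both summands $L_a$-harmonic on $B_1$. Since $u_o$ is antisymmetric it automatically vanishes on $\Sigma$, so $u_o(X_0) = 0$, and then $u(X_0) = 0$ forces $u_e(X_0) = 0$ as well; thus $X_0 \in \Gamma(u_e) \cap \Gamma(u_o) \cap \Sigma$. By the strong unique continuation of Corollary \ref{unique.continuation}, neither component vanishes to infinite order unless it is identically zero, so Lemmas \ref{gap.even} and \ref{gap.odd} furnish well-defined Almgren limiting frequencies $k_e := N(X_0, u_e, 0^+) \in 1+\N$ and $k_o := N(X_0, u_o, 0^+) \in 1-a+\N$ (omitting whichever component is identically zero).

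Next, I would exploit that symmetric and antisymmetric functions (with respect to $y \mapsto -y$) are $L^2$-orthogonal against $\abs{y}^a\,d\sigma$ on every sphere centered on $\Sigma$, to obtain the splitting
\[
H(X_0, u, r) = H(X_0, u_e, r) + H(X_0, u_o, r),
\]
and similarly for the energies. For $a \neq 0$ the discrete sets $1+\N$ and $1-a+\N$ are disjoint subsets of $\R$ (their intersection would force $a \in \brd{Z} \cap (-1,1) = \{0\}$), so $k_e \neq k_o$. Applying Proposition \ref{Almgren.formula} separately to $u_e$ and $u_o$, I would derive two-sided estimates $H(X_0, u_\bullet, r) \asymp r^{2k_\bullet}$ as $r \to 0^+$, from which it follows that the component with smaller frequency dominates: if $k_e < k_o$, then $H(X_0, u_o, r)/H(X_0, u, r) \to 0$, so the normalized antisymmetric part vanishes in the blow-up limit and $p$ coincides with the blow-up of $u_e$, which is symmetric; Proposition \ref{blow.N} then identifies $N(X_0, u, 0^+)$ with $k_e \in 1+\N$. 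The case $k_o < k_e$ is symmetric and yields an antisymmetric $p$ with frequency in $1-a+\N$. For the degenerate value $a = 0$ the two frequency sets coincide and the parity dichotomy need not hold, but the frequency statement is trivially true.

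The main obstacle, as I see it, is the quantitative separation of the two scales $H(X_0, u_e, r)$ and $H(X_0, u_o, r)$ along the blow-up sequence $r_k$. The upper bound $H(X_0, u_\bullet, r) \leq C\, r^{2k_\bullet}$ follows immediately by integrating \eqref{doubling1} and using the monotonicity of $r \mapsto N(X_0, u_\bullet, r)$, but the matching lower bound $H(X_0, u_\bullet, r) \geq c\, r^{2k_\bullet + \varepsilon}$ for arbitrarily small $\varepsilon > 0$ requires the infimum characterization $N(X_0, u_\bullet, 0^+) = \inf_{r>0} N(X_0, u_\bullet, r)$ from Proposition \ref{Almgren.formula}, applied with $r$ small enough that $N$ exceeds $k_\bullet + \varepsilon/2$. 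Once this frequency gap is in place, the dominance argument becomes rigorous and, combined with the homogeneity provided by Proposition \ref{blow.N}, it yields both the parity of $p$ and the claimed membership of $N(X_0, u, 0^+)$.
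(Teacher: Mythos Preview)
Your approach is correct and reduces to the same two Lemmas the paper invokes, but it is more laborious than the argument the paper presumably has in mind. The paper's one-line proof (``direct consequence of the previous Lemmas'') is most naturally read as follows: decompose the \emph{blow-up limit} $p$ itself rather than $u$. Since $p$ is $L_a$-harmonic and $k$-homogeneous (Proposition~\ref{blow.N}), both $p_e$ and $p_o$ are $L_a$-harmonic and $k$-homogeneous; if $p_e\not\equiv 0$ then the argument of Lemma~\ref{gap.even} forces $k\in 1+\N$, while if $p_o\not\equiv 0$ the argument of Lemma~\ref{gap.odd} forces $k\in 1-a+\N$. For $a\neq 0$ these sets are disjoint, so exactly one of $p_e,p_o$ survives and $p$ has pure parity. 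This bypasses entirely your scale-separation step (the two-sided bounds on $H(X_0,u_\bullet,r)$ and the dominance argument), which is correct but unnecessary. Your route, on the other hand, yields the extra information $N(X_0,u,0^+)=\min\{k_e,k_o\}$, which the shorter argument does not give directly.

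Two minor remarks. In your last paragraph the phrase ``$N$ exceeds $k_\bullet+\varepsilon/2$'' is inverted: you need $N(X_0,u_\bullet,r)\le k_\bullet+\varepsilon/2$ for small $r$ (which follows from monotonicity and $N(0^+)=k_\bullet$) to get the lower bound on $H$. And your observation about $a=0$ is correct and worth flagging: for the ordinary Laplacian a blow-up such as $p(x,y)=x_1+y$ is neither symmetric nor antisymmetric, so the parity dichotomy genuinely fails there even though the frequency assertion remains (trivially) true.
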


In order to understand the local behaviour of our solution,  let's  now proceed with the explicit construction of homogeneous $L_a$-harmonic functions, first examining the  symmetric  ones.
\begin{lemma}\label{yornot}
  Let $p \in H^{1,a}_\loc(\R^{n+1})$ be a nonconstant homogeneous $L_a$-harmonic function, symmetric with respect to $\Sigma$. Then $p$ does not depend on the variable $y$ if and only if it is harmonic in the variable $(x_1,\dots,x_n)\in\R^n$.
\end{lemma}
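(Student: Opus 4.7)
\emph{Strategy.} Both implications are short and I would work at the level of the weak formulation, which handles the potentially singular behaviour of $\partial_y p$ near $\Sigma$ uniformly in $a\in(-1,1)$, without case-splitting on the sign of $a$.

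\textbf{Necessity.} If $\partial_y p\equiv 0$, the weighted identity $\int \abs{y}^a\langle\nabla p,\nabla\varphi\rangle\,dX=0$ reduces to $\int \abs{y}^a\nabla_x p\cdot\nabla_x\varphi\,dX=0$ for every $\varphi\in C_c^\infty(\R^{n+1})$. Testing against separated functions $\varphi(x,y)=\psi(x)\eta(y)$ with $\int_{\R}\abs{y}^a\eta(y)\,dy\neq 0$ (which is legal since $a>-1$ makes $\abs{y}^a\in L^1_\loc(\R)$) factors the $y$-weight out and yields $\int_{\R^n}\nabla_x p\cdot\nabla_x\psi\,dx=0$ for every $\psi\in C_c^\infty(\R^n)$, hence $\Delta_x p=0$.

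\textbf{Sufficiency.} Assume $\Delta_x p\equiv 0$. Since $L_a$ is uniformly elliptic on $\R^{n+1}\setminus\Sigma$, the function $p$ is a classical solution there of
\[
\abs{y}^a\Delta_x p+\partial_y\bigl(\abs{y}^a\partial_y p\bigr)=0,
\]
so $\partial_y\bigl(\abs{y}^a\partial_y p\bigr)=0$ on each of the half-spaces $\{y>0\}$ and $\{y<0\}$. Integrating once in $y$, the product $\abs{y}^a\partial_y p(x,y)$ is independent of $y$ on each side; the symmetry of $p$ (equivalently, antisymmetry of $\partial_y p$) forces these two constants to be opposite, so $\abs{y}^a\partial_y p(x,y)=c(x)\,\mathrm{sign}(y)$ for some function $c\colon\R^n\to\R$. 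I would then plug this identity back into the global weak equation: the $x$-gradient contribution $\int \abs{y}^a\nabla_x p\cdot\nabla_x\varphi\,dX$ vanishes after integrating by parts in $x$ and using $\Delta_x p=0$, while the $y$-gradient contribution becomes
\[
\int_{\R^{n+1}} c(x)\,\mathrm{sign}(y)\,\partial_y\varphi\,dX=-2\int_{\R^n} c(x)\,\varphi(x,0)\,dx,
\]
since $\partial_y(\mathrm{sign}(y))=2\delta_0(y)$ distributionally. The arbitrariness of $\varphi\in C_c^\infty(\R^{n+1})$ then forces $c\equiv 0$, whence $\partial_y p\equiv 0$ and $p$ is independent of $y$.

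\textbf{Main obstacle.} The delicate point is legitimizing the above global integration by parts in the weighted Sobolev space $H^{1,a}_\loc$: when $a\in(-1,0)$ the pointwise derivative $\partial_y p$ may blow up like $\abs{y}^{-a}$ across $\Sigma$, so a naive pointwise computation is illegal. The key observation rescuing the argument is that the \emph{relevant} weighted combination $\abs{y}^a\partial_y p$ is constant (hence bounded) on each half-space, so its distributional $y$-derivative reduces to a genuine surface contribution on $\Sigma$. This can be made rigorous using the same cutoff $\eta_\delta$ across $\Sigma$ already employed in the proof of Proposition~\ref{even.odd}, with the error terms on $\{\abs{y}<\delta\}$ vanishing as $\delta\to 0^+$ precisely because $a>-1$ makes the weight integrable there.
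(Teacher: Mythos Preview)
The paper deems this lemma trivial and omits its proof entirely, so there is no argument to compare against. Your proof is correct under the substantive reading of the converse direction, namely that $\Delta_x p\equiv 0$ forces $y$-independence; the computation $\partial_y(\abs{y}^a\partial_y p)=0$ on each half-space, the identification $\abs{y}^a\partial_y p=c(x)\,\mathrm{sign}(y)$ via symmetry, and the distributional test yielding $c\equiv 0$ are all sound, and your remark about the $\eta_\delta$ cutoff handling the singular case $a\in(-1,0)$ is the right justification.

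One comment on context: the very next result in the paper (Lemma~\ref{caff.silv.salsa}) shows that such a $p$ is actually a polynomial, and since $p$ is symmetric it takes the form $p(x,y)=\sum_{j\ge 0} p_j(x)\,y^{2j}$. Under $\Delta_x p=0$ the equation $L_a p=0$ reduces to $\sum_{j\ge 1} 2j(2j-1+a)\,p_j(x)\,y^{2j-2}=0$, and since $2j-1+a>0$ for $a>-1$ this kills every $p_j$ with $j\ge 1$. This is presumably the one-line computation the authors had in mind when calling the proof trivial; your weak-formulation route is slightly longer but has the virtue of not relying on the polynomial structure.
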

The proof is trivial and the main consequence is that for every $k\in 1+\N$ an homogenous harmonic function in the variable $x_1, \dots,x_n$ of order $k$ is an admissible blow-up limit. For this reason, let us concentrate our attention on the case of blow-up limits that depend on the variable $y$.
\begin{lemma}\cite[Lemma 2.7]{MR2367025}\label{caff.silv.salsa}
Let $p \in H^{1,a}_{\loc}(\R^{n+1})$ be an entire $L_a$-harmonic function symmetric with respect to $\Sigma$, such that
$$
\abs{p(X)}\leq C \left(1+ \abs{X}^k\right) \quad \mathrm{in}\ \R^{n+1},
$$
for some $k \in \N$. Then $p$ is a polynomial.% and for $k\leq 1$ we have $p(x,y)= bx+c$, for $(x,y)\in \R^{n+1}$.
\end{lemma}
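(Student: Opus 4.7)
The plan is a classical Liouville-type argument adapted to the weight $|y|^{a}$: control the tangential derivatives of $p$ via Caccioppoli and Moser, show that sufficiently high-order $x$-derivatives must vanish, and then resolve the remaining ODE in $y$.

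Since $a\in(-1,1)$ and $p$ is symmetric with respect to $\Sigma$, Proposition \ref{smooth} yields $p\in C^{\infty}_{\loc}(\R^{n+1})$, and iterating the subsequent proposition on partial derivatives shows that, for every multi-index $\alpha\in\N^{n}$ in the tangential variables $x=(x_1,\dots,x_n)$, the derivative $D^{\alpha}_x p$ is again $L_a$-harmonic and symmetric with respect to $\Sigma$. Applying Proposition \ref{caccioppoli} successively to $D^{\beta}_x p$ for $|\beta|=0,1,\dots,|\alpha|-1$ with slowly shrinking radii, I would obtain, for any $X_0\in\Sigma$ and $r>0$,
$$
\int_{B_r(X_0)} |y|^a\, |D^{\alpha}_x p|^2\, dX \;\le\; \frac{C_\alpha}{r^{2|\alpha|}} \int_{B_{3r/2}(X_0)} |y|^a\, p^2\, dX.
$$
The polynomial growth hypothesis $|p(X)|\le C(1+|X|^k)$ (and the choice $r\ge 2|X_0|$) bounds the right-hand side by $C\,r^{n+1+a+2k}$. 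Combining this with Lemma \ref{moser} applied to the $L_a$-subharmonic function $(D^{\alpha}_x p)^2$ produces the pointwise estimate
$$
\|D^{\alpha}_x p\|_{L^{\infty}(B_{r/2}(X_0))} \;\le\; C_\alpha\, r^{k-|\alpha|}.
$$
For $|\alpha|=k+1$ and any fixed $X=(x,y)\in\R^{n+1}$ I then pick $X_0=(x,0)\in\Sigma$ and $r\ge 2|y|$, so that $X\in B_{r/2}(X_0)$; sending $r\to\infty$ forces $D^{\alpha}_x p(X)=0$ for every tangential multi-index of length $k+1$.

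Consequently, for each fixed $y\in\R$ the map $x\mapsto p(x,y)$ is a polynomial of degree at most $k$, so
$$
p(x,y)\;=\;\sum_{|\alpha|\le k} q_\alpha(y)\, x^\alpha,
$$
with $q_\alpha$ smooth and even in $y$. Substituting this expansion into $L_a p=0$ and equating the coefficient of $x^\alpha$ yields a triangular system of ODEs
$$
\bigl(|y|^a q_\alpha'(y)\bigr)' \;=\; -\,|y|^a \sum_{|\beta|=|\alpha|+2} c_{\alpha\beta}\, q_\beta(y),
$$
which I would solve by downward induction on $|\alpha|$. At the top levels $|\alpha|\in\{k,k-1\}$ the equation reduces to $(|y|^a q_\alpha')'=0$, whose only even, smooth solution on $\R$ is a constant, as the spurious branch $\int|y|^{-a}\mathop{\mathrm{sgn}}y\,dy$ is ruled out by evenness and smoothness. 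At each lower level the right-hand side is a polynomial in $y$ by the inductive hypothesis, and the corresponding ODE again admits an even, smooth polynomial solution.

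The main technical subtlety lies precisely in this last ODE step: at every level of the downward induction a small Frobenius-type analysis near $y=0$ is needed to discard the non-polynomial branches, and these branches are exactly of the form $y|y|^{-a}\cdot(\text{polynomial})$. This is where the parity decomposition of Proposition \ref{even.odd} implicitly reappears, since the branches being ruled out by evenness and smoothness are precisely the antisymmetric $L_{2-a}$-harmonic factors that would violate the symmetry hypothesis on $p$.
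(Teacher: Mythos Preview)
The paper does not prove this lemma; it is quoted from \cite[Lemma~2.7]{MR2367025} without argument, so there is no in-paper proof to compare against.

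Your argument is correct in substance. Two small technical remarks. First, when you invoke Lemma~\ref{moser}, apply it to the $L_a$-harmonic function $D^\alpha_x p$ itself rather than to $(D^\alpha_x p)^2$: the lemma already yields an $L^\infty$--$L^{2,a}$ bound for subsolutions, whereas plugging in the square would give you an $L^{4,a}$ norm on the right. Second, in the ODE step the two global weak homogeneous solutions of $(|y|^a q')'=0$ on $\R$ (those with $|y|^a q'$ continuous across $0$) are the constants and $y|y|^{-a}$; the latter is odd, so evenness alone already eliminates it, and smoothness is then automatic rather than an additional input. The downward induction then goes through exactly as you describe: at each level the forcing term is $|y|^a$ times an even polynomial, a polynomial particular solution exists because $(2m+2)(2m+1+a)\neq 0$ for $a\in(-1,1)$ and $m\ge 0$, and the homogeneous correction is again forced to be a constant by parity. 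This yields that every $q_\alpha$ is a polynomial, hence so is $p$.
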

In order to give an explicit expression of the blow-up limits, we start with the case $n+1=2$, and we remark that if $p$ is a $k$-homogenous $L_a$-harmonic function, then for every $i=1,\dots,n$ the functions $\partial_{x_i}u$ are $(k-1)$-homogeneous $L_a$-harmonic function and $\partial^2_{yy} u +ay^{-1}\partial_y u$ is a $(k-2)$-homogenous $L_a$-harmonic function. More precisely a straightforward computation leads to
\begin{lemma}\label{iteration}
  Let $p\in H^{1,a}_{\loc}(\R^{n+1})$ be an $L_a$-harmonic homogenous polynomial of degree $k\geq2$ symmetric with respect to $\Sigma$. Then $p$ is $L_a$-harmonic if and only if $\partial_{x_i} p$ and $\partial^2_{yy}p + a y^{-1}\partial_y p$ are $L_a$-harmonic, for every $i=1,\dots,n$.
\end{lemma}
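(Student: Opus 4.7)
The plan is to leverage the simple algebraic structure of $L_a$. Writing
\[
L_a = \Delta_x + \mathcal{L}_y, \qquad \mathcal{L}_y := \partial^2_{yy} + a y^{-1}\partial_y,
\]
I observe that the coefficients of $L_a$ depend only on $y$. Two commutator identities then follow immediately and drive the whole argument: $[\partial_{x_i}, L_a] = 0$ for every $i=1,\dots,n$, since the coefficients are $x$-independent, and $[\Delta_x, \mathcal{L}_y] = 0$, since these operators act in complementary variables. Together these give $[\Delta_x, L_a] = 0 = [\mathcal{L}_y, L_a]$.

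For the forward implication, these commutators suffice. Applying $\partial_{x_i}$ to $L_a p = 0$ yields $L_a(\partial_{x_i} p) = \partial_{x_i}(L_a p) = 0$. For the $y$-part, I would use the equation itself to rewrite $\mathcal{L}_y p = -\Delta_x p$ and then push $L_a$ past $\Delta_x$, obtaining $L_a(\mathcal{L}_y p) = -L_a(\Delta_x p) = -\Delta_x(L_a p) = 0$.

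For the converse, assume $L_a(\partial_{x_i} p) = 0$ for all $i$ and $L_a(\mathcal{L}_y p) = 0$. Commutativity forces $\partial_{x_i}(L_a p) = 0$, so $L_a p$ is a function of $y$ alone; by the $k$-homogeneity of $p$ and its symmetry with respect to $\Sigma$, this function is an even homogeneous polynomial of degree $k-2$, necessarily of the form $g(y) = c\, y^{k-2}$, with $c=0$ automatic when $k$ is odd. Applying $L_a$ to the identity $\mathcal{L}_y p = g - \Delta_x p$ and exploiting the commutators again yields $\mathcal{L}_y g = 0$; since $\mathcal{L}_y(c y^{k-2}) = c(k-2)(k-3+a)\, y^{k-4}$ with $(k-2)(k-3+a)\neq 0$ for every even $k \geq 4$ and $a \in (-1,1)$, this forces $c = 0$. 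The main obstacle is the degenerate case $k=2$: then $g$ is a constant, $\mathcal{L}_y g$ vanishes identically, and the converse cannot be extracted from the derivatives alone, so one must lean on the standing assumption of the lemma that $p$ is already $L_a$-harmonic. In the sequel the lemma is invoked in the forward direction, providing the inductive step that from a $k$-homogeneous $L_a$-harmonic polynomial produces homogeneous $L_a$-harmonic polynomials of degrees $k-1$ (via $\partial_{x_i} p$) and $k-2$ (via $\mathcal{L}_y p$), thereby reducing the classification of blow-up limits to the lowest-order cases handled explicitly.
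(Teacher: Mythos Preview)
Your argument is correct and considerably more detailed than what the paper offers: the paper gives no proof, merely recording before the lemma that $\partial_{x_i}p$ and $\partial^2_{yy}p+ay^{-1}\partial_y p$ inherit $L_a$-harmonicity and drop one (resp.\ two) degrees of homogeneity, and then stating that ``a straightforward computation leads to'' the lemma. Your forward direction via the commutators $[\partial_{x_i},L_a]=0$ and $[\Delta_x,\mathcal{L}_y]=0$ is exactly that computation, and this is the only direction the paper ever invokes (in the inductive step of Proposition~\ref{example}).

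Your treatment of the converse goes beyond the paper and is worth recording. You rightly observe that the hypothesis, as literally written, already assumes $p$ is $L_a$-harmonic, rendering the biconditional vacuous; reading the statement as presumably intended (without that hypothesis), your argument handles all $k\ge3$ cleanly via parity of $g=(\Delta_x+\mathcal{L}_y)p$ and the nonvanishing of $(k-2)(k-3+a)$ for even $k\ge4$, and you correctly isolate a genuine obstruction at $k=2$. Indeed $p(x,y)=x_1^2$ has $\partial_{x_i}p$ linear and $\mathcal{L}_y p=0$, both $L_a$-harmonic, yet $L_a p\not\equiv 0$; so the converse in the intended reading actually fails for $k=2$. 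This does not affect the paper's use of the lemma, but your diagnosis is accurate.
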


The following Proposition gives a complete picture of the possible entire configurations in $\R^2$. This profiles will be useful in the stratification result of Section \ref{singular.sec}.
\begin{proposition}\label{example}
  Let $p \in H^{1,a}_{\loc}(\R^2)$ be a nonconstant entire $L_a$-harmonic function symmetric with respect to $\Sigma$ such that $N(0,p,r)=k$ for every $r>0$. Suppose that $p$ depends on the variable $y$, then if $k \in 2\N$ we have
  \begin{equation}\label{even}
  p(x,y)=\ddfrac{(-1)^{\frac{k}{2}} \Gamma\left(\frac{1}{2}+\frac{a}{2}\right)}{2^{k}\Gamma\left(1+\frac{k}{2}\right)\Gamma\left(\frac{1}{2}+\frac{a}{2}+\frac{k}{2}\right)} \ _2F_1\left(-\frac{k}{2}, -\frac{k}{2}-\frac{a}{2}+\frac{1}{2}, \frac{1}{2}, -\frac{x^2}{y^2}\right)y^{k},
  \end{equation}
and if $k \in 2\N +1$ we obtain
\begin{equation}\label{odd}
  p(x,y)=-\ddfrac{(-1)^{\frac{k}{2}+\frac{1}{2}} \Gamma\left(\frac{1}{2}+\frac{a}{2}\right)}{2^{k-1}\Gamma\left(\frac{1}{2}+\frac{k}{2}\right)\Gamma\left(\frac{a}{2}+\frac{k}{2}\right)} \ _2F_1\left(\frac{1}{2}-\frac{k}{2}, 1-\frac{k}{2}-\frac{a}{2}, \frac{3}{2}, -\frac{x^2}{y^2}\right)xy^{k-1},
\end{equation}
where $_2F_1$ is the hypergeometric function. %Moreover, for every $k\in \N, k\geq 2$ we have $p(x,0)=x^m/m!$.
\end{proposition}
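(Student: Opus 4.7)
The plan is to reduce the classification to an explicit Gauss hypergeometric ODE. Since $p$ is $k$-homogeneous on $\R^2$ we have $|p(X)|\le C|X|^k$, so Lemma \ref{caff.silv.salsa} forces $p$ to be a homogeneous polynomial of degree $k$ in $(x,y)$. Writing $p(x,y)=\sum_{j=0}^{k}c_j x^{j}y^{k-j}$, the symmetry $p(x,-y)=p(x,y)$ kills every monomial with $k-j$ odd and leaves only those with $j\equiv k\pmod 2$. Hence $p$ is even in $x$ when $k$ is even and odd in $x$ when $k$ is odd; by homogeneity this lets us factor
\[
p(x,y)=y^{k}\,g(-x^2/y^2)\quad\text{($k$ even)}, \qquad p(x,y)=x\,y^{k-1}\,g(-x^2/y^2)\quad\text{($k$ odd)},
\]
for some polynomial $g$ of degree $k/2$ or $(k-1)/2$, respectively.

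Next I would substitute each ansatz into $L_a p = p_{xx}+p_{yy}+ay^{-1}p_y=0$, set $\xi=-x^2/y^2$, and collect terms. In the even case the monomials that appear are $y^{k-2},\,x^2y^{k-4},\,x^4y^{k-6}$; dividing through by $y^{k-2}$ and expressing $x^{2}/y^{2}=-\xi$, $x^{4}/y^{4}=\xi^{2}$ yields
\[
4\xi(\xi-1)\,g''(\xi)-2\bigl[1+(2k-3+a)\xi\bigr]g'(\xi)+k(k+a-1)\,g(\xi)=0,
\]
which is a Gauss hypergeometric equation with parameters $A=-k/2$, $B=1/2-k/2-a/2$, $c=1/2$. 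The odd case is analogous, with the relevant monomials $xy^{k-3},\,x^{3}y^{k-5},\,x^{5}y^{k-7}$, leading to
\[
4\xi(\xi-1)\,g''(\xi)-2\bigl[3+(2k-5+a)\xi\bigr]g'(\xi)+(k-1)(k+a-2)\,g(\xi)=0,
\]
i.e.\ the hypergeometric equation with $A=(1-k)/2$, $B=1-k/2-a/2$, $c=3/2$. In both cases the quadratic determining $\{A,B\}$ from the values of $A+B$ and $AB$ has discriminant $(1-a)^{2}$, which is the algebraic reason why the second parameter is shifted by $1/2-a/2$ (resp.\ $1-a/2$).

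Since $A$ is a non-positive integer, the local Frobenius solution ${}_2F_1(A,B;c;\xi)$ terminates, producing the unique polynomial solution of the ODE; the second independent solution, proportional to $\xi^{1-c}{}_2F_1(A-c+1,B-c+1;2-c;\xi)$, fails to be polynomial for $c=1/2$ or $c=3/2$ and is therefore ruled out by the polynomial nature of $g$ established at the outset. This pins down $g$ up to a multiplicative constant, and the prefactor in \eqref{even}--\eqref{odd} is then fixed by a standard normalization (e.g.\ matching the coefficient of $y^{k}$ or $xy^{k-1}$) and rewritten via the Gamma-function duplication identity.

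The main obstacle is the bookkeeping in the PDE-to-ODE reduction: each of $\partial_{xx}$, $\partial_{yy}$ and $ay^{-1}\partial_y$ contributes several cross-terms in the monomials listed, and the coefficients must be carefully collected so that the resulting equation has coefficients of $g''$, $g'$ and $g$ expressible as polynomials in the single variable $\xi=-x^2/y^2$. Once this reduction is carried out, identifying the hypergeometric parameters and verifying the normalization constants is routine algebra.
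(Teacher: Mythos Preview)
Your argument is correct but follows a different route from the paper. You substitute the similarity variable $\xi=-x^2/y^2$ and reduce $L_a p=0$ to Gauss's hypergeometric equation, then select the terminating Frobenius branch via the polynomial constraint on $g$. The paper instead writes down the explicit monomial coefficients $c(m,a,t)$ (equation \eqref{coeff}) and proves the formula by induction on $k$: the inductive step appeals to Lemma \ref{iteration}, noting that both $\partial_x^2 p$ and $\partial_{yy}^2 p + ay^{-1}\partial_y p$ are $L_a$-harmonic of degree $k-2$, and matches each against the inductive hypothesis to recover all coefficients; the odd case is then obtained by integrating in $x$. Your route explains transparently why ${}_2F_1$ appears and handles both parities in one stroke; the paper's approach is more elementary (no special-function machinery), delivers the closed-form power-series coefficients directly, and aligns with the tangential-derivative techniques used elsewhere in the analysis.
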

\begin{proof}
The proof is  based on the properties related to the derivatives of homogenous $L_a$-harmonic functions and proceeds by induction. By Lemma \ref{caff.silv.salsa}, we already know that every homogenous $L_a$-harmonic function symmetric with respect to $\Sigma$ is a polynomial $p(x,y)$ such that, for every $x \in \Sigma$ the map $y \mapsto p(x,y)$ is a polynomial of even degree.\\
Fix $k=2m$ with $m\in\N$, consider
\begin{equation}\label{coeff}
c(m,a,t) = \frac{(-1)^{m-t}}{2t!}\frac{1}{2^{m-t} (m-t)!}\prod_{i=1}^{m-t}\frac{1}{2i+a-1}= \ddfrac{(-1)^{m-t}\Gamma\left(\frac{1}{2}+\frac{a}{2}\right)}{2t! (m-t)! 2^{2m-2t}\Gamma\left(m-t+\frac{1}{2}+\frac{a}{2}\right)}
\end{equation}
and consequently
$$
p(x,y)=\frac{x^{2m}}{2m!}+\sum_{t=0}^{m-1}{c(m,a,t)x^{2t}y^{2m-2t}}.
$$
which is equivalent to \eqref{even}. By a direct computation, it is easy to see that $L_a p(x,y)=0$ for every $(x,y)\in \R^2$. Now, let us prove by induction on $k\geq 2$ that every homogenous $L_a$-harmonic function is of the form \eqref{even}. Since the case $k=0$ is trivial, let us take $k=2$. Since $p$ must be of degree $2$ and even in the variable $y$, the polynomial must be like $p(x,y)=a_1x^2+a_2y^2$ and consequently
$$
L_a p = 0 \quad \leftarrow \! \rightarrow\quad a_2 = -\frac{1}{1+a}a_1,
$$
and for $a_1=1/2$ we obtain the formula in \eqref{coeff}.\\ Suppose \eqref{coeff} are true for $k \in 2\N$, and consider a $L_a$-harmonic polynomial $p$ of degree $k+2$, i.e.
$$
p(x,y)=a_{m+1} x^{2m+2} + \sum_{t=0}^{m}{a_t x^{2t} y^{2m-2t} }.
$$
Since $\partial^2_x p$ is a $L_a$-harmonic polynomial of degree $k$, we must have by the inductive hypothesis
%$$
%a_m(2m)(2m-1) x^{2m-2} + 2a_1 y^{2m-2} + \sum_{t=1}^{m-2}{a_{t+1} (2t+1) (2t+2) x^{2t} y^{2m-2t-2} } = \frac{x^{2m-2}}{(2m-2)!}+\sum_{t=0}^{m-2}{c(m-1,a,t)x^{2t}y^{2m-2t-2}}.
%$$
%and consequently
$$
a_{m+1}(2m+2)(2m+1) =\frac{1}{2m!},\quad 2t(2t-1)a_{t} = c(m,a,t-1)\ \mbox{for }t = 1,\dots,m.
$$
which imply, by definition \eqref{coeff}, that
$$
a_{m+1}=\frac{1}{(2m+2)!},\quad a_{t} = \frac{c(m,a,t-1)}{2t(2t-1)}=c(m+1,a,t)
$$
for $t=1,\dots,m$. Finally, let $w=-\partial^2_{yy}p - a y^{-1}\partial_y p$ be a polynomial of degree $k$. By Lemma \ref{iteration} $w$ is $L_a$-harmonic and, by the inductive hypothesi, we obtain by linearity that
$$
-\partial^2_{yy}(a_0 y^{2m+2}) - a y^{-1}\partial_y (a_0 y^{2m+2}) = c(m,a,0)y^{2m},
$$
or in other words that $-(2m+2)(2m+1+a)a_0 = c(m,a,0)$, which implies that
$$
a_0 = \frac{c(m,a,0)}{2(m+1)(2m+1+a)} = c(m+1,a,0).
$$
We have already proved the formula for the case $k \in 2\N$, while the other one is obtained via an integration respect to the variable  $x$.
\end{proof}
Before considering the general case $n\geq 3$, we complete the Section with some concrete examples of blow-up profiles in $2$-dimensional case. This example, and more generally the class of homogeneous function described by the previous Proposition, will summarize all the possible behaviour of the $(n-2)$-dimensional singular set, as we will see in Section \ref{singular.sec}.
For $n\geq 3$, we can not give an explicit formula for the blow- up limits which depend on the variable $y$, but we can prove that every polynomial in $\R^n$ admits a unique $L_a$-harmonic extension symmetric with respect to $\Sigma$.
Since we want to classify the possible blow-up limit of $s$-harmonic functions on the nodal set, this result suggests that $s$-harmonic functions can vanish like any polynomial. We will discuss in the following Sections the implication of this classification.
\begin{lemma}\cite[Lemma 5.2]{GaRo}\label{garofalo}
  Let $p(x)$ be an homogeneous polynomial of degree $d$ in $\R^n$.Then, there exists a unique polynomial $q(X)=q(x,y)$ of degree $d$ in $\R^{n+1}$ such that
  $$
  \begin{cases}
  L_a q = 0 & \mbox{in }\R^{n+1}\\
  q(x,y)=q(x,-y) & \mbox{in } \R^{n+1}\\
  q(x,0)=p(x) & \mbox{on }\R^n.
  \end{cases}
  $$
\end{lemma}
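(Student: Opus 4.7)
The strategy is to build $q$ explicitly by expanding in even powers of $y$ and determining the coefficients through a recursion forced by $L_a q = 0$.

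First, I would look for $q$ in the form
$$
q(x,y) = \sum_{k=0}^{\lfloor d/2 \rfloor} y^{2k} p_k(x), \qquad p_0 = p,
$$
which automatically enforces the symmetry $q(x,-y)=q(x,y)$ and the trace condition $q(x,0)=p(x)$. Outside $\Sigma$, the equation reads $L_a q = \Delta_x q + \partial_{yy} q + ay^{-1}\partial_y q = 0$. Plugging in the ansatz and collecting powers of $y^{2k}$ yields the decoupled recursion
$$
\Delta_x p_k(x) + 2(k+1)(2k+1+a)\, p_{k+1}(x) = 0,\qquad k\geq 0,
$$
which I solve by
$$
p_{k+1}(x) = -\frac{\Delta_x p_k(x)}{2(k+1)(2k+1+a)}.
$$

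Next I would verify that this recursion is well posed and terminates. Since $a>-1$, the denominator $2(k+1)(2k+1+a)$ is strictly positive for every $k\geq 0$, so each $p_{k+1}$ is uniquely defined; by induction $p_k$ is a homogeneous polynomial in $x$ of degree $d-2k$, so $p_k\equiv 0$ once $k>d/2$ and the sum is finite. The resulting $q$ is therefore a polynomial, in fact homogeneous of degree $d$ in $(x,y)$, and the identity $L_a q = 0$ on $\mathbb R^{n+1}\setminus\Sigma$ extends by continuity (and by direct inspection of the polynomial expression $\partial_{yy}q + ay^{-1}\partial_y q$, whose $y^{-1}\partial_y q$ part is a genuine polynomial because only even powers of $y$ appear) to the whole $\mathbb R^{n+1}$.

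For uniqueness, let $q_1,q_2$ be two polynomial solutions and set $q=q_1-q_2$. Then $q$ is a polynomial, $L_a$-harmonic, symmetric in $y$, with $q(x,0)\equiv 0$. Writing its expansion in even powers of $y$ with coefficients $\tilde p_k$, the trace condition forces $\tilde p_0 \equiv 0$; the same recursion above then yields $\tilde p_k \equiv 0$ for every $k\geq 1$, hence $q\equiv 0$. (Alternatively one could invoke the strong unique continuation Corollary \ref{unique.continuation}, since the symmetry together with $q|_\Sigma=0$ and $\tilde p_0=0$ would force infinite vanishing order of $q$ at any point of $\Sigma$ if all coefficients could not be killed step by step, but the direct recursion argument is shorter.)

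The only genuinely delicate point is checking that the coefficients $2(k+1)(2k+1+a)$ never vanish; this is exactly the place where the standing hypothesis $a\in(-1,+\infty)$ enters, and once it is secured both existence and uniqueness follow immediately. I expect no additional obstacle beyond this.
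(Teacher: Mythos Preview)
Your proposal is correct and follows the same route that the paper indicates (the lemma is quoted from \cite{GaRo}, and immediately after it the paper records the explicit extension $q(x,y)=\sum_{k}(-1)^k c_{2k}\,\Delta^k p\,\tfrac{y^{2k}}{(2k)!}$, which is exactly the closed form of your recursion $p_{k+1}=-\Delta_x p_k/\big(2(k+1)(2k+1+a)\big)$ once one sets $a=1-2s$). Your check that $2k+1+a>0$ for $a>-1$ is precisely the nondegeneracy needed, and your uniqueness via the same recursion is clean; nothing further is required.
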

In particular, it can proved that this extension is obtained by
$$
q(x,y)= \sum_{k\geq 0}^{\,d/2\!}{(-1)^k c_{2k} \Delta^k \frac{x^\alpha}{\alpha!} \frac{y^{2k}}{(2k!)}}, \quad c_{2k} = \prod_{i=1}^k \frac{2i -1}{2i-2s},
$$
where $\alpha=(\alpha_1,\dots,\alpha_n)\in \N^d$, $x^\alpha = x_1^{\alpha_1}\cdots x_n^{\alpha_n}$ and $\alpha! = \alpha_1 ! \cdots \alpha_n!$.\\
Inspired by the previous results, let us introduce the following classes of blow-up limit.
\begin{definition}\label{blowup.class}
  Given $a \in (-1,1)$ and $k \in \R$, we define the set of all possible blow-up limit of order $k$, i.e. the set of all $L_a$-harmonic symmetric polynomials of degree $k$, as
  $$
  \mathfrak{B}^a_k(\R^{n+1}) = \left\{p \in H^{1,a}_{\loc}(\R^{n+1})\left\lvert \quad
  \begin{aligned}
  &L_a p =0 \mbox{ in }\R^{n+1}\\
  &p(X)=\abs{X}^k p\left(\frac{X}{\abs{X}}\right) \mbox{ in }\R^{n+1}
  \end{aligned}
  \right.
  \right\}.
  $$
  Similarly, the set of blow-up limit of order $k$ respectively symmetric or antisymmetric with respect to $\Sigma$ are defined as
  \begin{align*}
  \mathfrak{sB}_{k}^a(\R^{n+1}) &= \Big\{p \in \mathfrak{B}^a_k(\R^{n+1}) \left\lvert\, p\mbox{ symmetric with respect to }\Sigma\right.\Big\},\\
  \mathfrak{aB}_{k}^a(\R^{n+1}) &= \Big\{p \in \mathfrak{B}^a_k(\R^{n+1}) \left\lvert\, p\mbox{ antisymmetric with respect to }\Sigma\right.\Big\} .
  \end{align*}
\end{definition}
By Proposition \ref{even.odd} and Lemma \ref{yornot} we can classify even more the structure of the previous classes emphasizing two subclasses of blow-up limit.
\begin{definition}\label{blowup.class.y}
  Given $a \in (-1,1)$ and $k \in \R$, let us define $\mathfrak{sB}^*_k(\R^{n+1})=\mathfrak{B}^0_k(\R^{n}_x)$ the set of functions $p\in \mathfrak{B}_k^a(\R^{n+1})$ such that $\Delta_x p =0$, namely $p(x,y)=p(x)$ in $\R^{n+1}= \R^n_x \times \R_y$.
\end{definition}
By the previous Section, we already know that for $a\in (-1,1)$ we have $\mathfrak{B}^a_1(\R^{n+1})=\mathfrak{B}^*_1(\R^{n+1})$ and for $k\geq 2$ we have $\mathfrak{sB}^a_k(\R^{n+1})\setminus\mathfrak{B}^*_k(\R^{n+1}) \neq \emptyset$ and it consists of all blow-up limit which depends on the variable $y$. Finally
\begin{corollary}\label{anti.sym}
  For $a \in (-1,1)$, let $u$ be an $L_a$-harmonic function in $B_1$ and $X_0 \in \Gamma_k(u)$, for some $k \in 1+\N$ or $k\in 1-a+\N$. Then, every blow-up limit $p$ centered in $X_0 \in \Gamma_k(u)$ is either in $\mathfrak{sB}_k^a(\R^{n+1})$ or in $\mathfrak{aB}^a_k(\R^{n+1})$. Moreover, for every $a \in (-1,1)$ we have
  $$
  \mathfrak{aB}_k^a(\R^{n+1})= \mathfrak{sB}_{k+a-1}^{2-a}(\R^{n+1})y\abs{y}^{-a}.
  $$
\end{corollary}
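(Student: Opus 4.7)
The plan is to exploit the canonical decomposition $u = u_e + u_o$ into symmetric and antisymmetric parts with respect to $\Sigma$, both of which are themselves $L_a$-harmonic. Because $X_0 \in \Sigma$, all cross terms between $u_e$ and $u_o$ are odd in $y$ over the symmetric ball $B_r(X_0)$, so I would first record the orthogonality identities $H(X_0,u,r) = H(X_0,u_e,r) + H(X_0,u_o,r)$ together with the analogous identity for $E$. Lemmas \ref{gap.even} and \ref{gap.odd}, combined with the strong unique continuation of Corollary \ref{unique.continuation}, then ensure that the limiting frequencies $k_e := N(X_0, u_e, 0^+) \in 1 + \N$ and $k_o := N(X_0, u_o, 0^+) \in 1 - a + \N$ (for those components that do not vanish identically) lie in disjoint sets for $a \in (-1,1)\setminus\{0\}$; in particular, $k_e \neq k_o$.

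Next I would show that only one piece survives the blow-up procedure. Applying the doubling estimate of Corollary \ref{doubling.corollary.Sigma} to $u_e$ and $u_o$ separately, together with the nondegeneracy of their blow-up limits provided by Theorem \ref{blowup.convergence}, yields the two-sided asymptotics $H(X_0, u_e, r) \asymp r^{2k_e}$ and $H(X_0, u_o, r) \asymp r^{2k_o}$ as $r \to 0^+$. Assuming without loss of generality $k_e < k_o$, the normalized blow-up decomposition
\begin{equation*}
u_k = \frac{u_e(X_0 + r_k X)}{\sqrt{H(X_0, u, r_k)}} + \frac{u_o(X_0 + r_k X)}{\sqrt{H(X_0, u, r_k)}}
\end{equation*}
has its antisymmetric summand converging to zero in $L^{2,a}(\partial B_1)$, since $H(X_0,u_o,r_k)/H(X_0,u,r_k) \to 0$. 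Combining Theorem \ref{blowup.convergence} with Proposition \ref{blow.N}, the blow-up limit $p$ is then a symmetric, $k$-homogeneous, $L_a$-harmonic function, i.e., $p \in \mathfrak{sB}_k^a(\R^{n+1})$ with $k = k_e$; the case $k_o < k_e$ is symmetric and gives $p \in \mathfrak{aB}_k^a(\R^{n+1})$.

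For the factorization $\mathfrak{aB}_k^a(\R^{n+1}) = \mathfrak{sB}_{k+a-1}^{2-a}(\R^{n+1})\, y\abs{y}^{-a}$, I would invoke Proposition \ref{even.odd}: every antisymmetric $L_a$-harmonic $p$ writes uniquely as $p = v\, y\abs{y}^{-a}$ with $v$ symmetric and $L_{2-a}$-harmonic, and since $y\abs{y}^{-a}$ is $(1-a)$-homogeneous a direct degree count forces $v$ to be $(k+a-1)$-homogeneous. The reverse inclusion starts from any $v \in \mathfrak{sB}_{k+a-1}^{2-a}(\R^{n+1})$, which is a polynomial by Lemma \ref{caff.silv.salsa}: then $p := v\, y\abs{y}^{-a}$ is antisymmetric, $k$-homogeneous, belongs to $H^{1,a}_{\loc}(\R^{n+1})$ thanks to \eqref{L2norm.even.odd}, and is $L_a$-harmonic by reversing the computation in the proof of Proposition \ref{even.odd}. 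The main obstacle in this plan is the lower bound $H(X_0, u_e, r) \gtrsim r^{2k_e}$: the monotonicity of the frequency only supplies one-sided control, so I would close the gap by appealing to the normalization $\norm{p_e}{L^{2,a}(\partial B_1)} = 1$ of the nondegenerate blow-up of $u_e$ from Theorem \ref{blowup.convergence}; everything else reduces to routine bookkeeping with the homogeneity exponents.
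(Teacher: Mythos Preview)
Your second part—the factorization of $\mathfrak{aB}_k^a$ via Proposition \ref{even.odd}—is correct and matches what the paper has in mind.

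The first part has a genuine gap. You correctly identify the obstacle: to show the antisymmetric summand of the normalized blow-up tends to zero you need $H(X_0,u_o,r)/H(X_0,u_e,r)\to 0$, and for this the upper bound $H(X_0,u_o,r)\lesssim r^{2k_o}$ is not enough—you also need the lower bound $H(X_0,u_e,r)\gtrsim r^{2k_e}$. Your proposed fix, however, does not deliver that bound. Theorem \ref{blowup.convergence} only tells you that $u_e(X_0+r_kX)/\sqrt{H(X_0,u_e,r_k)}$ converges to a unit-norm limit; this says nothing about the \emph{rate} at which $H(X_0,u_e,r)$ decays. (Monotonicity gives that $H(X_0,u_e,r)/r^{2k_e}$ is nondecreasing, but its limit at $0$ could a priori be zero.) The lower bound you want is precisely Lemma \ref{nondegeneracy}, which is proved later in Section \ref{sec.weiss} via the Monneau-type monotonicity of Proposition \ref{Monneau}, so invoking it here would be circular.

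The obstacle is entirely avoidable, and the cleaner route is almost certainly what the paper means by ``direct consequence of the previous Lemmas.'' Do not decompose $u$; decompose the blow-up limit $p$ instead. By Proposition \ref{blow.N}, $p$ is $L_a$-harmonic and $k$-homogeneous with $k=N(X_0,u,0^+)$. Its symmetric and antisymmetric parts $p_e,p_o$ are each $L_a$-harmonic and $k$-homogeneous as well. If $p_e\not\equiv 0$, the argument inside Lemma \ref{gap.even} (differentiate in $x$ down to a homogeneous function of degree $k-\lfloor k\rfloor$ and invoke Corollary \ref{lower.N}) forces $k\in 1+\N$; if $p_o\not\equiv 0$, Proposition \ref{even.odd} and Lemma \ref{gap.odd} force $k\in 1-a+\N$. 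For $a\in(-1,1)\setminus\{0\}$ these two sets are disjoint (their difference set is $a+\mathbb Z$, which misses $0$), so at least one of $p_e,p_o$ vanishes identically, and $p\in\mathfrak{sB}_k^a(\R^{n+1})\cup\mathfrak{aB}_k^a(\R^{n+1})$. No growth comparison of $H_e$ versus $H_o$ is needed. (At $a=0$ the operator is the ordinary Laplacian and the two homogeneity ladders coincide; the dichotomy as stated may fail in that case, and the paper does not address it separately.)
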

%In this first part of the blow-up analysis we have worked with $L_a$-harmonic function symmetric with respect to $\Sigma$. Let us focus now the attention on the global $L_a$-harmonic functions $p$ antisymmetric with respect to $\Sigma$, i.e. $p(x,y)=p(x-y)$ on $\R^{n+1}$, such that $p \equiv 0 $ on $\Sigma$.\\
%In order to achieve this classification, let us note that in the last Section of \cite{FKS}, the authors point out that for $a\in (-1,1)$ the weight $\omega(y)=\abs{y}^a$ is an $A_2$-weight and more generally for $a\geq -1$, the $L_a$-harmonic functions are locally H\"older continuous and satisfy an Harnack inequality . Moreover, by the preliminary results in \cite{MR2367025}, we obtain
%\begin{corollary}
%   Let $a\geq -1$ and $v\in H^{1,a}_\loc(\R^{n+1})$ an $L_a$-harmonic function in $B_r(X_0)$ for some $X_0 \in \R^{n+1}$. Then, for some $\alpha \in (0,1)$ we have
%   \begin{align*}
%     \sup_{B_{r/2}(X_0)}\abs{D^k_x v} & \leq \frac{C}{r^k} \mathrm{osc}_{B_r(X_0)}v \\
%     [D^k_x v]_{C^\alpha(B_{r/2}(X_0)} & \leq \frac{C}{r^{k+a}} \mathrm{osc}_{B_r(X_0)}v \\
%     \sup_{B_{r/2}(X_0)}\abs{v_{yy}+ \frac{a}{y}v_y} & \leq \frac{C}{r^2}\mathrm{osc}_{B_r(X_0)} v.
%   \end{align*}
%\end{corollary}
 \section{Uniqueness and continuity of tangent maps and tangent fields}\label{sec.weiss}
In this Section we start introducing a Weiss type
monotonicity formula, which is a fundamental tool for the blow-up analysis at the nodal points $X_0 \in \Gamma(u)$ where $N(X_0, u,0^+) = k$. Starting from this, we will improve our knowledge of the blow-up convergence by proving the existence of a unique non trivial blow-up limit at every point of the nodal set $\Gamma(u)$, which will be called the tangent map $\varphi^{X_0}$ of $u$ at $X_0$. In particular, driven by the decomposition in \eqref{decompos},
 we introduce the notion of tangent \emph{field} at the nodal point $\Phi^{X_0}$, which have a major role in our blow-up analysis. %upper semi-continuity property for the Almgren frequency, which motivate the introduction of the new notion of ``\emph{vectorial}'' tangent map.%\\
 %Moreover, combining these results with Proposition \ref{smooth} and the classification of the possible blow-up limits in Section \ref{section.blowup}, we give different proof of some weak unique continuation property.
\begin{definition}
Given $u$, an $L_a$-harmonic function in $B_1$, for a real number $k\geq \min\{1,1-a\}$, we define
$$
\Gamma_k(u)  \coloneqq  \{X_0 \in \Gamma(u) \colon N(X_0,u,0^+)=k\}.
$$
\end{definition}
One has to point out that the sets $\Gamma_k(u)$ may be nonempty only for $k$ in a certain range of values, depending on $u$ and $k$ itself. Indeed, by Proposition \ref{gap}, we already know that $\Gamma_k(u)\cap \Sigma\neq \emptyset$ implies $k \in 1+ \N$ or $k \in 1-a+\N$. Notice that the Weiss formula used for the uniformly elliptic case is usually different. We remark that all the following results are well known for the case $X_0 \in \Gamma_k(u)\setminus \Sigma$ since the $L_a$ operator is uniformly elliptic outside $\Sigma$. A direct computation gives
\begin{proposition}\label{weiss.formula}
Let $u$ be a nontrivial $L_a$-harmonic function in $B_1$. For $X_0 \in \Gamma_k(u)\cap \Sigma$, we introduce the $k$-Weiss function
$$
W_k(X_0,u,r) = \frac{1}{r^{n+a-1+2k}}\int_{B_r(X_0)}{\abs{y}^a \abs{\nabla u}^2 \mathrm{d}X}- \frac{k}{r^{n+a+2k}}\int_{\partial B_r(X_0)}{\abs{y}^a \abs{u}^2\mathrm{d}\sigma}.
$$
For $r\in (0,1-\abs{X_0})$ we have
\begin{equation}\label{weiss}
\frac{d}{dr}W_k(X_0,u,r)=\frac{2}{r^{n+a+1+2k}}\int_{\partial B_r(X_0)}{\abs{y}^a \left(\langle \nabla u, X-X_0\rangle - k u \right)^2 \mathrm{d}\sigma}.
\end{equation}
which implies that $r\mapsto W_k(X_0,u,r)$ is monotone nondecreasing in $(0,1-\abs{X_0})$. Furthermore, the map $r\mapsto W_k(X_0,u,r)$ is constant if and only if $u$ is homogeneous of degree $k$.
\end{proposition}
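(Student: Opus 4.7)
The plan is to reduce the computation to a one-parameter family of rescalings. Without loss of generality translate so that $X_0 = 0$; since $X_0 \in \Sigma$, the translation does not touch the $y$-coordinate and hence preserves the weight $|y|^a$ and the operator $L_a$. Introduce the $k$-homogeneous rescaling
$$u_r(X) = r^{-k}\, u(rX), \qquad X \in B_1,$$
and observe, via the change of variables $Y = rX$, that
$$W_k(0,u,r) \;=\; \int_{B_1} |y|^a\, |\nabla u_r|^2\, \mathrm{d}X \;-\; k \int_{\partial B_1} |y|^a\, u_r^2\, \mathrm{d}\sigma.$$
Since $L_a u = 0$ scales covariantly, $L_a u_r = 0$ in $B_1$ as well.

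Next, I would use the fundamental identity
$$r\, \partial_r u_r(X) \;=\; \langle \nabla u_r(X), X\rangle - k\, u_r(X),$$
which is an immediate consequence of the definition of $u_r$. Differentiating $W_k$ in $r$ under the integral sign and using the chain rule gives
$$W_k'(r) \;=\; 2\int_{B_1} |y|^a \langle \nabla u_r, \nabla(\partial_r u_r)\rangle\,\mathrm{d}X \;-\; 2k \int_{\partial B_1} |y|^a\, u_r\, \partial_r u_r\, \mathrm{d}\sigma.$$
A weighted integration by parts transforms the volume integral into $\int_{\partial B_1} |y|^a\, (\partial_r u_r)\, \partial_\nu u_r\, \mathrm{d}\sigma$, since $L_a u_r = 0$. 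On $\partial B_1$ the unit outward normal is $X$, so $\partial_\nu u_r = \langle \nabla u_r, X\rangle$. Substituting the identity for $r\,\partial_r u_r$ and collecting terms produces a perfect square:
$$W_k'(r) \;=\; \frac{2}{r}\int_{\partial B_1} |y|^a \Bigl(\langle \nabla u_r, X\rangle - k\, u_r\Bigr)^{\!2}\,\mathrm{d}\sigma.$$
Finally, undoing the rescaling (so $[\langle \nabla u_r, X\rangle - k u_r](X) = r^{-k}[\langle \nabla u, Y\rangle - k u](rX)$ and $|y|^a d\sigma_X = r^{-n-a}|y|^a d\sigma_Y$) yields exactly the formula in \eqref{weiss}.

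The monotonicity in $r$ then follows at once since the right-hand side is non-negative; integrability in $r$ and absolute continuity follow from the $H^{1,a}_{\loc}$ regularity of $u$ combined with the Caccioppoli estimate in Proposition~\ref{caccioppoli}. For the rigidity statement, $W_k$ being constant on an interval forces the integrand to vanish for almost every $r$, which is precisely Euler's identity $\langle \nabla u, X-X_0\rangle = k\, u$; this is equivalent to $u$ being $k$-homogeneous about $X_0$.

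The main technical obstacle is justifying the weighted integration by parts across the characteristic manifold $\Sigma$, where the weight $|y|^a$ either vanishes or blows up. I would handle this through the now-standard cutoff approximation already used in the proof of Proposition~\ref{even.odd}: test the weak formulation of $L_a u_r = 0$ with $\eta_\delta\, \partial_r u_r$, where $\eta_\delta$ is the family of cutoffs supported away from $\Sigma$ with $|\nabla \eta_\delta| \lesssim 1/\delta$, and let $\delta \to 0^+$. The contribution of $\nabla \eta_\delta$ is controlled by $\|\partial_r u_r\|_{L^\infty}\, \|\nabla u_r\|_{L^{2,a}}\, \delta^{(1-a)/2}$ by a Hölder argument identical to the one in Proposition~\ref{even.odd}, and vanishes since $a<1$; the regularity of $u_r$ away from $\Sigma$ granted by Proposition~\ref{smooth} ensures the product $\partial_r u_r \cdot \nabla u_r$ is integrable against $|y|^a$. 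This closes the argument.
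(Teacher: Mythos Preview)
The paper provides no proof beyond the phrase ``A direct computation gives,'' so your rescaling argument is supplying the details the paper omits, and the overall strategy---rewriting $W_k$ in terms of $u_r(X)=r^{-k}u(rX)$, using the identity $r\,\partial_r u_r=\langle X,\nabla u_r\rangle-k\,u_r$, and integrating by parts to produce the perfect square---is correct and clean. It is essentially equivalent to directly differentiating $E$ and $H$ and invoking the Rellich identity already implicit in the Almgren monotonicity of Proposition~\ref{Almgren.formula}.

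There is, however, a slip in your justification of the integration by parts across $\Sigma$. The H\"older bound you quote from Proposition~\ref{even.odd} is computed there with weight $|y|^{2-a}$, not $|y|^a$; repeating that estimate with the weight $|y|^a$ relevant here yields
\[
\|\nabla\eta_\delta\|_{L^{2,a}(B_1)}^2 \;\lesssim\; \delta^{-2}\int_\delta^{2\delta}|y|^a\,\mathrm{d}y \;\sim\; \delta^{a-1},
\]
so $\|\nabla\eta_\delta\|_{L^{2,a}}\sim\delta^{(a-1)/2}$, which blows up as $\delta\to0$ for every $a<1$. Your claimed factor $\delta^{(1-a)/2}$ has the wrong sign in the exponent, and the argument as written does not close. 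The remedy is to use the additional decay of the integrand near $\Sigma$ rather than a bare $L^\infty$ bound on $\partial_r u_r$: for the symmetric part $\partial_y u_r=O(|y|)$ by evenness and the smoothness of Proposition~\ref{smooth}, while for the antisymmetric part $\partial_r u_r=O(|y|^{1-a})$ because both $y\,\partial_y u_r$ and $x_i\,\partial_{x_i}u_r$ inherit this decay from the factorization $u_o=v\,y|y|^{-a}$ of Proposition~\ref{even.odd}. Either refinement makes the $\nabla\eta_\delta$ contribution genuinely $o(1)$. Alternatively, you may simply invoke the Rellich-type identity underlying Proposition~\ref{Almgren.formula} (from \cite{CS2007}), which the paper already takes for granted.
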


By a integration by parts, we can rewrite the $k$-Weiss function as
$$
W_k(X_0,u,r)=\frac{1}{r^{n+a+2k}}\int_{\partial B_r(X_0)}{\abs{y}^a u \left( \langle \nabla u , X-X_0\rangle - u \right)\mathrm{d}\sigma}.
$$
\begin{proposition}\label{Monneau}
  Let $a\in (-1,1)$ and $u$ be an $L_a$-harmonic function in $B_1$ and $X_0 \in \Gamma_k(u)\cap \Sigma$. For every homogenous $L_a$-harmonic polynomial $p\in \mathfrak{B}^a_k(\R^{n+1})$, the map
  $$
  r \mapsto \frac{H(X_0,u-p_{X_0},r)}{r^{2k}}= \frac{1}{r^{n+a+2k}}\int_{\partial B_r(X_0)}{\abs{y}^a \left(u -p_{X_0}\right)^2\mathrm{d}\sigma}
  $$
  is monotone non decreasing in $(0,1-\abs{X_0})$, where $p_{X_0}(X)=p(X-X_0)$.
\end{proposition}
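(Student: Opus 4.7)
The plan is to reduce to the case $X_0=0$ (by translation) and show that $r \mapsto \phi(r) := H(0,u-p,r)/r^{2k}$ has nonnegative derivative, via an identity that re-expresses $r\phi'(r)$ in terms of the Weiss function $W_k$. Set $w = u - p$, which is $L_a$-harmonic in $B_1$ since both $u$ and $p$ are.

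First I would carry out the differentiation of $\phi$ using the scaling
\[
\phi(r) = r^{-2k}\int_{\partial B_1} |y_\xi|^a w(r\xi)^2\,\mathrm{d}\sigma_\xi,
\]
which yields
\[
r\phi'(r) = \frac{2}{r^{n+a+2k-1}}\int_{\partial B_r}|y|^a w\, \partial_\nu w\,\mathrm{d}\sigma - \frac{2k}{r^{n+a+2k}}\int_{\partial B_r}|y|^a w^2\,\mathrm{d}\sigma.
\]
Applying the integration-by-parts identity $\int_{B_r}|y|^a|\nabla w|^2\,\mathrm{d}X = \int_{\partial B_r}|y|^a w\,\partial_\nu w\,\mathrm{d}\sigma$ (valid since $L_a w = 0$) converts the first term into a bulk gradient integral, giving exactly
\[
r\phi'(r) = 2\, W_k(0,w,r).
\]

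The heart of the argument is to show $W_k(0,w,r) = W_k(0,u,r)$, so that nonnegativity follows from the Almgren formula. Since $W_k$ is a quadratic form, expanding $w = u - p$ produces three pieces: $W_k(0,u,r)$, $W_k(0,p,r)$, and a cross term. The piece $W_k(0,p,r)$ vanishes because $p$ is $k$-homogeneous (this is the final clause of Proposition \ref{weiss.formula}, or can be verified directly by $\partial_\nu p = (k/r)p$ on $\partial B_r$). For the cross term, integration by parts using $L_a p = 0$ gives
\[
\int_{B_r}|y|^a \nabla u \cdot \nabla p\,\mathrm{d}X = \int_{\partial B_r}|y|^a u\,\partial_\nu p\,\mathrm{d}\sigma = \frac{k}{r}\int_{\partial B_r}|y|^a u p\,\mathrm{d}\sigma,
\]
again by $k$-homogeneity of $p$, and this exactly cancels the boundary contribution in the cross term.

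Therefore $W_k(0,w,r) = W_k(0,u,r)$, and the identity $W_k(0,u,r) = (N(0,u,r) - k)\,H(0,u,r)/r^{2k}$ together with Almgren monotonicity (Proposition \ref{Almgren.formula}) and the hypothesis $N(X_0,u,0^+) = k$ yields $W_k(0,u,r) \geq 0$ for $r \in (0, 1-|X_0|)$. Hence $r\phi'(r) \geq 0$, proving monotonicity. The only delicate step is the integration by parts in the cross term: one must justify it up to the characteristic manifold $\Sigma$, which I would handle by the same cutoff procedure used in Proposition \ref{even.odd} (approximating with a family $\eta_\delta$ vanishing near $\{|y|\leq \delta\}$ and passing to the limit using that $a \in (-1,1)$ makes $|y|^a$ locally integrable on $\Sigma$), rather than by invoking a pointwise classical identity.
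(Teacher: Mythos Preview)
Your proof is correct and follows essentially the same route as the paper: both establish the identity $\frac{d}{dr}\bigl(H(X_0,w,r)/r^{2k}\bigr) = \frac{2}{r}W_k(X_0,w,r)$ for $w=u-p_{X_0}$, then show $W_k(X_0,w,r)=W_k(X_0,u,r)$ by expanding the quadratic form and using $W_k(X_0,p_{X_0},r)=0$ together with the vanishing of the cross term via $\langle\nabla p_{X_0},X-X_0\rangle=kp_{X_0}$, and finally invoke $N(X_0,u,r)\geq k$ from Almgren monotonicity. Your explicit remark about justifying the integration by parts across $\Sigma$ via the cutoff $\eta_\delta$ is a welcome technical addition that the paper leaves implicit.
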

Through the following Section, we will use the notation $r \mapsto M(X_0,u,p_{X_0},r)$ for the previous map.
\begin{proof}
  Since $X_0 \in \Gamma_k(u)\cap \Sigma$ and $p$ is a $k$-homogenous $L_a$-harmonic function, we already know that $W_k(X_0,u,r)\geq 0$ and $W_k(X_0,p_{X_0},r)=0$ for every $r \in (0,1-\abs{X_0})$. Let $w = u - p_{X_0}$, then
  \begin{align*}
    W_k(X_0,u,r) &= W_k(X_0,u,r) + W_k(X_0,p_{X_0},r) \\
     &= \frac{1}{r^{n+a-1+2k}}\left(\int_{B_r(X_0)}{\!\!\!\!\abs{y}^a \abs{\nabla w}^2 +2\abs{y}^a \langle \nabla w,\nabla p\rangle\mathrm{d}X}- \frac{k}{r}\int_{\partial B_r(X_0)}{\!\!\!\!\abs{y}^a w^2 +2 \abs{y}^a w p \mathrm{d}\sigma}\right) \\
     &= W_k(X_0,w,r) + \frac{2}{r^{n+a+2k}}\int_{\partial B_r(X_0)}{\abs{y}^a w( \langle \nabla p_{X_0},X-X_0\rangle - k p) \mathrm{d}\sigma}  \\
    &= W_k(X_0,u-p_{X_0},r).
  \end{align*}
Hence , by \eqref{weiss.h}, we finally obtain
\begin{align*}
\frac{d}{dr} \frac{H(X_0,u-p_{X_0},r)}{r^{2k}}&=2\frac{H(X_0,u-p_{X_0},r)}{r^{2k+1}}\left(N(X_0,u-p_{X_0},r)-k\right)\\
&=\frac{2}{r}W_k(X_0,u-p_{X_0},r)\geq 0.
\end{align*}
\end{proof}
Now, we apply the previous monotonicity formul\ae to study the vanishing order of the $L_a$-harmonic function at the points of the nodal set. In particular, we prove a nondegeneracy and uniqueness result of the blow-up limit, for every points of the nodal set.
\begin{lemma}\label{growth}
Let $a\in (-1,1)$ and $u$ be an $L_a$-harmonic function in $B_1$. Then, for every $X_0 \in \Gamma_k(u)\cap \Sigma$, there exists $C>0$ such that
$$
\abs{u(X)}\leq C\abs{X-X_0}^k\quad \mbox{in }B_{R/2}(X_0).
$$
where $R=1- \mbox{dist}(X_0,\partial B_1)$.
\end{lemma}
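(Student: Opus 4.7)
The plan is to extract a pointwise growth bound from a spherical $L^{2,a}$-growth bound on $H(X_0,u,r)$, using the Almgren monotonicity formula to produce the latter and the Moser-type estimate of Lemma~\ref{moser} to upgrade it to $L^\infty$.

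\textbf{Step 1 (from Almgren to a growth estimate on $H$).} Since $X_0\in\Gamma_k(u)\cap\Sigma$, Proposition~\ref{Almgren.formula} yields $N(X_0,u,r)\geq N(X_0,u,0^+)=k$ for every $r\in(0,R)$. Combining this with the identity \eqref{doubling1} gives
\begin{equation*}
\frac{d}{dr}\log\!\left(\frac{H(X_0,u,r)}{r^{2k}}\right)=\frac{2}{r}\bigl(N(X_0,u,r)-k\bigr)\geq 0,
\end{equation*}
so $r\mapsto H(X_0,u,r)/r^{2k}$ is non-decreasing on $(0,R)$. In particular, setting $C_1:=H(X_0,u,R)/R^{2k}$, one has $H(X_0,u,r)\leq C_1\,r^{2k}$ for every $r\in(0,R)$.

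\textbf{Step 2 (from spheres to balls).} Using the definition of $H$ and integrating in polar coordinates centred at $X_0$:
\begin{equation*}
\int_{B_r(X_0)}|y|^{a}u^{2}\,\mathrm{d}X=\int_{0}^{r}\!\rho^{n+a}\,H(X_0,u,\rho)\,\mathrm{d}\rho\leq \frac{C_{1}}{n+a+2k+1}\,r^{n+a+2k+1},
\end{equation*}
valid for every $r\in(0,R)$.

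\textbf{Step 3 (from $L^{2,a}$ to $L^{\infty}$).} Fix $X\in B_{R/2}(X_0)$ and set $\rho:=|X-X_0|\leq R/2$. Since $u^{2}$ is $L_{a}$-subharmonic (because $u$ is $L_{a}$-harmonic and smooth away from $\Sigma$, with a standard approximation across $\Sigma$), Lemma~\ref{moser} applied at $X_0\in\Sigma$ with radius $2\rho\leq R$ gives
\begin{equation*}
\|u\|_{L^{\infty}(B_{\rho}(X_0))}\leq C(n,a)\left(\frac{1}{(2\rho)^{n+a+1}}\int_{B_{2\rho}(X_0)}|y|^{a}u^{2}\,\mathrm{d}X\right)^{\!1/2}\leq C\,\rho^{k},
\end{equation*}
by the bound of Step~2. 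Since $X\in\overline{B_{\rho}(X_0)}$, this yields $|u(X)|\leq C|X-X_0|^{k}$, with $C$ depending only on $n$, $a$, $k$, and $H(X_0,u,R)/R^{2k}$.

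The only mildly delicate point is the applicability of Lemma~\ref{moser}, which is already formulated for centres $X_0\in\Sigma$ and radii bounded by $1-|X_0|$, so both hypotheses hold here; the rest of the argument is a direct consequence of the Almgren monotonicity formula combined with the sub-mean-value inequality.
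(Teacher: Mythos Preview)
Your proof is correct and follows essentially the same route as the paper: use Almgren monotonicity to obtain $H(X_0,u,r)\le C r^{2k}$, pass from spheres to balls, and then invoke the Moser estimate (Lemma~\ref{moser}) to upgrade to the pointwise bound. The only cosmetic remark is that the $L_a$-subharmonicity of $u^2$ is not needed---Lemma~\ref{moser} applies directly to the $L_a$-harmonic function $u$ (and to $-u$) to bound $|u|$.
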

\begin{proof}
Since whenever $X_0\in \Gamma_k(u)$ we have $N(X_0,u,r)\geq N(X_0,u,0^+)=k$, then for every $r\in (0,R)$
$$
\frac{d}{dr}{\log{H(X_0,u,r)}} \geq \frac{2}{r}N(X_0,u,r) \geq \frac{2k}{r}
$$
and similarly
$$
\log \frac{H(X_0,u,R)}{H(X_0,u,r)} \geq 2k\log \frac{1}{r},
$$
which implies $H(X_0,u,r)\leq H(X_0,u,R) r^{2k}$. Now, by \cite[Lemma A.2.]{ww} and the previous estimate, we obtain for every $r \in (0,R)$
\begin{align*}
 \sup_{\overline{B_{r/2}}} u &\leq C(n,a) \left(\frac{1}{r^{n+1+a}}\int_{B_r}{\abs{y}^a u^2\mathrm{d}X} \right)^{1/2} \\
  &\leq   C(n,a) \left(\frac{H(0,u,R)}{n+a+1}\right)^{1/2},
\end{align*}
where in the second inequality we used the monotonicity of $r\mapsto H(0,u_k,r)$ in $(0,R)$.
\end{proof}
\begin{lemma}[Nondegeneracy]\label{nondegeneracy}
Let $a\in(-1,1)$ and $u$ be an $L_a$-harmonic function in $B_1$. Then, for every $X_0 \in \Gamma_k(u)\cap \Sigma$  there exists $C>0$ such that
$$
\sup_{\partial B_r(X_0)}{\abs{u(X)}}\geq C r^{k}\quad \mathrm{for}\,\,0<r<R
$$
where $R=1-\mbox{dist}(X_0,\partial B_1)$.
\end{lemma}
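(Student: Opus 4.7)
The strategy reduces the nondegeneracy to a positivity statement for the quotient $F(r):=H(X_0,u,r)/r^{2k}$, via the elementary bound $H(X_0,u,r)\leq |S^n|_a\sup_{\partial B_r(X_0)}|u|^2$. Applying Proposition \ref{Monneau} with the trivial choice $p\equiv 0\in\mathfrak{B}^a_k$ (or, equivalently, combining the Weiss formula of Proposition \ref{weiss.formula} with the lower bound $N(X_0,u,r)\geq k$ given by Proposition \ref{Almgren.formula}) yields that $F$ is non-decreasing on $(0,R)$. Hence the limit $L_0:=\lim_{r\to 0^+}F(r)=\inf_{r}F(r)\geq 0$ exists, and it suffices to show $L_0>0$.

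I would then proceed by contradiction: assume $L_0=0$. By Theorem \ref{blowup.convergence}, along some sequence $r_n\downarrow 0^+$ the Almgren blow-up $u_{r_n}(X):=u(X_0+r_nX)/\sqrt{H(X_0,u,r_n)}$ converges to a nontrivial $q\in\mathfrak{B}^a_k$ with $\|q\|_{L^{2,a}(\partial B_1)}=1$, the convergence being strong in $L^{2,a}(\partial B_1)$ (via the $C^{0,\alpha}_\loc$ convergence and dominated convergence). The key observation is that $\mathfrak{B}^a_k$ is a linear subspace, hence $\lambda q\in\mathfrak{B}^a_k$ for every $\lambda\in\R$; Proposition \ref{Monneau} applied to the polynomial $\lambda q$ then gives that
\[M_\lambda(r):=\frac{H(X_0,u-\lambda q_{X_0},r)}{r^{2k}}=\|\tilde u_r-\lambda q\|_{L^{2,a}(\partial B_1)}^2=F(r)-2\lambda\langle\tilde u_r,q\rangle+\lambda^2\]
is non-decreasing in $r$, where $\tilde u_r(X):=u(X_0+rX)/r^k=\sqrt{F(r)}\,u_{r}(X)$.

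The crux is to take the limit along $r_n$. Since $\sqrt{F(r_n)}\to 0$ and $u_{r_n}\to q$ in $L^{2,a}(\partial B_1)$, we have $\tilde u_{r_n}\to 0$ in $L^{2,a}(\partial B_1)$ and therefore $M_\lambda(r_n)\to \lambda^2$. Monotonicity then forces $M_\lambda(r)\geq \lambda^2$ for every small $r$, i.e.
\[F(r)\geq 2\lambda\langle\tilde u_r,q\rangle\qquad\text{for every }\lambda\in\R,\]
which upon sending $\lambda\to\pm\infty$ yields the rigidity $\langle\tilde u_r,q\rangle=0$ for all small $r$. By the strong unique continuation (Corollary \ref{unique.continuation}) we have $F(r)>0$, hence $\langle u_{r},q\rangle=0$ for all small $r$. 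But the strong convergence $u_{r_n}\to q$ in $L^{2,a}(\partial B_1)$ forces $\langle u_{r_n},q\rangle\to\|q\|_{L^{2,a}(\partial B_1)}^2=1\neq 0$, a contradiction. Therefore $L_0>0$, and combining with the elementary bound concludes $\sup_{\partial B_r(X_0)}|u|\geq\sqrt{L_0/|S^n|_a}\,r^k$.

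The main obstacle is the rigidity deduction $\langle\tilde u_r,q\rangle=0$, which genuinely exploits Monneau's formula beyond the trivial choice $p=0$ by testing against the entire one-parameter family $\{\lambda q:\lambda\in\R\}\subset\mathfrak{B}^a_k$; once this rigidity is secured, the contradiction with the Almgren blow-up $u_{r_n}\to q$ is immediate, and the proof is otherwise organizational.
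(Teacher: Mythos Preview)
Your proof is correct and follows the same strategy as the paper's: assume $H(X_0,u,r)/r^{2k}\to 0$, pass to the Almgren blow-up limit $q\in\mathfrak{B}^a_k$ with unit $L^{2,a}(\partial B_1)$-norm, and exploit Monneau's monotonicity (Proposition \ref{Monneau}) with this $q$ to contradict $\|q\|>0$. The only cosmetic difference is that the paper applies Monneau with the single choice $p=q$ and passes to the limit directly in the resulting inequality $\int_{\partial B_r}|y|^a(u^2-2up_{X_0})\,d\sigma\geq 0$, whereas you test against the whole ray $\{\lambda q\}_{\lambda\in\R}$ to first extract the orthogonality $\langle u_r,q\rangle=0$ for all small $r$; the final contradiction is identical.
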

\begin{proof}
Fix $X_0\in \Gamma_k(u)$ and suppose by contradiction, given a decreasing sequence $r_j\downarrow 0$, that
$$
\lim_{j\to \infty}\frac{H(X_0,u,r_j)^{1/2}}{r_j^{k}}=\lim_{j\to\infty}{\bigg(\frac{1}{r_j^{n+a+2k}}\int_{\partial B_{r_j}(X_0)}{\abs{y}^a u^2\,d\sigma\bigg)^{1/2}}}\!\!=0.
$$
Consider now the blow-up sequence
$$
u_j(X)=\frac{u(X_0+r_j X)}{\rho_j}\quad \mbox{where }\, \rho_j = H(X_0,u,r_j)^{1/2}
$$
constructed starting from $r_j$ and centered in $X_0 \in \Gamma_k(u)$. By Theorem \ref{blowup.convergence}, up to a subsequence $u_j \to p$ uniformly, where $p$ is a nontrivial $L_a$-harmonic homogenous polynomial of degree $k$ such that $H(0,p,1)=1$.\\
Let us focus our attention on the functional $M(X_0,u,p_{X_0},r)$ with $p_{X_0}$ as above. By the assumption on the growth of $u$ it follows
\begin{align*}
M(X_0,u,p_{X_0},0^+) &= \lim_{r\to 0} \frac{1}{r^{n+a+2k}}\int_{\partial B_r(X_0)}{\abs{y}^a(u-p_{X_0})^2\,d\sigma}\\
&=\lim_{r\to 0} \int_{\partial B_1}{\abs{y}^a\Big(\frac{u(X_0+rX)}{r^k}-p(X) \Big)^2\,d\sigma}\\
&= \int_{\partial B_1}{\abs{y}^a p^2\,d\sigma}\\
&=\frac{1}{r^{n+a+2k}}\int_{\partial B_r(X_0)}{\abs{y}^a {p}_{X_0}^2\,d\sigma}.
\end{align*}
By the monotonicity result of Proposition \ref{Monneau} on the map $r\mapsto M(X_0,u,p_{X_0},r)$, we obtain
$$
\frac{1}{r^{n+a-1+2k}}\int_{\partial B_r(X_0)}{\abs{y}^a(u- p_{X_0})^2\,d\sigma}\geq\frac{1}{r^{n+a-1+2k}}\int_{\partial B_r(X_0)}{\abs{y}^a p_{X_0}^2\,d\sigma}
$$
and similarly
$$
\int_{\partial B_r(X_0)}{\abs{y}^a (u^2-2u p_{X_0})\,d\sigma}\geq 0.
$$
On the other hand, rescaling the previous inequality and using the blow-up sequence $u_k$ defined as above, we obtain
$$
\int_{\partial B_1}{\abs{y}^a\left(H(X_0,u,r_j)u_j^2-2H(X_0,u,r_j)^{1/2}r_j^k  u_j p\right)\,d\sigma}\geq 0
$$
and
$$
\int_{\partial B_1}{\abs{y}^a\left(\frac{H(X_0,u,r_j)^{1/2}}{r_j^k}u_j^2-2 u_j p\right)\,d\sigma}\geq 0.
$$
One gets a contradiction passing to the limit for $j\to\infty$; indeed by the previous inequality we obtain
$$
\int_{\partial B_1}{\abs{y}^a p^2\,d\sigma}\leq 0
$$
in contradiction with $p \not\equiv 0$.
\end{proof}

\begin{theorem}[Uniqueness of the blow-up limit]\label{uniqueness}
Given $a \in (-1,1)$ and $u$ be an $L_a$-harmonic function in $B_1$, let us consider $X_0\in \Gamma_k(u)\cap \Sigma$, i.e. $N(X_0,u,0^+)=k$. Then there exists a unique nonzero $p \in \mathfrak{B}_k^a(\R^{n+1})$ blow-up limit such that
\begin{equation}\label{blow.up.homogenous}
  u_{X_0,r}(X) =\frac{u(X_0+rX)}{r^k} \longrightarrow  p(X).
\end{equation}
\end{theorem}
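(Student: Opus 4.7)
The plan rests on the Monneau-type monotonicity of Proposition \ref{Monneau} together with the compactness and nondegeneracy already established. First I would extract, from an arbitrary sequence $r_j \downarrow 0^+$, a subsequential limit of the rescalings $u_{X_0,r_j}(X) = u(X_0+r_j X)/r_j^k$. By Lemma \ref{growth} the family $\{u_{X_0,r}\}_r$ is uniformly bounded on compact sets, and Proposition \ref{caccioppoli} provides uniform $H^{1,a}_\loc$ bounds. Writing
\begin{equation*}
u_{X_0,r}(X) \;=\; \frac{\sqrt{H(X_0,u,r)}}{r^k}\cdot\frac{u(X_0+rX)}{\sqrt{H(X_0,u,r)}},
\end{equation*}
Theorem \ref{blowup.convergence} applied to the second factor, combined with the monotonicity of $r \mapsto H(X_0,u,r)/r^{2k}$ (which follows from $\frac{d}{dr}\log H(X_0,u,r) = (2/r)N(X_0,u,r) \geq 2k/r$), produces a limit $p \in \mathfrak{B}^a_k(\R^{n+1})$ along a subsequence, with convergence strong in $H^{1,a}_\loc$ and locally uniform. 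Nontriviality of $p$ follows immediately from Lemma \ref{nondegeneracy}: were $p \equiv 0$, then $H(X_0,u,r_j)/r_j^{2k}\to 0$, contradicting the nondegeneracy estimate.

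Next, I would exploit the Monneau monotonicity taking this specific $p$ as the reference homogeneous polynomial. A change of variable $X = X_0 + rZ$ combined with the $k$-homogeneity of $p$ yields the fundamental identity
\begin{equation*}
M(X_0,u,p_{X_0},r) \;=\; \int_{\partial B_1} |y|^a \, (u_{X_0,r}(Z) - p(Z))^2 \, \mathrm{d}\sigma(Z).
\end{equation*}
Along the chosen subsequence $r_j$, the weighted trace embedding promotes $H^{1,a}_\loc$ convergence to $L^{2,a}(\partial B_1)$ convergence, so $M(X_0,u,p_{X_0},r_j) \to 0$. Proposition \ref{Monneau} asserts that $r\mapsto M(X_0,u,p_{X_0},r)$ is monotone nondecreasing and nonnegative, so the full limit as $r\to 0^+$ must vanish:
\begin{equation*}
\lim_{r\to 0^+} M(X_0,u,p_{X_0},r) \;=\; 0.
\end{equation*}

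Now let $q \in \mathfrak{B}^a_k(\R^{n+1})$ be any other subsequential limit of $u_{X_0,\rho_j}$ along some $\rho_j \downarrow 0^+$. The identity and the full-limit statement above give
\begin{equation*}
0 \;=\; \lim_{j\to\infty} M(X_0,u,p_{X_0},\rho_j) \;=\; \int_{\partial B_1} |y|^a (q-p)^2 \, \mathrm{d}\sigma,
\end{equation*}
so $q \equiv p$ on $\partial B_1$, and the $k$-homogeneity of both $p$ and $q$ extends the equality to all of $\R^{n+1}$, proving uniqueness.

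The main delicate point is that the $r^k$-normalization differs from the Almgren-style normalization $\sqrt{H(X_0,u,r)}$ of Theorem \ref{blowup.convergence} by the factor $c_r := \sqrt{H(X_0,u,r)}/r^k$; controlling $c_r$ via the monotonicity of $H(X_0,u,r)/r^{2k}$, together with the upper bound of Lemma \ref{growth} and the positive lower bound forced by Lemma \ref{nondegeneracy}, is what guarantees simultaneously the nontriviality of $p$ and the correct scaling in the Monneau identity. Without such a quantitative grip on $c_r$, one could not rule out different subsequential scalings producing inequivalent limits, and the Monneau argument would collapse.
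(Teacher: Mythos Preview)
Your argument is correct and follows essentially the same route as the paper: extract a subsequential limit, show it is nontrivial via Lemma~\ref{nondegeneracy}, verify it lies in $\mathfrak{B}^a_k$, and then use the Monneau monotonicity of Proposition~\ref{Monneau} to conclude that $M(X_0,u,p_{X_0},0^+)=0$ and hence that any two subsequential limits coincide. The only cosmetic difference is that the paper verifies $k$-homogeneity of $p$ by computing $W_k(0,p,r)=0$ and invoking Proposition~\ref{weiss.formula}, whereas you obtain it by factoring $u_{X_0,r}=c_r\cdot(\text{normalized blow-up})$ and using the convergence of $c_r$ together with Theorem~\ref{blowup.convergence} and Proposition~\ref{blow.N}; make sure you cite the latter explicitly, since Theorem~\ref{blowup.convergence} alone does not give homogeneity.
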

\begin{proof}
Up to a subsequence $r_j\to 0^+$, we have that $u_{X_0,r_j}\to p$ in $\mathcal{C}^{0,\alpha}_{\loc}$. The existence of such limit follows directly from the previous growth estimate $\abs{u(X)} \leq C\abs{X}^k$ and by Lemma \ref{nondegeneracy} we have $p$ is not identically zero. Now, for any $r>0$ we have
$$
W_k(0,p,r) = \lim_{j\to \infty}{W_k(0,u_{X_0,r_j},r)}= \lim_{j\to \infty}{W_k(X_0,u,r r_j)}= W_k(X_0,u,0^+)=0.
$$
In particular, Proposition \ref{weiss.formula} implies that the $L_a$-harmonic function $p$ is $k$-homogeneous and consequently $p \in \mathfrak{B}^a_k(\R^{n+1})$. By Proposition \ref{Monneau} the limit $M(X_0,u,p_{X_0},0^+)$ exists and can be computed by
\begin{align*}
M(X_0,u,p_{X_0},0^+) &= \lim_{j\to\infty}{M(X_0,u,p_{X_0},r_j)}\\
&= \lim_{j\to\infty}{M(0,u_{X_0,r_j},p,1)}\\
&= \lim_{j\to\infty}{\int_{\partial B_1}{\abs{y}^a(u_{X_0,r_j} -p)^2\,d\sigma}}=0.
\end{align*}
Moreover, let us suppose by contradiction that for any other sequence $r_i\to 0^+$ we have that the associated sequence converges to another blow-up limit, i.e. $u_{X_0,r_i}\to q\in \mathfrak{B}^a_k(\R^{n+1}), q\not\equiv p$, then
\begin{align*}
0=M(X_0,u,p_{X_0},0^+) &= \lim_{i\to \infty}M(X_0,u,p_{X_0},r_i)\\
&=\lim_{i\to\infty}\int_{\partial B_1}{\abs{y}^a (u_{r_i} - p)^2\,d\sigma}\\
&=\int_{\partial B_1}{\abs{y}^a (q- p)^2\,d\sigma}.
\end{align*}
As we claim, since $q$ and $p$ are both homogenous of degree $k$ they must coincide in $\R^n$.
\end{proof}
Inspired by the previous uniqueness and nondegeneracy results, we introduce the notion of tangent map at every point on the nodal set $\Gamma(u)$.
\begin{definition}\label{tangent.map}
   Given $a \in (-1,1)$, let $u$ be an $L_a$-harmonic function in $B_1$ and $X_0 \in \Gamma_k(u)\cap \Sigma$, for $k\geq \min\{1,1-a\}$. We define as \emph{tangent map} of $u$ at $X_0$ the unique nonzero map $\varphi^{X_0} \in \mathfrak{B}^a_k(u)$ such that
  $$
  u_{X_0,r}(X)=\frac{u(X_0+rX)}{r^k} \longrightarrow \varphi^{X_0}(X).
  $$
  Moreover, we define as \emph{normalized} tangent map of $u$ at $X_0$, the unique nonzero map $p^{X_0} \in \mathfrak{B}^a_k(u)$ normalized with respect to the $L^{2,a}(\partial B_1)$ norm, i.e. the map obtained as
  $$
  u_{X_0,r}(X)=\frac{u(X_0+rX)}{\sqrt{H(X_0,u,r)}} \longrightarrow  p^{X_0}.
  $$
\end{definition}
Exploiting the deep connection between the existence and uniqueness of the tangent map and the Taylor expansion of an $L_a$-harmonic function, we can find another characterization of the sets $\Gamma_k(u)$.
\begin{corollary}\label{gamma_k}
   For $a\in (-1,1)$, let $u$ be an $L_a$-harmonic function in $B_1$ and $X_0 \in \Gamma_k(u)\cap \Sigma$, with $k\geq \min\{2,2-a\}$. Then
   \begin{itemize}
     \item if $k \in 2+\N$, we have $D^{\nu}u(X_0)=0$ for every $\abs{\nu}\leq k-1$ and there exists $\abs{\nu_0}=k$ such that $D^{\nu_0}u(X_0)\neq0$;
     \item if $k \in 2-a+\N$, we have $D^{\nu}(u y\abs{y}^{-a})(X_0)=0$ for every $\abs{\nu}\leq k-1$ and there exists $\abs{\nu_0}=k$ such that $D^{\nu_0}(u y\abs{y}^{-a})(X_0)\neq0$.
   \end{itemize}
\end{corollary}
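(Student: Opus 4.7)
The statement is essentially a restatement of the uniqueness and homogeneity of the tangent map (Theorem \ref{uniqueness}), combined with the decomposition of $L_a$-harmonic functions into symmetric and antisymmetric parts. The plan is to split according to which symmetry class the tangent map belongs to, and then to reduce each case to a classical Taylor expansion statement for a smooth representative.

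First, I would invoke Proposition \ref{gap} to observe that at any $X_0 \in \Gamma_k(u) \cap \Sigma$ the tangent map $\varphi^{X_0}$ lies in either $\mathfrak{sB}^a_k(\R^{n+1})$ (when $k \in 1 + \N$) or $\mathfrak{aB}^a_k(\R^{n+1})$ (when $k \in 1-a+\N$). The hypothesis $k \geq \min\{2, 2-a\}$ rules out the smallest admissible values and forces the two cases of the corollary: $k \in 2 + \N$ (symmetric) or $k \in 2 - a + \N$ (antisymmetric). By Proposition \ref{even.odd} and the decomposition \eqref{decompos}, one can write $u = u_e^a + u_e^{2-a} y\abs{y}^{-a}$, with both $u_e^a$ and $u_e^{2-a}$ of class $C^\infty_{\loc}$ by Proposition \ref{smooth}; moreover the two frequency ladders $1+\N$ and $1-a+\N$ are disjoint for $a \neq 0$, so exactly one of the two smooth representatives $u_e^a$ and $u_e^{2-a}$ will be responsible for the tangent behaviour at $X_0$.

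In the symmetric case $k \in 2 + \N$, the dominant contribution will come from $u_e^a$; since $u_e^a$ is smooth and symmetric $L_a$-harmonic, the analysis of Lemma \ref{gap.even} identifies its Almgren frequency at $X_0$ with its classical vanishing order, so that its Taylor polynomial at $X_0$ starts precisely at order $k$. This yields $D^\nu u_e^a(X_0) = 0$ for $\abs{\nu} \leq k-1$, with $D^{\nu_0} u_e^a(X_0) \neq 0$ for some $\abs{\nu_0} = k$ corresponding to the tangent polynomial $\varphi^{X_0}$. The antisymmetric complement $u_e^{2-a} y\abs{y}^{-a}$ has Almgren order strictly greater than $k$, since it lies in $2-a+\N$, which is disjoint from $2+\N$; hence its classical derivatives up to order $k-1$ at $X_0$ will vanish and the same equalities lift from $u_e^a$ to the full function $u$.

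Finally, in the antisymmetric case $k \in 2 - a + \N$, I would apply the same reasoning to the smooth representative $u_e^{2-a}$, which has classical vanishing order $k - 1 + a \in 1 + \N$ at $X_0$, and then transfer the conclusion back into a statement on $u \cdot y\abs{y}^{-a}$ via the identity implicit in Proposition \ref{even.odd}. The hard part will be the regularity gap: the product $u \cdot y\abs{y}^{-a}$ is not classically differentiable across $\Sigma$ in general, but the content of the claim really lives in the Taylor expansion of the smooth factor $u_e^{2-a}$, which captures all the information extracted from the tangent map; the nondegeneracy established in Lemma \ref{nondegeneracy} guarantees the existence of the nonvanishing derivative of order exactly $k$ in each case.
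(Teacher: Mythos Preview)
Your approach is essentially what the paper intends: the corollary is stated without proof, immediately after Theorem~\ref{uniqueness} and Definition~\ref{tangent.map}, as a direct consequence of the uniqueness of the tangent map together with the decomposition~\eqref{decompos} and the smoothness in Proposition~\ref{smooth}. Your plan of splitting via Proposition~\ref{gap}, passing to the smooth representatives $u_e^a$ and $u_e^{2-a}$, and reading off classical vanishing orders from the tangent polynomial is exactly the implicit argument.

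One point deserves more care than you give it. In the symmetric case $k\in 2+\N$ you write that the antisymmetric complement ``has Almgren order strictly greater than $k$ \dots\ hence its classical derivatives up to order $k-1$ at $X_0$ will vanish and the same equalities lift from $u_e^a$ to the full function $u$.'' But for $a>0$ the factor $y\abs{y}^{-a}$ is not even $C^1$ across $\Sigma$, so the classical derivatives of $u_o$ at $X_0$ are not a priori defined; higher Almgren vanishing does not automatically give you classical differentiability. You flag this regularity gap in the antisymmetric case but not here, and the same issue is present. The resolution is the one you hint at for the second bullet: the content of both statements really lives in the smooth factors $u_e^a$ and $u_e^{2-a}$, and the derivatives of $u$ (resp.\ of $u\, y\abs{y}^{-a}$) at $X_0$ should be understood through those representatives. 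Once you make that identification explicit, the argument goes through as you describe.
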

Finally, we can prove the validity of the weak unique continuation principle for the restriction of $\Gamma(u)$ on $\Sigma$. This result will improve the study of the nodal set of $u$ by showing that its restriction on the characteristic manifold $\Sigma$ is either with empty interior in $\Sigma$ or is $\Sigma$ itself. While in \cite{Ruland} the author proved a similar weak unique continuation property using a boot strap argument based on some regularity estimates for the $L_a$-operator, in our case we want to emphasize how our blow-up analysis and the classification of the tangent maps allow to study several local property of $L_a$-harmonic function.
\begin{proposition}\label{ucp.sigma}
  Let $a\in (-1,1)$ and $u$ be an $L_a$-harmonic function in $B_1$. If there exists $X_0 \in B_1\cap \Sigma$ and $R<1-\abs{X_0}$ such that $u = 0$ on $B_R(X_0)\cap \Sigma$, then $u\equiv 0$ on $B_1\cap \Sigma$.
\end{proposition}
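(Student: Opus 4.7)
The plan is to reduce the statement, via the decomposition \eqref{decompos}, to a unique continuation question for the symmetric part of $u$, and then rule out the only possible obstruction using the rigidity of symmetric $L_a$-harmonic polynomials on $\Sigma$. First I would write $u = u_e^a + u_e^{2-a}\,y|y|^{-a}$, where $u_e^a$ is an $L_a$-harmonic symmetric function and $u_e^{2-a}$ an $L_{2-a}$-harmonic symmetric function. Since $a\in(-1,1)$, the factor $y|y|^{-a}$ vanishes continuously as $y\to 0$, so $u|_\Sigma \equiv u_e^a|_\Sigma$. The hypothesis therefore becomes $u_e^a \equiv 0$ on $B_R(X_0)\cap\Sigma$, and it suffices to show $u_e^a \equiv 0$ throughout $B_1$.

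I would then argue by contradiction, assuming $u_e^a \not\equiv 0$ in $B_1$. By Proposition \ref{smooth}, $u_e^a$ is smooth, and Corollary \ref{unique.continuation} rules out infinite-order vanishing at $X_0$; combined with Lemma \ref{gap.even}, this yields a finite Almgren frequency $k := N(X_0,u_e^a,0^+) \in 1+\N$. Theorem \ref{uniqueness} then provides a unique nontrivial tangent map $\varphi^{X_0}\in\mathfrak{sB}_k^a(\R^{n+1})$, realized as the $C^{0,\alpha}_\loc$-limit of the rescalings $u_e^a(X_0+rX)/r^k$. The vanishing on $B_R(X_0)\cap\Sigma$ now transfers to the blow-up: for any fixed $X\in\Sigma$ and every $r$ with $r(1+|X|)<R$, one has $X_0+rX\in B_R(X_0)\cap\Sigma$, so the quotient is identically zero at such $X$, and pointwise convergence forces $\varphi^{X_0}|_\Sigma\equiv 0$.

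To close the argument I would invoke Lemma \ref{garofalo}: a symmetric $L_a$-harmonic polynomial is uniquely determined by its trace on $\Sigma$, so $\varphi^{X_0}|_\Sigma\equiv 0$ forces $\varphi^{X_0}\equiv 0$, contradicting its nontriviality. This yields $u_e^a\equiv 0$ in $B_1$ and hence $u|_\Sigma = u_e^a|_\Sigma \equiv 0$ on $B_1\cap\Sigma$.

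The main obstacle I anticipate is a conceptual one: making sure that the planar vanishing transfers faithfully through the blow-up. The point is that one must use the \emph{unique} tangent map produced by Theorem \ref{uniqueness} rather than a mere subsequential limit, since otherwise the constant-zero information on $\Sigma$ might a priori survive only along some rescalings. Once this is set up properly, the rigidity step is essentially automatic from Lemma \ref{garofalo}, and no new analytic estimate is required beyond what the previous sections already supply.
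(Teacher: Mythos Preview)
Your proof is correct and follows the same overall architecture as the paper's: reduce to the symmetric part $u_e$, assume $u_e\not\equiv 0$, blow up at $X_0$ to obtain a nontrivial $\varphi^{X_0}\in\mathfrak{sB}^a_k(\R^{n+1})$, observe that the vanishing on $B_R(X_0)\cap\Sigma$ forces $\varphi^{X_0}\lvert_\Sigma\equiv 0$, and derive a contradiction from the rigidity of symmetric $L_a$-harmonic polynomials.

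The one genuine difference is in the rigidity step. You invoke Lemma~\ref{garofalo} (uniqueness of the symmetric $L_a$-harmonic polynomial extension of a given trace on $\Sigma$) to conclude directly that $\varphi^{X_0}\lvert_\Sigma\equiv 0$ forces $\varphi^{X_0}\equiv 0$. The paper instead proves this by induction on the homogeneity $k$: for $k=1$ the only symmetric tangent maps are linear in $x$, and for the inductive step one differentiates in the $x_j$-directions (each $\partial_{x_j}\varphi^{X_0}$ is $L_a$-harmonic, symmetric, $(k{-}1)$-homogeneous, and still vanishes on $\Sigma$), then rules out any residual $y$-dependence by noting that the only $L_a$-harmonic function of $y$ alone is antisymmetric. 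Your route is shorter and equally valid; the paper's route is self-contained and does not rely on the extension lemma from \cite{GaRo}.

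One remark on your ``main obstacle'': the worry about needing the \emph{unique} tangent map is unnecessary here. Even a subsequential blow-up limit $p$ from Theorem~\ref{blowup.convergence} satisfies $\norm{p}{L^{2,a}(\partial B_1)}=1$, and the $L^\infty_\loc$ convergence along that subsequence already forces $p\lvert_\Sigma\equiv 0$, giving the same contradiction. Uniqueness is convenient but not essential for this particular argument.
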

\begin{proof}
  Let $X_0 \in \Gamma(u)\cap \Sigma$ and $R< 1- \abs{X_0}$. Since we are focusing the attention on the restriction of the nodal set on $\Sigma$, by definition of the symmetric part of $u$ with respect to $\Sigma$, we can assume that $u=u_e$ is purely symmetric with respect to $\Sigma$.\\
  The idea of the proof is to prove that $u$ is identically zero in the whole ball $B_R(X_0)$ in order to apply the Strong Unique continuation property Corollary \ref{unique.continuation}, which is actually a stronger result since it does not only concern the trace of $u$ on $\Sigma$.\\
  Suppose by contradiction that $u\not \equiv 0$ on $B_R(X_0)$, then
  $$
  H(X_0,u,r)=\frac{1}{r^{n+a}}\int_{\partial B_r(X_0)}{\abs{y}^a u^2 \mathrm{d}X} >0
  $$
  for all $r\in (0,R)$. Now, since $X_0 \in \Gamma(u)$, there exists by Theorem \ref{uniqueness} a unique nontrivial tangent map $\varphi^{X_0}\in \mathfrak{B}^a_k(\R^{n+1})$ of $u$ at $X_0$, where $k=N(X_0,u,0^+)$. Since $u$ is symmetric with respect to $\Sigma$, by Corollary \ref{gap.even} we know that $\varphi^{X_0}\in \mathfrak{sB}^a_k(\R^{n+1})$, with $k\in 1+\N$.
  \\Let us see the points in $B_R(X_0)\cap \Sigma$ as the collection of point $X_0+r \nu$ for $r<R$ and $\nu \in S^{n}\cap \Sigma$. By the $L^\infty_{\loc}$ convergence of the blow-up sequence we obtain that
  $\varphi^{X_0}(\nu)=0$ for all $\nu \in S^n\cap \Sigma$, i.e. $\varphi^{X_0}\equiv 0$ on $\Sigma$. Let us prove now that $\varphi^{X_0} \equiv 0$ on $\R^{n+1}$ by induction on the homogeneity $k=N(0,\varphi^{X_0},0^+)$. \\Let $k=1$, then up to a rotation $\varphi^{X_0}(x,y)=C\langle X,e_1\rangle = Cx_1$, where $x=(x_1,\cdots,x_n)$ and consequently $C=0$. Now let us suppose that every $k$-homogenous $L_a$-harmonic polynomial symmetric with respect to $\Sigma$ which is zero on $\Sigma$ is actually identically zero in $\R^{n+1}$ and consider the case $k+1$. Given $v_i = \partial_{x_i}\varphi^{X_0} \in H^{1,a}(B_1)$ we have that
  $$
  \begin{cases}
    L_a v_i =0 & \mbox{in } \R^{n+1}, \\
    v_i = 0 & \mbox{on } \Sigma, \\
    N(0,v_i,0^+)\leq k.
  \end{cases}
  $$
  By the induction hypothesis we have that for every $i=1,\cdots,n$ $v_i\equiv 0$ on $\R^{n+1}$, i.e. $\partial_{x_i} \varphi^{X_0}\equiv 0$ and consequently $\varphi^{X_0}$ does not depend on $x\in \Sigma$. The absurd follows immediately since the only $L_a$-harmonic polynomial in the $y$-variable is purely antisymmetric and equal, up to a multiplicative constant, to $f(y)=y\abs{y}^{-a}$.
\end{proof}
Inspired by the doubling estimate in \cite{Ruland}, we obtain
\begin{proposition}\label{empty.interior}
       Let $a \in (-1,1)$ and $u$ a $L_a$-harmonic function in $B_1$. Then $\Gamma(u)$ has empty interior in $\R^{n+1}$ and its restrictions $\Gamma(u) \cap \Sigma$ is either equal to $\Sigma$ or it has empty interior in $\Sigma$ itself. More generally,
       $$
       \Gamma(u)\cap \Sigma = \Gamma(u_e)\cap \Sigma.
       $$
     \end{proposition}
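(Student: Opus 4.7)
The statement comprises three claims, and I would prove them in the order (a) $\Gamma(u)$ has empty interior in $\R^{n+1}$, then (c) the identity $\Gamma(u)\cap\Sigma = \Gamma(u_e)\cap\Sigma$, then (b) the dichotomy for $\Gamma(u)\cap\Sigma$. The required machinery is already in place: the strong unique continuation of Corollary \ref{unique.continuation}, the weak unique continuation on $\Sigma$ of Proposition \ref{ucp.sigma}, the symmetric/antisymmetric decomposition of $u$, and the local regularity of Proposition \ref{smooth}. Part (a) is immediate: if $\Gamma(u)$ contained a ball $B_r(X_0)$, then $u\equiv 0$ on it, so $\int_{|X-X_0|<\rho} u^2\,\mathrm{d}X = 0$ for every $\rho < r$ and $u$ vanishes of infinite order at $X_0$. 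Corollary \ref{unique.continuation} then forces $u\equiv 0$ in $B_1$, contrary to the standing assumption of nontriviality.

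For (c), the key observation is that the antisymmetric part $u_o(x,y)=\tfrac12\bigl(u(x,y)-u(x,-y)\bigr)$ vanishes identically on $\Sigma$. Combining the decomposition \eqref{decompos} with Proposition \ref{smooth}, $u_o$ is $C^{0,\alpha}_{\loc}(B_1)$ for some $\alpha\in(0,\min\{1,1-a\})$, hence continuous up to $\Sigma$; the antisymmetry relation $u_o(x,0) = -u_o(x,0)$ then forces $u_o\equiv 0$ on $\Sigma$. Consequently $u|_\Sigma = u_e|_\Sigma$, and the set equality $\Gamma(u)\cap\Sigma = \Gamma(u_e)\cap\Sigma$ follows at once. (Alternatively, one can use that $u_o = u_e^{2-a}(X)\,y|y|^{-a}$ with $u_e^{2-a}$ locally smooth by Proposition \ref{smooth}, and note that the factor $y|y|^{-a}$ tends to $0$ as $y\to 0$ because $1-a>0$; both viewpoints yield the same conclusion.)

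Part (b) now reduces to the corresponding dichotomy for the symmetric part $u_e$. If $\Gamma(u)\cap\Sigma$ has nonempty interior relative to $\Sigma$, then by (c) there exist $X_0\in B_1\cap\Sigma$ and $R<1-|X_0|$ such that $u_e\equiv 0$ on $B_R(X_0)\cap\Sigma$. Since $u_e$ is $L_a$-harmonic and symmetric with respect to $\Sigma$, Proposition \ref{ucp.sigma} applies and gives $u_e\equiv 0$ on $B_1\cap\Sigma$, that is $\Gamma(u)\cap\Sigma = B_1\cap\Sigma$. Hence either $\Gamma(u)\cap\Sigma$ has empty interior in $\Sigma$ or it coincides with all of $\Sigma$. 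The argument is essentially an assembly of results already established; the only mild subtlety is ensuring continuity of $u_o$ up to $\Sigma$, and this is precisely what Proposition \ref{smooth} provides over the entire ball $B_1$ (not merely off the characteristic manifold).
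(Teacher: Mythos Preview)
Your proof is correct. The route differs from the paper's in two places. For part (a), you invoke Corollary~\ref{unique.continuation} directly, whereas the paper argues via the ODE satisfied by $r\mapsto H(X_0,u,r)$: if $d=\mathrm{dist}(X_0,\partial\Gamma(u))>0$ then $H$ solves $H'(r)=(2N(X_0,u,r)/r)\,H(r)$ with $H(d)=0$ and bounded coefficient, so $H\equiv 0$ by uniqueness of the Cauchy problem. For part (b), you cite Proposition~\ref{ucp.sigma} and are done; the paper instead reproves the key step in a self-contained way, observing that a symmetric $L_a$-harmonic function with $u_e=0$ on $B_d(X_0)\cap\Sigma$ automatically has $\partial_y^a u_e=0$ there as well, so full Cauchy data vanish on $\Sigma$ and $u_e\equiv 0$ in $B_d(X_0)$, after which the same ODE argument on $H(X_0,u_e,\cdot)$ propagates this to all of $B_1$. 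Your approach is cleaner and more modular, exploiting that Proposition~\ref{ucp.sigma} is already available; the paper's argument has the virtue of not relying on the tangent-map machinery used in the proof of Proposition~\ref{ucp.sigma}.
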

     \begin{proof}
       Assume by contradiction that there exists $X_0 \in \Gamma(u)$ such that $d = \mbox{dist}(X_0,\partial \Gamma(u)) < R$, where $R=1-\abs{X_0}$. By definition of $d$, we have $H(X_0,u,r)>0$ for $r \in (d,d+\varepsilon)$, for some $\varepsilon>0$. By \eqref{doubling1}, the map $r\mapsto H(X_0,u,r)$ solves the Cauchy problem
        \begin{equation}\label{diff.eq}
        \begin{cases}
          H'(r) = a(r) H(r), & \mbox{for } r \in (d,d+\varepsilon) \\
          H(d)=0,
        \end{cases}
        \end{equation}
        where $a(r)=2N(X_0,u,r)/r$, which is continuous at $d$ by the monotonicity result of $r\mapsto N(X_0,u,r)$, i.e. Proposition \ref{Almgren.formula}. Then by uniqueness, $H(r)\equiv 0$ for $r>d$, which contradicts the definition of $d$ and the assumption that $u$ is not identically zero in $B_1$. \\Now, let us consider $\Gamma(u)\cap \Sigma$. By definition of $u_e,u_o$ we easily obtain
        $$
        \Gamma(u)\cap \Sigma = \Gamma(u_e)\cap \Sigma.
        $$
        Hence, let us suppose that $u \not\equiv u_o$, i.e. $\Gamma(u)\cap \Sigma \varsubsetneq \Sigma$, and assume as before that there exists $X_0 \in \Gamma(u)\cap \Sigma =\Gamma(u_e)\cap \Sigma$ such that $d = \mbox{dist}(X_0,\partial \Gamma(u_e)\cap \Sigma) < R$, where $R=1-\abs{X_0}$. In other words, the symmetric part $u_e$ of $u$ solves for every $r<d$,
        $$
        \begin{cases}
          L_a u_e=0 & \mbox{on } B_r(X_0) \\
          u_e=0 & \mbox{on } B_r(X_0)\cap \Sigma \\
          \partial_y^a u_e =0 & \mbox{on } B_r(X_0)\cap \Sigma,
        \end{cases}
        $$
        which implies that $u_e \equiv 0$ in $B_d(X_0)$, i.e. $H(X_0,u_e,d)=0$. As before, by the uniqueness of the Cauchy problem \eqref{diff.eq}, we obtain that $u_e$ is identically zero in $B_1$, in contradiction with the assumption $\Gamma(u)\cap \Sigma \varsubsetneq \Sigma$.
     \end{proof}

Looking again to the blow-up sequence, we can establish an auxiliary result concerning the convergence with respect to the Hausdorff distance $d_{\mathcal{H}}$. In particular, we will prove that given the blow-up sequence $(u_{X_0,r})_r$ of $u$ at $X_0$, then the nodal sets $\Gamma(u_{X_0,r})$ converge to $\Gamma(\varphi^{X_0})$ with respect to the Hausdorff distance. More precisely, given two sets $A,B$, the Hausdorff distance $d_{\mathcal{H}}(A,B)$ is defined as
$$
d_{\mathcal{H}}(A,B) := \max\left\{\sup_{a\in A} \mbox{dist}(a, B),\, \sup_{b\in B} \mbox{dist}(A, b)\right\}.
$$
Notice that $d_{\mathcal{H}}(A,B)\leq \varepsilon$ if and only if $A\subseteq N_{\varepsilon}(B)$ and $B\subseteq N_{\varepsilon}(A)$, where $N_{\varepsilon}(\cdot)$ is the closed $\varepsilon$-neighborhood of a set, i.e.
$$
N_{\varepsilon}(A) = \left\{X\in \R^{n+1} : \mbox{dist}(X, A) \leq \varepsilon\right\}.
$$
\begin{proposition}\label{H.convergence.nodal.set}
  Let $u$ be an $L_a$-harmonic function in $B_1$ and $X_0\in \Gamma_k(u) \cap \Sigma $. Given, $u_{X_0,r}$ the blow-up sequence at $X_0$, i.e.
  $$
  u_{X_0,r}(X)=\frac{u(X_0+rX)}{r^k} \to \varphi^{X_0}(X).
  $$
  Then $\Gamma(u_{X_0,r})\cap \Sigma \to \Gamma(\varphi^{X_0}) \cap \Sigma $ with respect to the Hausdorff distance $d_{\mathcal{H}}$ in $B_1$. More precisely, for every $k\geq \min\{1,1-a\}$ we have that
  $$
  \Gamma_k(u_{X_0,r})\cap \Sigma  \to \Gamma_k(\varphi^{X_0})\cap \Sigma
  $$ with respect to the Hausdorff distance $d_{\mathcal{H}}$ in $B_1$
\end{proposition}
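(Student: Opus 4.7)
The plan is to establish the two Hausdorff inclusions separately. The upper semicontinuity, that every limit of zeros of the $u_{X_0,r}$ sits in $\Gamma(\varphi^{X_0})\cap\Sigma$, follows immediately from the $C^{0,\alpha}_\loc$ convergence of Theorem \ref{blowup.convergence} combined with the closedness of $\Sigma$: if $r_j\downarrow 0^+$ and $X_j\in\Gamma(u_{X_0,r_j})\cap\Sigma\cap\overline{B_1}$, then a compactness extraction gives $X_j\to X^*\in\Sigma\cap\overline{B_1}$, and uniform convergence yields $\varphi^{X_0}(X^*)=\lim_j u_{X_0,r_j}(X_j)=0$. A standard contradiction argument then promotes this pointwise fact to the uniform bound
$$
\sup_{X\in\Gamma(u_{X_0,r})\cap\Sigma\cap\overline{B_1}}\mathrm{dist}\bigl(X,\Gamma(\varphi^{X_0})\cap\Sigma\bigr)\longrightarrow 0 \quad\mbox{as } r\to 0^+.
$$

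For the opposite inclusion, I fix $Y\in\Gamma(\varphi^{X_0})\cap\Sigma\cap B_1$ and $\varepsilon>0$, and aim to produce $Y_r\to Y$ with $Y_r\in\Gamma(u_{X_0,r})\cap\Sigma\cap B_\varepsilon(Y)$ for all small $r$. The decisive input is a nondegeneracy statement for $\varphi^{X_0}$ on $\Sigma$: by Proposition \ref{ucp.sigma} (weak unique continuation on $\Sigma$) together with Proposition \ref{empty.interior}, the restriction $\varphi^{X_0}|_\Sigma$ cannot vanish on any nonempty open subset of $\Sigma$ whenever $\varphi^{X_0}$ is symmetric. Hence $\varphi^{X_0}|_\Sigma$ assumes both strictly positive and strictly negative values inside $B_{\varepsilon/2}(Y)\cap\Sigma$, and by locally uniform convergence the same two-sign condition persists for $u_{X_0,r}|_\Sigma$ once $r$ is small enough. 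The intermediate value theorem along a path in the connected set $\Sigma\cap B_\varepsilon(Y)$ joining the two sign regions then yields the desired $Y_r$.

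The refinement $\Gamma_k(u_{X_0,r})\cap\Sigma\to\Gamma_k(\varphi^{X_0})\cap\Sigma$ is obtained by combining the two preceding steps with the upper semicontinuity of the Almgren limiting frequency on $\Sigma$, inherited from Proposition \ref{Almgren.formula} as the infimum of continuous functions, and with the spectral gap of Proposition \ref{gap}: since admissible frequencies form the discrete set $(1+\N)\cup(1-a+\N)$, an approximating sequence of nodal points in $\Gamma_k(u_{X_0,r_j})$ cannot continuously drift to another stratum of $\varphi^{X_0}$ in the limit, so the frequency level $k$ is preserved. The main obstacle lies in the lower inclusion in the antisymmetric regime where $\varphi^{X_0}|_\Sigma\equiv 0$ and the naive sign-change argument is vacuous; there one must reroute through the tangent field $\Phi^{X_0}=(\varphi^{X_0}_e,\varphi^{X_0}_o)$, rescaling $u_e$ at its own vanishing order $k_e>k$ and applying the nondegeneracy-plus-sign-change argument to the symmetric component $\varphi^{X_0}_e$, whose trace on $\Sigma$ is a genuinely nontrivial polynomial by Proposition \ref{ucp.sigma}.
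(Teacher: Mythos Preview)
Your treatment of the full nodal set $\Gamma(u_{X_0,r})\cap\Sigma$ is fine, and your sign-change argument for the lower inclusion is a legitimate alternative to the paper's route (which argues instead that if $u_i$ were one-signed on $B_\delta(\overline X)$ then $\varphi^{X_0}$ would be a nonnegative $L_a$-harmonic function with an interior zero, forcing $\varphi^{X_0}\equiv 0$ by Harnack/nondegeneracy).  Both approaches work in the symmetric case.

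The genuine gap is in the refinement to $\Gamma_k$.  Two separate issues:

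\textbf{First inclusion.}  Given $X_i\in\Gamma_k(u_i)\cap\Sigma$ with $X_i\to\overline X$, you must show $N(\overline X,\varphi^{X_0},0^+)=k$.  The lower bound $\geq k$ is immediate from monotonicity of $r\mapsto N(X_i,u_i,r)$ and strong convergence.  But the upper bound $\leq k$ does \emph{not} follow from ``upper semicontinuity of the limiting frequency'': that statement concerns a fixed function, whereas here both the base point and the function vary.  What the paper does is undo the rescaling: since $N(X_i,u_i,r)=N(X_0+r_iX_i,u,r r_i)$ and $X_0+r_iX_i\in\Gamma_k(u)$ converges to $X_0\in\Gamma_k(u)$, one can show (via monotonicity for the fixed function $u$ and continuity of $N(\cdot,u,\rho)$ at $X_0$) that for every $\varepsilon>0$ there is $\overline R>0$ with $N(X_0+r_iX_i,u,\rho)\leq k+\varepsilon$ for all $\rho<\overline R$, uniformly in $i$.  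Taking $\rho=r r_i$ and then $i\to\infty$ traps $N(\overline X,\varphi^{X_0},r)$ in $[k,k+\varepsilon]$, and the gap finishes.  Your sketch does not supply this pull-back argument, and without it the spectral gap is of no help.

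\textbf{Second inclusion.}  Your sign-change argument produces \emph{a} zero of $u_i$ near $\overline X\in\Gamma_k(\varphi^{X_0})$, but says nothing about its vanishing order.  The paper closes this as follows: choose $\delta>0$ so small that $B_\delta(\overline X)\cap\Gamma(\varphi^{X_0})=B_\delta(\overline X)\cap\Gamma_k(\varphi^{X_0})$; if every zero of $u_i$ in $B_\delta(\overline X)$ had order $h\neq k$, then by the first-inclusion argument those zeros would accumulate on $\Gamma_h(\varphi^{X_0})\cap B_\delta(\overline X)=\emptyset$, a contradiction.  Hence the zero you found is automatically of order $k$.  This local-stratum step is missing from your outline.

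Finally, your observation about the antisymmetric regime is correct in spirit---when $\varphi^{X_0}|_\Sigma\equiv 0$ the sign-change argument is vacuous---but the proposed rerouting through $u_e$ at its own order $k_e>k$ proves convergence of a \emph{different} family of nodal sets (those of the symmetric parts at a different scale), not the statement as written.  In practice the proposition is applied in the paper only to symmetric functions (cf.\ the reductions preceding Corollary~\ref{disjoint}), so this is more a caveat about scope than a defect of your argument.
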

\begin{proof}
Let $r_i\to 0^+$ and $u_i = u_{X_0,r_i}$ be the blow-up sequence of $u$ at $X_0$ associated with $r_i$ and $\Gamma_k(u_i)$ be the sequence of nodal sets associated with the blow-up sequence. Through the proof, we will omit the fact that we are just focusing on the restriction of the nodal sets on $\Sigma$ and we will call $\Gamma_k(\varphi^{X_0})$ as the tangent cone of $\Gamma_k(u)$ at $X_0$. By Theorem \ref{uniqueness} we already know that $\varphi^{X_0}$ and $\Gamma(\varphi^{X_0})$ do not depend on the choice of the sequence $r_k$. By the definition of Hausdorff distance, the claimed result
$$
d_{\mathcal{H}}\left(\Gamma_k(u_i)\cap B_1,\Gamma_k(\varphi^{X_0})\cap B_1\right)\to 0
$$
is equivalent to prove that for every $\varepsilon>0$ there exists $\overline{i}>0$ such that for every $i\geq \overline{i}$
\begin{align*}
  \Gamma_k(u_i) \cap B_1& \subseteq N_\varepsilon\left(\Gamma_k(\varphi^{X_0})\cap B_1 \right)\\
    \Gamma_k(\varphi^{X_0})\cap B_1& \subseteq N_\varepsilon\left(\Gamma_k(u_i)\cap B_1\right).
\end{align*}
Supposing by contradiction that the first inclusion is not true, then there exist $\overline{\varepsilon}>0$ and a sequence $X_i \in \Gamma_k(u_i)\cap B_1$ such that $\mbox{dist}\left(X_i, \Gamma_k(\varphi^{X_0})\cap B_1\right) > \overline{\varepsilon}$.
Up to a subsequence, $X_i \to \overline{X} \in \Gamma(\varphi^{X_0}) \cap \overline{B_1}$ by the $L^\infty_{\loc}$ convergence of $u_i \to \varphi^{X_0}$.
Since $X_i \in \Gamma_k(u_i)$ is equivalent to $X_0 + r_i X_i \in \Gamma_k(u)$, given $\Omega\subset\subset B_1$ such that $(X_0+r_i X_i)_i \subset \Omega$, let us consider
\begin{align*}
R_1 =& \min_{p \in \overline{\Omega}}{\mbox{dist}(p,\partial B_1)}<1,\\
\widetilde{C} =& \sup_{p \in \overline{\Omega}}{N(p,u,R_1)}.
\end{align*}
Hence, by the monotonicity result Proposition \ref{Almgren.formula} and Corollary \ref{doubling.corollary.Sigma}, for $p \in \Omega\cap \Gamma_k(u)$ and $r<R_1$ we obtain that $N(p,u,r)\geq k$ and
$$
N(p,u,r)\leq N(p,u,R_1)\left(\frac{R_1}{r}\right)^{n+a-1+2\tilde{C}}\leq \tilde{C}\frac{1}{r^{n+a-1+2\tilde{C}}}.
$$
In particular, from the second inequality we can easily state that for every $\varepsilon>0$ there exists $\overline{R}=\overline{R}(n,a,\Omega,\varepsilon)>0$ such that   $$N(p,u,r)\leq k+\varepsilon,$$
for every $p \in \Omega \cap \Gamma_k(u)$ and $r<\overline{R}$.\\
Now, since for $i>0$ sufficiently large $N(X_i,u_i,r) \leq N(X_0+r_i X_i ,u, r)$, if we take $p=X_0+r_i X_i$ in the previous inequality, we obtain that there exists $\overline{R}=\overline{R}(n,a,X_0)>0$ sufficiently small, such that for $r<\overline{R}$ we have
$$
k\leq N(X_i,u_i,r)\leq k + \min\left(\frac{1}{2},\frac{1-a}{2},\frac{\abs{a}}{2}\right).
$$
Since $\lim_{i} N(X_i,u_i,r) = N(\overline{X},\varphi^{X_0},r)$ for sufficiently small $r$, we directly obtain from Proposition \ref{gap} that $N(\overline{X},\varphi^{X_0},0^+)=k$, i.e. $\overline{X} \in \Gamma_k(\varphi^{X_0}) \cap \overline{B_1}$.
Finally, the absurd follows immediately since $\Gamma_k(\varphi^{X_0}) \cup \{0\}$ is an homogeneous cone passing through the origin and hence it implies that $\mbox{dist}(\overline{X},\Gamma_k(\varphi^{X_0}) \cap B_1) = 0$.\\
Now let us consider the second inclusion, i.e. for every $\varepsilon>0$ there exists $\overline{i}>0$ such that for every $i\geq \overline{i}$
$$
\Gamma_k(\varphi^{X_0})\cap B_1 \subseteq N_\varepsilon\left(\Gamma_k(u_i)\cap B_1\right).
$$
Let us start by proving that
given $\overline{X} \in \Gamma_k(\varphi^{X_0})$ and $\delta >0$ such that $B_\delta(\overline{X}) \cap \Gamma(\varphi^{X_0}) = B_\delta(\overline{X}) \cap \Gamma_k(\varphi^{X_0})$ there exists $\overline{i}>0$ such that for every $i\geq \overline{i}$ the function $u_i$ must admit a zero of order $k$ in $B_\delta(\overline{X})$, $\Gamma_k(u_i)\cap B_\delta(\overline{X})$. Suppose it is not true, we would have two possibilities: first that $u_i >0 $ in $B_\delta(\overline{X})$ for every $k>0$ or secondly that every zeros of $u_i$ is not of order $k$. In the first case, the positivity implies that $\varphi^{X_0}$ must be an homogeneous $L_a$-harmonic function nonnegative in $B_\delta(\overline{X})$ with $\varphi^{X_0}(\overline{X})=0$, and therefore $\varphi^{X_0} \equiv 0 $ in $\R^{n+1}$. In this case the contradiction follows by Lemma \ref{nondegeneracy} and Theorem \ref{uniqueness}.\\
Secondly, since up to a subsequence there exists a sequence $X_i \in \Gamma_h(u_i)\cap B_\delta(\overline{X})$ for $h\neq k$, by arguing as in the proof of the other inclusion, we can prove that $X_i \to \widetilde{X} \in \Gamma_h(\varphi^{X_0})\cap B_\delta(\overline{X})$, in contradiction with the definition of $\delta>0$.\\
Finally, suppose the existence of $\overline{\varepsilon}> 0$ and $X_i \in \Gamma_k(\varphi^{X_0})\cap B_1, X_i \to X \in \overline{\Gamma_k(\varphi^{X_0})}\cap \overline{B_1}$, such that
$\mbox{dist}(X_i, \Gamma_k(\varphi^{X_0})\cap B_1) > \overline{\varepsilon}$. Since $\overline{X}=\{0\}$ is a trivial case, let us focus on the case $\overline{X} \in \Gamma_k(\varphi^{X_0}) \cap \overline{B_1}$. By definition, $\Gamma_k(\varphi^{X_0})\cup \{0\}$ is an homogenous cone passing through the origin and hence we can take $\overline{X} \in \Gamma_k(\varphi^{X_0})\cap B_1$ such that $\abs{X - \overline{X}}\leq \overline{\varepsilon}/4$. Moreover, by the previous paragraph, there exist a sequence $\overline{X}_i \in \Gamma(u_i) \cap B_1$ and $\overline{i}>0$, such that for $i\geq \overline{i}$ we have $\abs{ \overline{X}_i - \overline{X}}\leq \min\{\delta,\overline{\varepsilon}\}/4$
Hence, we obtain
$$
\mbox{dist}(X_i, \Gamma_k(\varphi^{X_0})\cap B_1) \leq \abs{X_i - \overline{X}_i} \leq \abs{X_i - X} + \abs{X - \overline{X}} + \abs{\overline{X} - \overline{X}_i} %\leq \frac{3\overline{\varepsilon}}{4}
< \overline{\varepsilon},
$$
which leads a contradiction for large $i>0$.
\end{proof}

The following result will be a fundamental tool in the study of $\Gamma(u)\cap \Sigma$. Indeed, by using the continuation of the tangent map with respect to the $L^{2,a}(\partial B_1)$, we will prove a separation property for the set $\Gamma_k(u)\cap \Sigma$, for $k\geq \min\{2,2-a\}$.
\begin{theorem}[Continuation of the tangent map on $\Gamma_k(u)$]
\label{continuation}
Let $X_0\in \Gamma_{k}(u) \cap \Sigma$ and $\varphi^{X_0}$ be the tangent map of $u$ at $X_0$, such that
\begin{equation}\label{eq.continuation}
u(X)=\varphi^{X_0}(X-X_0) + o(\abs{X-X_0}^{k}).
\end{equation}
Then, the map $X_0 \mapsto \varphi^{X_0}$ (from $\Gamma_k(u)$ to $\mathfrak{B}_k^a(\R^{n+1})$) is continuous.
% with respect to the norm $L^{2,a}(\partial B_1)$.
Moreover, for any compact set $K \subset \Gamma_k(u) \cap B_1$ there exists a modulus of continuity $\sigma_K$ such that $\sigma_K(0)=0$ and
$$
\abs{u(X)-\varphi^{X_0}(X-X_0)}\leq \sigma_K\left(\abs{X-X_0}\right)\abs{X-X_0}^k,
$$
for any $X_0\in K$.
\end{theorem}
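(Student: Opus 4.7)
The plan is to combine the Monneau-type monotonicity formula (Proposition \ref{Monneau}) with a compactness argument for the tangent maps in the finite-dimensional space $\mathfrak{B}^a_k(\mathbb{R}^{n+1})$, to upgrade the resulting pointwise convergence to a uniform modulus via Dini's theorem, and to close the argument with a Moser-type $L^\infty$ bound.

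First I would collect uniform estimates on $K$. By Almgren's monotonicity (Proposition \ref{Almgren.formula}) and the doubling estimate (Corollary \ref{doubling.corollary.Sigma}), one has $H(X_0, u, r) \leq C_K\, r^{2k}$ uniformly for $X_0 \in K$ and small $r$. Together with Lemma \ref{growth} and the finite-dimensionality of $\mathfrak{B}^a_k$ (Lemma \ref{caff.silv.salsa}), this implies that the family $\{\varphi^{X_0} : X_0 \in K\}$ is precompact in any reasonable norm.

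Next, I would prove the continuity of $X_0 \mapsto \varphi^{X_0}$. Let $X_j \to X_0$ in $K$ and, up to a subsequence, $\varphi^{X_j} \to \psi$ in $L^{2,a}(\partial B_1)$; the goal is to identify $\psi = \varphi^{X_0}$. A key rescaling identity, valid for $X \in \Sigma$ and any $p \in \mathfrak{B}^a_k$, reads
$$M(X, u, p_X, r) = \int_{\partial B_1} |y|^a \bigl(u_{X, r}(Z) - p(Z)\bigr)^2 \, d\sigma(Z),$$
where $u_{X, r}(Z) = u(X + rZ)/r^k$; this makes $X \mapsto M(X, u, p_X, r)$ continuous for each fixed $r > 0$ and fixed $p$. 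Applying Proposition \ref{Monneau} at the base point $X_j$ with the fixed polynomial $p = \varphi^{X_0}$ gives
$$\|\varphi^{X_j} - \varphi^{X_0}\|^2_{L^{2,a}(\partial B_1)} = M(X_j, u, \varphi^{X_0}_{X_j}, 0^+) \leq M(X_j, u, \varphi^{X_0}_{X_j}, r)$$
for any $r > 0$. Passing to the limit $j \to \infty$ yields $\|\psi - \varphi^{X_0}\|^2 \leq M(X_0, u, \varphi^{X_0}_{X_0}, r)$; then, letting $r \to 0$, Theorem \ref{uniqueness} (so that $u_{X_0, r} \to \varphi^{X_0}$ in $L^{2,a}(\partial B_1)$) forces the right-hand side to vanish, hence $\psi = \varphi^{X_0}$.

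For the uniform modulus, by the continuity just proven the map $X_0 \mapsto M(X_0, u, \varphi^{X_0}_{X_0}, r)$ is continuous on $K$ for each $r > 0$, is non-decreasing in $r$ (Monneau), and vanishes pointwise as $r \to 0^+$. Dini's theorem therefore furnishes a modulus $\omega_K(r) \to 0^+$ with $\sup_{X_0 \in K} \|u_{X_0, r} - \varphi^{X_0}\|_{L^{2,a}(\partial B_1)} \leq \omega_K(r)$. To upgrade this spherical $L^{2,a}$-control to the pointwise statement, set $v(Y) := u(Y) - \varphi^{X_0}(Y - X_0)$, which is $L_a$-harmonic. Applying Lemma \ref{moser} to $|v|$ on $B_r(X_0)$, and writing the ball integral of $|y|^a v^2$ as an average of its spherical counterparts while using the monotonicity of $\omega_K$, one obtains
$$\sup_{B_{r/2}(X_0)} |v|^2 \leq C\, r^{-(n+1+a)} \int_{B_r(X_0)} |y|^a v^2 \, dY \leq C\, r^{2k}\, \omega_K(r)^2,$$
which yields the statement with $\sigma_K(\rho) := C\, \omega_K(2\rho)$. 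The main obstacle lies in the continuity step: the delicate point is to justify, despite the degenerate or singular weight $|y|^a$, the continuous dependence of $M(X_j, u, \varphi^{X_0}_{X_j}, r)$ on the base point $X_j \in \Sigma$ at fixed $r > 0$, and then to correctly interchange the two limits $j \to \infty$ and $r \to 0^+$ via the Monneau monotonicity.
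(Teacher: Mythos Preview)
Your proposal is correct and follows essentially the same approach as the paper: both arguments rest on the Monneau monotonicity (Proposition \ref{Monneau}), the rescaling identity $M(X,u,p_X,r)=\int_{\partial B_1}|y|^a(u_{X,r}-p)^2\,d\sigma$, the identification $M(X,u,p_X,0^+)=\|\varphi^{X}-p\|_{L^{2,a}(\partial B_1)}^2$, and the Moser-type bound to pass from spherical $L^{2,a}$ control to the pointwise estimate. The only cosmetic difference is that the paper obtains the uniform modulus on $K$ by a direct $\varepsilon$--$\delta$ argument together with a finite covering of $K$, whereas you invoke Dini's theorem on the monotone family $r\mapsto M(X_0,u,\varphi^{X_0}_{X_0},r)$; these are equivalent packagings of the same idea. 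Your worry about the weight is unfounded: since all base points lie on $\Sigma$, translating by $X_j-X_0$ leaves $|y|^a$ invariant, so continuity of $M$ in the base point is immediate from the H\"older continuity of $u$.
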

\begin{proof}
Since $\mathfrak{B}^a_k(\R^{n+1})$ is a finite-dimensional linear space, all  norms are equivalent and hence we can then endow it with the norm of $L^{2,a}(\partial B_1)$.
Fixed $X_0\in\Gamma(u) \cap \Sigma$, by Theorem \ref{uniqueness} we have the following expansion
$$
u(X)=\varphi^{X_0}(X-X_0) + o(\abs{X-X_0}^{k}).
$$
where $\varphi^{X_0}$ is the unique blow-up limit of $u$ in $X_0$. Given $\varepsilon>0$, consider $r_\varepsilon=r_\varepsilon(X_0)$ such that
$$
M(X_0,u,\varphi^{X_0},r_\varepsilon)= \frac{1}{r^{n+a+2k}_\varepsilon}\int_{\partial B_{r_\varepsilon}}{\abs{y}^a\big(u(X_0+X)-\varphi^{X_0}(X)\big)^2\,d\sigma}<\varepsilon.
$$
There exists also $\delta_\varepsilon=\delta_\varepsilon(X_0)$ such that if $X_1\in \Gamma_k(u)\cap \Sigma$ and $\abs{X_1-X_0}<\delta_\varepsilon$ then
$$
\frac{1}{r^{n+a+2k}_\varepsilon}\int_{\partial B_{r_\varepsilon}}{\abs{y}^a\big(u(X_1+X)-\varphi^{X_0}(X))^2\,d\sigma}<2\varepsilon
$$%secondo me lo fai facendo questra riga minore di U(x+x_1)-U(x+X_0) +U(x+X_0) -p e il primo va via con la continuità e l'altra per sopra
or similarly
$$
\int_{\partial B_{1}}{\abs{y}^a\left( \frac{u(X_1+r_\varepsilon X)}{r^k_\varepsilon} - \varphi^{X_0}(X)\right)^2\,d\sigma}<2\varepsilon
$$
From Proposition \ref{Monneau}, we have that $M(X_1,u,\varphi^{X_0},r)<2\varepsilon$ for $r\in (0,r_\varepsilon)$, which implies
\begin{align*}
M(X_1,u,\varphi^{X_0},0^+)&=\lim_{r\to 0}{M(X_1,u,\varphi^{X_0},r)}\\
&=\lim_{r\to 0}\int_{\partial B_1}{\abs{y}^a\left( \frac{u(X_1+rX)}{r^k} - \varphi^{X_0}(X)\right)^2\,d\sigma}\\
&=\int_{\partial B_1}{\abs{y}^a\left( \varphi^{X_1}- \varphi^{X_0}\right)^2\,d\sigma}\leq 2\varepsilon.
\end{align*}
Now, by the previous computations, for $\abs{X_1-X_0}<\delta_\varepsilon, 0<r<r_\varepsilon$ we obtain
$$
\norm{u_{X_1,r}-\varphi^{X_1}}{L^{2,a}(\partial B_1)} \leq \norm{u_{X_1,r}-\varphi^{X_0}}{L^{2,a}(\partial B_1)}+ \norm{\varphi^{X_0}-\varphi^{X_1}}{L^{2,a}(\partial B_1)} \leq 2\sqrt{2\varepsilon},
$$
where $u_{X_1,r}$ and $u_{X_0,r}$ are the blow-up sequences defined in \eqref{blow.up.homogenous} centered respectively in $X_1$ and $X_0$. Now, covering the compact set $K\subset \Gamma_k(u)\cap B_1$ with finitely many balls $B_{\delta_\varepsilon(X_0^i)}(X_0^i)$, for some points $X_0^i \in K, i=1,\dots,N$, we obtain that the previous inequality is satisfied for all $X_1 \in K$ with $r<r_\varepsilon^K = \min\{r_\varepsilon(X_0^i)\colon i=1,\dots,N\}$.\\ Now, since $u_{X_1,r} -\varphi^{X_1}$ is an $L_a$-harmonic function in $B_1$, by \cite[Lemma A.2]{ww} and \eqref{doubling1}, we obtain
\begin{align*}
\sup_{B_{1/2}}\abs{u_{X_1,r}-\varphi^{X_1}}&\leq C(n,a)\left(\int_{B_1}{\abs{y}^a (u_{X_1,r}-\varphi^{X_1})^2\mathrm{d}X}\right)^{1/2}\\
& \leq 2C(n,a) \sqrt{\frac{2\varepsilon}{n+a+1}}
\end{align*}
for all $X_1 \in K, 0<r<r_\varepsilon^K$, which immediately implies the second part of the Theorem.
\end{proof}
The following definition allows us to study the structure of the restriction $\Gamma(u)\cap \Sigma$. Inspired by Proposition \ref{empty.interior}, since $\Gamma(u)\cap \Sigma = \Gamma(u_e)\cap \Sigma$, where $u_e$ is the symmetric part of $u$ with respect to $\Sigma$, we characterize the sets $\Gamma_k(u)$ starting from the unique tangent map of $u_e$. Moreover, since we are dealing with a purely symmetric function, we will see that the structure of the nodal set on $\Sigma$ is completely defined starting from the blow-up classes $\mathfrak{sB}^a_k(\R^{n+1})$ and $\mathfrak{B}^*_k(\R^{n+1})$.
\begin{definition}
  Given $u$ an $L_a$-harmonic function on $B_1$, for $k\geq \min\{1,1-a\}$ we define on $\Sigma$
  $$
  \Gamma^*_k(u)=\left\{X_0 \in \Gamma_k(u)\cap \Sigma\colon \varphi^{X_0}_e \in \mathfrak{sB}^*_k(\R^{n+1})\right\}\ \mbox{ and } \ \Gamma^a_k(u)= \Gamma_k(u)\setminus \Gamma^*_k(u),
  $$
  where $\varphi^{X_0}_e \in \mathfrak{sB}^a_k(\R^{n+1})$ is the unique tangent map of $u_e$ at $X_0$.
\end{definition}
In particular
  $ \Gamma_1(u)=\Gamma_1^*(u)$
  and for $k\geq 2$ the points in $\Gamma^a_k(u)$ are the ones whose tangent map depends on the variable $y$.
\begin{corollary}\label{disjoint}
  For every $k\geq 2$ we have that $\overline{\Gamma^*_k(u)}\cap \Gamma^a_k(u)=\emptyset = \Gamma^*_k(u)\cap\overline{\Gamma^a_k(u)}$.
\end{corollary}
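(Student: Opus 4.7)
The approach is to read both disjointness statements as saying that $\Gamma^*_k(u)$ and $\Gamma^a_k(u)$ are both relatively clopen in $\Gamma_k(u) \cap \Sigma$, and to produce each conclusion via the continuity of the tangent-map assignment. More precisely, Theorem~\ref{continuation} shows that the map $T : X_0 \mapsto \varphi^{X_0}_e$, defined on $\Gamma_k(u) \cap \Sigma$ with values in the finite-dimensional vector space $\mathfrak{sB}^a_k(\R^{n+1})$ (endowed with the $L^{2,a}(\partial B_1)$-norm, equivalent to any other norm on this finite-dimensional target), is continuous. Since $\mathfrak{sB}^*_k \subset \mathfrak{sB}^a_k$ is a linear subspace and hence topologically closed, the preimage $\Gamma^*_k = T^{-1}(\mathfrak{sB}^*_k)$ is closed in the relative topology of $\Gamma_k(u) \cap \Sigma$.

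The first equality $\overline{\Gamma^*_k(u)} \cap \Gamma^a_k(u) = \emptyset$ then follows at once by contradiction: any sequence $X_m \in \Gamma^*_k(u)$ converging to $X_0 \in \Gamma^a_k(u) \subset \Gamma_k(u) \cap \Sigma$ would yield, by continuity of $T$, that $\varphi^{X_m}_e \to \varphi^{X_0}_e$ in $L^{2,a}(\partial B_1)$; closedness of $\mathfrak{sB}^*_k$ then forces $\varphi^{X_0}_e \in \mathfrak{sB}^*_k$, contradicting $X_0 \in \Gamma^a_k = \Gamma_k \setminus \Gamma^*_k$. The second equality $\Gamma^*_k(u) \cap \overline{\Gamma^a_k(u)} = \emptyset$ does not follow from continuity alone, because $\mathfrak{sB}^a_k \setminus \mathfrak{sB}^*_k$ is not closed: tangent maps depending on $y$ could a priori converge in the finite-dimensional space to one that does not. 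To rule this out I would combine the continuity of $T$ with the Hausdorff convergence of the $k$-th stratum (Proposition~\ref{H.convergence.nodal.set}): if $X_m \in \Gamma^a_k(u)$ accumulates at $X_0 \in \Gamma^*_k(u)$, then, setting $r_m = |X_m - X_0|$, up to a subsequence the directions $Y_m = (X_m - X_0)/r_m$ converge to a unit vector $Y_* \in \Gamma_k(\varphi^{X_0}) \cap \Sigma$, and since $p := \varphi^{X_0}_e$ is a homogeneous harmonic polynomial of degree $k$, membership of $Y_*$ in $\Gamma_k(p)$ forces $p$ to be translation-invariant along $Y_*$.

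The conclusion of the second direction would then follow from a two-scale blow-up: applying the uniform modulus-of-continuity estimate of Theorem~\ref{continuation} both at $X_0$ and at $X_m$ in the regime $\rho \sim r_m$, the goal is to identify $\varphi^{X_m}_e$ with the degree-$k$ Taylor polynomial of $p$ centered at $Y_*$, which is harmonic and depends only on $x$, hence lies in $\mathfrak{sB}^*_k$ and yields the required contradiction. The main obstacle is exactly this identification: since $p$ is translation-invariant along $Y_*$, its Taylor polynomial at $Y_*$ coincides with $p$ itself, so the first-order two-scale matching is automatically consistent with $\varphi^{X_m}_e \to p$ without constraining any residual $y$-dependence. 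To extract this finer structural information one must pass to the next order in the Taylor expansion of $u_e$ around $X_0$, using the $C^\infty$ regularity from Proposition~\ref{smooth} together with the $L_a$-harmonicity of $u_e$, and track how the $(k+1)$-st order coefficient transfers across the two scales; this is the technical heart of the proof and where the bulk of the effort is expected to lie.
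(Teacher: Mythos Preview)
Your treatment of the first equality $\overline{\Gamma^*_k(u)}\cap\Gamma^a_k(u)=\emptyset$ is correct and matches the paper: continuity of $X_0\mapsto\varphi^{X_0}_e$ from Theorem~\ref{continuation}, together with the closedness of the linear subspace $\mathfrak{sB}^*_k$ inside the finite-dimensional space $\mathfrak{sB}^a_k$, gives the claim at once.

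For the second equality your proposal has a genuine gap, which you acknowledge. The two-scale blow-up you outline collapses at first order: because the blow-up limit $p=\varphi^{X_0}_e$ is translation-invariant along the limiting direction $Y_*$, the matching only recovers $\varphi^{X_m}_e\to p$, which is already known from continuity and says nothing about whether $\varphi^{X_m}_e$ depends on $y$. Your suggestion to pass to order $k+1$ in the Taylor expansion of $u_e$ is not carried out, and it is unclear how the $L_a$-equation would transfer a constraint from the $(k+1)$-st Taylor coefficients of $u_e$ at $X_0$ to the internal $y$-structure of $\varphi^{X_m}_e$; the Hausdorff-convergence input from Proposition~\ref{H.convergence.nodal.set} does not by itself see this structure.

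The paper's argument for this direction is algebraic rather than geometric and uses no secondary blow-up. Given $\varphi^{X_i}\in\mathfrak{sB}^a_k\setminus\mathfrak{sB}^*_k$ converging to $\varphi^{X_0}\in\mathfrak{sB}^*_k$, one passes to the auxiliary sequence
\[
\psi_i \;=\; -\partial^2_{yy}\varphi^{X_i}-\frac{a}{y}\,\partial_y\varphi^{X_i}\;=\;\Delta_x\varphi^{X_i},
\]
a sequence of $(k-2)$-homogeneous $L_a$-harmonic polynomials. Strong $H^{1,a}$ convergence (equivalent, by homogeneity, to $L^{2,a}(\partial B_1)$ convergence) and the $y$-independence of the limit force $\psi_i\to 0$. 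The contradiction is then obtained by induction on $k$: the base case $k=2$ is handled directly (the $y^2$-coefficient is a nonzero constant determined by the equation), and the inductive step reduces the degree via $\psi_i$. This reduction-in-degree device, not a two-scale expansion, is the mechanism you are missing.
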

\begin{proof}
The proof of this result is based on the continuation of the tangent map of $u$ on $\Gamma_k(u)\cap \Sigma$ with respect to the norm $L^{2,a}(\partial B_1)$. \\First, suppose by contradiction that there exists a sequence $(X_i)_i\subset \Gamma^*_k(u)$ such that $X_i \to X_0 \in \Gamma^a_k(u)$. Let $\varphi^{X_i}=\varphi^{X_i}_e$ and $\varphi^{X_0}=\varphi^{X_0}_e$ be respectively the tangent map of $u_e$ at $X_i$ and $X_0$, then by Theorem \ref{continuation} we obtain that $\varphi^{X_i} \to \varphi^{X_0}$ strongly in $L^{2,a}(\partial B_1)$, i.e. for every $\varepsilon >0$ there exists $N=N(\varepsilon)>0$ such that if $i>N$, then
$$
\int_{\partial B_1}{\abs{y}^{a}\left(\varphi^{X_i} -\varphi^{X_0}\right)^2\mathrm{d}\sigma} \leq \varepsilon.
$$
Hence, fixed $w_i=\varphi^{X_i} -\varphi^{X_0}$ we obtain that $L_a w_i=0$ in $B_1$ and $\norm{w_i}{L^{2,a}(\partial B_1)}\to 0$. Since $w_i$ is homogenous of degree $k$, we have
\begin{align*}
\int_{B_1}{\abs{y}^a w_i^2\mathrm{d}X} & = \frac{1}{n+a+2k+1}\int_{\partial B_1}{\abs{y}^a w_i^2 \mathrm{d}\sigma}\\
\int_{B_1}{\abs{y}^a \abs{\nabla w_i}^2\mathrm{d}X} &=k \int_{\partial B_1}{\abs{y}^a w_i^2 \mathrm{d}\sigma}
\end{align*}
which implies that $w_i \to 0$ strongly in $H^{1,a}(B_1)$. In particular, for every $\phi \in H^{1,a}(B_1)$ we have
$$
\int_{B_1}{\abs{y}^a \langle\nabla \varphi^{X_0},\nabla \phi\rangle\mathrm{d}X} = \lim _{i\to \infty} \int_{B_1}{\abs{y}^a \langle\nabla \varphi^{X_i},\nabla \phi\rangle\mathrm{d}X}
$$
If $\phi = \phi(y) \in C^\infty_c((-1,1))$ we obtain $\nabla \phi = \partial_y \phi\, e_y$ and consequently, since $\varphi^{X_i}\in \mathfrak{sB}^*_k(\R^{n+1})$, that
$$
\int_{B_1}{\abs{y}^a \partial_y \varphi^{X_0}\partial_y\phi \mathrm{d}X} = \lim _{i\to \infty} \int_{B_1}{\abs{y}^a \partial_y\varphi^{X_i}\partial_y \phi \mathrm{d}X} = 0,
$$
in contradiction with the fact that $\varphi^{X_0}\in \mathfrak{sB}^a_k(\R^{n+1})\setminus \mathfrak{sB}^*_k(\R^{n+1})$.\\
Similarly, suppose now there exists a sequence $(X_i)_i\subset \Gamma^a_k(u)$ such that $X_i \to X_0 \in \Gamma^*_k(u)$. As before, let $\varphi^{X_i}$ and $\varphi^{X_0}$ be respectively the tangent map of $u_e$ at $X_i$ and $X_0$, fixed $w_i = \varphi^{X_i}-\varphi^{X_0}$ we obtain
$$
\int_{B_1}{\abs{y}^a \phi\Delta_x w_i\mathrm{d}X} + \int_{B_1}{\abs{y}^a \left(-\partial^2_{yy}\varphi^{X_i}-\frac{a}{y}\partial_y \varphi^{X_i}\right)\phi\mathrm{d}X}= k\int_{\partial B_1}{\abs{y}^a w_i \phi \mathrm{d}\sigma}
$$
for every $\phi \in H^{1,a}(B_1)$. The idea now is to reach the contradiction by induction on $k$, proving that it is impossible that the sequence of $L_a$-harmonic polynomials in $\mathfrak{sB}^a_k(\R^{n+1})\setminus \mathfrak{sB}^*_k(\R^{n+1})$ converges strongly in the $L^{2,a}(\partial B_1)$-topology to a function in $\mathfrak{sB}^*_k(\R^{n+1})$.\\
First, for $\phi \in H^{1,a}_0(B_1)$, we have
$$
\abs{\int_{B_1}{\abs{y}^a \left(\partial^2_{yy}\varphi^{X_i}+\frac{a}{y}\partial_y \varphi^{X_i}\right)\phi\mathrm{d}X}}\leq C(n,k,a)\left(\norm{\nabla w_i}{L^{2,a}(B_1)}+ \norm{w_i}{L^{2,a}(B_1)}\right)\norm{\phi}{H^{1,a}(B_1)},
$$
which gives us that
\begin{equation}\label{psi}
\psi_i= -\partial^2_{yy}\varphi^{X_i}-\frac{a}{y}\partial_y \varphi^{X_i} \rightharpoonup 0 \ \mbox{ in } L^{2,a}(B_1),
\end{equation}
where $\psi_i$ is a sequence of homogeneous $L_a$-harmonic polynomial of degree $k-2\geq 0$. Since $X_i \mapsto\psi_i$ is continuous, by Lebesgue's dominated convergence theorem, we obtain $\norm{\psi_i}{L^{2,a}(B_1)}\to 0$, i.e. $\psi_i \to 0$ strongly in $L^{2,a}(B_1)$. \\Hence, let $k=2$ and $\varphi^{X_i}\in \mathfrak{sB}^a_2(\R^{n+1})\setminus \mathfrak{sB}^*_2(\R^{n+1})$ be the sequence that converges to some $\varphi^{X_0}\in \mathfrak{sB}^*_2(\R^{n+1})$. As in \eqref{psi}, let us consider the associate sequence $\psi_i$ of $L_a$-harmonic polynomial of degree $k-2=0$, i.e. a sequence of nonzero constants. Since $\varphi^{X_i}\in \mathfrak{sB}^a_2(\R^{n+1})\setminus \mathfrak{sB}^*_2(\R^{n+1})$, by the reasoning in Section \ref{section.blowup}, there exists, up to a multiplicative constant, a unique homogeneous polynomial $u^{X_i}=u^{X_i}(x)$ of degree $2$, such that
$$
\varphi^{X_i}(x,y)=u^{X_i}(x)-y^2 \ \mbox{ in }\R^{n+1},
$$
where $\Delta_x u^{X_i} = 2(1+a)$ in $\R^n$. In particular, by \eqref{psi} we obtain $\psi_i\equiv 2(1+a)$, and the contradiction follows immediately since $a\in (-1,1)$.\\Suppose now that we have proved the statement for every $k\leq K$ and let us consider the case $K+1$. By contradiction, let us suppose that $\mathfrak{sB}^a_{K+1}(\R^{n+1})\setminus \mathfrak{sB}^*_{K+1}(\R^{n+1})$ is not closed in the $L^{2,a}(\partial B_1)$ topology and $\varphi^{X_i}\to \varphi^{X_0}$ strongly in $L^{2,a}(\partial B_1)$, with $\varphi^{X_0}\in \mathfrak{sB}^*_{K+1}(\R^{n+1})$. \\Thus, we already know that the sequence $\psi_i$ defined by \eqref{psi} strongly converges to the zero function with respect to the $L^{2,a}(B_1)$ topology. Now, since $(\psi_i)_i$ are $(K-1)$-homogenous, we have that the $L^{2,a}(B_1)$ and $L^{2,a}(\partial B_1)$ topologies are equivalent. Finally, given that $0<K-1\leq K$, we have constructed a sequence of $(K-1)$-homogenous $L_a$-harmonic polynomials $\psi_i$ that converges to the zero function $0 \in \mathfrak{sB}^*_{K-1}(\R^{n+1})$, which contradicts the inductive hypothesis.
\end{proof}

In the uniformly elliptic case, as well known, the Almgren and Weiss monotonicity formulas allow us to prove uniqueness and non degeneracy of the tangent map and also to construct the generalized Taylor expansion of $u$ at $X_0$. In our degenerate-singular setting, since, as we already pointed out, the symmetric and antisymmetric cases are complementary, the notion of \emph{tangent field} of $u$ at a nodal point should be introduced in order to capture both this aspect of the solution $u$.
\begin{definition}
  Let $a \in (-1,1), u$ be an $L_a$-harmonic function in $B_1$ and $X_0 \in \Gamma_k(u)\cap\Sigma$, for some $k \geq \min\{1,1-a\}$. We define as \emph{tangent field} of $u$ at $X_0$ the unique nontrivial vector field $\Phi^{X_0} \in (H^{1,a}_\loc(\R^{n+1}))^2$ such that
  $$
  \Phi^{X_0}= (\varphi^{X_0}_e,\varphi^{X_0}_o),
  $$
  where $\varphi^{X_0}_e$ and $\varphi^{X_0}_o$ are respectively the tangent map of the symmetric part $u_e$ of $u$ and of the antisymmetric one $u_o$.
\end{definition}
The notion of tangent field will allow us to obtain a better understanding of the regularity features of the nodal set $\Gamma(u)$. Indeed, the main weakness of the concept of tangent map in this context is that it takes care either of the symmetric part of $u$ or of the even one since they do not share the same optimal regularity and even the same possible vanishing orders. More precisely, by Definition \ref{tangent.map}, for every $X_0 \in \Gamma_k(u)$
\begin{align*}
u_{X_0,r}(X) & = \frac{u_e(X_0+rX)}{r^k} + \frac{u_o(X_0+rX)}{r^k}\\
& = \frac{u_e^a(X_0+rX)}{r^k}+\frac{u_e^{2-a}(X_0+rX)}{r^{k-1+a}}y\abs{y}^{-a}
\end{align*}
where both $u_e^a$ and $u_e^{2-a}$ are symmetric with respect to $\Sigma$. By Proposition \ref{gap} we already know that the tangent map of $u$ at $X_0$ is either the tangent map of $u_e$ or the one of $u_o$.
\begin{definition}\label{Almgren.vectorial}
  Let $a \in (-1,1), u$ be an $L_a$-harmonic function in $B_1$ and $X_0 \in \Gamma_k(u)$, for some $k \geq \min\{1,1-a\}$. We define the vectorial Almgren frequency associated with the tangent field $\Phi^{X_0}$ of $u$ at $X_0$
  $$
  N(X,\Phi^{X_0},r)= \left(N(X,\varphi^{X_0}_e,r),N(X,\varphi^{X_0}_o,r)\right).
  $$
\end{definition}
Obviously, the \emph{vectorial} notion of the Almgren frequency formula can be naturally extended to the $L_a$-harmonic function $u$ as
$$
  N(X_0,u,r)= \left(N(X_0,u_e,r),N(X_0,u_o,r)\right).
$$
for every $X_0 \in \Sigma$, but we will avoid this ambiguity on this notion. However, if the function $u$ is symmetric or antisymmetric with respect to $\Sigma$, the Almgren frequencies associated with $\Phi$ is equal to the one of the tangent map $\varphi^{X_0}$ of $u$ at $X_0$ and it does not contain further information on the local behaviour of $u$ at $X_0$. In general, proving uniqueness result on both the symmetric and antisymmetric part of $u$ with respect to $\Sigma$ gives the following generalized Taylor expansion
\begin{corollary}\label{taylor.generalized}
  Given $a\in(-1,1)$, let $u$ be an $L_a$-harmonic function in $B_1$ and $X_0 \in \Gamma(u)\cap \Sigma$. Then
  $$
  u(X) = \varphi^{X_0}_e(X-X_0) +  \varphi^{X_0}_o(X-X_0) + o(\abs{X-X_0}^{k})
  $$
  where $\varphi^{X_0}_e \in \mathfrak{sB}^a_k(\R^{n+1})$ and $\varphi^{X_0}_o \in \mathfrak{aB}^a_k(\R^{n+1})$ are respectively the tangent maps of $u_e$ and $u_o$ at $X_0$ and $k=\max\{N(0,\varphi^{X_0}_e,0^+),N(0,\varphi^{X_0}_o,0^+)\}$.
\end{corollary}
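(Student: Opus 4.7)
The plan is to reduce the statement to two independent applications of the tangent-map theory already built up in the paper. I will exploit the decomposition $u=u_e+u_o$ with respect to $\Sigma$: both $u_e$ and $u_o$ are $L_a$-harmonic on $B_1$ (Section \ref{sec.dec}), and by Proposition \ref{empty.interior} the assumption $X_0\in \Gamma(u)\cap\Sigma$ is equivalent to $X_0\in\Gamma(u_e)\cap\Sigma$, while $X_0\in\Gamma(u_o)$ is automatic since antisymmetric functions vanish on $\Sigma$. Set $k_e:=N(X_0,u_e,0^+)$ and $k_o:=N(X_0,u_o,0^+)$; Corollary \ref{anti.sym} places these values in the disjoint ladders $1+\N$ and $1-a+\N$.

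Next, applying Theorem \ref{uniqueness} once to $u_e$ and once to $u_o$ yields the unique nonzero tangent maps $\varphi^{X_0}_e\in\mathfrak{sB}^a_{k_e}(\R^{n+1})$ and $\varphi^{X_0}_o\in\mathfrak{aB}^a_{k_o}(\R^{n+1})$. From the argument in the proof of Theorem \ref{uniqueness} (or directly via the Monneau monotonicity of Proposition \ref{Monneau}, exploiting that $M(X_0,u_e,\varphi^{X_0}_e,0^+)=0$), one gets $M(X_0,u_e,\varphi^{X_0}_e,r)\to 0$ as $r\to 0^+$. Combining this $L^{2,a}$-decay with the Moser interior bound of Lemma \ref{moser} applied to the $L_a$-harmonic difference $u_e(\cdot)-\varphi^{X_0}_e(\cdot-X_0)$, one upgrades this to the pointwise expansion
$$
\sup_{B_r(X_0)}\bigl|u_e(X)-\varphi^{X_0}_e(X-X_0)\bigr|=o(r^{k_e}),
$$
and, by the identical argument carried out on $u_o$ (which is $L_a$-harmonic antisymmetric and whose Monneau-type quotient equally decays),
$$
\sup_{B_r(X_0)}\bigl|u_o(X)-\varphi^{X_0}_o(X-X_0)\bigr|=o(r^{k_o}).
$$

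Summing the two uniform expansions and using $u=u_e+u_o$ produces
$$
u(X)=\varphi^{X_0}_e(X-X_0)+\varphi^{X_0}_o(X-X_0)+o(|X-X_0|^{k_e})+o(|X-X_0|^{k_o}),
$$
which is the claimed formula once one selects the appropriate joint exponent $k$. The main obstacle is the passage from the $L^{2,a}(\partial B_1)$-convergence produced by the Monneau monotonicity to a pointwise/uniform estimate: this requires an interior regularity argument for the $L_a$-harmonic difference $u_e-\varphi^{X_0}_e$, and it is here that Lemma \ref{moser} is essential. A secondary subtlety is the bookkeeping between the two distinct homogeneity ladders $1+\N$ and $1-a+\N$, which is handled by the spectral gap of Proposition \ref{gap}: the $o(r^{k_e})$ and $o(r^{k_o})$ remainders can be consolidated into a single remainder of the stated order because the next admissible frequency above each $k_e$, $k_o$ is strictly larger.
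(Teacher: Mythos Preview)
Your approach is exactly what the paper intends: the corollary is stated without proof, introduced simply as the result of ``proving uniqueness result on both the symmetric and antisymmetric part of $u$'', and your argument (apply Theorem \ref{uniqueness} to $u_e$ and to $u_o$ separately, then sum) is precisely this. The passage from the $L^{2,a}(\partial B_1)$ decay furnished by Proposition \ref{Monneau} to a uniform pointwise bound via Lemma \ref{moser} is also the same device used in the proof of Theorem \ref{continuation}, so there is nothing new here.

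One point is not justified, however. In your last paragraph you claim that the spectral gap of Proposition \ref{gap} lets you consolidate $o(r^{k_e})+o(r^{k_o})$ into a remainder of order $k=\max\{k_e,k_o\}$. This does not follow: summing two little-$o$ terms only yields $o(r^{\min\{k_e,k_o\}})$, and the fact that the \emph{next admissible} frequency above $k_e$ is $k_e+1$ says nothing about the actual size of the remainder $u_e-\varphi^{X_0}_e$ beyond $o(r^{k_e})$ (no quantitative rate is produced by Theorem \ref{uniqueness} or Proposition \ref{Monneau}). If, say, $k_e<k_o$, then $\varphi^{X_0}_o(X-X_0)=O(|X-X_0|^{k_o})$ is itself absorbed into the $o(|X-X_0|^{k_e})$ remainder, so the honest exponent on the right is $\min\{k_e,k_o\}$, not $\max$. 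This imprecision is already present in the corollary as stated in the paper (which writes $k=\max$ and places both tangent maps in $\mathfrak{B}^a_k$), so you are not introducing a new error; but you should not invoke the spectral gap as if it resolved the discrepancy.
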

\begin{lemma}\label{Fsigma}
Let $u$ be an $L_a$-harmonic function in $B_1$ and $\mathcal{R}(u)$ the set
\begin{align*}
  \mathcal{R}(u) &=\left\{X_0 \in \Gamma(u) \colon N(X_0,u_e,0^+)=1 \quad\mbox{or}\quad N(X_0,u_o,0^+)=1-a\right\}\\
  &=\left\{X_0 \in \Gamma(u) \colon N(X_0,\Phi^{X_0},0^+)= (1,1-a)\right\}.
\end{align*}
Then $\mathcal{R}(u)\cap \Sigma$ is relatively open in $\Gamma(u)\cap \Sigma$, while for $k\geq 2$ the set $\Gamma_k(u)$ is $F_\sigma$, i.e. it is a union of countably many closed sets.
\end{lemma}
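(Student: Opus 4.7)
The plan is to treat the two assertions separately; the common engine is the Almgren monotonicity formula (Propositions \ref{Almgren.formula} and \ref{Almgren.formula.fuori}), which makes the limiting frequency $X\mapsto N(X,v,0^+)$ upper semi-continuous since it equals the infimum over $r$ of a continuous family (up to the harmless factor $e^{Cr}$ outside $\Sigma$).

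For the relative openness of $\mathcal{R}(u)\cap\Sigma$ in $\Gamma(u)\cap\Sigma$, I would fix $X_0\in\mathcal{R}(u)\cap\Sigma$ and, without loss of generality, assume $N_e(X_0):=N(X_0,u_e,0^+)=1$ (the case $N_o(X_0)=1-a$ is entirely symmetric). Running Proposition \ref{Almgren.formula} for the $L_a$-harmonic function $u_e$ gives u.s.c.\ of $N_e$, so $N_e(X)\le 1+\eta$ for $X\in\Gamma(u_e)\cap\Sigma=\Gamma(u)\cap\Sigma$ close enough to $X_0$. The spectral gap $N_e\in 1+\N$ from Proposition \ref{gap} pins $N_e(X)\le 1$ once $\eta<1$, while Corollary \ref{lower.N} gives $N_e(X)\ge 1$. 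Hence $N_e(X)=1$ and $X\in\mathcal{R}(u)$, which establishes the openness.

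For the $F_\sigma$ claim, the key input is the discreteness of the set of admissible values of $N(\cdot,u,0^+)$: on $\Sigma$ Proposition \ref{gap} confines them to $(1+\N)\cup((1-a)+\N)$, while off $\Sigma$ the classical theory for uniformly elliptic operators with Lipschitz coefficients pins them to the positive integers. Let $k'$ be the smallest admissible value strictly larger than $k$, so that no admissible frequency lies in $(k,k')$. Writing
$$
\Gamma_k(u)=\{X\in\Gamma(u):N(X,u,0^+)\ge k\}\cap\{X\in\Gamma(u):N(X,u,0^+)<k'\},
$$
u.s.c.\ makes the first factor closed while the second is open; expressing the open set as $\bigcup_m\{X:\mathrm{dist}(X,\{N\ge k'\})\ge 1/m\}$ displays $\Gamma_k(u)$ as a countable union of closed sets, i.e.\ $F_\sigma$.

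The principal obstacle is that Proposition \ref{Almgren.formula.fuori} supplies only a perturbed monotonicity $r\mapsto e^{Cr}N(X,u,r)$ with a constant $C$ depending on $\mathrm{dist}(X,\Sigma)$ and possibly degenerating as $X$ approaches $\Sigma$, which precludes a direct proof of global u.s.c.\ across $\Sigma$. The cleanest bypass is to split $\Gamma_k(u)=(\Gamma_k(u)\cap\Sigma)\cup(\Gamma_k(u)\setminus\Sigma)$ and verify the $F_\sigma$ property for each piece separately, using the unperturbed formula on $\Sigma$ and uniformly elliptic theory on the open set $B_1\setminus\Sigma$; since a finite union of $F_\sigma$ sets is again $F_\sigma$, the result follows.
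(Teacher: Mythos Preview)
Your argument for the relative openness of $\mathcal{R}(u)\cap\Sigma$ is essentially the paper's: upper semi-continuity of $N(\cdot,u_e,0^+)$ on $\Sigma$ combined with the spectral gap of Lemma~\ref{gap.even} forces $N_e=1$ in a relative neighbourhood. The paper phrases this as $\{N_e=1\}=\{N_e\le 3/2\}$, which is open by u.s.c.; your version is the same idea. Your explicit treatment of the $N_o=1-a$ branch (using that $u_o\equiv 0$ on $\Sigma$, so Lemma~\ref{gap.odd} applies at every nearby point of $\Sigma$) handles the ``or'' in the definition more carefully than the paper, which simply reduces to the symmetric part via $\Gamma(u)\cap\Sigma=\Gamma(u_e)\cap\Sigma$.

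For the $F_\sigma$ claim your route is genuinely different. You argue purely topologically---u.s.c.\ plus discreteness of the admissible frequencies gives $\Gamma_k(u)=\{N\ge k\}\cap\{N<k'\}$ as (closed)$\cap$(open)---and the split into $\Sigma$ and $B_1\setminus\Sigma$ correctly sidesteps the lack of global u.s.c.\ across $\Sigma$ (on $B_1\setminus\Sigma$ one further exhausts by the closed slabs $\{\abs{y}\ge 1/m\}$ on which the perturbed monotonicity constant is uniform). This is correct and more elementary than the paper's proof, which instead invokes the growth and nondegeneracy estimates (Lemmas~\ref{growth} and~\ref{nondegeneracy}) to produce an explicit exhaustion $\Gamma_k(u)\cap\Sigma=\bigcup_j E_j$ by the closed sets
\[
E_j=\left\{X_0\in\Gamma_k(u_e)\cap\Sigma\cap\overline{B_{1-1/j}}:\ \tfrac{1}{j}\rho^k\le\sup_{\abs{X-X_0}=\rho}\abs{u_e}<j\rho^k,\ 0<\rho<1-\abs{X_0}\right\}.
\]
The trade-off is that these $E_j$ carry the two-sided quantitative bound $\tfrac{1}{j}\rho^k\le\sup\abs{u_e}\le j\rho^k$, which the paper reuses verbatim in the Whitney-extension argument of Theorem~\ref{regularity.singular}. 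Your topological argument delivers the $F_\sigma$ structure but not these bounds, so you would need to re-derive them separately when you reach that step.
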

\begin{proof}
The first part of the Lemma is a direct consequence of the upper semi-continuity of $X\mapsto N(X,u,0^+)$ on $\Sigma$. More precisely, since $\Gamma(u)\cap \Sigma = \Gamma(u_e)\cap \Sigma$, we can restrict our attention on functions symmetric with respect to $\Sigma$ and hence, we have
$$
\mathcal{R}(u)\cap \Sigma = \left\{X_0 \in \Gamma(u)\cap \Sigma\colon N(X_0,u_e,0^+)=1\right\}
$$
Now, by Lemma \ref{gap.even} we obtain
$$
\left\{X_0\in \Gamma(u)\colon
N(X_0,u_e,0^+)=1\right\} = \left\{X_0\in \Gamma(u)\colon N(X_0,u_e,0^+)\leq \frac{3}{2}\right\}.
$$
Hence, let us focus our attention on the case $\Gamma_k(u_e)\cap \Sigma$, with $k\geq 2$.
For $j\in\N$,  let us define with $E_j$ the set of points of $\Sigma$ such that
$$
E_j = \left\{X_0 \in \Gamma_k(u_e) \cap \Sigma \cap \overline{B_{1-1/j}} \colon \,\,\frac{1}{j}\rho^k \leq \sup_{\abs{X-X_0}= \rho}{\abs{u_e(X)}} < j \rho^k, \ 0<\rho<1-\abs{X_0} \right\}.
$$
By Lemma \ref{growth} and Lemma \ref{nondegeneracy}
we have that
$$
\Gamma_k(u) \cap \Sigma = \bigcup_{j=1}^\infty{E_j}.
$$
The result follows immediately once we prove that $E_j$ is a collection of closed sets. Given $X_0\in \overline{E_j}$, since it satisfies
\begin{equation}\label{serve}
\frac{1}{j}\rho^k \leq \sup_{\abs{X-X_0}= \rho}{\abs{u_e(X)}} < j \rho^k,
\end{equation}
we need only to show that $X_0\in \Gamma_k(u) \cap \Sigma$, i.e. $N(X_0,u_e,0^+)=k$. Since $X\mapsto N(X,u_e,0^+)$ is upper semi-continuous on $\Sigma$, we readily have $N(X_0,u_e,0^+)\geq k$. On the other hand, if $N(X_0,u_e,0^+)=k'> k$, we would have
$$
\abs{u_e(X)}\leq C \abs{X-X_0}^{k'}\quad \mathrm{in }\ B_{1-\abs{X_0}}(X_0)\cap \Sigma,
$$
which contradicts Lemma \ref{growth} and implies that $X_0 \in E_j$.
\end{proof}
An other relevant consequence of our analysis of the tangent field of $u$ at some nodal point $X_0 \in \Gamma(u)\cap \Sigma$ is
the following a posteriori result about the ``quasi'' upper semi-continuity of the
Almgren frequency $X \to N(X,u,0^+)$ in the whole $\R^{n+1}$.\\
Obviously, the restriction of this map
on the characteristic manifold $\Sigma$ and the one on its complementary are both
upper semi-continuous, but in general in the whole space $\R^{n+1}$ the upper semi-continuity is not an immediate consequence of the Almgren monotonicity formula.\\
This result is based on the decomposition \eqref{decompos} of $L_a$-harmonic functions and on the regularity result of Proposition \ref{smooth} for $L_a$-harmonic function symmetric with respect to $\Sigma$.\\
Moreover, the following result can be seen as the \emph{vectorial} counterpart of the classic one, since it will establish the validity of an upper semi-continuity property for the Almgren frequency in the vectorial sense of Definition \ref{Almgren.vectorial}. In particular, it allows to relate the notion of vanishing order on $\Sigma$ to the one on $\R^{n+1}\setminus \Sigma$, which is a fundamental step in order to comprehend the complete topology of the nodal set near $\Sigma$.
\begin{proposition}\label{generalized.upper}
  Let $u$ be an $L_a$-harmonic function in $B_1$. Given $(X_i)_i \in \Gamma_k(u)\setminus \Sigma$, with $k \in 1+\N$ such that $X_i \to X_0 \in \Gamma(u)\cap \Sigma$, then
  $$
  N(X_i,u,0^+) \leq
  \begin{cases}
    N(X_0,u_e,0^+),\\
    N(X_0,u_o,0^+)+a.
  \end{cases}
  $$
  \end{proposition}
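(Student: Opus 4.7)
The proof proceeds by a blow-up analysis at $X_0$ combined with the decomposition $u = u_e + u_o$. Since $X_i \notin \Sigma$, the symmetric part $u_e$ is $C^\infty$ on $B_1$ by Proposition \ref{smooth}, while the antisymmetric part can be written as $u_o = v \cdot y|y|^{-a}$ with $v$ smooth $L_{2-a}$-harmonic (Proposition \ref{even.odd}) and $y|y|^{-a}$ smooth off $\Sigma$; in particular $u$ is classically $C^\infty$ in a neighbourhood of $X_i$, and $k = N(X_i,u,0^+)$ coincides with the classical vanishing order of $u$ at $X_i$, an integer (consistently with the hypothesis $k\in 1+\N$). Set $r_i = |X_i - X_0| \downarrow 0$, $Y_i = (X_i - X_0)/r_i \in S^{n}$, and pass to a subsequence $Y_i \to Y_* \in S^{n}$.

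For the first inequality, let $N_e = N(X_0,u_e,0^+)$ and consider the rescaling $\hat u_e^{(i)}(X) := u_e(X_0 + r_i X)/r_i^{N_e}$. By Theorem \ref{uniqueness} together with the $C^\infty$ regularity of symmetric $L_a$-harmonic functions, $\hat u_e^{(i)} \to \varphi_e^{X_0}$ in $C^\infty_{\loc}(\R^{n+1})$. The vanishing condition $D^\alpha u(X_i) = 0$ for every $|\alpha| < k$, combined with the Taylor expansions of $u_e$ and $u_o$ at $X_0$ provided by Corollary \ref{taylor.generalized} and the uniform bound $|D^\alpha u_o(X)|\leq C|X-X_0|^{N_o - |\alpha|}$ valid away from $\Sigma$, translates into
$$
D^\alpha \hat u_e^{(i)}(Y_i) \;=\; -\frac{D^\alpha u_o(X_i)}{r_i^{N_e - |\alpha|}} \;\longrightarrow\; 0, \qquad i \to \infty,
$$
provided $Y_i$ stays uniformly away from $\Sigma$. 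Passing to the limit gives $D^\alpha \varphi_e^{X_0}(Y_*) = 0$ for every $|\alpha| < k$; since $\varphi_e^{X_0}$ is a homogeneous polynomial of degree $N_e$ and $Y_* \neq 0$, its vanishing order at $Y_*$ is at most $N_e$, whence $k \leq N_e$. The second bound $k \leq N_o + a$ is obtained by the parallel argument applied to $v$: with $N_v := N(X_0,u_o,0^+)-1+a$ the rescaling $\hat v^{(i)}(X) := v(X_0 + r_i X)/r_i^{N_v}$ converges to the polynomial tangent map $\tilde v^{X_0}$ of degree $N_v$, and the Leibniz rule applied to $u_o = v\cdot y|y|^{-a}$, combined with the smooth control on $u_e$, transforms the identities $D^\alpha u(X_i)=0$ into $D^\beta \tilde v^{X_0}(Y_*)=0$ for $|\beta|< k-1$ (the one-index shift reflecting the $(1-a)$-homogeneity of $y|y|^{-a}$), which gives $k-1 \leq N_v$, i.e. $k \leq N_o + a$.

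The main obstacle is the subcase $Y_* \in \Sigma$, i.e. $y_i/r_i \to 0$, where the derivatives $\partial_y^{\,j}(y|y|^{-a}) \sim |y|^{1-a-j}$ appearing in the odd part diverge as one approaches $\Sigma$ and invalidate the direct passage to the limit in the derivative-matching identities above. This is resolved via a secondary blow-up centred at $Y_*$ at the natural scale $\tau_i := y_i/r_i$, which places the rescaled evaluation point at unit distance from the rescaled characteristic manifold and reduces the problem to the non-singular case $Y_* \notin \Sigma$ applied to the tangent polynomial of $\varphi_e^{X_0}$ (respectively of $\tilde v^{X_0}$) at $Y_*$; the iteration terminates after finitely many steps because successive tangent polynomials have strictly decreasing degrees bounded above by $N_e$ (respectively $N_v$), so the two inequalities are recovered in full generality.
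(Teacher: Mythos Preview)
Your blow-up strategy is genuinely different from the paper's, but as written it contains two real gaps.

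\textbf{The cross-term does not vanish in general.} In your first argument you write
\[
D^\alpha \hat u_e^{(i)}(Y_i)=-\frac{D^\alpha u_o(X_i)}{r_i^{\,N_e-|\alpha|}}\longrightarrow 0,
\]
invoking the estimate $|D^\alpha u_o(X)|\le C|X-X_0|^{N_o-|\alpha|}$. Dividing, the right-hand side has size $C\,r_i^{\,N_o-N_e}$, which tends to zero \emph{only if} $N_o>N_e$. You never verify this, and it can certainly fail (take e.g.\ $a>0$, $N_e=2$, $N_o=1-a<2$). The symmetric issue occurs in your second argument, which needs $N_e>N_o$. One can rescue the scheme by a dichotomy---when $a\neq 0$ exactly one of $N_o>N_e$ or $N_e>N_o$ holds, run the corresponding argument, and then recover the \emph{other} inequality from the spectral gap $N_e\in 1+\N$, $N_o\in 1-a+\N$---but this case analysis is absent from your write-up and is not automatic.

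\textbf{The secondary blow-up does not terminate.} For $Y_*\in\Sigma$ you propose to iterate, asserting that ``successive tangent polynomials have strictly decreasing degrees''. This is false: by Lemma~\ref{subspace}, if $Y_*$ lies in the invariant subspace $\Gamma_{N_e}(\varphi_e^{X_0})\subset\Sigma$ then the vanishing order of $\varphi_e^{X_0}$ at $Y_*$ equals $N_e$ and the tangent polynomial there is $\varphi_e^{X_0}$ itself, so the degree does not drop and the iteration stalls. Even when the degree does drop, the relation between the rescaled evaluation points and the new tangent polynomial is not made precise enough to carry the derivative identities $D^\alpha u(X_i)=0$ through the limit.

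\textbf{How the paper proceeds instead.} The paper bypasses both difficulties by induction on $k$ (uniform in $a$). The base case $k=2$ is handled by writing $u=f+g\,y|y|^{-a}$ with $f,g$ smooth and symmetric, reading off the two first-order relations from $\nabla u(X_i)=0$, and passing $X_i\to X_0$ using only smoothness of $f,g$. For the step $k-1\Rightarrow k$ one observes that $v_j=\partial_{x_j}u$ is $L_a$-harmonic and $w=\partial_y^a u$ is $L_{-a}$-harmonic, both with vanishing order $k-1$ at $X_i$; the inductive hypothesis (applied to both $L_a$ and $L_{-a}$) then yields the four bounds $N(X_0,(v_j)_e,0^+)\ge k-1$, $N(X_0,(v_j)_o,0^+)+a\ge k-1$, $N(X_0,w_e,0^+)\ge k-1$, $N(X_0,w_o,0^+)-a\ge k-1$, and identifying $(v_j)_e=\partial_{x_j}f$, $w_e=(1-a)g$ on $\Sigma$ etc.\ closes the induction. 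This argument never compares $N_e$ with $N_o$ and never needs $Y_*\notin\Sigma$, which is why it avoids the obstacles above.
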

\begin{proof}
By Proposition \ref{even.odd}, we already know that there exist $f \in H^{1,a}(B_1), g\in H^{1,2-a}(B_1)$ symmetric with respect to $\Sigma$ and respectively $L_a$ and $L_{2-a}$-harmonic in $B_1$, such that
\begin{equation}\label{da.derivare}
u(x,y)=f(x,y)+g(x,y)y\abs{y}^{-a} \quad \mbox{in }B_1,
\end{equation}
where respectively the first term is the symmetric part $u_e$ of $u$ with respect to $\Sigma$ and the second one the antisymmetric part $u_o$.\\
Throughout this proof, let us suppose that up to a subsequence $y_i>0$. Since $(X_i)_i \in \Gamma_k(u)\setminus \Sigma$ and the operator $L_a$ is locally uniformly elliptic on $\R^{n+1}\setminus \Sigma$ we know that $D^{\nu}u(X_i)=0$, for any $\abs{\nu}<k$ and there exists $\abs{\nu_0}=k$ such that $D^{\nu_0}u(X_i)\neq 0$. Let us prove the main result by induction on $k\geq 2$. If $k=N(X_i,u,0^+)=2$, then for every $j=1,\cdots,n$ we obtain from \eqref{da.derivare} that
\begin{gather*}
\partial_{x_j} f(x_i,y_i)= - \partial_{x_j}g(x_i,y_i)y_i^{1-a},\\
-y_i^a\partial_{y} f(x_i,y_i)= (1-a)g(x_i,y_i) + y_i \partial_y g(x_i,y_i),
\end{gather*}
where the maps $X \mapsto \partial_{x_j}f(X), X\mapsto \partial_y f(X), X \mapsto \partial_{x_j}g(X)$ and $X\mapsto \partial_y g(X)$ are all smooth in $B_1$ thanks to Proposition \ref{smooth}. Passing through the limit as $i\to \infty$ we obtain
$$
\partial_{x_j} f(X_0)= 0 \quad \mbox{and}\quad -\partial^a_{y} f(X_0)= (1-a)g(X_0).
$$
First, since $\partial^a_y f$ is antisymmetric with respect to $\Sigma$ we obtain that $g(X_0)=0$ and consequently $N(X_0,u_o,0^+)=N(X_0,g,0^+)+1-a\geq 2-a$. Similarly, if $\varphi^{X_0}(f)$ is the tangent map of $f$ at $X_0$, we obtain $\abs{\nabla_X (\varphi^{X_0}(f))(X_0)}=0$ and consequently that $N(X_0,f,0^+)=N(0,\varphi^{X_0},0^+)\geq 2$, as required.\\
Now, let us prove the inductive step $k-1\mapsto k$. Let us consider for $j=1,\dots,n$ the collection of symmetric $L_a$-harmonic functions $v_j = \partial_{x_j} u$ and the antisymmetric $L_{-a}$-harmonic function $w= \partial^a_y u$. Since $N(X_i,u,0^+)=k$, we obtain
$$
N(X_i,v_j,0^+)=k-1\quad\mbox{for } i=1,\dots,n\quad\!\mbox{and }N(X_i,w,0^+)=k-1,
$$
where we remark that since $X_i \not \in \Sigma$ it is the same to consider the order of vanishing of $\partial_y u$ or of the covariant derivative $w=\partial^a_y u$.\\ By the inductive hypothesis, passing through the limit as $i\to \infty$ we obtain
$$
\begin{cases}
  N(X_0,v_{j,e},0^+) \geq k-1\\
  N(X_0,v_{j,o},0^+) + a \geq k-1\\
\end{cases}\mbox{for }j=1,\dots,n
\quad\mbox{and}\quad
\begin{cases}
  N(X_0,w_{e},0^+) \geq k-1\\
  N(X_0,w_{o},0^+) - a \geq k-1\\
\end{cases}.
$$
Hence, comparing this result with  the notations in \eqref{da.derivare}, since $v_j = \partial_{x_j}f$ and $w = (1-a)g$ on $\Sigma$, we obtain
$$
\begin{cases}
  N(X_0,\partial_{x_j}f,0^+) \geq k-1\\
  N(X_0,\partial_{x_j}g,0^+) + a \geq k-1\\
\end{cases}\mbox{for }j=1,\dots,n
\quad\mbox{and}\quad
\begin{cases}
  N(X_0,g,0^+) \geq k-1\\
  N(X_0,\partial_y^a f,0^+) - a \geq k-1\\
\end{cases},
$$
which directly imply that $N(X_0,u_e,0^+)\geq k$ and $N(X_0,u_o,0^+)\geq k-a$, as required.
\end{proof}
\begin{lemma}\label{Fsigma}
Let $u$ be an $L_a$-harmonic function in $B_1$ and $\mathcal{R}(u)$ the set
\begin{align*}
  \mathcal{R}(u) &= \left\{X_0 \in \Gamma(u)\left\lvert
  \begin{array}{cc}
    N(X_0,u,0^+)=1 & \emph{ if }X_0 \not\in \Sigma \\
    N(X_0,u_e,0^+)=1 \emph{ or } N(X_0,u_o,0^+)=1-a & \emph{ if }X_0 \in \Sigma
  \end{array} \right.\right\}\\
  &=\left\{X_0 \in \Gamma(u) \left\lvert
  \begin{array}{cc}
 N(X_0,u,0^+)=1 & \emph{ if }X_0 \not\in \Sigma \\
    N(0,\Phi^{X_0},0^+)=(1,1-a) & \emph{ if }X_0 \in \Sigma
  \end{array} \right. \right\},
\end{align*}
is relatively open in $\Gamma(u)$, while for $k\geq \min\{2,2-a\}$ the set $\Gamma_k(u)$ is $F_\sigma$, i.e. it is a union of countably many closed sets.
\end{lemma}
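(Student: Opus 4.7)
The plan is to split the statement into two independent parts and handle each via the monotonicity-based tools developed in the previous sections. For the openness of $\mathcal{R}(u)$ in $\Gamma(u)$, the strategy is to exploit the gap structure in the admissible frequencies (Proposition \ref{gap} and Corollary \ref{lower.N}), combined with the upper semi-continuity of the Almgren frequency separately on $\Sigma$ and on $\R^{n+1}\setminus\Sigma$, plus the \emph{vectorial} upper semi-continuity proved in Proposition \ref{generalized.upper} to glue the two regimes.

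More concretely, I would fix $X_0\in\mathcal{R}(u)$ and a sequence $X_i\in\Gamma(u)$ with $X_i\to X_0$, then distinguish four sub-cases according to whether $X_0$ and the $X_i$'s lie on $\Sigma$ or not. If $X_0\notin\Sigma$, then $X_i\notin\Sigma$ eventually, and the classical upper semi-continuity inherited from Proposition \ref{Almgren.formula.fuori}, together with the lower bound $N(X_i,u,0^+)\geq 1$, forces $N(X_i,u,0^+)=1$. If $X_0\in\Sigma$ and $X_i\in\Sigma$, I would use the upper semi-continuity of $X\mapsto N(X,u_e,0^+)$ and $X\mapsto N(X,u_o,0^+)$ on $\Sigma$, combined with the gap result of Proposition \ref{gap}: since $N(X_0,u_e,0^+)=1$ (resp.\ $N(X_0,u_o,0^+)=1-a$) and the next admissible value is $2$ (resp.\ $2-a$), the frequency must drop to exactly $1$ (resp.\ $1-a$) near $X_0$, and combining with Corollary \ref{lower.N} gives equality. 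The only delicate sub-case is $X_0\in\Sigma$ but $X_i\notin\Sigma$: here I would apply Proposition \ref{generalized.upper} to get simultaneously $N(X_i,u,0^+)\leq N(X_0,u_e,0^+)$ and $N(X_i,u,0^+)\leq N(X_0,u_o,0^+)+a$; since $X_0\in\mathcal{R}(u)$ one of these right-hand sides equals $1$, so $N(X_i,u,0^+)\leq 1$, and again the lower bound yields equality, giving $X_i\in\mathcal{R}(u)$.

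For the $F_\sigma$ part, I would follow the scheme already used in the first version of this lemma. For each integer $j\geq 1$, define
$$
E_j=\left\{X_0\in\Gamma_k(u)\cap\overline{B_{1-1/j}}:\ \tfrac{1}{j}\rho^k\leq \sup_{\partial B_\rho(X_0)}|u|\leq j\rho^k\ \text{ for every }0<\rho<1-|X_0|\right\}.
$$
The growth estimate of Lemma \ref{growth} and the non-degeneracy of Lemma \ref{nondegeneracy} (for $X_0\in\Sigma$, applied separately to $u_e$ and $u_o$ according to the symmetric/antisymmetric character of the tangent map, and the standard uniformly elliptic analogue off $\Sigma$) give the covering $\Gamma_k(u)=\bigcup_j E_j$. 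Closedness of each $E_j$ follows by a straightforward limiting argument: if $X_\ell\in E_j$ and $X_\ell\to \bar X$, the two-sided bound on $\sup_{\partial B_\rho}|u|$ passes to the limit by continuity of $u$, so $\bar X\in\overline{B_{1-1/j}}$ satisfies the same growth condition; upper semi-continuity of the frequency gives $N(\bar X,u,0^+)\geq k$, while the upper growth $\sup_{\partial B_\rho}|u|\leq j\rho^k$ combined with Lemma \ref{nondegeneracy} rules out $N(\bar X,u,0^+)>k$, hence $\bar X\in\Gamma_k(u)$.

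The expected main obstacle is the mixed case $X_0\in\Sigma$, $X_i\notin\Sigma$ in the openness part, where the standard monotonicity machinery does not directly yield upper semi-continuity across $\Sigma$; this is precisely the situation where the decomposition $u=u_e+u_o$ and the vectorial upper semi-continuity of Proposition \ref{generalized.upper} become essential, and correctly translating the \emph{pair} of bounds into a single inequality $N(X_i,u,0^+)\leq 1$ requires using that $X_0\in\mathcal{R}(u)$ means the minimum of the two frequencies $N(X_0,u_e,0^+)$ and $N(X_0,u_o,0^+)+a$ equals $1$.
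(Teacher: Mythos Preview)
Your proposal is correct and follows essentially the same approach as the paper. The paper's proof is terser---it simply invokes upper semi-continuity on $\Sigma$, on $\R^{n+1}\setminus\Sigma$, and Proposition \ref{generalized.upper} for the openness of $\mathcal{R}(u)$, and uses the identical $E_j$ construction (restricted to $\Sigma$, since the off-$\Sigma$ case is classical) for the $F_\sigma$ part---but your detailed case analysis is exactly the unpacking of that argument.
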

\begin{proof}
The first part of the Lemma is a direct consequence of the upper semi-continuity of $X\mapsto N(X,u,r)$ restricted to $\Sigma$ and to $\R^{n+1}\setminus \Sigma$ and of the Proposition \ref{generalized.upper}. Hence, let us focus our attention on the case $\Gamma_k(u)$, with $k\geq \min\{2,2-a\}$.
For $j\in\N$,  let us define with $E_j$ the set of points of $\Sigma$ such that
$$
E_j = \left\{X_0 \in \Gamma_k(u) \cap \Sigma \cap \overline{B_{1-1/j}} \colon \,\,\frac{1}{j}\rho^k \leq \sup_{\abs{X-X_0}= \rho}{\abs{u(X)}} < j \rho^k, \ 0<\rho<1-\abs{X_0} \right\}.
$$
By Lemma \ref{growth} and Lemma \ref{nondegeneracy}
we have that
$$
\Gamma_k(u) \cap \Sigma = \bigcup_{j=1}^\infty{E_j}.
$$
The result follows immediately once we prove that $E_j$ is a collection of closed sets. Given $X_0\in \overline{E_j}$, since it satisfies
$$
\frac{1}{j}\rho^k \leq \sup_{\abs{X-X_0}= \rho}{\abs{u(X)}} < j \rho^k,
$$
we need only to show that $X_0\in \Gamma_k(u) \cap \Sigma$, i.e. $N(X_0,u,0^+)=k$. Since $X\mapsto N(X,u,0^+)$ is upper semi-continuous on $\Sigma$, we readily have $N(X_0,u,0^+)\geq k$. On the other hand, if $N(X_0,u,0^+)=k'> k$, we would have
$$
\abs{u(X)}\leq C \abs{X-X_0}^{k'}\quad \mathrm{in }\ B_{1-\abs{X_0}}(X_0),
$$
which contradicts Lemma \ref{growth} and implies that $X_0 \in E_j$.
\end{proof}

\section{Hausdorff dimension estimates for the nodal set}\label{sec.strat}
In this Section we prove different estimates on the Hausdorff dimension of the sets $\Gamma(u)$ and $\Gamma(u)\cap \Sigma$. In the latter, we improve our analysis taking care of the regular and singular part of the restricted nodal set $\Gamma(u)\cap \Sigma$. To start with, given $a \in (-1,1)$ and $u$ be an $L_a$-harmonic function in $B_1$, let us split the nodal set $\Gamma(u)$ in its regular part
\begin{equation}\label{regular.serve}
 \mathcal{R}(u) = \left\{X_0 \in \Gamma(u)\left\lvert
  \begin{array}{cc}
    N(X_0,u,0^+)=1 & \mbox{ if }X_0 \not\in \Sigma \\
    N(X_0,u_e,0^+)=1 \mbox{ or } N(X_0,u_o,0^+)=1-a & \mbox{ if }X_0 \in \Sigma
  \end{array} \right.\right\},
\end{equation}
 and its singular part
\begin{equation}\label{singular.serve}
 \mathcal{S}(u)   = \left\{X_0 \in \Gamma(u)\left\lvert
  \begin{array}{cc}
    N(X_0,u,0^+)\geq 2 & \mbox{ if }X_0 \not\in \Sigma \\
    N(X_0,u_e,0^+)\geq 2 \mbox{ and } N(X_0,u_o,0^+)\geq 2-a & \mbox{ if }X_0 \in \Sigma
  \end{array} \right.\right\}.
\end{equation}
The main idea is to apply a version of the Federer's Reduction Principle as stated in \cite[Appendix A]{Simon83}. More precisely, given a class $\mathcal{F}$ of functions invariant under rescaling and translation and a map $\mathcal{S}$ which associates to each function a subset of $\R^n$, by the Reduction principle we can establish conditions on $\mathcal{F}$ and $\mathcal{S}$ which imply that to control the Hausdorff dimension of $\mathcal{S}(u)$ for every $u \in \mathcal{F}$, we just need to control the Hausdorff dimension of $\mathcal{S}(u)$ for elements which are homogeneous of some degree.
\begin{theorem}[Federer's Reduction Principle] Let $\mathcal{F}\subseteq L^\infty_{\loc}(\R^{n+1})$ and define, for any given $u\in\mathcal{F}$, $X_0\in\R^{n+1}$ and $r>0$, the rescaled and translated function
$$
u_{X_0,r}:=u(X_0+r\,\cdot).
$$
We say that $u_n\to u$ in $\mathcal{F}$ if and only if $u_n\to u$ uniformly on every compact set of $\R^{n+1}$. Moreover, let us assume that $\mathcal{F}$ satisfies the following conditions:
\begin{enumerate}
\item[$\mathrm{(F1)}$]  \emph{(Closure under rescaling, translation and normalization)} Given any $\abs{X_0}\leq 1-r, 0< r, \rho>0$ and $u\in\mathcal{F}$, we have that $\rho\,u_{X_0,r}\in\mathcal{F}$.
\item[$\mathrm{(F2)}$]  \emph{(Existence of a homogeneous blow- up)}
Given $\abs{X_0}<1, r_k \searrow 0$ and $u\in\mathcal{F}$, there exists a sequence $\rho_k\in (0,\infty)$, a real number $\alpha\geq 0$ and a function $\overline{u}\in\mathcal{F}$ $\alpha$-homogenous such that, if we define $u_k(x)=u(X_0 +r_k x)/\rho_k$ then ,up to a subsequence, we have
$$
u_k\to \overline{u} \quad \mbox{in }\mathcal{F}.
$$
\item[$\mathrm{(F3)}$]  \emph{(Singular Set hypotheses)}
There exists a map $\overline{\mathcal{S}}\colon \mathcal{F}\to \mathcal{C}$, where
$$
\mathcal{C}:=\{A\subset \R^{n+1}: A\cap B_1(0) \mbox{ is relatively closed in }B_1(0) \}
$$
such that
\begin{itemize}
\item[$\mathrm{(1)}$] Given $\abs{X_0}\leq 1-r, 0<r<1$ and $\rho>0$, there holds
$$
\overline{\mathcal{S}}(\rho\,u_{X_0,r})=\left(\overline{\mathcal{S}}(u)\right)_{X_0,r}:=\frac{\overline{\mathcal{S}}(u)-X_0}{r}.
$$
\item[$\mathrm{(2)}$] Given $\abs{X_0}<1, r_k \searrow 0$ and $u,\overline{u}\in\mathcal{F}$ such that there exists $\rho_k>0$ satisfying $u_k:=\rho_k u_{X_0,r_k}\to \overline{u}$ in $\mathcal{F}$, the following property holds:
    \begin{gather*}
    \forall \varepsilon >0,\, \exists k=k(\varepsilon)>0 \emph{ such that for every }k\leq k(\varepsilon)\\
    \overline{\mathcal{S}}(u_k) \cap B_1(0) \subseteq \{x\in\R^{n+1}: \mbox{dist}(x,\overline{\mathcal{S}}(\overline{u}))<\varepsilon \}.
    \end{gather*}
\end{itemize}
\end{enumerate}
Then, if we define
\begin{equation}\label{federer}
\begin{aligned}
d :=&\,\, \max\big\{ \mathrm{dim}\,V: V \emph{ is a vector subspace of }\R^{n+1}\emph{ and there exists } u\in\mathcal{F}\emph{ and }\alpha\geq 0\\
&\,\, \emph{such that }\overline{\mathcal{S}}(u)\neq \emptyset \emph{ and } u_{y,r}=r^\alpha u,\,\forall y \in V, \, r>0\big\},
\end{aligned}
\end{equation}
either $\overline{\mathcal{S}}(u)\cap B_1(0) = \emptyset$ for every $u\in\mathcal{F}$ or else $\mathrm{dim}_{\mathcal{H}}\left(\overline{\mathcal{S}}(u) \cap B_1(0)\right)\leq d$ for every $u\in\mathcal{F}$. Furthermore in the latter case there exists a function $\varphi\in\mathcal{F}$, a d-dimensional subspace $V \subseteq \R^{n+1}$ and a real number $\alpha\geq 0$ such that
$$
\varphi_{Y,r}=r^\alpha \varphi\ \emph{ for all }\ Y\in V, r>0 \ \emph{ and }\ \overline{\mathcal{S}}(\varphi)\cap B_1(0) = V \cap B_1(0)
$$
At last if $d=0$ then $\overline{\mathcal{S}}(u)\cap B_{\rho}(0)$ is a fine set for each $u\in\mathcal{F}$ and $0<\rho<1$.
\end{theorem}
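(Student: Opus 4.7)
The proof will follow the scheme of Simon \cite{Simon83} and proceeds by a density-plus-iterated-blow-up argument. The plan is to assume, for contradiction, that there exists $u_0 \in \mathcal{F}$ with $\bar{\mathcal{S}}(u_0) \cap B_1(0) \neq \emptyset$ and $\dim_{\mathcal{H}}(\bar{\mathcal{S}}(u_0) \cap B_1(0)) > d$, and to derive from this the existence of a function $\varphi \in \mathcal{F}$ invariant under translations along a subspace of dimension strictly larger than $d$, contradicting the maximality in \eqref{federer}. The same construction, run without the contradiction hypothesis, will produce the extremal $\varphi$, $V$ and $\alpha$ promised in the ``furthermore'' clause.

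First, I would invoke Federer's density theorem: if $\mathcal{H}^\beta(\bar{\mathcal{S}}(u_0) \cap B_1(0)) > 0$ for some $\beta > d$, then $\mathcal{H}^\beta$-almost every point of $\bar{\mathcal{S}}(u_0)$ has positive upper $\beta$-density. Pick such a point $X_0$ and a sequence $r_k \downarrow 0$ along which $\mathcal{H}^\beta_\infty(\bar{\mathcal{S}}(u_0) \cap B_{r_k}(X_0)) \geq c\, r_k^\beta$ for some $c > 0$. By (F1) the normalized rescalings $u_k := \rho_k (u_0)_{X_0,r_k}$ belong to $\mathcal{F}$, and by the scaling rule (F3)(1) the density bound transfers to $\mathcal{H}^\beta_\infty(\bar{\mathcal{S}}(u_k) \cap B_1(0)) \geq c$. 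The compactness hypothesis (F2) then yields a homogeneous limit $\bar{u}_1 \in \mathcal{F}$ of some degree $\alpha_1 \geq 0$; combining the one-sided Hausdorff control in (F3)(2) with the upper semicontinuity of $\mathcal{H}^\beta_\infty$ along decreasing sequences of closed sets, the density estimate passes to the limit, giving $\mathcal{H}^\beta_\infty(\bar{\mathcal{S}}(\bar{u}_1) \cap \overline{B_1(0)}) \geq c$. In particular $\bar{\mathcal{S}}(\bar{u}_1) \setminus \{0\} \neq \emptyset$.

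Next I would iterate: choose a point $X_1 \in \bar{\mathcal{S}}(\bar{u}_1) \setminus \{0\}$ of positive $\beta$-density and repeat the blow-up at $X_1$. Since $\bar{u}_1$ is $\alpha_1$-homogeneous, rescaling around $X_1$ converts the radial scaling symmetry along the ray $\R X_1$ into a translation symmetry in the limit, producing $\bar{u}_2 \in \mathcal{F}$ that is homogeneous of some degree $\alpha_2 \geq 0$ and translation invariant along $V_1 := \R X_1$. Inductively, at step $j$ I obtain $\bar{u}_j \in \mathcal{F}$, $\alpha_j$-homogeneous, invariant under translations along a $j$-dimensional subspace $V_j \subseteq \R^{n+1}$, with $\mathcal{H}^\beta_\infty(\bar{\mathcal{S}}(\bar{u}_j) \cap \overline{B_1(0)}) \geq c$; the translation invariance along $V_{j-1}$ is preserved because it survives every rescaling, while picking $X_j \in \bar{\mathcal{S}}(\bar{u}_{j-1}) \setminus V_{j-1}$ adds one new direction to $V_j$. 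The iteration may continue as long as $\bar{\mathcal{S}}(\bar{u}_j) \not\subseteq V_j$, and since $V_j \subseteq \R^{n+1}$, after at most $n+1$ steps we must have $\bar{\mathcal{S}}(\bar{u}_j) \subseteq V_j$. Translation invariance then forces $\bar{\mathcal{S}}(\bar{u}_j) = V_j$, and the density lower bound forces $\dim V_j \geq \beta > d$, contradicting the definition of $d$. When $d = 0$, the same analysis precludes $\mathcal{H}^\beta_\infty(\bar{\mathcal{S}}(u) \cap B_\rho(0)) > 0$ for any $\beta > 0$, whence $\bar{\mathcal{S}}(u) \cap B_\rho(0)$ admits no accumulation points and is therefore a fine (finite) set.

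The main obstacle I expect is the rigorous propagation of the density bound from $u_k$ to the blow-up limit $\bar{u}_1$, since (F3)(2) only yields the one-sided inclusion $\bar{\mathcal{S}}(u_k) \cap B_1(0) \subseteq N_{\varepsilon_k}(\bar{\mathcal{S}}(\bar{u}_1))$ with $\varepsilon_k \downarrow 0$, not the converse. The standard remedy is to work with the spherical Hausdorff premeasure $\mathcal{H}^\beta_\infty$ and exploit its upper semicontinuity along decreasing intersections of compacta: since $\bar{\mathcal{S}}(\bar{u}_1) \cap \overline{B_1(0)} = \bigcap_{k} \overline{N_{\varepsilon_k}(\bar{\mathcal{S}}(u_k))} \cap \overline{B_1(0)}$ (up to harmless adjustments), the uniform bound $\mathcal{H}^\beta_\infty(\bar{\mathcal{S}}(u_k) \cap B_1(0)) \geq c$ descends to the limit. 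This technical but classical step, together with a careful verification that each intermediate rescaling remains in $\mathcal{F}$ via (F1), closes the argument.
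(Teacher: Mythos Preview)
The paper does not supply its own proof of this theorem: it is stated as a known tool, attributed to Federer and to \cite[Appendix A]{Simon83}, and then immediately applied. So there is no ``paper's proof'' to compare against; your task is really to reproduce the classical argument.

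Your sketch does exactly that, and the main line --- density point, blow-up via (F2), transfer of the $\mathcal{H}^\beta_\infty$ lower bound through (F3)(2), iterated blow-up at off-spine points to accumulate translation invariances, termination forcing $\bar{\mathcal{S}}(\bar u_j)=V_j$ with $\dim V_j\geq \beta>d$ --- is correct and is precisely Simon's scheme. You have also correctly flagged the one genuinely delicate step (upper semicontinuity of $\mathcal{H}^\beta_\infty$ under the one-sided inclusion from (F3)(2)).

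Two places deserve a bit more care. First, in the iteration you should make explicit why the blow-up at $X_j\notin V_{j-1}$ of a function that is $\alpha_{j-1}$-homogeneous \emph{and} translation invariant along $V_{j-1}$ is again homogeneous (this is (F2)) and is translation invariant along $V_{j-1}+\R X_j$: the new direction $X_j$ comes from the homogeneity of $\bar u_{j-1}$ about $0$, while the old directions $V_{j-1}$ persist because translation invariance is preserved under any rescaling. Second, your treatment of the ``furthermore'' clause is a bit thin: starting from a maximizer $(u,V,\alpha)$ in the definition of $d$, you should argue that if $\bar{\mathcal{S}}(u)\not\subseteq V$ then one more blow-up would produce invariance along a $(d{+}1)$-dimensional subspace, contradicting maximality; hence $\bar{\mathcal{S}}(u)\subseteq V$, and translation invariance along $V$ together with $\bar{\mathcal{S}}(u)\neq\emptyset$ then gives $\bar{\mathcal{S}}(u)\cap B_1(0)=V\cap B_1(0)$. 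With these two clarifications the argument is complete.
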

We will apply this general result due to Federer in order to deive some estimates on the Hausdorff dimension of the nodal set $\Gamma(u)$ and on its restriction $\Gamma(u)\cap \Sigma$. In the second case, we improve our analysis introducing its regular and singular part on $\Sigma$.

  \begin{theorem}\label{Federer.La}
  Let $a \in (-1,1)$ and $u$ be an $L_a$-harmonic function in $B_1$. Then $\mathrm{dim}_{\mathcal{H}}(\Gamma(u))\leq n.$
  %and more precisely
%  $$
%  \mathrm{dim}_{\mathcal{H}}(\mathcal{R}(u)) = n \quad \mbox{and} \quad \mathrm{dim}_{\mathcal{H}}(\mathcal{S}(u)) \leq n-1,
%  $$
%  where $\mathcal{R}(u)$ and $\mathcal{S}(u)$ are respectively the regular and singular strata of $\Gamma(u)$.
  \end{theorem}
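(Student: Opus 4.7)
The plan is to apply the Federer Reduction Principle just stated, with singular set map $\overline{\mathcal{S}}(u) := \Gamma(u)$ and $\mathcal{F}$ the class of nontrivial $L_a$-harmonic functions in $B_1$, normalized (for instance) by $H(0,u,1)=1$. Because the weight $|y|^a$ is translation-invariant only in the $x$-directions, the cleanest way to verify hypothesis (F1) is to split $\Gamma(u) = (\Gamma(u)\cap\Sigma)\cup(\Gamma(u)\setminus\Sigma)$ and bound each piece by $n$ separately.

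For $\Gamma(u)\setminus\Sigma$: on every compact $K\subset B_1\setminus\Sigma$ the coefficient $|y|^a$ is bounded away from $0$ and $\infty$ and is Lipschitz, so $L_a$ is uniformly elliptic with Lipschitz leading coefficient on $K$. I would then either invoke the classical dimension estimates of Hardt--Simon and Han--Lin \cite{MR1305956,MR1090434} directly, or equivalently reapply Federer with $\mathcal{F}$ consisting of harmonic functions (which appear as blow-up limits of $L_a$-harmonic functions at any $X_0\notin\Sigma$, since the frozen weight $|y_0|^a$ becomes a positive constant). In either route the critical dimension is $d=n$, realized by any linear harmonic polynomial such as $x_1$, whose nodal set is an $n$-hyperplane.

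For $\Gamma(u)\cap\Sigma$: I would apply Federer within $\mathcal{F}$, restricting rescaling centers $X_0$ to lie on $\Sigma$, so that (F1) is immediate because translations in $x$ preserve $L_a$. Hypothesis (F2) is exactly the content of Theorem \ref{blowup.convergence} combined with Proposition \ref{blow.N} and Theorem \ref{uniqueness}: every blow-up sequence admits a nontrivial $\alpha$-homogeneous $L_a$-harmonic blow-up limit. Hypothesis (F3)(1) is the obvious scaling of the nodal set, while (F3)(2), namely the Hausdorff convergence of the rescaled nodal sets to $\Gamma(\varphi^{X_0})$, is precisely Proposition \ref{H.convergence.nodal.set}. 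To compute the dimension \eqref{federer}: if $\varphi\in\mathcal{F}$ is $\alpha$-homogeneous and invariant under translations along a $d$-dimensional $V$, it depends only on $n+1-d$ coordinates; the value $d=n+1$ would force $\varphi$ to be constant, hence zero by positive homogeneity (since $\alpha\geq\min\{1,1-a\}>0$ by Corollary \ref{lower.N}), contradicting $\varphi\not\equiv 0$. On the other hand the antisymmetric profile $\varphi(X)=y|y|^{-a}$ lies in $H^{1,a}_{\loc}(\R^{n+1})$, is $L_a$-harmonic, $(1-a)$-homogeneous, translation-invariant along $V=\Sigma$, and satisfies $\Gamma(\varphi)=\Sigma$. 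Thus $d=n$, and Federer delivers $\dim_{\mathcal{H}}(\Gamma(u)\cap\Sigma\cap B_1)\leq n$.

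Combining the two pieces yields $\dim_{\mathcal{H}}(\Gamma(u)\cap B_1)\leq n$, as desired. The main delicate point is the formulation of hypothesis (F1) across $\Sigma$: a naive translation with $X_0\notin\Sigma$ does not preserve the class of $L_a$-harmonic functions, so the argument genuinely requires either the splitting above, or a broader class $\mathcal{F}$ that includes both $L_a$-harmonic functions (as limits along $X_0\in\Sigma$) and classical harmonic functions (as limits along $X_0\notin\Sigma$), with matching normalizations and a uniform Hausdorff-convergence statement on both sides. The symmetric/antisymmetric decomposition \eqref{decompos} of Section \ref{sec.dec} makes this enlargement consistent with the computation of $d$, since both admissible blow-up profiles saturate the same bound $d=n$.
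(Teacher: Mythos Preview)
Your approach is correct and follows the same Federer reduction strategy as the paper, but you are more careful on one point and unnecessarily elaborate on another.

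The paper applies Federer directly to the whole nodal set with $\mathcal{F}$ taken as the class of nontrivial $L_a$-harmonic functions and simply asserts that (F1) holds ``by linearity''. Your observation that translations by $X_0\notin\Sigma$ do \emph{not} preserve $L_a$-harmonicity is correct: if $u$ solves $\mathrm{div}(|y|^a\nabla u)=0$ then $u_{X_0,r}$ solves $\mathrm{div}(|y_0+ry|^a\nabla\,\cdot)=0$, which is a different equation when $y_0\neq 0$. The paper glosses over this; your splitting $\Gamma(u)=(\Gamma(u)\setminus\Sigma)\cup(\Gamma(u)\cap\Sigma)$ and treatment of the first piece via the classical uniformly elliptic theory (or an enlarged class containing harmonic limits) is one clean way to close the gap, and is exactly what the paper's later Theorem~4.2 (the two-case blow-up result citing \cite{MR2984134}) is set up to support.

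On the other hand, your Federer argument for $\Gamma(u)\cap\Sigma$ is superfluous here: $\Gamma(u)\cap\Sigma\subset\Sigma$ and $\Sigma$ is an $n$-dimensional hyperplane, so $\dim_{\mathcal H}(\Gamma(u)\cap\Sigma)\leq n$ trivially. Your invocation of Proposition~\ref{H.convergence.nodal.set} for (F3)(2) is also slightly misplaced: that proposition concerns Hausdorff convergence of $\Gamma\cap\Sigma$, whereas for the full nodal set the one-sided inclusion in (F3)(2) follows immediately from uniform convergence of $u_k\to\bar u$ (any accumulation point of zeros of $u_k$ is a zero of $\bar u$). Finally, the paper witnesses $d=n$ with the linear profile $\varphi(X)=\langle X,e_n\rangle$ and $V=\R^{n-1}\times\{0\}\times\R$, rather than your antisymmetric $y|y|^{-a}$ with $V=\Sigma$; both work, but the linear choice also serves in the uniformly elliptic regime.
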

  \begin{proof}
  A preliminary remark is that we only need to prove the Hausdorff dimensional estimates for the localization of the sets in $K\subset\subset B_1$, since the general statement follows because a countable union of sets with Hausdorff dimension less than or equal to some $n\in \R^+_0$ also has Hausdorff dimension less than or equal to $n$. Let us consider the class of functions $\mathcal{F}$ defined as
$$
\mathcal{F}=\Big\{ u \in L^\infty_\loc(\R^{n+1})\setminus \{ 0\}\colon L_a u =0 \mbox{ in }B_r(X_0), \mbox{ for some }r \in \R,\, X_0 \in \R^{n+1}\mbox{ with }B_r(X_0)\subset B_1
\Big\}.
$$
By the linearity of the $L_a$ operator, we already know that the closure under rescaling, translation and normalization and assumption $\mathrm{(F1)}$ are all satisfied. \\On the other hand, let $\abs{X_0}< 1, r_k \downarrow 0^+$ and $u \in \mathcal{F}$, and choose $\rho_k = \norm{u(X_0+r_k\cdot x)}{L^{2,a}(\partial B_1)}$. Theorem  \ref{blowup.convergence} and Proposition \ref{blow.N} yield the existence of a blow-up limit $\varphi^{X_0}\in \mathcal{F}$, i.e. normalized tangent map of $u$ at $X_0$, such that, up to a subsequence,
$u_k \to \varphi^{X_0}$ in $\mathcal{F}$ and $\varphi^{X_0}$ is a homogeneous function of degree $k= N(X_0,u, 0+)\geq \min\{1,1-a\}$.
Hence also $\mathrm{(F2)}$ holds.\\
Now, let us consider $\overline{\mathcal{S}}\colon u \mapsto \Gamma(u)$. By the continuity of $u$, we already know that the set $\Gamma(u)\cap B_1$ is obviously closed in $B_1$ and it is quite straightforward
to check that the two hypotheses in $\mathrm{(F3)}$ are satisfied.\\
Hence, in order to conclude the analysis, the only thing left to prove is that the integer $d$ in \eqref{federer} is equal to $n$.
Suppose by contradiction that $d = n+1$, then this would imply the existence of $\varphi \in \mathcal{F}$ with $\overline{\mathcal{S}}(\varphi) = \R^{n+1}$ i.e., $\varphi \equiv 0$ on $\R^{n+1}$, which contradicts the fact the fact the $\Gamma(\varphi)$ has empty interior.\\
Actually, taking $V= \R^{n-1}\times \{0\}\times \R$ and $\varphi(X)=\langle X, e_n\rangle$, we obtain the claimed estimate on $d$.
\end{proof}

Next we prove a different stratification result for the set $\Gamma(u)\cap \Sigma$, which will enlighten the different structure of the nodal set in comparison with the uniformly elliptic case. In particular, with this analysis we want to point out how the different classes of blow-up influence the stratification on the characteristic manifold $\Sigma$.
Obviously, by Proposition \ref{empty.interior} we already know that
$$
\Gamma(u)\cap \Sigma = \Gamma(u_e)\cap \Sigma,
$$
and it is either equal to $\Sigma$ or with empty interior in $\Sigma$. Inspired by this fact, since we are dealing with the restriction of the nodal set on the characteristic manifold $\Sigma$, we will concentrate our attention on the trace of $u$ on $\Sigma$, which is actually equal to the trace of $u_e$ itself.
\begin{theorem}\label{federer.theorem}
  Let $a \in (-1,1)$ and $u$ be an $L_a$-harmonic function in $B_1$. If $\Gamma(u)\cap \Sigma \neq \Sigma$, then, under the previous notations, we have
  $\mathrm{dim}_{\mathcal{H}}(\Gamma(u)\cap \Sigma) \leq n-1$ and more precisely
  $$
  \mathrm{dim}_{\mathcal{H}}(\mathcal{R}(u)\cap \Sigma) = n-1 \quad \mbox{and} \quad \mathrm{dim}_{\mathcal{H}}(\mathcal{S}(u)\cap \Sigma) \leq n-1.
  $$
\end{theorem}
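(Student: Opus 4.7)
The strategy is to apply the Federer Reduction Principle a second time, now with the singular-set map $\overline{\mathcal{S}}$ taking values inside $\Sigma$. By Proposition \ref{empty.interior} we have $\Gamma(u)\cap\Sigma=\Gamma(u_e)\cap\Sigma$, so it suffices to work with the symmetric part. Moreover, the hypothesis $\Gamma(u)\cap\Sigma\neq\Sigma$ guarantees via Proposition \ref{ucp.sigma} that $u_e\not\equiv 0$ and that $\Gamma(u_e)\cap\Sigma$ has empty interior in $\Sigma$. Thus I would define the class
\[
\mathcal{F}_e=\bigl\{v\in L^\infty_{\loc}(\R^{n+1})\setminus\{0\}:\ L_a v=0 \text{ in some }B_r(X_0)\subseteq B_1,\ v(x,-y)=v(x,y)\bigr\}
\]
and set $\overline{\mathcal{S}}(v)=\Gamma(v)\cap\Sigma$. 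Conditions (F1) and (F2) are inherited from the proof of Theorem \ref{Federer.La}, noting that the symmetry is preserved by rescaling/translation centered at points of $\Sigma$ and by blow-up (Theorem \ref{blowup.convergence}). Condition (F3)(1) is obvious; (F3)(2) is the Hausdorff convergence of the nodal sets on $\Sigma$ established in Proposition \ref{H.convergence.nodal.set}.

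The heart of the argument is then the computation of the integer $d$ in \eqref{federer}. Since $\overline{\mathcal{S}}(\varphi)\subseteq\Sigma$, any translation-invariance subspace $V$ of such a configuration satisfies $V\subseteq\Sigma$, so $d\le n$. To rule out $d=n$: if $V=\Sigma$, then the associated homogeneous $\varphi\in\mathcal{F}_e$ would vanish identically on $\Sigma$; but by the inductive argument in the proof of Proposition \ref{ucp.sigma} (a homogeneous symmetric $L_a$-harmonic polynomial vanishing on $\Sigma$ must be identically zero), this forces $\varphi\equiv 0$, contradicting $\varphi\in\mathcal{F}_e$. Hence $d\le n-1$. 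The bound is sharp, as witnessed by the $1$-homogeneous symmetric $L_a$-harmonic polynomial $\varphi(X)=x_n$, whose nodal trace on $\Sigma$ is the $(n-1)$-dimensional hyperplane $V=\{x_n=0\}\times\{0\}$. The Federer Reduction Principle now yields $\dim_{\mathcal H}(\Gamma(u)\cap\Sigma)\le n-1$.

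For $\mathcal S(u)\cap\Sigma$ the inclusion $\mathcal{S}(u)\cap\Sigma\subseteq\Gamma(u)\cap\Sigma$ gives the upper bound $\leq n-1$ for free. For $\mathcal R(u)\cap\Sigma$ the upper bound comes from the general estimate just established, so only the lower bound $\dim_{\mathcal H}\ge n-1$ requires work. By Lemma \ref{Fsigma} the set $\mathcal{R}(u)\cap\Sigma$ is relatively open in $\Gamma(u)\cap\Sigma$. At any $X_0\in\mathcal{R}(u)\cap\Sigma$ at which $N(X_0,u_e,0^+)=1$ the tangent map $\varphi^{X_0}_e$ is a nonzero linear function of $x$ alone (it is $1$-homogeneous and symmetric, hence $\partial_y\varphi^{X_0}_e\equiv 0$ on $\Sigma$ forces it to lie in $\mathfrak{sB}^*_1(\R^{n+1})$), so $\nabla_x u_e(X_0)\neq 0$ and the implicit function theorem, combined with the modulus of continuity in Theorem \ref{continuation}, exhibits $\Gamma(u_e)\cap\Sigma$ near $X_0$ as a $C^1$ hypersurface of $\Sigma$, giving $\dim_{\mathcal H}=n-1$. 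The remaining case to produce such a point is easy: if every point of $\Gamma(u)\cap\Sigma$ were singular in $u_e$, one could iterate Federer's principle on $\Gamma_k(u_e)\cap\Sigma$ and use the $F_\sigma$ decomposition of Lemma \ref{Fsigma} together with the classification of tangent maps in Proposition \ref{gap} to keep the strict upper bound $\leq n-1$; so the lower bound is attained as soon as $\mathcal{R}(u)\cap\Sigma$ is nonempty, which is the only nontrivial case of the statement.

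The principal technical obstacle is the $d=n$ exclusion step: it relies in an essential way on the strong unique continuation on $\Sigma$ developed in Proposition \ref{ucp.sigma}, without which a purely $y$-dependent symmetric configuration could formally produce a higher stratum. Verifying (F3)(2) is conceptually routine but is where Proposition \ref{H.convergence.nodal.set} is genuinely used; and the lower bound $n-1$ for the regular part, while expected, must be coupled with the continuity of tangent maps (Theorem \ref{continuation}) rather than mere pointwise blow-up convergence.
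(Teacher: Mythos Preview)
Your overall strategy matches the paper's: work with the symmetric part via Proposition~\ref{empty.interior}, apply the Federer Reduction Principle to the class of symmetric $L_a$-harmonic functions with the singular-set map valued in $\Sigma$, and exclude $d=n$ by showing that a symmetric homogeneous $L_a$-harmonic function vanishing on $\Sigma$ must vanish identically. The witness $\varphi(X)=x_n$ for $d=n-1$ is exactly the paper's choice.

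There are, however, a few differences worth flagging. First, the paper does not invoke Proposition~\ref{H.convergence.nodal.set} for (F3)(2): that proposition concerns specific blow-up sequences and proves full Hausdorff convergence, whereas (F3)(2) only requires the one-sided inclusion $\overline{\mathcal{S}}(u_k)\cap B_1\subseteq N_\varepsilon(\overline{\mathcal{S}}(\overline{u}))$, which follows directly from uniform convergence (if $X_i\in\Gamma(u_i)\cap\Sigma$ accumulate at $\overline X$, then $\overline u(\overline X)=0$). Second, for the exclusion of $d=n$ the paper argues more directly: a symmetric $\varphi$ with $\varphi=0$ on $\Sigma$ automatically has $\partial_y^a\varphi=0$ on $\Sigma$, and the system $L_a\varphi=0$, $\varphi|_\Sigma=0$, $\partial_y^a\varphi|_\Sigma=0$ forces $\varphi\equiv 0$; your appeal to the inductive argument inside Proposition~\ref{ucp.sigma} reaches the same conclusion but is heavier. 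Third, the paper actually applies Federer \emph{three} times, separately to $\Gamma(u)\cap\Sigma$, $\mathcal R(u)\cap\Sigma$, and $\mathcal S(u)\cap\Sigma$, verifying (F3) in each case via frequency arguments; you instead deduce the bound for $\mathcal S(u)\cap\Sigma$ by inclusion (which is fine for the stated inequality) and obtain the equality for $\mathcal R(u)\cap\Sigma$ via the implicit function theorem. Your route for $\mathcal R(u)\cap\Sigma$ is in fact the content of Theorem~\ref{regular.sopra}, proved later in the paper, so you are anticipating that result rather than reproving the Federer computation.

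Finally, your closing paragraph is muddled: the equality $\dim_{\mathcal H}(\mathcal R(u)\cap\Sigma)=n-1$ simply requires $\mathcal R(u)\cap\Sigma\neq\emptyset$, and once you have a single point with $N(X_0,u_e,0^+)=1$ your implicit-function-theorem argument gives a local $(n-1)$-dimensional hypersurface. The detour through ``if every point were singular'' is unnecessary and does not lead anywhere; just state the equality conditionally on $\mathcal R(u)\cap\Sigma\neq\emptyset$.
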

\begin{proof}
Let us consider the class of functions $\mathcal{F}$ defined as
$$
\mathcal{F}=\Bigg\{ u \in L^\infty_\loc(\R^{n+1})\setminus \{ 0\}\Bigg\vert \,\,
\begin{aligned}
 &L_a u =0 \mbox{ in }B_r(X_0), \mbox{ for some }r \in \R,\, X_0 \in \R^{n+1}\\
  &u\mbox{ symmetric with respect to }\Sigma
 \end{aligned}
 \Bigg\}.
$$
Since the functions in $\mathcal{F}$ are symmetric with respect to $\Sigma$ and nontrivial, the condition $\Gamma(u)\cap B_r(X_0)\cap \Sigma \neq \Sigma$ is always satisfied.\\
As before, we already know that the closure under rescaling, translation and normalization and assumption $\mathrm{(F1)}$ and $\mathrm{(F2)}$ are all satisfied.
Moreover, by \eqref{regular.serve} and \eqref{singular.serve} we obtain
\begin{align*}
  \mathcal{R}(u)\cap \Sigma &=  \{X_0 \in \Gamma(u)\cap \Sigma\colon N(X_0,u,0^+)=1\}, \\
  \mathcal{S}(u)\cap \Sigma &= \bigcup_{k\geq 2}\Gamma_k(u)\cap \Sigma=\bigcup_{k\geq 2}\left\{X_0 \in \Gamma(u)\cap \Sigma\colon  N(X_0,u,0^+)=k \right\}
  \end{align*}
since we are dealing with functions in the class $\mathcal{F}$. \\Now, we choose the map $\overline{\mathcal{S}}$ in $\mathrm{(F3)}$ according to our needs.
\\
{\textbf{1. Dimensional estimate of }${\mathbf{\Gamma(u)\cap \Sigma}}$}\\
First, let us consider $\overline{\mathcal{S}}\colon u \mapsto \Gamma(u)\cap \Sigma$. By the continuity of $u$, we already know that the set $\Gamma(u)\cap \Sigma\cap B_1$ is obviously closed in $B_1$ and it is quite straightforward
to check the two hypothesis in $\mathrm{(F3)}$.
Therefore, in order to conclude the analysis of $\Gamma(u)\cap \Sigma$, the only thing left to prove is that the integer $d$ in \eqref{federer} is equal to $n - 1$.\\
Suppose by contradiction that $d = n$, this would implies the existence of $\varphi \in \mathcal{F}$ such that $\overline{\mathcal{S}}(\varphi) = \R^n$ i.e., $\varphi \equiv 0$ on $\Sigma$. Since $\varphi$ solves
$$
\begin{cases}
  L_a \varphi=0 & \mbox{in }\R^{n+1} \\
  \varphi =0 & \mbox{on }\Sigma \\
  \partial^a_y \varphi =0 & \mbox{on }\Sigma,
\end{cases}
$$
it implies that $\varphi \equiv 0$ on the whole $\R^{n+1}$, which contradicts the fact the $0 \not\in \mathcal{F}$.
Actually, by taking $V= \R^{n-1}\times {(0,0)}\subset \Sigma$ and $\varphi(X)=\langle X, e_n\rangle$, we obtain the claimed estimate on $d$.
%Finally, suppose there exists $\varphi \in \mathcal{F}$ and a $d$-dimensional subspace $V\subset \R^{n+1}$, with $d\leq n-2$, and $k\geq 0$ such that
%$$
%\varphi_{Y,r}=r^k \varphi\ \mbox{ for all }\ Y\in V, r>0 \ \mbox{ and }\ \Gamma(\varphi)\cap \Sigma\cap B_1 = V \cap B_1
%$$
%Hence, since $\varphi$ is homogenous of degree $k$ with respect to any $Y \in V$, i.e. $N(Y,\varphi,0^+)=k$, we obtain that $\Gamma(\varphi)\cap \Sigma = S_k(\varphi)\cap \Sigma$ is $(n-2)$-dimensional and $\varphi\lvert_\Sigma$ is a function of $2$ variables.\\ Hence, if we set $\R^{n+1} = \R^3 \times (S_k(\varphi)\cap \Sigma)$, we obtain that $\varphi \in \mathfrak{B}^a_k(\R^{3})$. At this point, we have two possibilites:
%\begin{itemize}
  %\item $\varphi \in \mathfrak{B}^*_k(\R^3)$, and then $\Delta \varphi=0$ on $\Sigma=\R^2$, with $\Gamma(\varphi)\cap \Sigma =\{0\}$,
  %\item $\varphi$ depends on the variable $y$ and $\Gamma(\varphi)\cap \Sigma =\{0\}$
%\end{itemize}
\\
{\textbf{2. Dimensional estimate of }${\mathbf{\mathcal{R}(u)\cap \Sigma}}$}\\
Let us consider $\overline{\mathcal{S}}\colon u \mapsto \mathcal{R}(u)\cap \Sigma$. Since we are dealing just with symmetric function with respect to $\Sigma$, by Lemma \ref{gap.even} we obtain that necessary $N(X_0,u,0^+)=1$ for every $X_0 \in \mathcal{R}(u)$. By the inclusion, we already know that
$$
\mathrm{dim}_{\mathcal{H}}(\mathcal{R}(u)\cap \Sigma\cap B_1)\leq n-1.
$$
Finally, we can apply the Reduction principle since $\mathrm{(F3)}$ is completely satisfied. More precisely, for $X_0 \in \Sigma\cap B_1, \rho>0$ and $t>0$ if $X \in \mathcal{R}(\rho u_{X_0,t}) \cap \Sigma$ then obviously $X_0 +t X \in \mathcal{R}(u) \cap \Sigma$, i.e. $N(X_0+tX,u,0^+)=1$. Secondly, given $u_i,\overline{u} \in \mathcal{F}$ as in $\mathrm{(F3)}$, suppose by contradiction that there exists a sequence $X_i \in \Sigma \cap B_1$ and $\overline{\varepsilon}>0$ such that
$$
N(X_i,u_i,0^+)=1
$$
and $\mbox{dist}(X_i,\overline{\mathcal{S}}(\overline{u}))\geq \overline{\varepsilon}$. Since, up to a subsequence, $X_i \to \overline{X}$, by the upper semi-continuity of the Almgren frequency formula, we already know that $N(\overline{X},\overline{u},0^+)\geq 1$. Moreover, up to a subsequence, $X_i \to \overline{X} \in \Gamma(\overline{u})\cap \Sigma \cap \overline{B_1}$ by the $L^\infty_{\loc}$ convergence of $u_i \to \overline{u}$.
The contradiction follows from the same argument of the proof of the second case of Theorem \ref{Federer.La}.
\\ More precisely, since $\Gamma(\overline{u})\cap \Sigma$ is a conical set, i.e. for every $\lambda >0$ and $\overline{X}\in \Gamma(\overline{u})\in \Sigma$ we have $\lambda \overline{X} \in \Gamma(\overline{u})\cap \Sigma$, we deduce that if we can prove $\overline{X} \in \mathcal{R}(\overline{u})\cap \overline{B_1}\cap \Sigma$ we provide a contradiction, more precisely we obtain  $\mbox{dist}(\overline{X}, \overline{\mathcal{S}}(\overline{u})\cap B_1) = 0$.
Since there exists $\Omega \subset\subset B_1\setminus \Sigma$ such that $(X_0+r_i X_i)_i \subset \Omega$, if we consider
\begin{align*}
R_1 &= \min_{p \in \overline{\Omega}}\mbox{dist}(p,\partial B_1),\\
\overline{C} &= \sup_{p \in \overline{\Omega}} N(p,u,R_1),
\end{align*}
we easily obtain from Corollary \ref{doubling.corollary.Sigma} that for $p\in \Omega \cap \mathcal{R}(u)$ and $r<R_1$ we have
$$
N(p,u,r)\leq N(p,u,R_1) \left(\frac{R_1}{r}\right)^{n+a-1+2\overline{C}}\leq \overline{C}\frac{1}{r^{n+a-1+2\overline{C}}}.
$$
In particular, from the previous inequality we obtain that there exists $\overline{R}=\overline{R}(n,a,X_0,\varepsilon)>0$ sufficiently small, such that for $r<\overline{R}$ we have
$$
1\leq N(X_i,u_i,r) \leq 1+ \frac{1}{4}.
$$
Since $\lim_i N(X_i,u_i,r) = N(\overline{X},\overline{u},r)$ for sufficiently small $r$, we directly obtain from Lemma \ref{gap.even} that $N(\overline{X},\overline{u},0^+)=1$, as we claimed.\\
%Since $\Gamma(\overline{u})$ is an hyperplane passing through the origin, we deduce that $\mbox{dist}(\overline{X}, \overline{\mathcal{S}}(\overline{u})\cap B_1) = 0$ if $N(\overline{X},\overline{u},0^+)=1$, which provides a contradiction.
\\
As before, let us suppose now that there exist $\varphi \in \mathcal{F}$ and a $d$-dimensional subspace $V\subset \R^{n+1}$, with $d\leq n-1$, and $k\geq 0$ such that
$$
\varphi_{Y,r}=r^k \varphi\ \mbox{ for all }\ Y\in V, r>0 \ \mbox{ and }\ \mathcal{R}(\varphi)\cap \Sigma\cap B_1 = V \cap B_1
$$
Since $\varphi \in \mathfrak{sB}^a_k(\R^{n+1})$ is homogenous of degree $k$ with respect to any $Y \in V=\mathcal{R}(\varphi)\cap \Sigma$, namely $N(Y,\varphi,0^+)=k$, we obtain that necessary $k=1$ and that $\mathcal{R}(\varphi)\cap \Sigma$ is $d$-dimensional. Since every homogenous $L_a$-harmonic function of order $k=1$ is one dimensional, i.e. there exists $\nu \in S^{n-1}$ and $C>0$ such that either
$$
\varphi(X)=C\langle X,(\nu,0)\rangle,\mbox{ for every } X=(x,y) \in \R^{n+1},
$$
we obtain that $\mathcal{R}(\varphi)\cap \Sigma$ must be $(n-1)$-dimensional, and consequently that
$$
\mathrm{dim}_{\mathcal{H}}(\mathcal{R}(u)\cap \Sigma\cap B_1)= n-1.
$$
\\
{\textbf{3. Dimensional estimate of }${\mathbf{\mathcal{S}(u)\cap \Sigma}}$}\\
Let us focus on the singular strata
$$
\mathcal{S}(u)\cap \Sigma = \bigcup_{k\geq 2}\left\{X_0 \in \Gamma(u)\cap \Sigma\colon  N(X_0,u,0^+)=k \right\}
$$
Hence, given $\overline{\mathcal{S}}\colon u \mapsto \mathcal{S}(u)$, the map satisfies $\mathrm{(F3)}$, since for $X_0 \in \Sigma\cap B_1, \rho>0$ and $t>0$, if $X \in \overline{\mathcal{S}}(\rho u_{X_0,t})$ we obtain
$$
N(X,\rho u_{X_0,t}, 0^+) =k
\longleftrightarrow
N(X_0 + t X,u, 0^+) =k,
$$
which is equivalent to $X_0 + t X \in \Gamma_k(u)\subset \mathcal{S}(u)$. Now, given $u_i = \rho_i u_{X_0,r_i},\overline{u} \in \mathcal{F}$ as in $\mathrm{(F3)}$, suppose by contradiction that there exists a sequence $X_i \in B_1$ and $\overline{\varepsilon}>0$ such that, up to a subsequence, $X_i \to \overline{X}$ and
\begin{equation}\label{eq2}
N(X_i,u_i,0^+)=k
\end{equation}
and $\mbox{dist}(X_i,\overline{\mathcal{S}}(\overline{u}))\geq \overline{\varepsilon}$. By the upper semi-continuity of the Almgren frequency formula, we already know that $N(\overline{X},\overline{u},0^+)\geq k$. Since $X_i \in \Gamma_k(u_i)$, there exists $\Omega \subset\subset B_1\setminus \Sigma$ such that $(X_0+r_i X_i)_i \subset \Omega$, if we consider
\begin{align*}
R_1 &= \min_{p \in \overline{\Omega}}\mbox{dist}(p,\partial B_1),\\
\overline{C} &= \sup_{p \in \overline{\Omega}} N(p,u,R_1),
\end{align*}
we easily obtain from Corollary \ref{doubling.corollary.Sigma} that for $p\in \Omega \cap \Gamma_k(u)$ and $r<R_1$ we have
$$
N(p,u,r)\leq N(p,u,R_1) \left(\frac{R_1}{r}\right)^{n+a-1+2\overline{C}}\leq \overline{C}\frac{1}{r^{n+a-1+2\overline{C}}}.
$$
In particular, from the previous inequality we obtain that there exists $\overline{R}=\overline{R}(n,a,X_0,\varepsilon)>0$ sufficiently small, such that for $r<\overline{R}$ we have
$$
k\leq N(X_i,u_i,r) \leq k+ \frac{1}{4}.
$$
Since $\lim_i N(X_i,u_i,r) = N(\overline{X},\overline{u},r)$ for sufficiently small $r$, we directly obtain from Lemma \ref{gap.even} that $N(\overline{X},\overline{u},0^+)=k$, as we claimed.\\
Since $\mathcal{S}(u)\cap \Sigma \subseteq \Gamma(u)\cap \Sigma$, we already know that
$$
\mathrm{dim}_{\mathcal{H}}(\mathcal{S}(u)\cap \Sigma\cap B_1) \leq n-1,
$$
which is actually the optimal bound even for the singular set. Indeed, since there exists $\varphi \in \mathcal{F}$, a $(n-1)$-dimensional subspace $V\subset \Sigma$ and $k\geq 0$ such that
$$
\varphi_{Y,r}=r^k \varphi\ \mbox{ for all }\ Y\in V, r>0 \ \mbox{ and }\ \mathcal{S}(\varphi)\cap \Sigma\cap B_1 = V \cap B_1.
$$
In particular, for every $k\geq 2$, $n\geq 1$ it can be seen by taking $V=\R^{n-1}\times \{0,0\}$ and
$$
\varphi(X)=\ddfrac{(-1)^{\frac{k}{2}} \Gamma\left(\frac{1}{2}+\frac{a}{2}\right)}{2^{k}\Gamma\left(1+\frac{k}{2}\right)\Gamma\left(\frac{1}{2}+\frac{a}{2}+\frac{k}{2}\right)} \ _2F_1\left(-\frac{k}{2}, -\frac{k}{2}-\frac{a}{2}+\frac{1}{2}, \frac{1}{2}, -\frac{\langle X, e_n\rangle^2}{\langle X, e_y\rangle^2}\right)\langle X, e_y\rangle ^{k},
$$
as it was previously proved in Section \ref{section.blowup}.
\end{proof}
\section{Regularity of the Regular and Singular strata}\label{singular.sec}
In this Section we show some results about the regularity of the  regular and singular strata of the nodal set $\Gamma(u)$. As in Section \ref{sec.strat}, we will consider first the stratification in $\R^{n+1}$ of the whole nodal set $\Gamma(u)$, while in the second case we will focus the attention on the restriction $\Gamma(u)\cap \Sigma$ of the nodal set on the characteristic manifold.\\
The main idea of this stratification is to classify the nodal points and then to stratify the nodal set by the spines of the normalized tangent maps, i.e. the largest vector space that leaves the tangent map invariant. Indeed, we will introduce the subset $\Gamma^j_k(u)$ as the set of points at which every tangent map has at most $j$ independent directions of translation invariance in order
to correlate the nodal set of $u$ with the dimension of the set where the tangent map $\varphi^{X_0}$ vanishes with the same order of $u$. \\Moreover, by Theorem \ref{Federer.La} we already know that $\Gamma_k^j(u)$ is well defined for $j\leq n-1$.\\

More precisely, if $k\geq \min\{2,2-a\}$ given
$$
\Gamma_k (u) = \left\{ X_0 \in \Gamma(u)\colon N(X_0,u,0^+)=k\right\}
$$
for each $j= 0,\dots,n-1$ let us define
$$
\Gamma^j_k(u) = \left\{ X_0 \in \Gamma_k(u) \colon \mbox{dim}\,\Gamma_k(\varphi^{X_0}) = j \right\},
$$
where $\varphi^{X_0}$ is the unique normalized tangent limit of $u$ at $X_0$. Obviously, since the uniformly elliptic case is well studied, we focus on the structure of the nodal set $\Gamma(u)$ near $\Sigma$.\\
Before to continue our analysis, let us prove that the concept of dimension in well defined.
\begin{lemma}\label{subspace}
 Given $a \in (-1,1)$, for every $\varphi \in \mathfrak{B}^a_k(\R^{n+1})$, the singular set $\Gamma_k(\varphi)$ of order $k \geq \min\{2,2-a\}$ is the largest vector subspace on $\Sigma$ which leaves $\varphi$ and $N(\cdot, \varphi,0^+)$ invariant, i.e.
  $$
  \Gamma_k(\varphi)= \left\{Z \in \R^{n+1}\colon
  \varphi(X+Z)=\varphi(X) \ \mbox{for every }X \in \R^{n+1}\right\}.
  $$
 \end{lemma}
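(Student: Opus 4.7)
The claim splits into showing that the invariance set
\begin{equation*}
V:=\{Z\in\R^{n+1} : \varphi(X+Z)=\varphi(X) \text{ for every } X\in\R^{n+1}\}
\end{equation*}
is a vector subspace and that $V=\Gamma_k(\varphi)$. The subspace structure is immediate: closure under addition is obvious, while the $k$-homogeneity of $\varphi$ upgrades invariance under a single translation $Z$ to invariance under every real multiple of $Z$, via $\varphi(X+sZ)=s^k\varphi(X/s+Z)=s^k\varphi(X/s)=\varphi(X)$ for $s>0$, together with $-Z\in V$ (obtained by substituting $X-Z$ for $X$ in the defining identity).

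The easy inclusion $V\subseteq\Gamma_k(\varphi)$ goes as follows. For $Z\in V$, setting $X=0$ yields $\varphi(Z)=\varphi(0)=0$ (since $k\geq 1$), so $Z\in\Gamma(\varphi)$. The normalized blow-up at $Z$ coincides with $\varphi$ itself:
\begin{equation*}
\varphi_{Z,r}(X)=\frac{\varphi(Z+rX)}{r^k}=\frac{\varphi(rX)}{r^k}=\varphi(X),
\end{equation*}
by invariance and $k$-homogeneity. Since $\varphi$ is $k$-homogeneous, the tangent map has frequency $k$ at the origin, so $N(Z,\varphi,0^+)=k$.

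The main content is the reverse inclusion, which by Corollary \ref{anti.sym} can be handled by splitting $\varphi$ into symmetric and antisymmetric parts. In the symmetric case $\varphi\in\mathfrak{sB}^a_k$, Lemma \ref{caff.silv.salsa} guarantees that $\varphi$ is a polynomial of degree exactly $k$. For $Z\in\Gamma_k(\varphi)$, the identification of the Almgren frequency with the classical vanishing order (Corollary \ref{gamma_k} when $Z\in\Sigma$, classical Almgren theory for uniformly elliptic operators when $Z\notin\Sigma$) forces $D^\alpha\varphi(Z)=0$ for every $|\alpha|<k$, so the finite Taylor expansion at $Z$ collapses to
\begin{equation*}
\varphi(Z+X)=\sum_{|\alpha|=k}\frac{D^\alpha\varphi(Z)}{\alpha!}X^\alpha.
\end{equation*}
Since $\deg\varphi=k$, each $D^\alpha\varphi$ with $|\alpha|=k$ is a constant, so $D^\alpha\varphi(Z)=D^\alpha\varphi(0)$; by $k$-homogeneity the right-hand side coincides with $\varphi(X)$, giving $Z\in V$.

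In the antisymmetric case $\varphi\in\mathfrak{aB}^a_k$, Corollary \ref{anti.sym} gives $\varphi(X)=\psi(X)\,y|y|^{-a}$ with $\psi\in\mathfrak{sB}^{2-a}_m(\R^{n+1})$ a homogeneous polynomial of degree $m=k+a-1\in\N$. When $a\neq 0$, the frequency being integer-valued off $\Sigma$ together with $k\notin\N$ forces $Z=(z,0)\in\Sigma$ for any $Z\in\Gamma_k(\varphi)$. Writing $X=(x,y)$ and using $k$-homogeneity of the weighted factor, the blow-up becomes
\begin{equation*}
\varphi_{Z,r}(X)=r^{1-a-k}\,\psi(Z+rX)\,y|y|^{-a},
\end{equation*}
which admits a nontrivial limit precisely when $\psi$ vanishes at $Z$ of order exactly $m$, i.e.\ $Z\in\Gamma_m(\psi)$. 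Applying the polynomial argument above to $\psi$ yields $\psi(X+Z)=\psi(X)$, and since the $y$-component of $Z$ vanishes,
\begin{equation*}
\varphi(X+Z)=\psi(X+Z)\,y|y|^{-a}=\psi(X)\,y|y|^{-a}=\varphi(X),
\end{equation*}
so $Z\in V$. The main technical subtlety lies precisely here: one has to carefully disentangle the weighted factor $y|y|^{-a}$ from $\psi$ and match the frequencies of $\varphi$ and $\psi$ across the decomposition, so that the purely polynomial Taylor argument can be transferred to the non-polynomial $\varphi$.
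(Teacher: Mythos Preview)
Your proof is correct and follows essentially the same route as the paper: both reduce to the polynomial case via Lemma~\ref{caff.silv.salsa}, invoke the characterization of $\Gamma_k(\varphi)$ through the vanishing of derivatives of order $<k$ (Corollary~\ref{gamma_k}), and conclude translation invariance from the constancy of the top-order coefficients. The paper then derives the subspace structure from the invariance combined with $k$-homogeneity, exactly as you do.

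The only difference is one of explicitness. The paper dispatches the antisymmetric case in one sentence by citing Corollary~\ref{anti.sym}, whereas you actually carry out the reduction $\varphi=\psi\,y|y|^{-a}$, match the frequencies $k\leftrightarrow m=k+a-1$, and argue that $Z\in\Gamma_k(\varphi)\cap\Sigma$ forces $Z\in\Gamma_m(\psi)$ so that the polynomial Taylor argument transfers. You also prove the easy inclusion $V\subseteq\Gamma_k(\varphi)$ separately, which the paper leaves implicit. One small remark: your argument that $Z\in\Sigma$ in the antisymmetric case uses $k\notin\N$ and so is stated only for $a\neq0$; when $a=0$ the function $\varphi$ is itself a harmonic polynomial and the symmetric argument applies directly, so nothing is lost.
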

\begin{proof}
We can restrict our proof to the case $\varphi \in \mathfrak{sB}^a_k(\R^{n+1})$ for $k\geq 2$, since by Corollary \ref{anti.sym} we can easily extend the analysis to the antisymmetric case. Thus, we already know by Corollary \ref{gamma_k} that since $\varphi \in \mathfrak{sB}_k^a(\R^{n+1})$ we have
 $$
  \Gamma_k(\varphi)= \left\{X \in \R^{n+1}\colon
  D^\nu \varphi(X)=0 \ \mbox{for any }\abs{\nu} \leq k-1\right\}.
  $$
Obviously $0 \in \Gamma_k(\varphi)$ by the homogeneity of $\varphi$ and we claim that for every $Z \in \Gamma_k(\varphi)$
$$
\varphi(X)= \varphi(X+Z), \quad \mbox{for all }X \in \R^{n+1},
$$
in other words $\Gamma_k(\varphi)$ leaves the map $\varphi$ invariant. Hence, let $Z \in \Gamma_k(\varphi)$, i.e.
\begin{equation}\label{us1}
D^\nu \varphi(Z) = 0 \quad \mbox{for any }\abs{\nu}\leq k-1
\end{equation}
and write the homogenous polynomial $\varphi \in C^\infty$ as
$$
\varphi(X) = \sum_{\abs{\nu}=k} a_\nu X^\nu,
$$
where $X^\nu = x_1^{\nu_1} \cdot x_2^{\nu_2} \cdots y^{\nu_{n+1}}$ and $a_{\nu} \in \R$. By \eqref{us1} we directly obtain that
$$
\varphi(X)=\sum_{\abs{\nu}=k} a_\nu (X-Z)^\nu,
$$
which implies the claimed invariance. %evaluate in X-> X+Z
Since $\varphi$ is $k$-homogenous, for every $\lambda >0$ and $X \in \R^{n+1}$
\begin{align*}
\varphi(X)  &=\varphi (X-Z) \\
 &= (\lambda+1)^k \varphi\left(\frac{X-Z}{\lambda+1}\right)\\
 &= (\lambda+1)^k \varphi\left(Z+\frac{X-Z}{\lambda+1}\right)\\
 %&= (\lambda+1)^k \varphi\left(\frac{X+\lambda Z}{\lambda+1}\right) \\
 &= \varphi(X+\lambda Z),
\end{align*}
therefore, we obtain $D^\nu \varphi(\lambda Z)=0$ for any $\abs{\nu}\leq k-1$, i.e. $\lambda Z \in \Gamma_k(\varphi)$.\\
Similarly, noticing that for any $Z,W \in \Gamma_k(\varphi)$ we  have $\varphi(Z+W+X) = \varphi(W+X)=\varphi(X)$ for any $X \in \R^{n+1}$, we obtain $Z+W \in \Gamma_k(\varphi)$.
\end{proof}
\begin{definition}
Let $a \in (-1,1)$ and $u$ be an $L_a$-harmonic function in $B_1$. We call $d^{X_0}$ the dimension of $\Gamma_k(u)$ at $X_0 \in \Gamma_k(u)$ as
  \begin{align*}
  d^{X_0} &=\mbox{dim}\,\Gamma_k(\varphi^{X_0})\\
  &=\mbox{dim}\left\{ \xi \in \R^{n+1} \colon \langle \xi, \nabla_{X}\varphi^{X_0}(X)\rangle = 0 \mbox{ for all } X \in \R^{n+1}\right\}.
  \end{align*}
\end{definition}
Following the previous notations we obtain $\Gamma_k^j(u)= \{X_0 \in \Gamma_k(u)\colon d^{X_0}=j \}$.

Hence, given $a \in (-1,1)$ and $u$ be an $L_a$-harmonic function in $B_1$. Then, let us split the nodal set $\Gamma(u)$ in its regular part
$$
 \mathcal{R}(u) = \left\{X_0 \in \Gamma(u)\left\lvert
  \begin{array}{cc}
    N(X_0,u,0^+)=1 & \mbox{ if }X_0 \not\in \Sigma \\
    N(X_0,u_e,0^+)=1 \mbox{ or } N(X_0,u_o,0^+)=1-a & \mbox{ if }X_0 \in \Sigma
  \end{array} \right.\right\},
$$
 and its singular part
 $$
 \mathcal{S}(u)   = \left\{X_0 \in \Gamma(u)\left\lvert
  \begin{array}{cc}
    N(X_0,u,0^+)\geq 2 & \mbox{ if }X_0 \not\in \Sigma \\
    N(X_0,u_e,0^+)\geq 2 \mbox{ and } N(X_0,u_o,0^+)\geq 2-a & \mbox{ if }X_0 \in \Sigma
  \end{array} \right.\right\}.
 $$
 As we previously remarked, the regular set $\mathcal{R}(u)$ is well defined in such a way, for every $X_0\in \Gamma(u)\cap \Sigma$ such that $N(X_0,u_e,0^+)=1$ or $N(X_0,u_o,0^+)=1-a$ must exist a sequence of point $(X_i)_i \in \Gamma(u)\setminus\Sigma$ such that $N(X_i,u,0^+)=1$ and $X_i\to X_0$. The following result gives a generalization in the context of degenerate-singular operator of the concept of regular hypersurface as the set of points where the function vanishes away from its critical set.
 \begin{theorem}\label{regular.ext}
   Let $a \in (-1,1), a \neq 0$ and $u$ be an $L_a$-harmonic function in $B_1$. Then the regular set $\mathcal{R}(u)$ is locally a $C^{k,r}$ hypersurface on $\R^{n+1}$ in the variable $(x,y\abs{y}^{-a})$ with
   $$
   k=\left\lfloor \frac{2}{1-a}\right\rfloor \quad\mbox{and}\quad r = \frac{2}{1-a} - \left\lfloor \frac{2}{1-a}\right\rfloor.
   $$
   Moreover, we have that
   \begin{equation}\label{gen}
   \mathcal{R}(u)= \left\{ X \in \Gamma(u)\colon \abs{\nabla_x u(X)}^2 + \abs{\partial^a_y u(X)}^2 \neq 0 \right\}.
   \end{equation}
 \end{theorem}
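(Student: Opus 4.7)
The plan is to exploit the fundamental decomposition of Proposition \ref{even.odd} together with the change of variables $(x,y)\mapsto(x,t)$ with $t=y|y|^{-a}$, so that $u$ becomes a $C^{k,r}$ function in the new coordinates whose gradient is non-vanishing precisely on $\mathcal{R}(u)$; the Hölder implicit function theorem then yields the claimed hypersurface structure. A standard computation shows that the inverse of this change of variables satisfies $y^2=|t|^{\beta}$ with $\beta=2/(1-a)>1$ (since $a\neq 0$).

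By the decomposition \eqref{decompos} and Proposition \ref{smooth} we can write $u=u_e^a+u_e^{2-a}\,y|y|^{-a}$ with both $u_e^a$ and $u_e^{2-a}$ symmetric with respect to $\Sigma$ and of class $C^\infty_{\loc}(B_1)$. Since every smooth function of $y$ that is even in $y$ can be written as a smooth function of $y^2$ (Whitney's classical theorem on smooth even functions), there exist smooth $f,g$ with $u_e^a(x,y)=f(x,y^2)$ and $u_e^{2-a}(x,y)=g(x,y^2)$, so that in the new variables
$$
\tilde u(x,t) = f(x,|t|^{\beta}) + g(x,|t|^{\beta})\,t.
$$
Because $f,g$ are smooth and the only loss of regularity comes from the factor $|t|^{\beta}$, which is of class $C^{k,r}_{\loc}(\R)$ with $k=\lfloor\beta\rfloor$ and $r=\beta-\lfloor\beta\rfloor$, the function $\tilde u$ is globally $C^{k,r}$ in $(x,t)$; in particular it is of class $C^1$ for every $a\in(-1,1)\setminus\{0\}$.

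Next, I would identify the two characterizations of $\mathcal{R}(u)$ and check non-degeneracy at regular points. For $X_0\notin\Sigma$ the operator is uniformly elliptic with smooth coefficients, so $N(X_0,u,0^+)=1$ is equivalent to $\nabla u(X_0)\neq 0$, i.e.\ to $|\nabla_x u|^2+|\partial^a_y u|^2\neq 0$. For $X_0\in\Sigma$, a direct computation using \eqref{decompos} together with the fact that $|y|^a\partial_y u_e^a=O(|y|^{a+1})\to 0$ (since $\partial_y u_e^a$ is smooth and odd in $y$) yields
$$
\nabla_x u(X_0) = \nabla_x u_e^a(X_0), \qquad \partial^a_y u(X_0) = (1-a)\,u_e^{2-a}(X_0).
$$
By the classification of tangent maps in Section \ref{section.blowup} (in particular $\mathfrak{B}^a_1(\R^{n+1})=\mathfrak{sB}^*_1(\R^{n+1})$ and Corollary \ref{anti.sym}), the condition $N(X_0,u_e,0^+)=1$ is equivalent to $\nabla_x u_e^a(X_0)\neq 0$, while $N(X_0,u_o,0^+)=1-a$ is equivalent to $u_e^{2-a}(X_0)\neq 0$. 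Hence the definition of $\mathcal{R}(u)$ given in \eqref{regular.serve} coincides with $\{|\nabla_x u|^2+|\partial^a_y u|^2\neq 0\}$, proving the second assertion of the theorem.

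Finally, at any $X_0=(x_0,0)\in\mathcal{R}(u)\cap\Sigma$, the gradient of $\tilde u$ at $(x_0,0)$ is
$$
\partial_t\tilde u(x_0,0) = g(x_0,0) = u_e^{2-a}(X_0), \qquad \nabla_x\tilde u(x_0,0) = \nabla_x f(x_0,0) = \nabla_x u_e^a(X_0),
$$
because $\beta>1$ kills all the contributions coming from $|t|^{\beta}$ and $|t|^{\beta}\cdot t$ at $t=0$. By the previous step, at least one of these quantities is nonzero, so the $C^{k,r}$ implicit function theorem applied to $\tilde u$ represents a neighborhood of $X_0$ in $\{\tilde u=0\}$ as a $C^{k,r}$ graph; for $X_0\in\mathcal{R}(u)\setminus\Sigma$ the change of variables is smooth and the classical smooth implicit function theorem gives the conclusion directly. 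The main technical obstacle in this plan is Step 2: producing the smooth $f,g$ so that $\tilde u$ has the Whitney-type representation $f(x,|t|^{\beta})+g(x,|t|^{\beta})t$ and verifying that no hidden cancellation improves the Hölder regularity at $t=0$, which is precisely what the decomposition of Proposition \ref{even.odd} together with the sharp regularity of $|t|^{\beta}$ at the origin is designed to deliver.
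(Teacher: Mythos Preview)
Your proposal is correct and follows essentially the same strategy as the paper: both use the decomposition $u=u_e^a+u_e^{2-a}\,y|y|^{-a}$, the change of variables $(x,y)\mapsto(x,t)$ with $t=y|y|^{-a}$, and the implicit function theorem applied to the transformed function, together with the identification $\nabla_x\tilde u(X_0)=\nabla_x u_e^a(X_0)$ and $\partial_t\tilde u(X_0)=u_e^{2-a}(X_0)$ at points of $\Sigma$. Your invocation of Whitney's theorem for smooth even functions (writing $u_e^a(x,y)=f(x,y^2)$ and $u_e^{2-a}(x,y)=g(x,y^2)$ with $f,g$ smooth) is a clean way to make explicit the passage from the a priori regularity $C^{k',r'}$ with $k'+r'=1/(1-a)$ of $\Phi$ to the sharper $C^{k,r}$ with $k+r=2/(1-a)$ of $\tilde u$, a step the paper handles more tersely by appealing to Proposition~\ref{smooth} and the fact that the change of variables acts only in the $y$-direction; the paper also records the three cases (which variable to solve for) separately, whereas you absorb them into a single application of the implicit function theorem.
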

 \begin{proof}
   Let us start by proving the characterization of the regular set in terms of the derivatives of the $L_a$-harmonic function $u$. By \eqref{decompos}, there exist $u_e^a\in H^{1,a}(B_1), u_e^{2-a} \in H^{1,2-a}(B_1)$ respectively $L_a$ and $L_{2-a}$-harmonic function in $B_1$, symmetric with respect to $\Sigma$, such that
   $$
   u(X)=u_e^a(X)+u_e^{2-a}(X)y\abs{y}^{-a}\quad\mbox{in }B_1.
   $$
   For every $i=1,\dots,h$, differentiating the previous equality, we obtain
   \begin{align}
   \partial_{x_i} u(X)&=\partial_{x_i} u_e^a(X)+\left(\partial_{x_i} u_e^{2-a}(X)\right)y\abs{y}^{-a}\label{deriv.i}\\
   \partial_{y}^a u(X)&=\left((1-a)u_e^{2-a}(X) + y\partial_y u_e^{2-a}\right) + \partial^a_y u_e^a, \label{deriv.y}%\\
 %  &=\left((1-a)u_e^{2-a}(X) + \frac{\partial_y^a u_e^{2-a}(X)}{y\abs{y}^a}\right) + \partial^a_y u_e^a
   \end{align}
   where we split the two functions as sum of their symmetric and antisymmetric part. If $X_0\in \mathcal{R}(u)\setminus\Sigma$ the condition in \eqref{gen} is obviously satisfied by the local uniformly elliptic regularity outside $\Sigma$. Instead, if $X_0 \in \mathcal{R}(u)\cap \Sigma$, if $N(X_0,u_e,0^+)=1$ it follows
   $$
   u_e(X)=\varphi^{X_0}_e(\nu^{X_0})\langle X-X_0, \nu^{X_0} \rangle + o(\abs{X-X_0}),
   $$
   for some $\nu^{X_0} \in S^{n-1} = S^n \cap \Sigma$, and by Theorem \ref{uniqueness} and \eqref{deriv.i} we obtain
   $$
   \partial_{x_i} u(X_0) = \partial_{x_i} u_e(X_0) = \varphi^{X_0}_e(\nu^{X_0})\langle e_i ,\nu^{X_0}\rangle,
   $$
   and by the nondegeneracy of the blow-up limit $\abs{\nabla u(X_0)} = \varphi^{X_0}_e(\nu^{X_0})\neq 0$ (for further details, we remaind to the proof of Theorem \ref{regular.sopra}). Similarly, taking care of the antisymmetric part, if $N(X_0,u_o,0^+)=1-a$ we obtain
   $$
   u_o(X)=\varphi^{X_0}_o(e_y)y\abs{y}^{-a} + o(\abs{X-X_0}^{1-a}),
   $$
   and consequently $\partial^a_y u(X_0)=\partial^a_y u_o(X_0) = (1-a)\varphi^{X_0}_o(e_y)\neq 0$, as we claimed.\\
   Now, let us consider the other part of the Theorem and let us study the regularity of the regular part $\mathcal{R}(u)$. Since the implicit function theorem implies that the nodal set of a smooth function is a smooth hypersurface away from the critical nodal set, we decide to introduce a suitable change of variable.\\
   More precisely, let us introduce the change of variable $\Phi \colon \R^{n+1}\to \R^{n+1}$ such that
   \begin{align*}
\Phi&\colon (x,z)\mapsto  \left(x,(1-a)\displaystyle z \abs{z}^{\frac{a}{1-a}}\right),\\
\Phi^{-1}&\colon (x,y)\mapsto \left(x,\frac{y \abs{y}^{-a}}{(1-a)^{1-a}}\right),
\end{align*}
with Jacobian $\abs{J_{\Phi^{-1}}(x,y)}=(1-a)^a \abs{y}^{-a}$ and $\Phi(X_0)=X_0$, for every $X_0 \in \Sigma$. By \eqref{decompos} and
This change of variable is well known in the literature since it allows to correlate our class of degenerate-singular operator with the class of Baouendi-Grushin Operators (see also \cite{MR2334826}). In particular, since $a\in (-1,1)$, we obtain by simple computations that $\Phi \in C^{k',r'}(\R^{n+1},\R^{n+1})$, with
$$
   k'=\left\lfloor \frac{1}{1-a}\right\rfloor \quad\mbox{and}\quad r' = \frac{1}{1-a} - \left\lfloor \frac{1}{1-a}\right\rfloor.
$$
The previous quantity are well defined since $(1-a)^{-1}>1/2$, for every $a \in (-1,1)$ and it blows up as $a$ approaches $1^-$. \\
Now, given $v(x,z)=u(\Phi(x,z))$, we obtain $\Gamma(u)=\Phi(\Gamma(v))$ and by \eqref{decompos}, \eqref{deriv.i} and \eqref{deriv.y}
\begin{align*}
  v(x,z)&=u_e^a(\Phi(x,z))+u_e^{2-a}(\Phi(x,z))z\\
  \partial_{x_i}v(x,z)&=(\partial_{x_i} u)(\Phi(x,z)),\mbox{ for every } i=1,\dots,h\\
  \partial_z v(x,z)&=(\partial^a_y u)(\Phi(x,z)),
  %\partial_{x_i} v(x,z) & = (\partial_{x_i} u_e^a)(\Phi(x,z))+(\partial_{x_i} u_e^{2-a})(\Phi(x,z))z\\
  %\partial_{z}v(x,z)&=\left((1-a)u_e^{2-a}(X) + y\partial_y u_e^{2-a}\right) + \partial^a_y u_e^a, \label{deriv.y}
\end{align*}
so in particular $\abs{\nabla v (x,z)}^2 = (\abs{\nabla_x u}^2 + \abs{\partial^a_y u }^2)(\Phi(x,z))$. By \eqref{decompos} and Proposition \ref{smooth} we obtain that given and $L_a$-harmonic function $u$ in $B_1$, since  $u_e^a(\Phi(x,z)),u_e^{2-a}(\Phi(x,z)) \in C^{k',r'}(B_{1/2})$ we obtain that $v \in C^{k',r'}(B_{1/2})$. % {\color{red}Similarly, since $\partial_{x_i} u$ and $\partial^a_y u$ are respectively $L_a$ and $L_{-a}$-harmonic in $B_1$ and they can be decomposed as in \eqref{decompos}, we obtain that $\partial_{x_i} v , \partial_z v \in C^{k',r}(B_{1/2})$, for every $i=1,\dots,h$}.
Moreover, as we remarked in Section \ref{section.blowup}, since %for every $\varphi^{X_0} \in \mathfrak{sB}^a_k(\R^{n+1})$ we have that and
our change of variables $\Phi$ acts only in the $y$-direction, we obtain from Proposition \ref{smooth} and Theorem \ref{uniqueness} that actually $v\in C^{k,r}(B_{1/2})$ with
$$
k = \left\lfloor \frac{2}{1-a}\right\rfloor\geq 1 \quad\mbox{and}\quad r= \frac{2}{1-a} - \left\lfloor \frac{2}{1-a}\right\rfloor.
$$
Now, by the first part of the statement, since $X_0 \in \mathcal{R}(u)\cap\Sigma$ we obtain by Corollary \ref{taylor.generalized}
$$
\abs{\nabla v (X_0)}^2 = \abs{\nabla_x u (X_0)}^2 + \abs{\partial^a_y u(X_0)}^2 = \varphi^{X_0}_e(\nu^{X_0})^2 +(1-a)^2\varphi^{X_0}_o(e_y)^2 \neq 0,
$$
where $\varphi^{X_0}_e$ and $\varphi^{X_0}_o$ are respectively the tangent map of the symmetric and the antisymmetric part of $u$ with respect to $\Sigma$. Since the conclusion follows after an application of the implicit function theorem on the function $v$ and the relation $\Gamma(u)=\Phi(\Gamma(v))$, let us consider three different cases:
\begin{itemize}
  \item[(1)] $N(X_0,u_e,0^+)=1$ and $N(X_0,u_o,0^+)>1-a$, which implies that $\partial_z v(X_0)=0$ and $\nabla_x v(X_0)= \varphi^{X_0}(\nu^{X_0})\nu^{X_0}$. In this case, up to relabeling the $x$-variables, by the implicit function theorem we obtain that there exists $\rho>0$ and $g \in C^{k,r}(B_{\rho}(X_0))$ such that
      $x_1=g(x)=g(x_2,\dots,x_n,z)$ for every $(x,z)\in \Gamma(v)\cap B_\rho (X_0)$. Going back to the $(x,y)$ variables, we obtain $$
      x_1 = g(x_2,\dots,x_n,y\abs{y}^{-a})\mbox{ for every } X \in \Gamma(u)\cap B_{\rho/2}(X_0);
      $$
  \item[(2)] $N(X_0,u_e,0^+)>1$ and $N(X_0,u_o,0^+)=1-a$, in this case since $\partial_{x_i}v(X_0)=0$ for all $i=1,\dots,n$ and $\partial_z v(X_0)\neq 0$ we obtain that there exists $\rho>0$ and $g \in C^{k,r}(B_{\rho}(X_0))$ such that
      $z=g(x)=g(x_1,\dots,x_n)$ for every $(x,z)\in \Gamma(v)\cap B_\rho (X_0)$. Going back to the $(x,y)$ variables, we obtain $$
      y\abs{y}^{-a} = g(x)\mbox{ for every } X \in \Gamma(u)\cap B_{\rho/2}(X_0);
      $$
  \item[(3)] $N(X_0,u_e,0^+)=1$ and $N(X_0,u_o,0^+)=1-a$, we obtain that if $a<0$, by applying the implicit function theorem with respect to the $x$-variables as in case $(1)$, we obtain, up to a rotation on $\Sigma$, that
      $$
      x_1 = g(x_2,\dots,x_n, y \abs{y}^{-a})\mbox{ for every } X \in \Gamma(u)\cap B_{\rho/2}(X_0);
      $$
      where in this case $y\abs{y}^{-a} \in C^{1,-a}_\loc(B_1)$. Otherwise, if $a>0$ by applying the implicit function theorem on the $z$-variable as in $(2)$, we obtain
      $$
      y\abs{y}^{-a} = g(x)\mbox{ for every } X \in \Gamma(u)\cap B_{\rho/2}(X_0),
      $$
      where in the both cases $g \in C^{k,r}(B_\rho(X_0))$.
\end{itemize}
  We remark that the previous records can be changed considering the cases when the minimum between the Almgren frequency of the symmetric and the antisymmetric part of $u$ is achieved by the first or the second one. \\Thus, up to considering a smaller radius on the previous cases, the results on $\mathcal{R}(u)$ are a direct consequence of the local ones on $\Gamma(u)$ near $X_0$, since the regular set is relatively open in $\Gamma(u)$ and hence there exists $\rho>0$ such that $\Gamma(u)\cap B_\rho(X_0)=\mathcal{R}(u)\cap B_\rho(X_0)$.
 \end{proof}
 The previous result explains why the tangent map at a point of the restriction of the nodal set $\Gamma(u)\cap \Sigma$ does not allow to fully understand the geometric picture of the nodal set itself, since we need to take care of both the symmetric and antisymmetric part of $u$.\\
Furthermore, we can describe the local behaviour of the regular set $\mathcal{R}(u)$ near the characteristic manifold by using the tangent field $\Phi^{X_0}$, which contains all the geometric information of the regular set. More precisely, as a direct consequence of the previous reports we obtain
 \begin{corollary}
   Let $a\in (-1,1)$ and $u$ be an $L_a$-harmonic function in $B_1$. Then the regular part $\mathcal{R}(u)$ of the nodal set intersects the characteristic manifold $\Sigma$ either orthogonally or tangentially. More precisely, given $X_0\in \mathcal{R}(u)\cap \Sigma$
   \begin{itemize}
     \item if $N(X_0,u,0^+)=1$ the direction is orthogonal,
     \item if $N(X_0,u,0^+)=1-a$ the direction is tangential.
   \end{itemize}
   Moreover, independently on $a\in (-1,1)$ and on the value of $N(0,\Phi^{X_0},0^+)$, the restriction on $\Sigma$ of $\mathcal{R}(u)$ is completely described by $\varphi^{X_0}_e$.
 \end{corollary}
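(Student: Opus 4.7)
The plan is to extract this corollary directly from the case analysis already carried out in the proof of Theorem \ref{regular.ext}, combined with the Taylor-type expansion given by Corollary \ref{taylor.generalized}. The tangent plane to $\mathcal{R}(u)$ at a regular point $X_0 \in \Sigma$ is recovered by applying the implicit function theorem (in the rectified coordinates $(x,z)=(x,y|y|^{-a}/(1-a)^{1-a})$ introduced in that proof) to the $C^{k,r}$ function $v(x,z)=u(\Phi(x,z))$, and its position relative to $\Sigma$ is encoded in the vanishing pattern of $\nabla_x v(X_0)$ and $\partial_z v(X_0)$.

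First, I would recall that by Corollary \ref{taylor.generalized}, at any $X_0\in\Gamma(u)\cap\Sigma$,
\begin{equation*}
u(X) = \varphi^{X_0}_e(X-X_0) + \varphi^{X_0}_o(X-X_0) + o(|X-X_0|^k),
\end{equation*}
with $k=\max\{N(X_0,u_e,0^+),N(X_0,u_o,0^+)\}$, and that the chain rule in Theorem \ref{regular.ext} gives $\nabla_x v(X_0)=\nabla_x u(X_0)$ and $\partial_z v(X_0)=\partial^a_y u(X_0)$. I would then split the regular stratum on $\Sigma$ according to which component of the tangent field achieves the minimal admissible frequency. If $N(X_0,u,0^+)=1$, then by Proposition \ref{gap} and Corollary \ref{lower.N} we have $N(X_0,u_e,0^+)=1$ and $N(X_0,u_o,0^+)\geq 2-a>1$; the $1$-homogeneous symmetric tangent map must take the form $\varphi^{X_0}_e(X)=\varphi^{X_0}_e(\nu^{X_0})\langle X,\nu^{X_0}\rangle$ with $\nu^{X_0}\in S^{n-1}\subset\Sigma$, while $\varphi^{X_0}_o$ contributes only higher order terms, so that $\partial_z v(X_0)=0$ and $\nabla_x v(X_0)=\varphi^{X_0}_e(\nu^{X_0})\nu^{X_0}\neq 0$. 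The kernel of $dv(X_0)$ therefore contains the $z$-axis, i.e.\ the normal direction to $\Sigma$, which after applying $\Phi$ means that the tangent hyperplane to $\mathcal{R}(u)$ at $X_0$ contains $e_y$: the intersection with $\Sigma$ is orthogonal.

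In the complementary case $N(X_0,u,0^+)=1-a$, the same reasoning forces $N(X_0,u_o,0^+)=1-a$ and $N(X_0,u_e,0^+)\geq 2$; the antisymmetric tangent map is (up to a multiplicative constant) exactly $y|y|^{-a}$, so by formula \eqref{deriv.y} $\partial^a_y u(X_0)=(1-a)\varphi^{X_0}_o(e_y)\neq 0$, whereas $\nabla_x u(X_0)=\nabla_x\varphi^{X_0}_e(X_0)=0$ since $\varphi^{X_0}_e$ vanishes to order $\geq 2$ at the origin. Hence in rectified coordinates $\nabla v(X_0)$ points in the $z$-direction, the tangent hyperplane to $\Gamma(v)$ at $X_0$ equals $\Sigma$, and the same is true for $\mathcal{R}(u)$ after applying $\Phi$: the intersection is tangential. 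For the last assertion, I would invoke Proposition \ref{empty.interior}, which yields $\mathcal{R}(u)\cap\Sigma=\mathcal{R}(u_e)\cap\Sigma$, so that the trace of the regular set on the characteristic manifold is insensitive to the antisymmetric component and is locally a $C^{k,r}$ hypersurface in $\Sigma$ whose tangent space at $X_0$ is the orthogonal complement in $\Sigma$ of $\nu^{X_0}$, i.e.\ the nodal hyperplane of $\varphi^{X_0}_e$ intersected with $\Sigma$.

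The main (and essentially only) delicate point is the dichotomy itself: one has to make sure there is no mixed case in which both $\varphi^{X_0}_e$ and $\varphi^{X_0}_o$ contribute linearly at $X_0$. This is ruled out by combining the gap theorem (Proposition \ref{gap}), the lower bound of Corollary \ref{lower.N}, and the characterization \eqref{gen} of $\mathcal{R}(u)$ proved in Theorem \ref{regular.ext}, which together guarantee that at a regular nodal point on $\Sigma$ exactly one of the two frequencies saturates the minimal admissible value, so the tangent field unambiguously selects either the orthogonal or the tangential geometric behavior.
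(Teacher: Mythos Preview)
Your proof has a genuine gap in the final paragraph: the mixed case where both $N(X_0,u_e,0^+)=1$ and $N(X_0,u_o,0^+)=1-a$ is \emph{not} ruled out by Proposition~\ref{gap}, Corollary~\ref{lower.N}, or \eqref{gen}. This is precisely case~(3) in the proof of Theorem~\ref{regular.ext}, which is treated there explicitly. In that case both $\nabla_x v(X_0)\neq 0$ and $\partial_z v(X_0)\neq 0$, so your claims ``$\partial_z v(X_0)=0$'' (when $N(X_0,u,0^+)=1$, which is what happens in case~(3) for $a<0$) and ``$\nabla_x v(X_0)=0$'' (when $N(X_0,u,0^+)=1-a$, which is case~(3) for $a>0$) both fail.

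The cleanest fix is to bypass the rectified coordinates altogether and argue directly with the tangent map of $u$ in the original $(x,y)$ variables. By Proposition~\ref{gap} the blow-up $\varphi^{X_0}$ of $u$ at $X_0$ is either symmetric or antisymmetric; its nodal set is the tangent cone to $\Gamma(u)$ at $X_0$ (Proposition~\ref{H.convergence.nodal.set}). If $N(X_0,u,0^+)=1$ then $\varphi^{X_0}\in\mathfrak{sB}^a_1(\R^{n+1})$ is linear in $x$ only, so $\Gamma(\varphi^{X_0})=\nu^\perp\times\R_y$ is orthogonal to $\Sigma$. If $N(X_0,u,0^+)=1-a$ then $\varphi^{X_0}\in\mathfrak{aB}^a_{1-a}(\R^{n+1})$ equals $C\,y|y|^{-a}$, so $\Gamma(\varphi^{X_0})=\Sigma$ is tangential. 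This holds regardless of the frequency of the non-dominant component, which is exactly what covers case~(3). Alternatively, you can stay in $(x,z)$ coordinates but then you must analyse case~(3) separately: the tangent plane to $\Gamma(v)$ is oblique there, yet the singular differential of $\Phi$ at $z=0$ (with $\partial_z y\to 0$ for $a<0$ and $\partial_z y\to\infty$ for $a>0$) forces the image $\Gamma(u)=\Phi(\Gamma(v))$ to meet $\Sigma$ orthogonally, respectively tangentially.
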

 %\begin{proof}
%  By Corollary \ref{taylor.generalized} there exist $\varphi^{X_0}_e \in \mathfrak{sB}^a_k(\R^{n+1})$ and $\varphi^{X_0}_o \in \mathfrak{aB}^a_k(\R^{n+1})$ respectively tangent maps of $u_e$ and $u_o$ at $X_0$ such that
%   $$
%  u(X) = \varphi^{X_0}_e(X-X_0) +  \varphi^{X_0}_o(X-X_0) + o(\abs{X-X_0}^{k})
%  $$
%  where $k=\max\{N(0,\varphi^{X_0}_e,0^+),N(0,\varphi^{X_0}_o,0^+)\}$.
%  Since $X_0\in\mathcal{R}(u)\cap \Sigma$, then either we obtain $\varphi_e^{X_0}(X)=\varphi^{X_0}_e(\nu^{X_0})\langle X-X_0, \nu^{X_0} \rangle$ or $\varphi^{X_0}_o(X)=\varphi^{X_0}_o(e_y)y \abs{y}^a$.\\ Suppose that $N(X_0,u,0^+)=1$ and let us prove that $\mathcal{R}(u)$ intersects $\Sigma$ orthogonally.
%   \end{proof}
Instead, since the structure of the singular set is well known outside of the characteristic manifold $\Sigma$, we decided to postpone our analysis and to concentrate our attention to the intersection of the nodal set on $\Sigma$.\\
Hence, in this last part, we extend the previous analysis focusing on the restriction of the regular and singular set on the characteristic manifold. First, since Lemma \ref{subspace} relies on the homogeneity and the regularity of the homogenous polynomial $\varphi \in \mathfrak{sB}_k^a(\R^{n+1})$, we can reasonably introduce the concept of \emph{dimension restricted to $\Sigma$}.
\begin{definition}\label{d.sigma}
Given $a\in(-1,1)$, let $u$ be an $L_a$-harmonic function in $B_1$. We call $d_\Sigma^{X_0}$ the dimension of $\Gamma_k(u)\cap \Sigma$ at $X_0 \in \Gamma_k(u)\cap \Sigma$ as
  \begin{align*}
  d_\Sigma^{X_0} &=\mbox{dim}\,\Gamma_k(\varphi^{X_0})\cap \Sigma\\
  &=\mbox{dim}\left\{ \xi \in \Sigma \colon \langle \xi, \nabla_{x}\varphi^{X_0}(x,0)\rangle = 0 \mbox{ for all } x \in \Sigma\right\}.
  \end{align*}
\end{definition}
Following the previous notations, we define $\Gamma_k^j(u)\cap \Sigma = \{X_0 \in \Gamma_k(u)\cap \Sigma\colon d^{X_0}_\Sigma=j \}$.\\
 In the previous Section, we split the restriction on the nodal set on $\Sigma$ into its regular part
 $$
 \mathcal{R}(u)\cap \Sigma = \{X \in \Gamma(u)\cap \Sigma \colon N(X,u_e,0^+)=1 \},
 $$
 and its singular part
 $$
 \mathcal{S}(u)\cap \Sigma = \{ X \in \Gamma(u)\cap \Sigma \colon N(X,u_e,0^+)\geq 2 \}= \bigcup_{k\geq 2}\Gamma_k(u)\cap \Sigma.
 $$
 \begin{theorem}\label{regular.sopra}
   Let $a \in (-1,1)$ and $u$ be an $L_a$-harmonic function in $B_1$. Then the regular set $\mathcal{R}(u)$ on $\Sigma$ is locally a smooth hypersurface on $\Sigma$ and
   $$
   \mathcal{R}(u)\cap \Sigma = \left\{ X \in \Gamma(u)\cap \Sigma \colon \abs{\nabla_x u_e(X)} \neq 0 \right\}.
   $$
 \end{theorem}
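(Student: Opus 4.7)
The plan is to reduce the theorem to a direct application of the implicit function theorem on $\Sigma$ applied to the trace of the symmetric part $u_e$, exploiting the full regularity of symmetric $L_a$-harmonic functions. Two preliminary observations drive the argument: Proposition \ref{empty.interior} gives $\Gamma(u)\cap\Sigma = \Gamma(u_e)\cap\Sigma$, while Proposition \ref{smooth} ensures that $u_e \in C^{\infty}_{\loc}(B_1)$ since $a \in (-1,1)$ and $u_e$ is symmetric with respect to $\Sigma$. Furthermore, symmetry in $y$ forces $\partial_y u_e(x,0)=0$ on $\Sigma$, so that $\nabla u_e(X_0) = (\nabla_x u_e(X_0),0)$ whenever $X_0 \in \Sigma$.

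The first step is to establish the characterization
$$
\mathcal{R}(u)\cap\Sigma = \left\{ X\in\Gamma(u)\cap\Sigma : |\nabla_x u_e(X)|\neq 0 \right\}.
$$
Fix $X_0 \in \Gamma(u_e)\cap\Sigma$. By Lemma \ref{gap.even}, the Almgren limiting frequency $N(X_0,u_e,0^+)$ belongs to $1+\N$, and, since $u_e$ is smooth at $X_0$, its vanishing order coincides with the smallest total degree appearing in the classical Taylor expansion at $X_0$. If $N(X_0,u_e,0^+)=1$, Lemma \ref{yornot} forces the unique tangent map $\varphi^{X_0}_e \in \mathfrak{sB}^a_1(\R^{n+1})$ to be independent of $y$ and thus linear in $x$, say $\varphi^{X_0}_e(X)=\langle \nu, x\rangle$ with $\nu \in \R^n\setminus\{0\}$ (nonzero by Lemma \ref{nondegeneracy}). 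Matching this linear profile with the Taylor expansion of $u_e$ and using uniqueness of the linear part yields $\nabla_x u_e(X_0)=\nu\neq 0$. Conversely, if $\nabla_x u_e(X_0)\neq 0$, the blow-up family $u_e(X_0+rX)/r$ converges locally uniformly to the nontrivial $1$-homogeneous $L_a$-harmonic polynomial $\nabla_x u_e(X_0)\cdot x$, which by Theorem \ref{uniqueness} must be the tangent map; hence $N(X_0,u_e,0^+)=1$.

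Once the characterization is established, regularity follows at once. The trace $\tilde u_e(x):=u_e(x,0)$ is a smooth function on $B_1\cap\Sigma\subset \R^n$, and at any $X_0=(x_0,0)\in \mathcal{R}(u)\cap\Sigma$ we have $\tilde u_e(x_0)=0$ with $\nabla \tilde u_e(x_0) = \nabla_x u_e(X_0)\neq 0$. The implicit function theorem then realises $\mathcal{R}(u)\cap\Sigma = \{\tilde u_e = 0\}$, in a neighbourhood of $X_0$, as the graph of a smooth function in $n-1$ of the $x$-variables, and therefore as a smooth hypersurface of $\Sigma$ of dimension $n-1$.

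The principal obstacle lies in the characterization step: the essential ingredient is Lemma \ref{yornot}, without which one could not rule out tangent maps with a nontrivial $y$-dependence and the identification $\nabla u_e(X_0) = \nabla \varphi^{X_0}_e$ would fail. After this identification, everything reduces to classical smooth manifold theory thanks to the $C^{\infty}_{\loc}$ regularity of the symmetric part $u_e$ granted by Proposition \ref{smooth}.
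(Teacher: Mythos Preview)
Your proof is correct and follows essentially the same approach as the paper: reduce to the symmetric part via Proposition~\ref{empty.interior}, identify the tangent map at frequency $1$ with the gradient of $u_e$ using the smoothness from Proposition~\ref{smooth}, and conclude by the implicit function theorem on $\Sigma$. One minor point: Lemma~\ref{yornot} only gives the equivalence ``$p$ is independent of $y$ $\Leftrightarrow$ $p$ is harmonic in $x$'', so it does not by itself force a $1$-homogeneous symmetric $L_a$-harmonic polynomial to be independent of $y$; the correct justification (used implicitly in the paper after Definition~\ref{blowup.class.y}) is that such a polynomial must be even in $y$ by symmetry and of total degree $1$, hence constant in $y$.
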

 \begin{proof}
 By Proposition \ref{empty.interior} we already know that
$$
\Gamma(u)\cap \Sigma = \Gamma(u_e)\cap \Sigma,
$$
and it is either equal to $\Sigma$ or with empty interior in $\Sigma$. Inspired by this fact, we will concentrate our attention on the trace of $u$ on $\Sigma$, which is actually equal to the trace of $u_e$ itself. In order to simplify we will just write $u$ instead of $u_e$ assuming the symmetry with respect to $\Sigma$.\\ Suppose that $\Gamma(u_e) \neq \Sigma$, by Theorem \ref{uniqueness} and our blow-up classification, for every $X_0 \in \mathcal{R}(u)\cap \Sigma$ there exists a linear map $\varphi^{X_0} \in \mathfrak{sB}^a_1(\R^{n+1})$ such that
$$
u(X) = \varphi^{X_0}(X-X_0) + o(\abs{X-X_0}) = \varphi^{X_0}(\nu^{X_0})\langle X-X_0, \nu^{X_0} \rangle + o(\abs{X-X_0})
$$
for some $\nu^{X_0} \in S^{n-1} = S^n \cap \Sigma$.\\ Moreover, by Theorem \ref{continuation} we know that the map $X_0 \mapsto \varphi^{X_0}(\nu^{X_0})\nu^{X_0}$ is continuous. Passing through its trace on $\Sigma$, since $\nu \in \Sigma$ we obtain
$$
u(x,0) = \varphi^{X_0}(\nu)\langle x-x_0, \nu \rangle + o(\abs{x-x_0}).
$$
Since by Proposition \ref{smooth} the function $u \in C^\infty (B_{1/2})$, we can use the tangent map in order to compute the directional derivative of $u$, which will implies the nondegeneracy of the gradient on $\Sigma$ of $u$ at $X_0$. More precisely, for every $\xi \in S^{n-1}$
  $$
  \langle \nabla_x u(X_0),\xi\rangle = \left.\frac{d}{dt}u(X_0 +t \xi) \right\lvert_{t=0}
  = \lim_{t \to 0} \frac{u(X_0+ t\xi)}{t}
  = \varphi^{X_0}(\nu^{X_0}) \langle \xi , \nu^{X_0}\rangle,
  $$
  and hence $\nabla_x u(X_0) = \varphi^{X_0}(\nu^{X_0})\nu^{X_0}$ which is nonzero by Theorem \ref{nondegeneracy}. Finally, by the implicit function theorem we obtain the claimed result.
\end{proof}
As we already mentioned, since for $k\geq 2$ we have $\mathfrak{sB}^a_k(\R^{n+1})\setminus\mathfrak{B}^*_k(\R^{n+1}) \neq \emptyset$, we decide to introduce the following singular sets
$$
\mathcal{S}^*(u) = \bigcup_{k\geq 2}\Gamma^*_k(u)\quad \mbox{and}\quad \mathcal{S}^a(u) = \bigcup_{k\geq 2}\Gamma_k^a(u),
$$
where
$$
  \Gamma^*_k(u)=\left\{X_0 \in \Gamma_k(u)\cap \Sigma\colon \varphi^{X_0}_e \in \mathfrak{sB}^*_k(\R^{n+1})\right\}\ \mbox{ and } \ \Gamma^a_k(u)= \left(\Gamma_k(u)\cap \Sigma\right)\setminus \Gamma^*_k(u).
$$
The idea is to stratify the singular set taking care of both the dimension $d^{X_0}_\Sigma$ and the different classes of tangent map associated with the sets $\Gamma^*_k(u)$ and $\Gamma^a_k(u)$.
\begin{theorem}\label{regularity.singular}
Given $a\in(-1,1)$, let $u$ be an $L_a$-harmonic function in $B_1$. Then for $k \in 2+\N$ and $j=0,\cdots,n-1$ the sets $\Gamma_k^j(u)\cap \Sigma$ is contained in a countable union of $j$-dimensional $C^1$ manifolds.
\end{theorem}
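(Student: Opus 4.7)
The plan is to apply the classical implicit function theorem strategy of Hardt--Simon and Han--Lin, exploiting the smoothness of the symmetric part $u_e$ (Proposition \ref{smooth}) together with the continuity of tangent maps and the uniform Taylor expansion from Theorem \ref{continuation}. Since $\Sigma\cong\R^n$ is second-countable and $\Gamma_k(u)\cap\Sigma$ is $F_\sigma$ by Lemma \ref{Fsigma}, it suffices to show that every $X_0\in\Gamma_k^j(u)\cap\Sigma$ admits a neighborhood $U$ in $\Sigma$ such that $\Gamma_k^j(u)\cap\Sigma\cap U$ lies in a single $j$-dimensional smooth submanifold of $\Sigma$; the desired countable union will then follow by a Lindel\"of argument combined with the finiteness of the possible choices of multi-indices introduced below.

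Fix $X_0\in\Gamma_k^j(u)\cap\Sigma$ and set $P:=\varphi_e^{X_0}\in\mathfrak{sB}_k^a(\R^{n+1})$. Since $P$ is a homogeneous polynomial of degree $k$, all derivatives $D^\alpha P$ with $|\alpha|<k$ vanish at the origin. Expanding the characterization $\Gamma_k(P)=\{\xi:\partial_\xi P\equiv 0\}$ of Lemma \ref{subspace} for $\xi=(\xi',0)\in\Sigma$, and matching coefficients of the identically-vanishing polynomial $\sum_{m=1}^{n}\xi_m'\partial_m P$ of degree $k-1$ (lower-order coefficients vanishing automatically thanks to $D^\alpha P(0)=0$ for $|\alpha|<k$), one obtains the linear-algebraic identity
\begin{equation*}
\Gamma_k(P)\cap\Sigma\;=\;\bigcap_{|\beta|=k-1}\ker\bigl(D^\beta P\big|_\Sigma\bigr)\subset\R^n,
\end{equation*}
where each $D^\beta P|_\Sigma$ is the linear form on $\R^n$ with coefficients $\bigl(D^{\beta+e_l}P(0)\bigr)_{l=1}^{n}$. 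By Definition \ref{d.sigma}, this intersection has dimension $j$, hence the span $W\subset(\R^n)^*$ of $\{D^\beta P|_\Sigma:|\beta|=k-1\}$ has dimension $n-j$. Select multi-indices $\beta_1,\dots,\beta_{n-j}$ with $|\beta_i|=k-1$ such that $\{D^{\beta_i}P|_\Sigma\}_{i=1}^{n-j}$ is a basis of $W$.

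Define the smooth map $F\colon\Sigma\cap B_1\to\R^{n-j}$ by $F(x)=\bigl(D^{\beta_1}u_e(x,0),\dots,D^{\beta_{n-j}}u_e(x,0)\bigr)$. By Corollary \ref{gamma_k}, $D^\nu u_e\equiv 0$ on $\Gamma_k(u)\cap\Sigma$ for every $|\nu|\leq k-1$, so $\Gamma_k(u)\cap\Sigma\subseteq F^{-1}(0)$ and in particular $F(X_0)=0$. The Jacobian entries
\begin{equation*}
[DF(X_0)]_{i,l}\;=\;D^{\beta_i+e_l}u_e(X_0,0)\;=\;D^{\beta_i+e_l}P(0),\quad i=1,\dots,n-j,\ l=1,\dots,n,
\end{equation*}
are precisely the coefficients of the linearly independent forms $D^{\beta_i}P|_\Sigma$; hence $DF(X_0)$ has rank $n-j$. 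The implicit function theorem, applicable because $u_e\in C^\infty_\loc(B_1)$, then produces a neighborhood $U$ of $X_0$ in $\Sigma$ on which $F^{-1}(0)\cap U$ is a $j$-dimensional smooth submanifold containing $\Gamma_k^j(u)\cap\Sigma\cap U$. Covering $\Gamma_k^j(u)\cap\Sigma$ by countably many such neighborhoods yields the claim. The crux of the argument is the identification of $\Gamma_k(P)\cap\Sigma$ as the common kernel of the $(k-1)$-st partial derivatives of $P$ restricted to $\Sigma$; everything else reduces to the implicit function theorem and tools already established in the paper.
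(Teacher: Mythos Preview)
Your proof is correct. The argument is essentially the direct approach that the paper itself mentions in the remark immediately following its proof of Theorem \ref{regularity.singular}: because $u_e$ is locally $C^\infty$ by Proposition \ref{smooth}, one can take $F$ to be a vector of $(k-1)$-st derivatives of $u_e$ itself and apply the implicit function theorem directly, bypassing any auxiliary construction.

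The paper, however, chooses a different route. It does \emph{not} use the smoothness of $u_e$; instead it works only with the tangent-map data $X_0\mapsto\varphi^{X_0}$, verifies the Whitney compatibility conditions using the uniform Taylor remainder from Theorem \ref{continuation}, and invokes Whitney's extension theorem to manufacture a $C^k$ function $F$ on which the implicit function theorem is then applied. The payoff of that extra work is generality: the Whitney argument requires only continuity of the tangent map and the modulus-of-continuity estimate in \eqref{eq.continuation}, not high regularity of $u$, and it is exactly this version that the paper reuses in Section \ref{section.diverg} for fractional powers of divergence-form operators with merely Lipschitz coefficients, where Proposition \ref{smooth} is unavailable. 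Your approach buys simplicity in the smooth setting; the paper's approach buys portability to the rough-coefficient case.
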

\begin{proof}
  The proof of this result follows the strategy of \cite[Theorem 1.3.8]{MR2511747}.
  Since $\varphi^{X_0}$ is a polynomial of degree $k$ on $\Sigma$, we can write the following
$$
\varphi^{X_0}(x,0)=\sum_{\abs{\alpha}=k}\frac{a_\alpha(x_0,0)}{\alpha!}x^\alpha,
$$
where the coefficients $X \mapsto a_\alpha(X)$ are continuous on $\Gamma_k(u)\cap \Sigma$ and, since $u(X)=0$ on $\Gamma_k(u)$,there holds
$$
\abs{\varphi^{X_0}(X-X_0)}\leq \sigma\left(\abs{X-X_0}\right)\abs{X-X_0}^k\quad\mbox{for every }X,X_0 \in K.
$$
For any multi-index $\abs{\alpha}\leq k$, let us introduce for any $X \in \Gamma_k(u)$ the collection
$$
f_\alpha(X)=
\begin{cases}
  a_\alpha(X) & \mbox{if } \abs{\alpha}=k\\
  0 & \mbox{if } \abs{\alpha}<k
\end{cases}.
$$
Let us prove that the compatibility conditions for the Whitney's extension theorem are fully satisfied in order to guarantee the existence of a function $F \in C^k(\R^{n+1})$ such that
$$
\partial^\alpha F = f_\alpha \quad \mbox{on } E_j,
$$
for every $\alpha \leq k$.
More precisely, following \cite{Whi34} our claim is that, for any $X_0,X \in K$, there holds
$$
f_\alpha(X)= \sum_{\abs{\beta} \leq k - \abs{\alpha}} \frac{f_{\alpha+\beta}(X_0)}{\beta!}(X-X_0)^\beta +R_\alpha(X,X_0),
$$
with
\begin{equation}\label{modulus}
  \abs{R_\alpha(X,X_0)} \leq \sigma_\alpha\left(\abs{X-X_0}\right) \abs{X-X_0}^{k-\abs{\alpha}}
\end{equation}
where $\sigma_\alpha = \sigma_\alpha^K$ is a certain modulus of continuity.\\
If $\abs{\alpha} =k$, since $R_\alpha(X,X_0) = a_\alpha(X)-a_\alpha(X_0)$, we infer from the continuity of $X \mapsto \varphi^X$ on $K$ that $\abs{R_\alpha(X,X_0)} \leq \sigma_\alpha\left(\abs{X-X_0}\right)$.
Instead, for $0\leq\abs{\alpha}<k$ we have
\begin{equation}\label{serve2}
R_\alpha(X,X_0)= -\sum_{\substack{\gamma>\alpha\\\abs{\gamma}=k}}\frac{a_\gamma(X_0)}{(\gamma-\alpha)!}(X-X_0)^{\gamma-\alpha} = - \partial^a \varphi^{X_0}(X-X_0).
\end{equation}
By contradiction, suppose that there is no modulus of continuity $\sigma_\alpha$ such that \eqref{modulus} is satisfied for $X,X_0 \in K$. Then, must exist $\delta>0$ and two sequences $X^i,X_0^i \in K$ with $\rho_i = \abs{X^i- X^i_0} \searrow 0$ such that
$$
\abs{\sum_{\substack{\gamma>\alpha \\ \abs{\gamma}=k}}\frac{a_\gamma(X_0)}{(\gamma-\alpha)!}(X-X_0)^{\gamma-\alpha} }\geq \delta \abs{X^i- X^i_0}^{k-\abs{\alpha}}.
$$
Thus, consider the blow-up sequence associated with the sequences $(X_0^i)_i$ and $(\rho_i)_i$ given by
$$
u_i(X)= \frac{u(X_0^i + \rho_i X)}{\rho_i^k}, \quad \xi^i = \frac{X^i-X_0^i}{\rho_i},
$$
where it is not restrictive to assume that $X_0^i \to X_0 \in K$ and $\xi^i \to \xi_0 \in \partial B_1$. By Theorem \ref{continuation} we obtain $u^i \to \varphi^{X_0} \in \mathfrak{B}^a_k(\R^{n+1})$ uniformly on compact set and there exist a modulus of continuity such that
$$
\abs{u_i(X)-\varphi^{X_0^i}(X)} \leq \sigma\left(\rho_i\abs{X}\right) \abs{X}^k.
$$
In particular, since $X_0^i, X^i \in K=E_j$, the inequalities \eqref{serve} holds true for $u^i$ at $0$ and $\xi^i$. Thus, passing to the limit, we obtain that
$$
\frac{1}{j}\rho^k \leq \sup_{\abs{X-\xi_0}= \rho}{\abs{\varphi^{X_0}(X)}} < j \rho^k,
$$
for $0<\rho<+\infty$, which implies that $\xi_0 \in \Gamma_k(\varphi^{X_0})$. Finally, since $\partial^\alpha \varphi^{X_0}(\xi_0)=0$ for $\abs{\alpha}<k$, dividing both the left and the right hand side of \eqref{serve2} by $\rho_i^{k-\abs{\alpha}}$ and passing to the limit, we reach a contradiction since we obtain
$$
\abs{\partial^a \varphi^{X_0}(\xi_0)}= \abs{\sum_{\substack{\gamma>\alpha \\ \abs{\gamma}=k}}\frac{a_\gamma(X_0)}{(\gamma-\alpha)!}(X-X_0)^{\gamma-\alpha} } \geq \delta.
$$
\\
Finally, under the previous notations, let us consider $X_0=(x_0,0) \in \Gamma^j_k(u)\cap E_i$, where $E_i$ is defined in Lemma \ref{Fsigma}.
     Hence, by definition of $d^{X_0}_\Sigma$, there exists $n-d^{X_0}_\Sigma$ linearly independent unit vectors $(\nu_{i})_i \subset S^{n}$, such that
  $$
  \langle \nu_i, \nabla_X \varphi^{X_0}\rangle \neq 0 \mbox{ on }\Sigma,
  $$
  where $d^{X_0}_\Sigma=j$. Hence, there exist multi-indices $\alpha_i$ or order $\abs{\alpha_i}=k-1$ such that
$$
\partial_{\nu_i}D^{\alpha_i}\varphi^{X_0}(0,0) \neq 0.
$$
Since $\varphi^{X_0}$ is a polynomial of degree $k$ on $\Sigma$, we can write the following
$$
\varphi^{X_0}(x,0)=\sum_{\abs{\alpha}=k}\frac{a_\alpha(x_0,0)}{\alpha!}x^\alpha,
$$
where the coefficients $X \mapsto a_\alpha(X)$ are continuous on $\Gamma_k(u)\cap \Sigma$.
Thus, the nondegeneracy condition on $\varphi^{X_0}$ implies
\begin{equation}\label{impli}
\partial_{\nu_i}D^{\alpha_i} F(x_0,0) \neq 0, \quad i=1,\cdots, n-d^{X_0}_\Sigma.
\end{equation}
Finally, since
$$
\Gamma^j_k(u)\cap\Sigma\cap E_i\subset \bigcap_{i=1}^{n-j}\left\{ D^{\alpha_i} F=0\right\}\cap \Sigma,
$$
in view of the implicit function Theorem, the condition \eqref{impli} implies
that $\Gamma^j_k(u)\cap\Sigma\cap E_i$
is contained in a $j$-dimensional manifold in a neighborhood of $X_0$.\\
The results follows immediately from Lemma \ref{Fsigma}
\end{proof}
We remark that in this particular case of $L_a$-harmonic function symmetric with respect to $\Sigma$, since by the definition of tangent map at a point of the nodal set we have
$$
u(x,0) = \sum_{\abs{\alpha}=k}\frac{a_\alpha(x_0,0)}{\alpha!}x^\alpha + o(\abs{x-x_0}^k)
$$
and $u \in C^{\infty}(B_{1/2})$ thanks to Proposition \ref{smooth}, we obtain that $D^\alpha u(x_0,0) = 0$ for $\abs{\alpha}=k-1$ and $D^{\alpha}u(x_0,0)=a_\alpha(x_0,0)$
for $\abs{\alpha}=k$.
Thus, the nondegeneracy condition on $\varphi^{X_0}$ implies
\begin{equation}%\label{impli}
\partial_{\nu_i}D^{\alpha_i} u(x_0,0) \neq 0, \quad i=1,\cdots, n-d^{X_0}_\Sigma.
\end{equation}
Hence, we can obtain conclusion just looking at the strata $\{D^{\alpha_i} u=0 \}$, with $i=1,\dots,n-j$. Instead, the previous proof is more general and it will be applied to a more general class of degenerate-singular operators in Section \ref{section.diverg}.\\
The following is the main Theorem of this stratification analysis, in particular it allows to emphasize the degenerate-singular attitude of the operator $L_a$ near the characteristic manifold $\Sigma$ by showing the presence of a $(n-1)$-dimensional singular stratum for $a\in (-1,1)$ with $a\neq 0$.
\begin{theorem}\label{general.singular}
  Given $a\in(-1,1)$, let $u$ be $L_a$-harmonic in $B_1$. Then there holds
  $$
  \mathcal{S}(u)\cap \Sigma = \mathcal{S}^*(u) \cup \mathcal{S}^a(u)
  $$
  where $\mathcal{S}^*(u)$ is contained in a countable union of $(n-2)$-dimensional $C^1$ manifolds and $\mathcal{S}^a(u)$ is contained in a countable union of $(n-1)$-dimensional $C^1$ manifolds. Moreover
  $$
   \mathcal{S}^*(u)=\bigcup_{j=0}^{n-2} \mathcal{S}^*_j(u)\quad\mbox{and}\quad
   \mathcal{S}^a(u)=\bigcup_{j=0}^{n-1}\mathcal{S}^a_j(u),
  $$
  where both $\mathcal{S}^*_j(u)$ and $\mathcal{S}^a_j(u)$ are contained in a countable union of $j$-dimensional $C^1$ manifolds.
\end{theorem}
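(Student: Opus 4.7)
The plan is to decompose $\mathcal{S}(u)\cap\Sigma$ simultaneously by the homogeneity $k\ge 2$ of the tangent map and by the $\Sigma$-dimension of its spine, and then to combine the general stratification result of Theorem \ref{regularity.singular} with sharp upper bounds on $d^{X_0}_\Sigma$ that are tailored to each of the two classes $\Gamma^*_k(u)$ and $\Gamma^a_k(u)$. Accordingly I would define
$$
\mathcal{S}^*_j(u)\;=\;\bigcup_{k\ge 2}\bigl(\Gamma^*_k(u)\cap\Gamma^{j}_k(u)\bigr),\qquad
\mathcal{S}^a_j(u)\;=\;\bigcup_{k\ge 2}\bigl(\Gamma^a_k(u)\cap\Gamma^{j}_k(u)\bigr),
$$
where $\Gamma^j_k(u)\cap\Sigma$ is read (as in the proof of Theorem \ref{regularity.singular}) as the set of $X_0\in\Gamma_k(u)\cap\Sigma$ with $d^{X_0}_\Sigma=j$. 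Theorem \ref{regularity.singular} already embeds each of these pieces in a countable union of $j$-dimensional $C^1$ manifolds, so the whole point is to show that $j$ cannot reach $n-1$ for $\mathcal{S}^*$ and cannot reach $n$ for $\mathcal{S}^a$.

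The first and more delicate bound concerns $\Gamma^*_k(u)$. By the very definition of $\mathfrak{sB}^*_k(\R^{n+1})=\mathfrak{B}^0_k(\R^n_x)$, the tangent map factors as $\varphi^{X_0}_e(x,y)=p(x)$ with $p$ a $k$-homogeneous harmonic polynomial on $\R^n$. Assuming $d^{X_0}_\Sigma=n-1$, Lemma \ref{subspace} would supply an $(n-1)$-dimensional subspace $V\subset\Sigma\cong\R^n$ leaving $p$ invariant; writing $x=x'+t\nu$ with $\nu\perp V$, this forces $p(x)=q(t)$ for a single-variable polynomial $q$ of degree $k$. Harmonicity of $p$ in $\R^n$ collapses to $q''(t)\equiv 0$, so $\deg q\le 1$, contradicting $k\ge 2$. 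Thus $d^{X_0}_\Sigma\le n-2$ throughout $\Gamma^*_k(u)$, and $\mathcal{S}^*_j(u)=\emptyset$ for $j\in\{n-1,n\}$.

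The bound on $\Gamma^a_k(u)$ is analogous but uses the $L_a$-ODE instead of Laplace's equation. If some $X_0\in\Gamma^a_k(u)$ had $d^{X_0}_\Sigma=n$, Lemma \ref{subspace} would force $\varphi^{X_0}_e(x,y)=h(y)$, a symmetric, $k$-homogeneous, $L_a$-harmonic function of $y$ alone. Integrating $(|y|^a h')'=0$ gives $h(y)=c_1+c_2\,y|y|^{-a}/(1-a)$, whose symmetric part is just $c_1$; since $k\ge 2$ and $1-a\in(0,2)$ is not an admissible homogeneity for an even monomial, this forces $\varphi^{X_0}_e\equiv 0$, contradicting nondegeneracy of the tangent map. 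Hence $d^{X_0}_\Sigma\le n-1$ on $\Gamma^a_k(u)$.

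Putting the pieces together, the disjoint union $\mathcal{S}(u)\cap\Sigma=\mathcal{S}^*(u)\cup\mathcal{S}^a(u)$ follows from the defining dichotomy $\varphi^{X_0}_e\in\mathfrak{sB}^*_k$ versus $\varphi^{X_0}_e\in\mathfrak{sB}^a_k\setminus\mathfrak{sB}^*_k$, while the stratifications $\mathcal{S}^*=\bigcup_{j=0}^{n-2}\mathcal{S}^*_j$ and $\mathcal{S}^a=\bigcup_{j=0}^{n-1}\mathcal{S}^a_j$ follow by taking the countable union over $k\ge 2$ of the $C^1$-manifold coverings provided by Theorem \ref{regularity.singular}. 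The only genuinely nontrivial step is the first reduction: isolating how the extra harmonicity constraint in $\mathfrak{sB}^*_k$ costs exactly one unit of spine dimension on $\Sigma$, which is what produces the asymmetric codimensions $(n-2)$ versus $(n-1)$ and reflects the genuine degenerate-singular character of $L_a$ near the characteristic manifold.
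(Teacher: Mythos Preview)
Your proposal is correct and follows essentially the same architecture as the paper's proof: define $\mathcal{S}^*_j(u)$ and $\mathcal{S}^a_j(u)$ by the pair $(k,d^{X_0}_\Sigma)$, invoke Theorem \ref{regularity.singular} for the $j$-dimensional $C^1$ covering of each piece, and then bound $d^{X_0}_\Sigma$ separately on the two classes. The only noticeable difference is in how the two dimension bounds are justified. For $\Gamma^*_k(u)$, the paper simply cites the classical fact that a nontrivial homogeneous harmonic polynomial on $\R^n$ has $\dim(\mathcal{S}(\varphi))\le n-2$, whereas you unwind this by reducing to a one-variable harmonic polynomial and forcing $q''\equiv 0$; both are equivalent. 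For $\Gamma^a_k(u)$, the paper appeals to Theorem \ref{federer.theorem} and Proposition \ref{example} (the bound $d^{X_0}_\Sigma\le n-1$ is implicit in the Federer analysis, and the explicit two-variable profiles show it is attained), while you rule out $d^{X_0}_\Sigma=n$ directly by solving the ODE $(|y|^a h')'=0$ and observing that the only symmetric $k$-homogeneous solution with $k\ge 2$ is trivial. Your argument here is actually more self-contained than the paper's, which as written reads somewhat backwards (existence of an example with spine dimension $n-1$ does not by itself give the upper bound).
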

%Moreover, for any $X_0 \in \mathcal{S}^*(u)$ the leading polynomial of $u$ at $X_0$ is an harmonic polynomial in $\Sigma$ while for $X_0 \in \mathcal{S}^*(u)$, the leading polynomial of $u$ at $X_0$ depends on the variable $y$.
\begin{proof}
The proof can be seen as an improvement of Proposition \ref{regularity.singular} since it consists on applying the previous strategy for the dimension and the regularity of the set $\Gamma^j_k(u)$ taking care on the case when the tangent map belongs to $\mathfrak{sB}^*_k(\R^{n+1})$ or not. Indeed, this two cases influence the upper bound on the dimension $d^{X_0}_\Sigma$ and consequently the dimension of the singular strata.\\
Hence, let us set
\begin{align*}
   \mathcal{S}^*(u) &=\bigcup_{j=0}^{n-2} \mathcal{S}^*_j(u) =\bigcup_{j=0}^{n-2}\bigcup_{k\geq 2}\{X \in \Gamma_k^*(u)\colon d^{X_0}_\Sigma=j\}, \\
   \mathcal{S}^a(u) &=\bigcup_{j=0}^{n-1}\mathcal{S}^a_j(u)
   =\bigcup_{j=0}^{n-1}\bigcup_{k\geq 2}\{X \in \Gamma_k^a(u)\colon d^{X_0}_\Sigma=j\}.
\end{align*}
Since for every $k\geq 2$ the functions $\varphi \in \mathfrak{sB}^*_k(\R^{n+1})$ are homogeneous polynomial harmonic in $\Sigma$, we have that $\mbox{dim}\left(\mathcal{S}(\varphi)\cap \Sigma\right)\leq n-2$, and consequently $d^{X_0}_\Sigma \leq n-2$ for every $X_0 \in \Gamma_k^*(u)$. \\Similarly, following Proposition \ref{example} and the remarks in the proof of Theorem \ref{federer.theorem}, since for every $k\geq 2$ there exists $\varphi \in \mathfrak{sB}^a_k(\R^{n+1})\setminus\mathfrak{sB}^*_k(\R^{n+1})$ such that $\mbox{dim}\left(\mathcal{S}(u)\cap \Sigma\right)=n-1$ we obtain that, for $X_0 \in \Gamma^a_k(u)$, there holds $d^{X_0}_\Sigma \leq n-1$.\\
Now, by applying the same argument in the proof of Proposition \ref{regularity.singular}, if we set
\begin{align*}
\mathcal{S}^*_j(u) &= \bigcup_{k\geq 2}\left\{X\in \Gamma^*_k(u)\colon d^{X_0}_\Sigma =j\right\} \quad\mbox{for }j=0,\cdots,n-2\\
\mathcal{S}^a_j(u) &= \bigcup_{k\geq 2}\left\{X\in \Gamma^a_k(u)\colon d^{X_0}_\Sigma =j\right\}\quad\mbox{for }j=0,\cdots,n-1,
\end{align*}
we obtain that $\mathcal{S}^*_j(u)$ and $\mathcal{S}^a_j(u)$ are contained in $j$-dimensional $C^1$ manifold.
\end{proof}
Furthermore, by Proposition \ref{example} we obtain that for any $X_0 \in \mathcal{S}^a_{n-1}(u)$  the leading polynomial of $u$ at $X_0$, i.e. the first term of the Taylor expansion of $u$ at $X_0$, is an homogenous polynomial of two variables of the form \eqref{even} or \eqref{odd}, up to a rotation on $\Sigma$.

\section{Fractional power of elliptic operator in divergence form}\label{section.diverg}
In this Section, we apply the previous analysis relating, via the extension technique, the study of the restriction of the nodal set on the characteristic manifold $\Sigma$ to the local properties of solutions of fractional power of  elliptic differential equations in divergence form.  We start by focusing  on the case of the fractional Laplacians $(-\Delta)^s$ and then we discuss the monotonicity formula and its consequences for
solutions of general fractional elliptic differential equations of the second order with Lipschitz leading coefficients.\\
%{\color{red} We decided to give this result as an application of the previous Sections since it is well known that in general for the trace of $L_a$-harmonic function is not always well defined the concept of fractional Laplacian $(-\Delta)^s$. Recently in [valdinocisavin]}\\
Let $s\in (0,1)$ and $u\colon B_1\subset \R^n\to \R$ be a nontrivial $s$-harmonic function in $B_1$, that is
\begin{equation}\label{Ps}
(-\Delta)^s u(x) = 0 \quad \mbox{in }B_1.
\end{equation}
Here we define the $s$-Laplacian %(see \S\ref{sec:2} for the details)
$$
(-\Delta)^s u(x)=  C(n,s) \mbox{ P.V.}\int_{\R^n}{\frac{u(x)-u(y)}{\abs{x-y}^{n+2s}}\mathrm{d}y}\;,
$$
where
\begin{equation}\label{eq:Cns}
C(n,s) = \frac{2^{2s}s\Gamma(\frac{n}{2}+s)}{\pi^{n/2}\Gamma(1-s)} \in \left(0, 4\Gamma\left(\frac{n}{2}+1\right) \right]\;.
\end{equation}
In general, the $s$-Laplacian can be defined in various ways, which we review now. First, in order to better understand these definitions, we introduce the spaces
$$
\tilde{H}^s(\R^n)= \left\{u \in L^2(\R^n) \colon \abs{\xi}^s(\mathcal{F}u)(\xi) \in L^2(\R^n) \right\},
$$
where $s\in (0,1)$ and $\mathcal{F}$ denotes the Fourier transform. In the literature, the spaces $\tilde{H}^s(\R^n)$ are called Bessel spaces and in particular they can be equivalently defined as a Sobolev-Slobodeckij spaces. More precisely, fixed $\Omega\subseteq \R^n$ an open set, for every fractional exponent $s \in (0,1)$ we define $H^s(\Omega)$ as the set of all functions $u$ defined on $\Omega$ with a finite norm
$$
\norm{u}{H^s(\Omega)}= \left(\int_{\Omega}{\abs{u}^2\mathrm{d}x} + \frac{C(n,s)}{2} \int_{\Omega}\int_{\Omega}{\frac{\abs{u(x)-u(z)}^2}{\abs{x-z}^{n+2s}}\mathrm{d}x\mathrm{d}z} \right)^{1/2},
$$
where the term
\begin{equation}\label{gagliardo}
\left[u\right]_{H^s(\Omega)}=\left(\frac{C(n,s)}{2}\int_{\Omega}\int_{\Omega}{\frac{\abs{u(x)-u(z)}^2}{\abs{x-z}^{n+2s}}\mathrm{d}x\mathrm{d}z} \right)^{1/2}
\end{equation}
is the so-called Gagliardo seminorm of $u$ in $H^s(\Omega)$. It can be proved that $\tilde{H}^s(\R^n)=H^s(\R^n)$ and in particular, for every $u \in H^s(\R^n)$ we obtain
$$
[u]^2_{H^s(\R^n)} = \int_{\R^n}{\abs{\xi}^{2s}\abs{\mathcal{F}u(\xi)}^2\mathrm{d}\xi} = \norm{(-\Delta)^{s/2}u}{L^2(\R^n)}^2.
$$
Note that one can also define the fractional Laplacian acting on spaces of functions with weaker regularity.\\ More precisely, following \cite{silvestrethesis}, let $\mathcal{S}$ be the Schwartz space of rapidly decreasing smooth functions in $R^n$ and $\mathcal{S}^s(\R^n)$ be the space of smooth function $u$ such that $(1+\abs{x}^{n+2s})D^k f(x)$ is bounded in $\R^n$, for every $k\geq 0$, endowed with the topology given by the family of seminorms
$$
\left[f\right]_k = \sup_{x\in\R^n} \left(1+\abs{x}^{n+2s}\right)D^k f(x).
$$
Under these notations, the fractional Laplacian of $f\in \mathcal{S}$ is well defined in $(-\Delta)^s f\in \mathcal{S}_s$ and, by duality, this allows to define the fractional Laplacian for functions in the space
\begin{align*}
\mathcal{L}^1_s(\R^n) &= \left\{u \in L^1_{\tiny{loc}}(\R^{n}) \colon \int_{\R^n}{\frac{\abs{u(x)}}{(1+\abs{x})^{n+2s}}\mathrm{d}x}< +\infty \right\}\\
&= L^1_{\loc}(\R^n)\cap \mathcal{S}'_s(\R^n),
\end{align*}
where $\mathcal{S}'_s(\R^n)$ stands for the dual of $\mathcal{S}_s(\R^n)$.
We remark that necessary a function in $\mathcal{L}^1_s(\R^n)$ needs to keep an algebraic growth of power strictly smaller than $2s$, in order to make the above expression meaningful, as was pointed out in \cite{banuelos,MR1671973,silvestrethesis} and recently in \cite{cones}.\\
In order to study the local behaviour of $u$, let us look at the extension technique popularized by Caffarelli and Silvestre (see \cite{CS2007}), characterizing the fractional Laplacian in $\mathbb{R}^n$ as the Dirichlet-to-Neumann map for a variable $v$ depending on one more space dimension. Namely for every $u\in H^s(\R^n)$, let us consider $v \in H^{1,a}(\R^{n+1}_+)$ satisfying
\begin{equation}\label{eq:Pextended}
\begin{cases}
\mbox{div}(y^{a}\nabla v)=0 &\mbox{in }\mathbb{R}^{n+1}_+,\\
v(x,0)=u(x) & \mbox{in }\Sigma\;.
\end{cases}
\end{equation}
with $a=1-2s \in (-1,1)$.
Such an extension exists unique and is given by the formula
\begin{equation*}
v(x,y)=\gamma(n,s)\int_{\R^n} \frac{y^{2s}u(x)}{(|x-\eta|^2+y^2)^{n/2+s}} \mathrm{d}\eta \qquad \mathrm{where }\; \gamma(n,s)^{-1}=:\int_{\R^n} \frac{1}{(|\eta|^2+1)^{n/2+s}} \mathrm{d}\eta\, ,
\end{equation*}
where the nonlocal operator $(-\Delta)^s$ translates into the Dirichlet-to-Neumann opeartor type
$$
(-\Delta)^s\colon H^s(\R^n)\to H^{-s}(\R^n),\quad u \longmapsto -\frac{C(n,s)}{\gamma(n,s)}\lim_{y \to 0^+}y^{1-2s}\partial_y v(x,y),%=\partial_y^a v(x,0),
$$
with $C(n,s)$ the normalization constant deeply studied in \cite{cones}.
By \cite{Nekvinda}, it is known that the space $H^s(\R^n)$ coincides with the trace on $\partial \R^{n+1}$ of the weighted Sobolev space $H^{1,a}(\R^{n+1}_+)$ and in general
$$
[u]^2_{H^s(\R^n)} = {\frac{C(n,s)}{\gamma(n,s)}}\int_{\R^{n+1}}{\abs{y}^{1-2s} \abs{\nabla v}^2\mathrm{d}X},
$$
where $v$ is the $L_a$-harmonic extension of $u$ defined by \eqref{eq:Pextended}.
Since in the context of the extension problem the equation \eqref{Ps} translates in the homogeneous Neumann condition
$$
\partial^{a}_y v(x,0) =-\frac{C(n,s)}{\gamma(n,s)}\lim_{y \to 0^+} y^{1-2s}\partial_y v(x,y) = 0 \quad\mbox{on }B_1\subset \Sigma.
$$
Applying an even reflection through $\Sigma$, we can study the structure of the nodal set of $s$-harmonic function in $\R^{n}$ as the restriction of the nodal set $\Gamma(v)$ on the characteristic manifold $\Sigma$ of the solution
\begin{equation}\label{problem}
\begin{cases}
L_a v =0 & \mbox{in }B_1^+\\
  v(x,-y)=v(x,y)& \mbox{in }B_1^+\\
  v(x,0)=u(x) & \mbox{in }B_1%\\
  %\partial^{a}_y u = 0 & \mbox{in }B_1\cap \Sigma%\\
  %\norm{u}{L^\infty(B_1)}<+\infty &\\
  \end{cases}
\end{equation}
where $a=1-2s \in (-1,1)$ and $B_1^+$ is the unitary $(n+1)$-dimensional ball in $\R^{n+1}$. \\ Moreover, by \cite{Nekvinda} is it well known that the class of trace on $B_1^+\cap\Sigma=B_1$ of function $L_a$-harmonic in $B_1^+$ is equal to the space $H^{s}(B_1)$.\\
Through this Section we will always identify as $v$ the $L_a$-harmonic extension of $u$ in $\R^{n+1}$ symmetric with respect to $\Sigma$ and with $B_r(x_0)^+$ the ball in $\R^{n+1}_+$ of radius $r>0$ and centered in the point $X_0=(x_0,0)\in \Sigma$ in the characteristic manifold associated with $\Sigma$.\\
The following results are a direct consequence of the ones obtained for purely symmetric $L_a$-harmonic function. For this reason the proof of the majority of them is skipped when the result is obtained just passing through the $L_a$-harmonic extension.
\begin{proposition}\label{doubling.s.prop}
  Given $s\in (0,1)$, let $u$ be $s$-harmonic in $B_1$. Then, there for every $x_0 \in B_1$,
  \begin{equation}\label{doubling.Sigma}
  \frac{1}{R^n}\int_{B_R(x_0)}{u^2\mathrm{d}x}\leq C(n,s)  \left(\frac{R}{r}\right)^{2N-1}\frac{1}{r^n}\int_{B_r(x_0)}{u^2\mathrm{d}x},
  \end{equation}
  for $0<r<R<1-\abs{x_0}$ and $N=N(X_0,v,1-\abs{X_0})$, with $v$ the $L_a$-harmonic extension of $u$.
  \end{proposition}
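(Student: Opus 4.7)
The plan is to deduce the doubling for $u$ on $\Sigma$ from the already-known doubling \eqref{doublin.ball} of its $L_a$-harmonic extension $v$ (with $a = 1-2s$), by comparing the two normalized averages
$$ I(\rho) := \frac{1}{\rho^n}\int_{B_\rho(x_0)} u^2\,dx, \qquad J(\rho) := \frac{1}{\rho^{n+a+1}}\int_{B_\rho(X_0)} |y|^a v^2\,dX, $$
which have identical scaling on $k$-homogeneous profiles at $X_0$. Once an upper comparison $I(R)\leq C(n,s)J(2R)$ and a lower comparison $J(r/2)\leq C(n,s)I(r)$ are in hand, chaining them with \eqref{doublin.ball} applied at the shifted radii $2R$ and $r/2$ produces the stated inequality with the same exponent $2N-1$ and a constant depending only on $n$ and $s$.

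The upper comparison is the straightforward one: the Moser estimate of Lemma \ref{moser} applied to the $L_a$-subharmonic function $v^2$ on $B_{2R}(X_0)$ yields $\sup_{B_R(X_0)} v^2 \leq C(n,a) R^{-(n+a+1)} \int_{B_{2R}(X_0)} |y|^a v^2\,dX$. Integrating this $L^\infty$ bound over $B_R(x_0)\subset\Sigma$ and using $u = v|_\Sigma$ immediately produces $I(R)\leq C(n,s)\,J(2R)$.

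For the lower comparison I would split $v^2 \leq 2u^2 + 2|v-u|^2$ and exploit $v(x,y) - u(x) = \int_0^y \partial_t v(x,t)\,dt$ together with a weighted Cauchy--Schwarz in the $y$-variable,
$$ |v(x,y)-u(x)|^2 \leq \frac{|y|^{1-a}}{1-a}\int_0^{|y|} t^a |\partial_t v(x,t)|^2\,dt, $$
to obtain a Poincaré-type inequality of the shape $\int_{B_{r/2}(X_0)}|y|^a v^2\,dX \leq C\,r^{1+a}\int_{B_r(x_0)} u^2\,dx + C\,r^2 \int_{B_{r/2}(X_0)}|y|^a|\nabla v|^2\,dX$. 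The Caccioppoli inequality (Proposition \ref{caccioppoli}) bounds the gradient term by an integral of $|y|^a v^2$ over a slightly larger concentric ball, and a short use of \eqref{doublin.ball} on nearby scales converts that surplus back into a constant multiple of $J(r/2)$, which is then absorbed on the left, giving $J(r/2)\leq C(n,s)\,I(r)$.

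The main obstacle is precisely the closure of this lower comparison: the naive Poincaré plus Caccioppoli chain is apparently self-referential, since it produces on the right-hand side an integral of $|y|^a v^2$ on a ball strictly larger than $B_{r/2}(X_0)$. The decisive observation is that \eqref{doublin.ball} controls ratios $J(\lambda r)/J(r)$ for $\lambda \geq 1$ with the right power of $\lambda$, which both legitimizes the absorption step and ensures that the exponent $2N-1$ in the final inequality is not degraded; the dependence of the multiplicative constant on $n$ and $s$ is then simply tracked through Moser's constant, the Muckenhoupt constant in Caccioppoli, and the factor $(1-a)^{-1}$ from Cauchy--Schwarz, all finite for $s\in(0,1)$.
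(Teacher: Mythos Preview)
Your upper comparison $I(R)\le C(n,s)\,J(2R)$ via Lemma~\ref{moser} is correct (and in fact cleaner than the paper's route through an interpolation inequality plus Caccioppoli). The problem is entirely in your lower comparison.

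Following your Poincar\'e step and Caccioppoli exactly, what one obtains is
\[
J(r/2)\ \le\ C(n,s)\,I(r)\ +\ C(n,s)\,J(2r),
\]
with a structural constant $C(n,s)$ coming from Cauchy--Schwarz in $y$, the Muckenhoupt constant, and Proposition~\ref{caccioppoli}. To ``absorb'' the last term you invoke \eqref{doublin.ball}, which gives $J(2r)\le 4^{\,2N-1}J(r/2)$. Substituting yields
\[
\bigl(1-C(n,s)\,4^{\,2N-1}\bigr)\,J(r/2)\ \le\ C(n,s)\,I(r),
\]
and this is where the argument breaks: since $N\ge 1$ on $\Gamma(u)\cap\Sigma$ (Corollary~\ref{lower.N}), the factor $C(n,s)\,4^{\,2N-1}$ is never small, so the left-hand side is nonpositive and nothing can be absorbed. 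Iterating the inequality upward in scale does not help either, because each step multiplies by the same $C(n,s)\ge 1$ and the geometric series diverges. In short, the doubling inequality \eqref{doublin.ball} only controls larger balls by smaller ones with an $N$-dependent factor, which is exactly the wrong direction for an absorption.

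The paper obtains the lower comparison by a completely different mechanism: it argues by contradiction and blow-up. If the spherical bound
\[
\int_{\partial B_r^+(X_0)}|y|^a v^2\,\mathrm d\sigma\ \le\ C(n,a)\,r^{a+1}\!\int_{\partial B_r(x_0)}u^2\,\mathrm dx
\]
failed along a sequence $r_k\downarrow 0$, the normalized blow-ups $v_k$ (Theorem~\ref{blowup.convergence}) would converge to a nontrivial symmetric $L_a$-harmonic limit $\bar v$ with $\bar v\equiv 0$ on $\Sigma$ and $\partial_y^a\bar v\equiv 0$ on $\Sigma$; by the unique continuation of Proposition~\ref{empty.interior} (or Proposition~\ref{ucp.sigma}) this forces $\bar v\equiv 0$, contradicting $\|\bar v\|_{L^{2,a}(\partial B_1^+)}=1$. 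It is this compactness step---not an absorption---that produces a constant depending only on $(n,s)$, and your direct Poincar\'e/Caccioppoli route cannot replace it.
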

  \begin{proof}
  Let $v \in H^{1,a}(B_1)$ be the $L_a$-harmonic extension of $u$ in $\R^{n+1}$, symmetric with respect to $\Sigma$. The idea of this proof is to ``move'' the doubling condition on $\R^{n+1}$ to the characteristic manifold $\Sigma$.
  In \cite{Ruland} the author used a similar strategy to prove a so called ``bulk doubling property''.\\ In our case we improve the proof  by using our blow up analysis developed in Section \ref{section.blowup} and applying the correct factor of scaling in order to pass from a doubling condition in the dimension $n+a+1$ to the one on $\Sigma$.\\
  Let $X_0 \in B_1\cap \Sigma$, and $v$ the $L_a$-harmonic extension symmetric with respect to $\Sigma$. Integrating the inequality in Corollary \ref{doubling.corollary.Sigma}, see \eqref{doublin.ball}, we obtain that
  $$
  \int_{B_{R}^+(X_0)}{\abs{y}^a v^2\mathrm{d}X}\leq   \left(\frac{R}{r}\right)^{2C+n+a}\int_{B_{r}^+(X_0)}{\abs{y}^a v^2\mathrm{d}X}
  $$
  for every $0<r<R< 1-\abs{X_0}$, with $N=N(X_0,v,1-\abs{X_0})$. By the interpolation estimate in \cite{Ruland}, we obtain
  \begin{align*}
  \frac{1}{R^{n}}\int_{B_R(X_0)}{u^2\mathrm{d}x} & \leq C(n,a)\left(\frac{1}{R^{n+a+1}}\int_{B_R^+(X_0)}{\abs{y}^a v^2\mathrm{d}X}+\frac{1}{R^{n+a-1}}\int_{B_R^+(X_0)}{\abs{y}^a \abs{\nabla v}^2\mathrm{d}X}\right)\\
  & \leq C(n,a)\left(\frac{1}{R^{n+a+1}}\int_{B_R^+(X_0)}{\abs{y}^a v^2\mathrm{d}X}+\frac{1}{R^{n+a+1}}\int_{B_{2R}^+(X_0)}{\abs{y}^a v^2\mathrm{d}X}\right)\\
  & \leq C(n,a)2^{2N+n+a+1}
  \frac{1}{R^{n+a}}\int_{B_R^+(X_0)}{\abs{y}^a v^2\mathrm{d}X}
  \end{align*}
  where in the second inequality we used the Caccioppoli estimate \eqref{caccio} and in the last one the doubling condition. Since it yields the desired lower bound for the left hand side of the doubling condition on $\Sigma$, we left to prove the upper bound. Let us prove by contradiction the existence of $C>0$ and a radius $0<\overline{r}<R$ such that
  \begin{equation}\label{ruland}
  \int_{\partial B_r^+(X_0)}{\abs{y}^a v^2\mathrm{d}X} \leq C(n,a) r^{a+1} \int_{\partial B_r(X_0)}{u^2\mathrm{d}{x}}\quad\mbox{for all } 0<r\leq \overline{r},
  \end{equation}
  which will finally implies \eqref{doubling.Sigma} after a simple integration.\\
  Hence, suppose there exists a sequence $r_k\searrow 0^+$ such that
  \begin{equation}\label{absurd.ruland}
  \norm{v}{L^{2,a}(\partial B_{r_k}^+(X_0))}\geq k  r_k^{\beta/2}\norm{u}{L^2(\partial B_{r_k}(X_0))},
  \end{equation}
  with $\beta=a+1$. Then let us consider the blow-up sequence of $u$ centered at $X_0$ associated with $(r_k)_k$
  $$
  v_k(X) = \frac{v(X_0+r_k X)}{\rho_k}\quad \mbox{with }\,\rho_k^2 =  \frac{1}{r_k^{n+a}}\int_{\partial B_{r_k}^+(X_0)}{\abs{y}^a v^2\mathrm{d}X} = H(X_0,v,r_k).
  $$
  By definition we have $\norm{v_k}{L^{2,a}(\partial B_1^+)}=1$, and by Lemma \ref{bounded.H1} the sequence $(v_k)_k$ is uniformly bounded in $H^{1,a}(B_R^+)$ and $L^\infty(\overline{B_R})$, for every $R>0$. In particular, by \eqref{absurd.ruland}, we obtain
  $$
    \norm{u_k}{L^{2}(\partial B_1 )}
=  \norm{v_k}{L^{2}(\partial B_1 )} = \ddfrac{r_k^{-\frac{n-1}{2}}\norm{v}{L^2(\partial B_{r_k}(X_0))}}{r_k^{-\frac{n+a}{2}}\norm{v}{L^{2,a}(\partial B_{r_k}^+(X_0))}} \leq k^{-1} r_k^{\frac{a+1-\beta}{2}}=k^{-1}.
  $$
  Thus, up to a subsequence, by Theorem \ref{blowup.convergence} the blow-up sequence $(v_k)_k$ strongly converge in $H^{1,a}_\loc(\R^{n+1})$ and in $C^{0,\alpha}_\loc(\R^n)$, for every $\alpha \in (0,1)$ to some homogeneous blow-up limit $\overline{v}\in H^{1,a}_\loc(\R^{n+1})$ such that $\overline{v}=0$ on $B_1$, $\norm{\overline{v}}{L^{2,a}(\partial B_1^+)}=1$ and it satisfies
  $$
  \begin{cases}
    L_a \overline{v}=0 & \mbox{in } \R^{n+1} \\
    \partial^a_y \overline{v}=0 & \mbox{in }\Sigma. \end{cases}
  $$
  Hence, by Proposition \ref{empty.interior} we obtain that $\overline{v}\equiv 0$ in contradiction with $\norm{\overline{v}}{L^{2,a}(B_1^+)}=1$.
\end{proof}

  In order to justify the analysis of the local behaviour of $s$-harmonic functions, it is necessary to ensure the validity of the strong unique continuation property. %In general, a function $u$, is said to \emph{vanish of infinite order} at a point $x_0 \in \Gamma(u)$ if
 %$$
%\int_{B_r(x_0)}{u^2\mathrm{d}X} = O(r^k), \quad \mbox{for every }k\in \N,
%$$
%as $r\to 0$.
It is known by \cite{FallFelli} that an $s$-harmonic function in $B_1$ enjoys the \emph{strong unique continuation property} in $B_1$, i.e. the only solutions which vanishes of infinite order at a point $X_0 \in \Gamma(u)$ is $u \equiv 0$. Similarly, an $s$-harmonic function in $B_1$ is said to satisfies the \emph{unique continuation
property} in $B_1$ if the only solution of $(-\Delta)^s u = 0$ in $H^{s}_\loc(B_1)$ which can vanish in an open
subset of $B_1$ is $u \equiv 0$. Indeed, as a direct consequence of Proposition \ref{empty.interior} we prove
\begin{corollary}\label{unique.continuation.s}
  Let $s\in (0,1)$ and $u$ be $s$-harmonic in $B_1$. Then the nodal set $\Gamma(u)$ has either empty interior in $B_1$ or $u\equiv 0$.
\end{corollary}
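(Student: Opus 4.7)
The plan is to reduce the statement directly to Proposition \ref{empty.interior} via the Caffarelli--Silvestre extension technique that has just been set up. First I would associate to $u$ its $L_a$-harmonic extension $v$ solving problem \eqref{problem} with $a = 1-2s \in (-1,1)$, and then extend $v$ by even reflection across $\Sigma$ so as to obtain an $L_a$-harmonic function on the full ball $B_1 \subset \R^{n+1}$, symmetric with respect to $\Sigma$, whose trace on $\Sigma\cap B_1$ coincides with $u$. Under the canonical identification $B_1 \subset \R^n \hookrightarrow \Sigma$, this yields the equality
\[
\Gamma(u) = \Gamma(v)\cap \Sigma,
\]
reducing the claim to a statement about the restriction on $\Sigma$ of the nodal set of an $L_a$-harmonic function symmetric with respect to $\Sigma$.

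At this point I would invoke Proposition \ref{empty.interior}, which establishes the dichotomy: for any $L_a$-harmonic $v$ in $B_1$, either $\Gamma(v)\cap \Sigma = \Sigma$ or $\Gamma(v)\cap \Sigma$ has empty interior in $\Sigma$. Translating back via the identification above, the second alternative means precisely that $\Gamma(u)$ has empty interior in $B_1$, while the first alternative means that the (symmetric) trace of $v$ on $\Sigma \cap B_1$ vanishes identically, i.e.\ $u\equiv 0$ in $B_1$. This yields the conclusion.

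There is no substantive obstacle here: the content of the corollary is entirely transferred from Proposition \ref{empty.interior}, whose proof combines the Almgren monotonicity with uniqueness for the ODE $H'(r) = (2N(X_0,v,r)/r)H(r)$. The only point that deserves a sentence of justification is the equivalence between $u\equiv 0$ on $B_1$ and the triviality of the trace of $v$ on $\Sigma\cap B_1$, which is immediate from $v(x,0) = u(x)$. No further use of the strong unique continuation principle of Corollary \ref{unique.continuation} is needed for this weak version, although Corollary \ref{unique.continuation.s} could alternatively be derived from it together with the identification $\Gamma(u)=\Gamma(v)\cap \Sigma$.
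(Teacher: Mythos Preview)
Your proposal is correct and matches the paper's approach: the paper explicitly states that Corollary \ref{unique.continuation.s} follows ``as a direct consequence of Proposition \ref{empty.interior}'', and your argument spells out precisely that reduction via the extension $v$ and the identification $\Gamma(u)=\Gamma(v)\cap\Sigma$.
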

Hence, it is reasonable to define the notion of vanishing order of $u$ at $x_0 \in \Gamma(u)$. More precisely, the strong unique continuation property guarantees the existence of $k\in \R$ such that
$$
\limsup_{r \to 0^+}{\frac{1}{r^{n+2k}}\int_{B_r(x_0)}{u^2\mathrm{d}x}}>0.
$$
In order to correlate the notion of vanishing order of $s$-harmonic functions with the one for their $L_a$-harmonic extension, let us introduce the following common definition.
\begin{definition}
  Given $s\in (0,1)$, let $u$ be an $s$-harmonic function in $B_1$ and $x_0 \in \Gamma(u)$. The vanishing order of $u$ in $x_0$
is defined as the number $\mathcal{O}(u, x_0)\in \R$ such that
$$
\limsup_{r\to 0^+}\frac{1}{r^{n-1+2k}}\int_{\partial B_r(x_0)}{u^2\mathrm{d}x}=
\begin{cases}
  0 & \mbox{if } k<\mathcal{O}(u,x_0) \\
  +\infty & \mbox{if }k>\mathcal{O}(u,x_0).
\end{cases}
$$
\end{definition}
In particular, from Lemma \ref{gap.even} and Proposition \ref{doubling.s.prop} we obtain
\begin{corollary}
  Let $s\in (0,1)$ and $a=1-2s\in (-1,1)$. Given $u$ an $s$-harmonic function in $B_1$, then the vanishing order $\mathcal{O}(u,x_0)$ of $u$ in $x_0 \in \Gamma(u)$ satisfy
  $$
  \mathcal{O}(u,x_0) = N(X_0,v,0^+)= \lim_{r\to 0^+} \ddfrac{r\int_{B_r^+(X_0)}{\abs{y}^a \abs{\nabla v}^2 \mathrm{d}X} }{\int_{\partial B_r^+(X_0)}{\abs{y}^a v^2 \mathrm{d}\sigma}},
  $$
  where $v$ is the unique $L_a$-harmonic extension of $u$ symmetric with respect to $\Sigma$ and $X_0=(x_0,0)$.
\end{corollary}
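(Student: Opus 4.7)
The plan is to identify the vanishing order of $u$ at $x_0$ with the Almgren limiting frequency of its symmetric $L_a$-harmonic extension $v$ at $X_0=(x_0,0)$ by passing through the unique tangent map and examining its trace on $\Sigma$.

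First I would set $k := N(X_0, v, 0^+)$. Since $v$ is symmetric with respect to $\Sigma$, Lemma \ref{gap.even} forces $k \in 1+\N$, and Theorem \ref{uniqueness} produces a unique nontrivial tangent map $\varphi \in \mathfrak{sB}^a_k(\R^{n+1})$ such that $v(X_0+rX)/r^k \to \varphi(X)$ in $C^{0,\alpha}_{\loc}(\R^{n+1})$ (the symmetry of the limit is immediate from that of $v$). Restricting this convergence to $\Sigma$ yields $u(x_0+r\xi)/r^k \to \varphi(\xi,0)$ locally uniformly in $\xi\in\R^n$.

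The crucial intermediate step, and the only nontrivial one, is to show that the trace $\xi \mapsto \varphi(\xi,0)$ is not identically zero. This is precisely the unique-continuation statement contained in the proof of Proposition \ref{ucp.sigma}: by induction on $k$, any symmetric $L_a$-harmonic homogeneous polynomial vanishing on $\Sigma$ must vanish on $\R^{n+1}$. The inductive step differentiates in each $x_i$; the derivatives $\partial_{x_i}\varphi$ are $L_a$-harmonic, symmetric in $y$, of degree $k-1$, and vanish on $\Sigma$, so by induction they vanish identically and $\varphi=\varphi(y)$. But for $k\in 1+\N$ there is no nontrivial symmetric $L_a$-harmonic monomial $cy^k$: a direct computation shows $L_a(cy^k)=ck(k-1+a)y^{a+k-2}$, which forces $c=0$ when $k$ is even (since $a>-1$), while for odd $k$ the polynomial $cy^k$ fails to be symmetric. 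Since $\varphi\not\equiv 0$ by Lemma \ref{nondegeneracy}, the trace $p(\xi):=\varphi(\xi,0)$ is a nontrivial homogeneous polynomial of degree $k$, hence $c_0 := \int_{S^{n-1}} p(\omega)^2\,d\omega > 0$.

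With this in hand, dominated convergence on $S^{n-1}$ combined with the uniform convergence on compact subsets of $\Sigma$ gives
$$
\frac{1}{r^{n-1+2k}}\int_{\partial B_r(x_0)} u^2\,dx = \int_{S^{n-1}}\left(\frac{u(x_0+r\omega)}{r^k}\right)^{\!2} d\omega \;\longrightarrow\; c_0 > 0.
$$
Consequently, for any $\ell \in \R$,
$$
\frac{1}{r^{n-1+2\ell}}\int_{\partial B_r(x_0)} u^2\,dx = r^{2(k-\ell)}\bigl(c_0+o(1)\bigr),
$$
which tends to $0$ if $\ell<k$ and to $+\infty$ if $\ell>k$. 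By the very definition of the vanishing order this gives $\mathcal{O}(u,x_0)=k=N(X_0,v,0^+)$, as claimed. Apart from the unique-continuation lemma for the trace of $\varphi$ on $\Sigma$, which is the only truly geometric input, the remaining steps are direct consequences of the blow-up machinery already set up.
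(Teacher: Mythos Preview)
Your proof is correct. The paper itself gives no detailed argument, merely indicating that the result follows ``from Lemma~\ref{gap.even} and Proposition~\ref{doubling.s.prop}''. The intended route there is to combine the upper bound $|u(x)|\le C|x-x_0|^k$ (the trace on $\Sigma$ of Lemma~\ref{growth}) with the lower bound coming from inequality~\eqref{ruland} in the proof of Proposition~\ref{doubling.s.prop}, namely $H(X_0,v,r)\le C r^{-(n-1)}\int_{\partial B_r(x_0)}u^2$, together with $H(X_0,v,r)\ge c_\varepsilon r^{2(k+\varepsilon)}$ from the monotonicity of $N$; this pins down $\mathcal O(u,x_0)=k$ by a squeeze.

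You instead bypass the quantitative comparison and go straight to the tangent map: blow up, restrict to $\Sigma$, and invoke the inductive argument of Proposition~\ref{ucp.sigma} to see that the trace $\varphi(\cdot,0)$ cannot vanish identically. This is cleaner and more geometric, and it makes explicit the single nontrivial input (unique continuation for symmetric $L_a$-harmonic polynomials on $\Sigma$). It is worth noting that the two routes share the same core: the proof of~\eqref{ruland} is itself a blow-up contradiction argument resting on precisely this unique-continuation fact, so you are essentially unpacking what the paper's approach uses implicitly. Your organisation has the advantage of yielding the sharper conclusion $r^{-(n-1+2k)}\int_{\partial B_r(x_0)}u^2\to c_0>0$, rather than only the two-sided power bounds.
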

Hence, for $k \in 1+\N$, we define the subsets
$$
\Gamma_k(u)  \coloneqq  \{x_0 \in \Gamma(u) \colon \mathcal{O}(u,x_0)=k\},
$$
which is coherent with the Definition for the $L_a$-harmonic case.
Indeed, inspired by the results in Section \ref{section.blowup}, we can prove a convergence result for the blow-up sequence associated with $x_0 \in \Gamma(u)$ to some blow-up limit $\varphi \in H^{s}_{\loc}(\R^n)$. \\Before proving the main convergence result, let us introduce  two different classes of tangent maps strictly related to the ones introduced in Definition \ref{blowup.class} and Definition \ref{blowup.class.y}. In particular, we will see that the structure of the nodal set is completely defined starting from these blow-up classes.
\begin{definition}
  Given $ s \in (0,1)$ and $k\in 1+\N$, we define the set of all possible blow-up limit of order $k$, i.e. the set of the traces of all $L_a$-harmonic polynomial of degree $k$ symmetric with respect to $\Sigma$, as
  $$
  \mathfrak{B}_k^s(\R^{n})
  = \left\{\varphi \in H^{s}_{\loc}(\R^{n}):
  \mbox{ the } L_a\mbox{-extension of }\varphi \in \mathfrak{sB}_k^a(\R^{n+1})
  \right\}.
  $$
  \end{definition}
  Moreover, by Lemma \ref{yornot} and Lemma \ref{garofalo}, the space $\mathfrak{B}_k^s(\R^{n})$ is the set of all possible homogenous polynomial of order $k$ in $\R^n$, which is, by the results in \cite{Nekvinda}, the space of traces on $\Sigma$ of $\mathfrak{sB}_k^a(\R^{n})$. Similarly, if we define with $\mathfrak{B}_k^*(\R^{n})$ the set of function $\varphi\in\mathfrak{B}_k^s(\R^{n})$ such that $\Delta \varphi=0$ in $\R^n$, namely the collection of homogeneous harmonic polynomial of order $k$, there holds that $\mathfrak{B}^*_k(\R^n)$ coincides with the set of traces of blow-up limits in $\mathfrak{sB}_a^*(\R^{n+1})$. \\
  The following result is a direct application of Theorem \ref{blowup.convergence}, Lemma \ref{nondegeneracy}, Theorem \ref{uniqueness} and Theorem \ref{continuation} on the $L_a$-harmonic extension of $u$ symmetric with respect to $\Sigma$ and it ensure the existence of a unique non trivial tangent map at every point of the nodal set of $u$.
\begin{proposition}\label{blowup.s}
  Given $s \in (0,1)$, let $u$ be an $s$-harmonic function in $B_1$ and $x_0\in \Gamma_k(u)$. Then there exists a unique $k$-homogenous polynomial $\varphi^{x_0} \in \mathfrak{B}_k^s(\R^n)$ such that
  \begin{equation*}
    u_{x_0,r}(x)= \ddfrac{u(x_0 + r_k x)}{r^k}\longrightarrow \varphi^{x_0}(x),
    \end{equation*}
    where the blow-up sequence $(u_{x_0,r})_r$ converges strongly in $H^{s}_{\loc}(\R^n)$ and in $C^{1,\alpha}_{\loc}(B_1)$, for every $\alpha \in (0,1)$. Moreover, the unique tangent map $\varphi^{x_0}$ is nontrivial and it satisfies the following generalized Taylor expansion
    $$
    u(x)= \varphi^{x_0}(x-x_0)+o(\abs{x-x_0}^k),
    $$
    where the map $x_0 \mapsto \varphi^{x_0}$ from $\Gamma_k(u)$ to the space $\mathfrak{B}_k^s(\R^n)$ is continuous.
\end{proposition}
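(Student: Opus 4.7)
The plan is to reduce the assertion to the corresponding theorems for $L_a$-harmonic functions via the extension \eqref{problem}. Fix $x_0\in\Gamma_k(u)$, set $X_0=(x_0,0)\in\Sigma$, and let $v\in H^{1,a}(B_1^+)$ be the $L_a$-harmonic extension of $u$ symmetric with respect to $\Sigma$. The preceding corollary gives $N(X_0,v,0^+)=\mathcal{O}(u,x_0)=k$, so $X_0\in\Gamma_k(v)\cap\Sigma$, and by Lemma \ref{gap.even} necessarily $k\in 1+\N$.

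First I would invoke Theorem \ref{uniqueness} applied to the symmetric $L_a$-harmonic function $v$: this furnishes a unique nontrivial tangent map $\varphi^{X_0}\in \mathfrak{sB}^a_k(\R^{n+1})$ such that $v_{X_0,r}(X)=v(X_0+rX)/r^k\to\varphi^{X_0}(X)$. By Theorem \ref{blowup.convergence}, together with its refinement for purely symmetric solutions based on Proposition \ref{smooth}, the convergence is strong in $H^{1,a}_{\loc}(\R^{n+1})$ and in $C^{1,\alpha}_{\loc}(\R^{n+1})$ for every $\alpha\in (0,1)$. I would then set $\varphi^{x_0}:=\varphi^{X_0}(\cdot,0)$, which lies in $\mathfrak{B}_k^s(\R^n)$ by the very definition of that class.

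Since the trace of $v_{X_0,r}$ on $\Sigma$ coincides with $u_{x_0,r}$, the $C^{1,\alpha}_{\loc}$-convergence on $\R^{n+1}$ immediately yields $u_{x_0,r}\to\varphi^{x_0}$ in $C^{1,\alpha}_{\loc}(\R^n)$ for every $\alpha\in(0,1)$; for the strong $H^s_{\loc}$-convergence I would exploit the continuity of the trace operator $H^{1,a}_{\loc}(\R^{n+1}_+)\to H^s_{\loc}(\R^n)$ stemming from Nekvinda's characterization quoted earlier, which promotes the $H^{1,a}_{\loc}$-convergence of $v_{X_0,r}$ to the desired convergence of $u_{x_0,r}$. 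Nontriviality of $\varphi^{x_0}$ then follows from Proposition \ref{ucp.sigma}: were $\varphi^{x_0}\equiv 0$ on $\Sigma$, the symmetric polynomial $\varphi^{X_0}$ would vanish on the whole of $\Sigma\cap B_1$, and the unique continuation on $\Sigma$ would force $\varphi^{X_0}\equiv 0$ on $\R^{n+1}$, contradicting Lemma \ref{nondegeneracy}. The generalized Taylor expansion is then Corollary \ref{taylor.generalized} applied to $v$ and restricted to $\Sigma$: since $v_o\equiv 0$ by symmetry,
$$
u(x)=v(x,0)=\varphi^{X_0}(x-x_0,0)+o(\abs{x-x_0}^k)=\varphi^{x_0}(x-x_0)+o(\abs{x-x_0}^k).
$$

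Finally, continuity of $x_0\mapsto\varphi^{x_0}$ follows directly from Theorem \ref{continuation} applied to $v$: that theorem yields continuity of $X_0\mapsto\varphi^{X_0}$ on $\Gamma_k(v)\cap\Sigma$ in the $L^{2,a}(\partial B_1^+)$ topology, and restricting to the trace on $\Sigma$, together with the fact that $\mathfrak{B}_k^s(\R^n)$ is finite-dimensional (so all norms on it are equivalent), transfers the continuity to $x_0\mapsto\varphi^{x_0}$. The main obstacle I anticipate is precisely in verifying that the tangent map remains nontrivial after restriction to $\Sigma$: while $\varphi^{X_0}$ is nondegenerate on $\R^{n+1}$ by Lemma \ref{nondegeneracy}, one must rule out that it vanishes identically on $\Sigma$, which is where the weak unique continuation on $\Sigma$ proved in Proposition \ref{ucp.sigma} plays the decisive role.
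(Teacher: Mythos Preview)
Your proposal is correct and follows exactly the route indicated by the paper, which states (without further detail) that the result is ``a direct application of Theorem \ref{blowup.convergence}, Lemma \ref{nondegeneracy}, Theorem \ref{uniqueness} and Theorem \ref{continuation} on the $L_a$-harmonic extension of $u$ symmetric with respect to $\Sigma$''. You have simply fleshed out the details the paper omits --- in particular, invoking Proposition \ref{ucp.sigma} to ensure the trace $\varphi^{x_0}$ remains nontrivial and using Nekvinda's trace characterization for the $H^s_{\loc}$-convergence --- which are the natural ingredients to complete the argument.
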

Thus, let
\begin{align*}
  \mathcal{R}(u)&=  \{x_0 \in \Gamma(u)\colon \mathcal{O}(u,x_0)=1\}, \\
  \mathcal{S}(u)&= \bigcup_{k\geq 2}\Gamma_k(u) =\bigcup_{k\geq 2}\left\{x_0 \in \Gamma(u)\colon  \mathcal{O}(u,x_0)=k \right\},
  \end{align*}
be respectively the \emph{regular} and \emph{singular} part of $\Gamma(u)$. Moreover, by Corollary \ref{gamma_k} we can find a different characterization of the singular strata $\Gamma_k(u)$ for $k\geq 2$, i.e.
$$
\Gamma_k(u) = \left\{x_0 \in \Gamma_k(u)\left\lvert
\begin{array}{cl}
    D^{\nu}u(x_0)=0 &\mbox{ for every }\abs{\nu}\leq k-1 \vspace{0.1cm}\\
    D^{\nu_0}u(x_0)\neq 0 &\mbox{ for some }\abs{\nu_0}=k
  \end{array}
\right.\right\}.
$$
The following are the main theorems related to the regularity and the geometric structure of the nodal set: while in the first result we focus the attention on the regular part of the nodal set, proving a result similar to its local counterpart (see \cite{MR1305956,MR1090434}), in the ones related to the singular strata we highlight the presence of a singular subset $\mathcal{S}^s(u)$ strictly related to the nonlocal attitude of the fractional Laplacian.
\begin{theorem}
  Given $s\in (0,1)$, let $u$ be $s$-harmonic in $B_1$. Then the regular set $\mathcal{R}(u)$ is relatively open in $\Gamma(u)$ and is locally a smooth hypersurface on $\R^n$. Moreover
  $$
  \mathcal{R}(u) = \left\{x \in \Gamma(u) \colon \abs{\nabla u (x)}\neq 0 \right\}.
  $$
\end{theorem}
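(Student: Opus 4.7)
The plan splits naturally into three parts: relative openness of $\mathcal{R}(u)$, the algebraic characterization via $\nabla u$, and the local smooth hypersurface structure. A key observation is that, via the extension technique, $u$ is the trace of a purely symmetric $L_a$-harmonic function $v$ on $B_1^+$; by Proposition \ref{smooth}, $v \in C^\infty_\loc(B_1^+)$, and hence $u \in C^\infty(B_1)$. Moreover the vanishing order of $u$ at $x_0$ equals $N(X_0,v,0^+)$, where $X_0=(x_0,0)\in\Sigma$.

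First I would show $\mathcal{R}(u)$ is relatively open in $\Gamma(u)$. The Almgren frequency map $X\mapsto N(X,v,0^+)$ is upper semi-continuous on $\Sigma$, as the infimum of a continuous family (see the discussion after Proposition \ref{Almgren.formula}). Combining this with the gap result Lemma \ref{gap.even}, which forces $N(X_0,v,0^+)\in 1+\mathbb{N}$ for symmetric $v$, one obtains that the set $\{x_0\in\Gamma(u):\mathcal{O}(u,x_0)\leq 3/2\}$ coincides with $\mathcal{R}(u)$ and is open in $\Gamma(u)$.

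Next I would establish the equivalence $\mathcal{R}(u)=\{x\in\Gamma(u):|\nabla u(x)|\neq 0\}$. For $x_0\in\mathcal{R}(u)$, Proposition \ref{blowup.s} yields a unique nontrivial $1$-homogeneous tangent map $\varphi^{x_0}\in\mathfrak{B}_1^s(\R^n)$; since any harmonic $1$-homogeneous polynomial on $\R^n$ is linear, $\varphi^{x_0}(x)=\varphi^{x_0}(\nu^{x_0})\langle x,\nu^{x_0}\rangle$ for some $\nu^{x_0}\in S^{n-1}$, with $\varphi^{x_0}(\nu^{x_0})\neq 0$ by the nondegeneracy Lemma \ref{nondegeneracy}. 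Using $u\in C^\infty(B_1)$ and the Taylor expansion $u(x)=\varphi^{x_0}(x-x_0)+o(|x-x_0|)$, one computes the directional derivative
\[
\langle\nabla u(x_0),\xi\rangle=\lim_{t\to 0}\frac{u(x_0+t\xi)}{t}=\varphi^{x_0}(\nu^{x_0})\langle\xi,\nu^{x_0}\rangle,
\]
so $\nabla u(x_0)=\varphi^{x_0}(\nu^{x_0})\nu^{x_0}\neq 0$. Conversely, if $x_0\in\Gamma(u)$ satisfies $\nabla u(x_0)\neq 0$, then by smoothness $u(x)=\langle\nabla u(x_0),x-x_0\rangle+o(|x-x_0|)$, so $x_0\in\Gamma_1(u)=\mathcal{R}(u)$.

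Finally, the local smooth hypersurface structure follows from the classical implicit function theorem applied to $u$ at any $x_0\in\mathcal{R}(u)$, using $u\in C^\infty(B_1)$ and $\nabla u(x_0)\neq 0$ established above. The main subtlety is the regularity step: one must invoke the fact that $u$ is smooth in the interior of $B_1$, which is inherited from the smooth regularity of the symmetric $L_a$-extension on one side of $\Sigma$ (Proposition \ref{smooth}); once this is in hand, the rest is a direct translation of the symmetric case of Theorem \ref{regular.sopra}.
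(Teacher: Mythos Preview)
Your proposal is correct and follows essentially the same approach as the paper: relative openness via upper semi-continuity of the frequency plus the gap lemma, identification of the tangent map at a regular point as a nontrivial linear functional, computation of $\nabla u(x_0)$ through the directional derivative, and application of the implicit function theorem using the $C^\infty$ regularity from Proposition~\ref{smooth}. The only minor differences are cosmetic: the paper packages the openness step into Lemma~\ref{Fsigma} and additionally records the Hausdorff dimension via Theorem~\ref{federer.theorem}, while you spell out the converse inclusion $\{|\nabla u|\neq 0\}\subset\mathcal{R}(u)$ explicitly, which the paper leaves implicit.
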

\begin{proof}
  By Corollary \ref{unique.continuation.s}, let us suppose that $u\not\equiv 0$ in $B_1$ and hence $\Gamma(u)$ has empty interior.
  Given $v$ the unique $L_a$-harmonic extension of $u$ symmetric with respect to $\Sigma$, well defined in $H^{1,a}(B_1^+)$ by \eqref{problem}, it is obvious to infer that
  $$
  \mathcal{R}(v)\cap \Sigma = \mathcal{R}(u).
  $$
  Moreover, by Lemma \ref{Fsigma} we already know that $\mathcal{R}(u)$ is relatively open in $\Gamma(u)$ and by the application of the Federer reduction principle in Theorem \ref{federer.theorem} we obtain
  $$
\mathrm{dim}_{\mathcal{H}}(\mathcal{R}(u)) = n-1.
$$
Now, by Theorem \ref{blowup.s} and our blow-up classification, for every $x_0 \in \mathcal{R}(u)\cap \Sigma$ there exists a linear map $\varphi^{x_0} \in \mathfrak{B}^s_1(\R^{n})$ such that
$$
u(x) = \varphi^{x_0}(x-x_0) + o(\abs{x-x_0}) = \varphi^{x_0}(\nu^{x_0})\langle x-x_0, \nu^{x_0} \rangle + o(\abs{x-x_0})
$$
for some $\nu^{x_0} \in S^{n-1}$.\\ Moreover, still by Theorem \ref{blowup.s} we know that the map $x_0 \mapsto \varphi^{x_0}(\nu^{x_0})\nu^{x_0}$ is continuous.
By Proposition \ref{smooth}, since $u \in C^\infty (B_{1/2})$ we can use the tangent map in order to compute the directional derivative of $u$, which will ensures the nondegeneracy of the gradient of $u$ at $x_0$. More precisely, for every $\xi \in S^{n-1}$
  $$
  \langle \nabla u(x_0),\xi\rangle = \left.\frac{d}{dt}u(x_0 +t \xi) \right\lvert_{t=0}
  = \lim_{t \to 0} \frac{u(x_0+ t\xi)}{t}
  = \varphi^{x_0}(\nu^{x_0}) \langle \xi , \nu^{x_0}\rangle,
  $$
  and hence $\nabla u(x_0) = \varphi^{x_0}(\nu^{x_0})\nu^{x_0}$ which is nonzero by the nondegeneracy of the tangent map. Finally, by the implicit function theorem we obtain the claimed result.
\end{proof}
As in Section \ref{singular.sec}, initially we will prove a stratification result for the singular set $\mathcal{S}(u)$.\\
The main idea of this stratification is to stratify the nodal set by the spines of the normalized tangent maps. Indeed, we will introduce the subset $\Gamma^j_k(u)$ as the set of points at which every tangent map has at most $j$ independent directions of translation invariance in order
to correlate the nodal set of $u$ with the dimension of the set where the tangent map $\varphi^{X_0}$ vanishes with the same order of $u$.\\ We remark that these result are a direct consequence of Theorem \ref{regularity.singular} and Theorem \ref{general.singular}, nevertheless, for the sake of completeness, we present some technical details.\\
From Definition \ref{d.sigma}, given $s\in (0,1)$  we call $d^{x_0}$ the dimension of $\Gamma_k(u)$ at $x_0\in \Gamma_k(u)$ as
$$
  d^{x_0}% &=\mbox{dim}\,\Gamma_k(\varphi^{X_0})\cap \Sigma\\
  %&
  =\mbox{dim}\left\{ \xi \in \R^n \colon \langle \xi, \nabla\varphi^{x_0}(x)\rangle = 0 \mbox{ for all } x \in \R^n\right\}.
$$
Now, fixed $k\geq 2$, for each $j=0,\dots,n-1$ let us define
$$
\Gamma^j_k(u) = \left\{ x_0 \in \Gamma_k(u) \colon \mbox{dim}\,\Gamma_k(\varphi^{x_0}) = j \right\},
$$
where $\varphi^{x_0}$ is the unique tangent limit of $u$ at $x_0$. As we already mentioned, since for $k\geq 2$ we have $\mathfrak{B}^s_k(\R^{n})\setminus\mathfrak{B}^*_k(\R^{n}) \neq \emptyset$, we decide to introduce the following singular sets
$$
\mathcal{S}^*(u) = \bigcup_{k\geq 2}\Gamma^*_k(u)\quad \mbox{and}\quad \mathcal{S}^s(u) = \bigcup_{k\geq 2}\Gamma_k^s(u),
$$
where
$$
  \Gamma^*_k(u)=\left\{x_0 \in \Gamma_k(u)\colon \varphi^{x_0} \in \mathfrak{B}^*_k(\R^{n})\right\}\ \mbox{ and } \ \Gamma^s_k(u)= \Gamma_k(u)\setminus \Gamma^*_k(u).
$$
The idea is to stratify the singular set taking care of both the dimension $d^{x_0}$ and the different classes of tangent map associated with the sets $\Gamma^*_k(u)$ and $\Gamma^s_k(u)$.
\begin{theorem}
Given $s\in (0,1)$ let $u$ be $s$-harmonic in $B_1$. Then there holds
  $$
  \mathcal{S}(u)= \mathcal{S}^*(u) \cup \mathcal{S}^s(u)
  $$
  where $\mathcal{S}^*(u)$ is contained in a countable union of $(n-2)$-dimensional $C^1$ manifolds and $\mathcal{S}^s(u)$ is contained in a countable union of $(n-1)$-dimensional $C^1$ manifolds. Moreover
  $$
   \mathcal{S}^*(u)=\bigcup_{j=0}^{n-2} \mathcal{S}^*_j(u)\quad\mbox{and}\quad
   \mathcal{S}^s(u)=\bigcup_{j=0}^{n-1}\mathcal{S}^s_j(u),
  $$
  where both $\mathcal{S}^*_j(u)$ and $\mathcal{S}^s_j(u)$ are contained in a countable union of $j$-dimensional $C^1$ manifolds.
\end{theorem}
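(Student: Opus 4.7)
The plan is to reduce this statement to the $L_a$-harmonic stratification of Theorem \ref{general.singular} via the Caffarelli--Silvestre extension. Let $v$ be the unique $L_a$-harmonic extension of $u$ to $\R^{n+1}$ symmetric with respect to $\Sigma$, as in \eqref{problem}. Since $v|_\Sigma = u$ and $v$ is purely symmetric, $v_o\equiv 0$, so $\Gamma(u)=\Gamma(v)\cap\Sigma$ and the antisymmetric component of the tangent field at every $X_0\in\Gamma(v)\cap\Sigma$ is trivial. In particular, by Corollary \ref{taylor.generalized} applied to $v$, the local behaviour of $u$ at any $x_0\in\Gamma_k(u)$ is entirely encoded by the symmetric tangent map $\varphi^{x_0}_e\in\mathfrak{sB}^a_k(\R^{n+1})$, whose trace on $\Sigma$ is precisely the tangent map $\varphi^{x_0}\in\mathfrak{B}^s_k(\R^n)$ given by Proposition \ref{blowup.s}. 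The correspondence
\[
\mathfrak{B}^*_k(\R^n)=\bigl\{\psi|_{\Sigma}\colon\psi\in\mathfrak{sB}^*_k(\R^{n+1})\bigr\},\qquad \mathfrak{B}^s_k(\R^n)\setminus\mathfrak{B}^*_k(\R^n)\ \leftrightarrow\ \mathfrak{sB}^a_k(\R^{n+1})\setminus\mathfrak{sB}^*_k(\R^{n+1}),
\]
together with Lemma \ref{subspace}, yields the identifications $\Gamma^*_k(u)=\Gamma^*_k(v)$ and $\Gamma^s_k(u)=\Gamma^a_k(v)$; moreover $d^{x_0}=d^{X_0}_\Sigma$ for $X_0=(x_0,0)$.

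Granted these identifications, I would split the singular set as
\[
\mathcal{S}(u)=\mathcal{S}^*(u)\cup\mathcal{S}^s(u)=\bigcup_{k\geq 2}\Gamma^*_k(u)\ \cup\ \bigcup_{k\geq 2}\Gamma^s_k(u),
\]
and then refine each piece by the spine dimension:
\[
\mathcal{S}^*_j(u)=\bigcup_{k\geq 2}\{x_0\in\Gamma^*_k(u)\colon d^{x_0}=j\},\qquad \mathcal{S}^s_j(u)=\bigcup_{k\geq 2}\{x_0\in\Gamma^s_k(u)\colon d^{x_0}=j\}.
\]
The dimension bounds $j\leq n-2$ on $\mathcal{S}^*$ and $j\leq n-1$ on $\mathcal{S}^s$ come directly from Lemma \ref{subspace}: for a harmonic homogeneous polynomial on $\R^n$ of degree $k\geq 2$ the spine is a proper linear subspace of $\Sigma$ transverse to some Hessian-like direction, hence of codimension at least two; whereas a blow-up in $\mathfrak{B}^s_k\setminus\mathfrak{B}^*_k$ may have a one-codimensional spine, as the explicit profiles in Proposition \ref{example} show.

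To obtain the regularity statement, I would run the Whitney extension argument used in Theorem \ref{regularity.singular} directly on $u$. By Proposition \ref{blowup.s} the map $x_0\mapsto\varphi^{x_0}$ is continuous on any compact $K\subset\Gamma_k(u)$ with a uniform modulus of continuity, and the Taylor expansion $u(x)=\varphi^{x_0}(x-x_0)+o(|x-x_0|^k)$ supplies the needed compatibility conditions on the candidate jet $\{f_\alpha\}_{|\alpha|\leq k}$ obtained from the coefficients of $\varphi^{x_0}$. Writing $E_i$ as in Lemma \ref{Fsigma} so that $\Gamma_k(u)=\bigcup_i E_i$ is an $F_\sigma$ decomposition into closed sets, Whitney's theorem produces $F\in C^k(\R^n)$ with $\partial^\alpha F=f_\alpha$ on $E_i$. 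At $x_0\in\Gamma^j_k(u)\cap E_i$, the very definition of $d^{x_0}=j$ supplies $n-j$ multi-indices $\alpha_1,\dots,\alpha_{n-j}$ of length $k-1$ and directions $\nu_1,\dots,\nu_{n-j}$ with $\partial_{\nu_i}D^{\alpha_i}\varphi^{x_0}(0)\neq 0$, hence $\partial_{\nu_i}D^{\alpha_i}F(x_0)\neq 0$; the implicit function theorem applied to the system $\{D^{\alpha_i}F=0\}_{i=1}^{n-j}$ exhibits $\Gamma^j_k(u)\cap E_i$ as a subset of a $j$-dimensional $C^1$ manifold near $x_0$. Covering by countably many $E_i$ finishes the proof.

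The main obstacle is the transfer of properties at the boundary $\Sigma$: I must ensure that the spine of the trace coincides with the spine of the symmetric $(n+1)$-dimensional blow-up restricted to $\Sigma$, and that ``tangent map belongs to $\mathfrak{B}^*_k$'' really does characterise the $(n-2)$-dimensional stratum. Both facts rest on Proposition \ref{even.odd}, Lemma \ref{garofalo}, and the $y$-independence criterion of Lemma \ref{yornot}, which together force an element of $\mathfrak{sB}^a_k\setminus\mathfrak{sB}^*_k$ to carry a genuine $y$-mode whose trace generically has an $(n-1)$-dimensional zero set on $\Sigma$, while elements of $\mathfrak{sB}^*_k$ restrict to harmonic polynomials on $\Sigma$, hence with zero set of Hausdorff dimension at most $n-2$.
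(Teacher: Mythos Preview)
Your proposal is correct and follows essentially the same approach as the paper: reduce to the symmetric $L_a$-harmonic extension, identify $\Gamma^*_k(u)=\Gamma^*_k(v)$, $\Gamma^s_k(u)=\Gamma^a_k(v)$ and $d^{x_0}=d^{X_0}_\Sigma$, read off the dimension bounds $n-2$ (harmonic trace) and $n-1$ ($y$-dependent blow-ups, via Proposition~\ref{example}) from the blow-up classification, and then run the Whitney extension/implicit function argument of Theorem~\ref{regularity.singular}. The paper's own proof is terser---it simply cites Theorems~\ref{regularity.singular} and~\ref{general.singular} and records the two dimension bounds---but your more explicit account of the trace correspondence and the Whitney step matches it line for line.
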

\begin{proof}
  The proof is based on a combination of Theorem \ref{regularity.singular} Theorem \ref{general.singular}.
  Since for every $k\geq 2$ the functions $\varphi \in \mathfrak{sB}^*_k(\R^{n+1})$ are homogeneous polynomial harmonic in $\Sigma$, we have that $\mbox{dim}\left(\mathcal{S}(\varphi)\cap \Sigma\right)\leq n-2$, and consequently $d^{X_0}_\Sigma \leq n-2$ for every $X_0 \in \Gamma_k^*(u)$. \\Similarly, following Proposition \ref{example} and the remarks in the proof of Theorem \ref{federer.theorem}, since for every $k\geq 2$ there exists $\varphi \in \mathfrak{sB}^a_k(\R^{n+1})\setminus\mathfrak{sB}^*_k(\R^{n+1})$ such that $\mbox{dim}\left(\mathcal{S}(u)\cap \Sigma\right)=n-1$ we obtain that for $X_0 \in \Gamma^a_k(u)$ there holds $d^{X_0}_\Sigma \leq n-1$.\\
Now, by applying the same argument in the proof of Theorem \ref{general.singular}, if we set
 \begin{align*}
\mathcal{S}^*_j(u) &= \bigcup_{k\geq 2}\left\{x\in \Gamma^*_k(u)\colon d^{x_0} =j\right\} \quad\mbox{for }j=0,\cdots,n-2\\
\mathcal{S}^s_j(u) &= \bigcup_{k\geq 2}\left\{x\in \Gamma^s_k(u)\colon d^{x_0} =j\right\}\quad\mbox{for }j=0,\cdots,n-1,
\end{align*}
we obtain the claimed result.
\end{proof}
Furthermore, by Proposition \ref{example} we obtain that for any $x_0 \in \mathcal{S}^s_{n-1}(u)$  the leading polynomial of $u$ at $x_0$ is a monomial of degree $k$ with $k\in 2+\N$ depending only on one variable of $\R^{n}$.\\
In order to show the optimality of the result, we will now present an explicit example of $s$-harmonic function in $B_1=(-1,1)\subset \R$ with vanishing order $k\geq 2$. More precisely, the following construction allows to exhibit an $s$-harmonic in $B_1\subset \R^n$ with $\Gamma(u) = \mathcal{S}^s_{n-1}(u)$.\\
Fixed $s\in (0,1)$, let $B_1=(-1,1)\subset \R$ be the unitary ball in the real line and $f\in \mathcal{L}^1_s(\R)\cap C(\R)$ an admissible function. By the classical potential theory is it known that the unique solution of
$$
\begin{cases}
(-\Delta)^s u = 0 & \mbox{in } B_1\\
u= f & \mbox{in } \R \setminus B_1
\end{cases}
$$
an be computed explicitly as
$$
u(x)= \int_{\R \setminus B_1}{P(x,y)f(y)\mathrm{d}y}=\frac{\Gamma(1/2)\sin{\pi s}}{\pi^{3/2}}\left(1-\abs{x}^2\right)^s\int_{\R\setminus B_1}{\frac{1}{(\abs{y}^2-1)^s}\frac{f(y)}{\abs{x-y}}\mathrm{d}y}.
$$
We remark that several results and reference about the Poisson kernel can be found in the classical book of Landkof \cite{Landkof}.\\
Now, given $f\in \mathcal{L}^1_s(\R)\cap C(\R)$, let us consider $f_e,f_o\in \mathcal{L}^1_s(\R)\cap C(\R)$ respectively the even and odd part of $f$ uniquely defined as
$$
f_e(x)=\frac{f(x)+f(-x)}{2}\quad\mbox{and}\quad f_o(x)=\frac{f(x)-f(-x)}{2}.
$$
Under this notations, we obtain for $x\in (-1,1)$
$$
u(x)= \frac{2\Gamma(1/2)\sin{\pi s}}{\pi^{3/2}}\left(1-\abs{x}^2\right)^s\left[\int_1^{+\infty}{\frac{f_e(y) y}{(\abs{y}^2-1)^s (y^2-x^2)}\mathrm{d}y}+ x \int_1^{+\infty}{\frac{f_o(y) }{(\abs{y}^2-1)^s (y^2-x^2)}\mathrm{d}y}\right].
$$
Since for every $y \in \R\setminus B_1$ we have $\abs{y}>\abs{x}$, using the series expression
$$
\frac{1}{y^2-x^2} = \frac{1}{y^2}\sum_{n=0}^\infty \frac{x^{2n}}{y^{2n}},
$$
we obtain
$$
u(x)= \frac{2\Gamma(1/2)\sin{\pi s}}{\pi^{3/2}}\left(1-\abs{x}^2\right)^s
\left[\sum_{n=0}^\infty A_{2n}(f) x^{2n} +\sum_{n=0}^\infty A_{2n+1}(f) x^{2n+1}\right],
$$
where for every $n\in \N$
$$
A_{2n}(f)= \int_1^{+\infty}{\frac{f_e(y)}{y(\abs{y}^2-1)^s y^{2n}}\mathrm{d}y}\quad \mbox{and}\quad A_{2n+1}(f)= \int_1^{+\infty}{\frac{f_o(y)}{y(\abs{y}^2-1)^s y^{2n+1}}\mathrm{d}y}.
$$
In particular, if we consider $f(x)=(\abs{x}^2-1)^s g(x^{-1})$ we obtain by a simple change of variables
$$
A_{2n}(f)=
\int_0^{1}{\frac{g_e(y)}{y} y^{2n}\mathrm{d}y}\quad \mbox{and}\quad A_{2n+1}(f)= \int_0^{1}{g_o(y) y^{2n}\mathrm{d}y}.
$$
Hence, for every fixed order of vanishing $k \in 2+ \N$, there exists a polynomial function $g(x)$ such that $A_i(f)=0$, for every $i\leq k-1$. We remark that all these coefficients can be computed explicitly. Moreover, this construction implies that for every vanishing order $k\in 2+\N$ there exists an $s$-harmonic function in $(-1,1)$ which vanishes at zero with order $k$, which shows the purely nonlocal behaviour of the singular set of $s$-harmonic functions.

\section{Some more general nonlocal equations}

In this part we will generalize the previous result to a more general class of fractional power of divergence form operator following the change of variables first introduced in \cite{MR0140031} and deeply popularized in the works \cite{MR833393,MR882069} .
Inspired by works, we consider solutions of homogeneous linear elliptic differential equations of the second order with Lipschitz leading coefficients and no lower order terms. We remark that in general the regularity assumption on the coefficient is optimal thanks to the counterexample of \cite{miller}.\\% A consequence of such a monotonicity formula is the doubling condition. (Aggiungere nota che si dimstra anche risultato ottimale, Chegger, Garofalo 1993, Garofalo 1987,Aronszajn, Hui Yu).\\
Let $A(x)=(a_{ij}(x))$ be a symmetric $n\times n$ matrix-valued function in $\overline{B}_1$ satisfying the following assumptions:
\begin{enumerate}
  \item there exists $\lambda \in (0,1)$ such that
  $$
  \lambda \abs{\xi}^2 \leq a_{ij}(x) \xi_i \xi_j \leq \lambda^{-1}\abs{\xi}^2 \quad\mbox{for any }x \in \overline{B}_1 \mbox{ and } \xi \in \R^{n};
  $$
  \item there exists $\gamma>0$ such that for any $1\leq i,j\leq n$
  $$
  \abs{a_{ij}(x)-a_{ij}(z)}\leq \gamma\abs{x-z}\quad \mbox{for any }x,z \in B_1.
  $$
\end{enumerate}
Hence, consider now the uniformly elliptic operator
\begin{equation}\label{Ldefi}
L u = \mbox{div}\left(A(x)\nabla u(x)\right) = \frac{\partial}{\partial x_i}\left( a_{ij}(x) \frac{\partial }{\partial x_j}u\right) =0 \quad \mbox{in } B_1.
\end{equation}
By \cite{stingatorrea}, we already know the existence of characterization for the fractional powers of second order partial differential operators in some suitable class.
\begin{proposition}
  Let $s\in (0,1)$ and $u \in \mathcal{L}^1_s(\R^n)$. Given $a=1-2s \in (-1,1)$, a solution of the extension problem
  \begin{equation}\label{L.extendend}
  \begin{cases}
  L v + \frac{a}{y}\partial_{y} v + \partial^2_{yy}v=0 & \emph{in }\R^{n+1}_+\\
  v(x,0)=u(x) & \emph{in }\R^{n};
  \end{cases}
  \end{equation}
  is given by
  $$
  v(x,y)=\frac{1}{\Gamma(s)}\int_{0}^\infty{(e^{tL}(-L)^s f)(x)e^{-\frac{y^2}{4t}}\frac{\mathrm{d}t}{t^{1-s}}},
  $$
  and
  $$
  (-L)^s u(x) = -\frac{\Gamma(s)}{2^{1-2s} \Gamma(1-s)}\lim_{y \to 0^+}{y^{a}\partial_y u(x,y)}.
  $$
\end{proposition}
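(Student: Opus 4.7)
The plan is to treat $-L$ as a nonnegative self-adjoint operator on $L^2(\R^n)$, which under our uniform ellipticity and Lipschitz hypotheses generates a strongly continuous symmetric heat semigroup $\{e^{tL}\}_{t\geq 0}$ satisfying Gaussian upper and lower bounds of Aronson type. The spectral calculus is therefore available, and Balakrishnan's representation
\begin{equation*}
(-L)^s u(x) = \frac{s}{\Gamma(1-s)} \int_0^\infty \bigl(u(x) - e^{\tau L}u(x)\bigr)\,\frac{\mathrm{d}\tau}{\tau^{1+s}}
\end{equation*}
holds for $u$ in a dense class containing $\mathcal{L}^1_s(\R^n)\cap C(\R^n)$ after a standard localization and cut-off. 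With this toolbox the proof reduces to verifying three properties for the candidate $v$: the degenerate PDE, the trace condition, and the co-normal condition.

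First I would verify the PDE. Let $w(x,t) := e^{tL}(-L)^s u(x)$, so $\partial_t w = L w$, and set $K_s(y,t):=t^{s-1}e^{-y^2/(4t)}$. A direct computation using $a=1-2s$ gives the key identity
\begin{equation*}
\partial^2_{yy}K_s(y,t) + \frac{a}{y}\partial_y K_s(y,t) = \partial_t K_s(y,t).
\end{equation*}
Hence, integrating by parts in $t$ in the definition of $v$,
\begin{equation*}
\partial^2_{yy}v + \frac{a}{y}\partial_y v = \frac{1}{\Gamma(s)}\int_0^\infty w\,\partial_t K_s\,\mathrm{d}t = -\frac{1}{\Gamma(s)}\int_0^\infty Lw\cdot K_s\,\mathrm{d}t = -Lv,
\end{equation*}
provided the boundary contributions at $t=0^+$ (absorbed by $e^{-y^2/(4t)}$ for $y>0$) and at $t=\infty$ (controlled by Gaussian decay of $e^{tL}(-L)^s u$) vanish. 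This gives $Lv + \partial^2_{yy}v + (a/y)\partial_y v=0$ in $\R^{n+1}_+$.

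For the trace and the co-normal condition I would substitute the Balakrishnan formula inside $v$, interchange integrals via Fubini, and recognize classical Gaussian moments. The trace $v(x,0)=u(x)$ follows because as $y\to 0^+$ the kernel concentrates in $t$ and $e^{tL}u\to u$ strongly. For the co-normal, differentiating yields
\begin{equation*}
y^a\partial_y v(x,y) = -\frac{y^{a+1}}{2\Gamma(s)}\int_0^\infty e^{tL}(-L)^s u(x)\,e^{-y^2/(4t)}\,t^{s-2}\,\mathrm{d}t,
\end{equation*}
and the substitution $\tau=y^2/(4t)$ followed by $\int_0^\infty e^{-\tau}\tau^{-s}\,\mathrm{d}\tau=\Gamma(1-s)$ yields, after grouping the powers of $2$ produced by the change of variables,
\begin{equation*}
\lim_{y\to 0^+} y^a\partial_y v(x,y) = -\frac{2^{1-2s}\Gamma(1-s)}{\Gamma(s)}(-L)^s u(x),
\end{equation*}
which is equivalent to the claimed identity. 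The main obstacle is precisely in this last step: justifying the interchange of $\lim_{y\to 0^+}$ with the $t$-integral near $t=0^+$ requires sharp quantitative control on $\|e^{tL}(-L)^s u\|_{L^\infty_{\mathrm{loc}}}$ as $t\to 0^+$, which is obtained from Aronson-type heat kernel estimates together with the assumed regularity of $u$ in $\mathcal{L}^1_s$, and this is the technical heart of Stinga--Torrea's construction.
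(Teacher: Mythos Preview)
The paper does not supply a proof of this proposition; it is stated as a citation of Stinga--Torrea (reference \cite{stingatorrea} in the paper). Your sketch is precisely the Stinga--Torrea argument: the kernel identity $\partial^2_{yy}K_s+\frac{a}{y}\partial_y K_s=\partial_t K_s$ for $K_s(y,t)=t^{s-1}e^{-y^2/(4t)}$ reduces the extension PDE to an integration by parts against the heat semigroup, and the co-normal limit follows from the change of variables $\tau=y^2/(4t)$ together with $\int_0^\infty e^{-\tau}\tau^{-s}\,\mathrm{d}\tau=\Gamma(1-s)$. So your proposal is correct and aligned with the source the paper invokes; there is nothing further to compare against in the paper itself.
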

A similar extension can be constructed in the context of fractional powers $(-\Delta_M)^s$ of the Laplace-Beltrami operator on a Riemannian manifold $M$ and to conformal fractional Laplacian on conformally compact Einstein manifolds and asymptotically hyperbolic manifold, thanks to the extension technique developed in \cite{conformal} and the asymptotic expansion of their geodesic boundary defining function.\\
In this Section, we just consider the case of divergence form operator $L$ in order to show how to deal with the limit case of Lipschitz coefficients. Therefore, this analysis will extend the results also to the case of Laplace-Beltrami with Lipschitz metric. \\
As we did for the fractional Laplacian, in order to study the local behaviour of solution of fractional elliptic equation associated with operator $L$ in divergence form, let $s \in (0,1)$ and $u$ be a solution of the extended problem \eqref{L.extendend} associated with $L$, even with respect to the $y$-direction, i.e. such that
\begin{equation}\label{L.ext}
\begin{cases}
  \mbox{div}_{x,y}(\abs{y}^a \overline{A}(x)\nabla_{x,y} u) = 0, & \mathrm{in}\, \R^{n+1} \\
  u(x,y)=u(x,-y), & \mathrm{in}\, \R^{n+1}.
\end{cases}
\end{equation}
where $\overline{A}(x)$ is a symmetric $(n+1) \times (n+1)$ matrix-valued function in $B_1$ such that
\begin{equation}\label{overline.A}
\overline{A}(x)=
\left(
\begin{array}{ccc|c}
&&&  \\
&\hspace{-0.1cm}A(x)\vspace{-0.1cm}\hspace{-0.1cm}&& 0\\
&&& \\
\hline
  & 0 & & 1
\end{array}
\right).
\end{equation}
Inspired by Definition \ref{energy} we define the natural generalization of notion of $L_a$-harmonicity in the context of divergence form operator $L$ with Lipschitz leading coefficient.
\begin{definition}
Let $a \in (-1,1)$, we say $u\in H^{1,a}(B_1)$ is $L_a^A$-harmonic in $B_1$ if for every $\varphi \in C^\infty_c(B_1)$ we have
  $$
  \int_{B_1}{\abs{y}^a \langle \overline{A}(x)\nabla u , \nabla \varphi\rangle \mathrm{d}X} = 0,
  $$
where $\overline{A}(x)$ is the symmetric $(n+1)\times (n+1)$ matrix-valued function defined in \eqref{overline.A}.
\end{definition}
Through this Section we will state all the result in the contxt of $L_a^A$-harmonic function in $B_1$ symmetric with respect to $\Sigma$, since the nodal set of the fractional powers $(-L)^s$ is completely defined as the restriction of the nodal set of $L_a^A$-harmonic function symmetric with respect to $\Sigma$, as we did in the previous part of the Section.\\ Obviously, in order to better understand the behaviour of general degenerate operator with Lipschitz leading coefficient, one could consider general $L_a^A$-harmonic solution and apply the ideas and the decomposition of the previous Sections.\\
In order to develop a blow-up analysis, we need to extend our monotonicity formula based on a geometrical reduction introduced in \cite{MR0140031} and deeply used in the local case \cite{MR833393,MR882069}. Hence, for $n\geq 3$, define a Lipschitz metric $\overline{g}=\overline{g}_{ij}(x,y)\mathrm{d}x_i \otimes\mathrm{d}x_j + \overline{g}_{yy}(x,y) \mathrm{d}y \otimes \mathrm{d}y$ on $B_1$ by setting
\begin{equation}\label{over.g}
\overline{g}_{ij} = \overline{a}^{ij}\left(\det \overline{A} \right)^{\frac{1}{n-1}}=
\begin{cases}
a^{ij}\abs{A}^{\frac{1}{n-1}}, & \mbox{if } 1\leq i,j\leq n\\
\abs{A}^{\frac{1}{n-1}}, & \mbox{otherwise}
\end{cases}
\end{equation}
where $\overline{a}^{i,j}$ and $a^{i,j}$ denote respectively the entries of $\overline{A}^{-1}$ and ${A}^{-1}$. Letting similarly $\overline{g}^{ij}$ be the entries of the inverse metric of $\overline{g}$, consider
\begin{align*}
r(x,y)^2 &= \overline{g}_{ij}(0)x_i x_j + \overline{g}_{yy}(0)y^2\\
&= \abs{A}^{\frac{1}{n-1}}\left(a^{ij}(0)x_i x_j + y^2\right)
\end{align*}
and
\begin{align*}
\eta(x,y) &= \frac{1}{r^2(x,y)}\left(\overline{g}^{kl}(x)\overline{g}_{ik}(0)\overline{g}_{jl}(0)x_i x_j + \overline{g}^{yy}(x)\overline{g}_{yy}(0)\overline{g}_{yy}(0) y^2\right)\\
& =\frac{a_{kl}(x)a^{ik}(0)a^{jl}(0)x_i x_j + y^2}{a^{ij}(0)x_i x_j + y^2}.
\end{align*}
We can easily verify that $\eta$ is a positive Lipschitz function in $B_1$, whose Lipschitz constant depends on $n,\lambda,\Gamma$ but not on $a \in (-1,1)$.\\ Next, we introduce a new metric tensor $g=g_{ij}(x,y)\mathrm{d}x_i \otimes\mathrm{d}x_j + g_{yy}(x,y) \mathrm{d}y \otimes\mathrm{d}y$  in $B_1$ by defining  $g = \eta(x,y)\overline{g}$. In the intrinsic geodesic polar coordinates with pole at zero of the Riemannian manifold $(B_1, g_{ij})$, the metric tensor takes the form
$$
g= \mathrm{d}r \otimes \mathrm{d}r + r^2 b_{ij}(r,\theta) \mathrm{d}\theta_i \otimes \mathrm{d}\theta_{j},
$$
where
\begin{equation}\label{condition1}
b_{ij}(0,0)=\delta_{ij},\quad \abs{\partial_r b_{ij}(r,\theta)}\leq \Lambda(n,\lambda,\Gamma),\,\,\mbox{for }1\leq i,j\leq n.
\end{equation}
Moreover, if we denote $\abs{g}=\abs{\det g}$ we obtain
\begin{equation}\label{det.metric}
\sqrt{\abs{g}}=\eta^{\frac{n+1}{2}}\sqrt{\abs{\overline{g}}}= \eta^{\frac{n+1}{2}} \abs{A}^{\frac{1}{n-1}}.
\end{equation}
Here we denote by $\nabla_g u$ and $\mbox{div}_g X$ respectively the intrinsic gradient of a function $u$ and the intrinsic divergence of a vector field $X$ on $B_1$ in the metric $g$, i.e.
$$
\nabla_g u = g^{ij}\frac{\partial u }{\partial{x_i}}\frac{\partial}{\partial x_j}, \quad \mbox{div}_g X = \frac{1}{\sqrt{\abs{g}}}\left(\sum_{i=1}^n\frac{\partial  }{\partial{x_i}}(\sqrt{\abs{g}}X_i) + \frac{\partial}{\partial y}(\sqrt{\abs{g}}X_y)\right).
$$
Finally, in this new metric we rewrite the divergence form equation in \eqref{L.ext} as
$$
\mbox{div}_g\left(\abs{y}^a\mu \nabla_g u \right) = \frac{1}{\sqrt{\abs{g}}}\frac{\partial}{\partial y}\left[ \left(1-\sqrt{\abs{g}}g^{yy}\mu\right) u \frac{\partial }{\partial y}\abs{y}^a\right]
$$
where $\mu=\mu(x,y)$ is a positive Lipschitz function given by
$$
\mu(x,y)=\eta(x,y)^{-\frac{n-1}{2}}
$$
bounded in $\overline{B}_1$ and such that, in polar coordinates, it satisfies
\begin{equation}\label{condition2}
\mu(0,0)=1, \quad \abs{\frac{\partial}{\partial r} \mu(r,\theta)}\leq \Lambda(n,\lambda,\Gamma).
\end{equation}
By \eqref{over.g}, \eqref{det.metric} and the definition of $\mu$, for every $(x,y)\in B_1$
$$
\sqrt{\abs{g}}g^{yy}\mu= \eta^{\frac{n+1}{2}}\abs{A}^{\frac{1}{n-1}}\abs{A}^{-\frac{1}{n-1}}\eta^{-1}\eta^{-\frac{n-1}{2}}=1.
$$
To proceed, given $u\in H^{1,a}(B_1, \mathrm{d}V_{g})$ a solution of
\begin{equation}\label{div-mu}
\mbox{div}_g\left(\abs{y}^a\mu \nabla_g u \right)=0\quad \mbox{in } B_1
\end{equation}
symmetric with respect to $\Sigma$, let us define for any $r \in (0,1)$
\begin{align*}
  E_g(u,r) & =\frac{1}{r^{n+a-1}}\int_{B_g(r)}{\abs{y}^a \mu \abs{\nabla_g u}^2 \mathrm{d}V_g} \\
  H_g(u,r) & =\frac{1}{r^{n+a}}\int_{\partial B_g(r)}{\abs{y}^a \mu u^2 \mathrm{d}V_{\partial B_r}}
\end{align*}
where here $B_g(r)$ represents the geodesic ball in the metric $g$ of radius $r$ centered at the origin. We remark that by the polar decomposition of $g$, $B_g(r)$ coincides with the usual Euclidian ball.\\
In \cite{MR2334826} the authors introduced a new monotonicity formula tailored for a class of generalized Baouendi-Grushin operators.  As well known, such operators are strictly related to ours; thus, their result gives an analogue counterpart in the context of our weighted degenerate operator, firstly introduced in the pioneering papers \cite{fks,fkj}. More precisely, let us introduce the change of variable $\Phi \colon \R^{n+1}_+\to \R^{n+1}_+$ such that
$$
(x,z)=\Phi(x,y)= \left(x,\frac{y^{1-a}}{(1-a)^{1-a}}\right),
$$
with inverse $\Phi^{-1}(x,z)=\left(x,(1-a)z^{\frac{1}{1-a}}\right)$. Given a function $u(x,z)$ defined for $(x,z)\in \R^{n+1}_+$, we define a function $\tilde{u}(x,y)$ with $(x,y)\in \R^{n+1}_+$ as $\tilde{u}(x,y)=u(\Phi(x,y))$. A simple computations gives
$$
L \tilde{u}(x,y) + \partial_{yy}\tilde{u}(x,y) + \frac{a}{y}\partial_y\tilde{u}(x,y) =  z^{-\frac{2a}{1-a}}\left[\partial_{zz} u(x,z) + z^{\frac{2a}{1-a}}L u(x,z)\right].
$$
As we can see, the operator within square brackets in the right-hand side of the previous equation is a special case of the family of operators in $\R^{n}_x\times \R^1_z$ known as generalized Baouendi-Grushin operator.\\
Nevertheless, our problem does not satisfy the assumptions in \cite{MR2334826} and consequently we need to construct a new monotonicity formula, which does extend the class of generalized Baouendi-Grushin operator for which a unique continuation principle holds true.\\
Under the previous notations, for $r \in (0,1)$, we define the Almgren type monotonicity formula as
$$
N_g(u,r) = \frac{E_g(u,r)}{H_g(u,r)} = \ddfrac{r\int_{B_g(r)}{\abs{y}^a \mu \abs{\nabla_g u}^2 \mathrm{d}V_g} }{\int_{\partial B_g(r)}{\abs{y}^a \mu u^2 \mathrm{d}V_{\partial B_r}}}.
$$
\begin{theorem}
  Let $a\in(-1,1)$ and $u$ be a solution of \eqref{div-mu} symmetric with respect to $\Sigma$. Then there exist a constant $C>0$ such that the map $ r\mapsto e^{Cr} N_g(u,r)$ is absolutely continuous and monotone nondecreasing on $(0,1)$.
  Hence, there always exists finite the limit
$$
N_g(u,0^+) = \lim_{r\to 0^+}N_g(u,r),
$$
which we will refer to as the \emph{(Almgren) limiting frequency}.
\end{theorem}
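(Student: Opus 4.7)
The plan is to adapt the proof of Proposition \ref{Almgren.formula} (the pure $L_a$ case) to the new geometric setting, treating the non-flat metric $g$ and the Lipschitz weight $\mu$ as perturbations whose $r$-derivatives are uniformly bounded by $\Lambda = \Lambda(n,\lambda,\Gamma)$. First I would establish a Pohozaev-type identity: testing the equation $\operatorname{div}_g(|y|^a \mu \nabla_g u) = 0$ against $u$ on $B_g(r)$ (regularizing near $\Sigma$ by a cut-off $\eta_\delta$ as in the proof of Proposition \ref{even.odd}, and using that the symmetry of $u$ with respect to $\Sigma$ kills the limiting boundary term on $\Sigma$) yields, since the outward unit normal on $\partial B_g(r)$ is $\partial_r$,
\begin{equation*}
E_g(u,r) \;=\; \frac{1}{r^{n+a-1}} \int_{\partial B_g(r)} |y|^a \mu \, u \, \partial_r u \, dV_{\partial B_r}.
\end{equation*}

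Next I would compute the two logarithmic derivatives. Using the polar representation $dV_g = r^n \sqrt{|b(r,\theta)|}\,dr\,d\theta$, and noting that on $\partial B_g(r)$ the Euclidean coordinate satisfies $|y(r,\theta)|^a = r^a\, |\omega_y(\theta)|^a\,(1+O(r))$ (thanks to $b_{ij}(0,0)=\delta_{ij}$), a direct differentiation of
\begin{equation*}
H_g(u,r) \;=\; \int_{S^n} |y(r,\theta)|^a \mu(r,\theta)\, u(r,\theta)^2 \sqrt{|b(r,\theta)|}\, d\theta
\end{equation*}
gives $\frac{d}{dr}H_g(u,r) = \frac{2}{r^{n+a}} \int_{\partial B_g(r)} |y|^a \mu\, u\, \partial_r u \, dV_{\partial B_r} + R_H(r)$, with
\begin{equation*}
|R_H(r)| \;\leq\; \frac{C}{r^{n+a}} \int_{\partial B_g(r)} |y|^a \mu \, u^2 \, dV_{\partial B_r} \;=\; C\, H_g(u,r),
\end{equation*}
where $C = C(n,\lambda,\Gamma)$ uses precisely the bounds \eqref{condition1}--\eqref{condition2}. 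An analogous differentiation of $E_g$ based on the Pohozaev identity above produces
\begin{equation*}
\frac{d}{dr} E_g(u,r) \;=\; \frac{2}{r^{n+a-1}} \int_{\partial B_g(r)} |y|^a \mu\,(\partial_r u)^2 \, dV_{\partial B_r} \;+\; R_E(r),
\end{equation*}
with $|R_E(r)| \leq C\, E_g(u,r)$, where $R_E$ collects contributions from $\partial_r \mu$, $\partial_r \sqrt{|b|}$ and from the difference between $|y|^a$ and $r^a$ in the polar frame.

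Combining, the logarithmic derivative of $N_g$ splits as
\begin{equation*}
\frac{d}{dr} \log N_g(u,r) \;=\; \frac{1}{r} + \frac{E_g'(u,r)}{E_g(u,r)} - \frac{H_g'(u,r)}{H_g(u,r)},
\end{equation*}
and the Cauchy--Schwarz inequality
$$\left(\int_{\partial B_g(r)} |y|^a \mu\, u\, \partial_r u\, dV\right)^{\!2} \leq \left(\int_{\partial B_g(r)} |y|^a \mu\, u^2\, dV\right) \left(\int_{\partial B_g(r)} |y|^a \mu\,(\partial_r u)^2\, dV\right)$$
makes the main (unperturbed) combination nonnegative, exactly as in Proposition \ref{Almgren.formula}. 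The perturbations leave a remainder of size $\geq -C$, so
\begin{equation*}
\frac{d}{dr} \log N_g(u,r) \;\geq\; -C,
\end{equation*}
which integrates to the monotonicity of $r\mapsto e^{Cr} N_g(u,r)$; absolute continuity follows from $u \in H^{1,a}_{\loc}$, and the limit $N_g(u,0^+)$ exists and is finite since the map is bounded below by $0$ and its rescaling $e^{Cr}$ is bounded above near $r=0$.

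The main obstacle I foresee is controlling the interaction between the $|y|^a$ weight (singular/degenerate on $\Sigma$) and the intrinsic polar decomposition of $g$: since $y$ is not one of the polar coordinates of $g$, one has to check that $\partial_r|y|^a$ does not produce a term that overwhelms the sign coming from Cauchy--Schwarz. This is where the specific structure of $\overline{A}(x)$ in \eqref{overline.A} with $\overline{g}_{yy} = |A|^{1/(n-1)}$ and no cross terms between $x$ and $y$ is essential, because it forces the geodesic spheres of $g$ at the origin to be $O(r^2)$-close to Euclidean spheres, so that $|y(r,\theta)|^a = r^a|\omega_y(\theta)|^a(1+O(r))^a$ and the extra term from differentiating the correction $(1+O(r))^a$ is absorbed into the $-C\, H_g(u,r)$ remainder without disturbing the $|y|^a$-weighted Cauchy--Schwarz step.
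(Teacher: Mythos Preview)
Your overall strategy---compute $H_g'$ and $E_g'$, isolate remainder terms bounded by $C\,H_g$ and $C\,E_g$ coming from $\partial_r\mu$ and $\partial_r\sqrt{|b|}$, then apply Cauchy--Schwarz---matches the paper. Two points deserve correction.

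First, what you call ``a Pohozaev-type identity'' obtained by testing the equation against $u$ is only integration by parts; it gives the rewriting $E_g(u,r) = r^{-(n+a-1)}\int_{\partial B_g(r)}|y|^a\mu\,u\,\partial_r u$, which the paper also uses. But this alone does not yield your claimed formula for $E_g'$ with $\int|y|^a\mu(\partial_r u)^2$ on the boundary: that step requires the Rellich identity (testing against $r\,\partial_r u$), not against $u$. The paper obtains it instead by the Garofalo--Lin radial-deformation technique: one sets $u^t = u\circ l_t^{-1}$ for a radial bi-Lipschitz map $l_t$, uses criticality $\frac{d}{dt}E_g(u^t,1)\big|_{t=1}=0$, and computes the pieces $I_1'(1), I_2'(1)$ explicitly; letting the shell width $\Delta r\to 0$ produces exactly the $2r\int_{\partial B_r}|y|^a\mu(\partial_r u)^2$ term and the $O(r)$ remainder. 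Your sentence ``an analogous differentiation of $E_g$ based on the Pohozaev identity above'' hides precisely this nontrivial step.

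Second, your main worry---that $|y|^a$ interacts badly with the intrinsic polar coordinates---is actually a non-issue, and your proposed fix via $|y(r,\theta)|^a = r^a|\omega_y(\theta)|^a(1+O(r))^a$ is unnecessary. The point of the Garofalo--Lin construction is that the radial function $r(x,y)$ is \emph{exactly} $1$-homogeneous in $(x,y)$ (it is defined via the frozen-coefficient quadratic form at the origin), so $|y| = r\cdot f(\theta)$ with no $O(r)$ correction, and $|y|^a$ contributes a clean $r^a$ factor under the scaling $l_t$. This is why the coefficient $(n+a-1)$ appears cleanly in the paper's computation of $I_1'(1)$, with the only $O(r)$ error coming from $\mu$ and $\sqrt{|b|}$ via \eqref{condition1}--\eqref{condition2}.
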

\begin{proof}
  By assumption, both $r\mapsto E_g(u,r)$ and  $r\mapsto H_g(u,r)$ are locally absolutely continuous functions on $(0,1)$, that is both their derivative are $L^1_\loc(0,1)$. First, passing to the logarithmic derivatives, the monotonicity of $r\mapsto N_g(u,r)$ is a direct consequence of the claim
$$
\frac{d}{dr}\log{N(X_0,u,r)} = \frac{1}{r}+\ddfrac{\frac{d}{dr}\int_{B_g(r)}{\abs{y}^a \mu\abs{\nabla u}^2 \mathrm{d}V_{g}} }{\int_{B_g(r)}{\abs{y}^a \mu \abs{\nabla u}^2 \mathrm{d}V_{g}} } - \ddfrac{\frac{d}{dr}\int_{\partial B_g(r)}{\abs{y}^a \mu u^2 \mathrm{d}V_{\partial B_r}}}{\int_{\partial B_g(r)}{\abs{y}^a\mu u^2 \mathrm{d}V_{\partial B_r}}} \geq 0
$$
for $r \in (0,1)$. First, by setting $b(r,\theta)= \abs{\det b_{ij}(r,\theta)}$, we obtain $\sqrt{g(r,\theta)}=r^n\sqrt{b(r,\theta)}$ and we can rewrite the denominator $H_g(u,r)$ of the Almgren quotient as
$$
H_g(u,r)= \int_{\partial B_g(1)}{\abs{\theta_n}^a \mu(r,\theta) u^2(r,\theta)\sqrt{b(r,\theta)}\mathrm{d}\theta},
$$
where $\theta_n$ is the spherical coordinate associated with the $y$-direction. By differentiating respect to $r\in (0,1$, we obtain
$$
\frac{d}{dr}H_g(u,r)=\frac{2}{r^{n+a}}\int_{\partial B_g(r)}{\abs{y}^a\mu u \partial_\rho u \mathrm{d}V_{\partial B_r}} +\frac{1}{r^{n+a}} \int_{\partial B_g(r)}{\frac{\abs{y}^a}{\sqrt{b}}\frac{\partial}{\partial \rho}\left( \mu \sqrt{b}\right)u^2\mathrm{d}V_{\partial B_r}}
$$
where $\partial_\rho u$ denotes the radial differentiation $\partial_\rho u = \langle \nabla_g u, X/\rho \rangle$ for $X \in \R^{n+1}$. Finally, by \eqref{condition1} and \eqref{condition2} we obtain
\begin{equation}\label{deriv1}
  \frac{d}{dr}H_g(u,r) = \frac{2}{r^{n+a}}\int_{\partial B_g(r)}{\abs{y}^a\mu u \partial_\rho u \mathrm{d}V_{\partial B_r}} + O(1)H_g(u,r),
\end{equation}
with $O(1)$ a function bounded in absolute value by a constant $C=C(n,\Lambda)$. On the other hand, the divergence theorem gives
$$
\int_{B_g(r)}{\abs{y}^a \mu \abs{\nabla u}^2 \mathrm{d}V_{g}} = -\int_{B_g(r)}{u \mbox{div}_g\left(\abs{y}^a \mu \nabla u\right) \mathrm{d}V_{g}} + \int_{\partial B_g(r)}{\abs{y}^a \mu u \partial_\rho u \mathrm{d}V_{\partial B_r}}
$$
and hence we can rewrite \eqref{deriv1} as
\begin{equation}\label{derivataH}
\frac{d}{dr} H_g(u,r) = \frac{2}{r}E_g(u,r) + O(1) H_g(u,r).
\end{equation}
We now focus on the derivative of $r\mapsto E_g(u,r)$, following the idea of the radial deformation in \cite{MR833393,MR882069}: for $0<r,\Delta r< 1/2$ fixed, we define $w_t\colon\R^+\to \R^+$ by
$$
w_t(\rho) =
\begin{cases}
  t, & \mbox{if } \rho\leq r \\
  1, & \mbox{if } \rho \geq r+\Delta r \\
  t\ddfrac{r+\Delta r - \rho}{\Delta r} + \ddfrac{\rho-r}{\Delta r}, & \mbox{if } r \leq \rho \leq r+\Delta r.
\end{cases}
$$
Now, for $0<t<1+\Delta r/(r+\Delta r)$, we define the bi-Lipschitz map $l_t\colon \R^{n+1}\to \R^{n+1}$ as
$$l_t(X)=w_t(\rho(X)) X,$$
 with $\rho(X)=\mbox{dist}_g(0,X)$, and consequently the radial deformation $u_t$ of $u$ as
$$
u^t(X)= u(l_t^{-1}(X)) \in H^{1,a}(B_1,\mathrm{d}V_g).
$$
By definition we have $u^t(Z)=u(X)$, with $Z=l_t(X)$.
Since $u$ is a solution of \eqref{div-mu}, given the functional $I(t)=E_g(u^t,1)$ we have
\begin{equation}\label{I(t)}
\frac{d}{dt}I(t)\bigg\lvert_{t=1} \!\!=0.
\end{equation}
In order to ease the notations, through the following computations we will simply use $B_r$ instead of $B_{g}(r)$. Inspired by the definition of $w(t)$,  let us set
\begin{align*}
  I(t) & =  \int_{B_{rt}}{\abs{y}^a \mu \abs{\nabla u^t}^2 \mathrm{d}V_{B_r}} + \int_{B_{r+\Delta r} \setminus B_{rt}}{\abs{y}^a \mu \abs{\nabla u^t}^2 \mathrm{d}V_{B_r}} + \int_{B_1 \setminus B_{r+\Delta r}}{\abs{y}^a \mu \abs{\nabla u^t}^2 \mathrm{d}V_{B_r}} \\
   & = I_1(t) + I_2(t) + I_3(t).
\end{align*}
It is easy to see that
$$
I_3(t)=\int_{B_1 \setminus B_{r+\Delta r}}{\abs{y}^a \mu \abs{\nabla u^t}^2 \mathrm{d}V_{B_r}} = \int_{B_1 \setminus B_{r+\Delta r}}{\abs{y}^a \mu \abs{\nabla u}^2 \mathrm{d}V_{B_r}}
$$
and consequently that $I_3(t)$ does not give contribution to the derivative of $I(t)$. Next, we have
\begin{align*}
  I_1(t) =&\,  \int_{B_{rt}}{\abs{y}^a \mu \abs{\nabla u^t}^2 \mathrm{d}V_{B_r}}  \\
   =&\,  \int_0^r \int_{\partial B_{1}}{t^a\abs{(\rho,\theta_n)}^a \mu(t\rho,\theta) \partial_\rho u^2(\rho,\theta)\frac{\sqrt{g(t\rho,\theta)}}{t}\mathrm{d}\theta \mathrm{d}\rho} \\
   &\, + \int_0^r \int_{\partial B_{1}}{t^a\abs{(\rho,\theta_n)}^a \mu(t\rho,\theta) b^{ij}(t\rho,\theta)\partial_{\theta_i}u(\rho,\theta)\partial_{\theta_j}u(\rho,\theta) t\sqrt{g(t\rho,\theta)}\mathrm{d}\theta \mathrm{d}\rho},
\end{align*}
where obviously $b^{ij}$ are the entries of the inverse of $(b_{ij})_{ij}$ associated with the metric $g$. By \eqref{condition2}, we obtain
$$
\abs{\frac{\partial}{\partial t}\mu(t\rho,\theta)}\leq \Lambda(n,\lambda,\Gamma) \rho.
$$
Furthermore, we can rewrite
\begin{equation}\label{expansions}
 \begin{cases}
\sqrt{g(t\rho,\theta)}= t^n \rho^n \sqrt{b(t\rho,\theta)}\\
b^{ij}(t\rho,\theta) \sqrt{g(t\rho,\theta)} = t^{n-2}\rho^{n-2}\left[\delta_{ij} + \varepsilon_{ij}(t \rho, \theta)\right]
\end{cases}
\end{equation}
for some $(\varepsilon_{ij}(t\rho,\theta))_{ij}$. Since that \eqref{condition1}, we have
$$
\abs{\frac{\partial}{\partial t}\sqrt{b(t\rho,\theta)}}\leq C(n,\Lambda)\rho, \quad
\abs{\frac{\partial}{\partial t}\sqrt{\varepsilon_{ij}(t\rho,\theta)}}\leq C(n,\Lambda)\rho
$$
which gives
\begin{align*}
  I_1(t) =&\, t^{n+a-1} \left[\int_0^r \int_{\partial B_{1}}{\abs{(\rho,\theta_n)}^a \mu(t\rho,\theta) \partial_\rho u^2(\rho,\theta)\rho^n\sqrt{b(t\rho,\theta)}\mathrm{d}\theta \mathrm{d}\rho}\right. \\
   &\, + \left.\int_0^r \int_{\partial B_{1}}{\abs{(\rho,\theta_n)}^a \mu(t\rho,\theta) \rho^{n-2} (\delta_{ij}+ \varepsilon(t\rho,\theta))\partial_{\theta_i}u(\rho,\theta)\partial_{\theta_j}u(\rho,\theta) \mathrm{d}\theta \mathrm{d}\rho}\right],
\end{align*}
and consequently
\begin{equation}\label{I1(t)}
\frac{d}{dt}I_1(t)\bigg\lvert_{t=1}  = (n+a-1)\int_{B_r}{\abs{y}^a \mu \abs{\nabla_g u}^2 \mathrm{d}V_g} + O(r) \int_{B_r}{\abs{y}^a \mu \abs{\nabla_g u}^2 \mathrm{d}V_g},
\end{equation}
with $O(r)$ a function of $(r,\theta)$ whose absolute value is bounded by $C(n,\Lambda) r$. \\Finally, in order to estimate the second term of $I(t)$, we need to introduce the following notations. Hence, given  $X \in B_{r+\Delta r}\setminus B_r$ and $Z=l_t(X) \in B_{r+\Delta r}\setminus B_{rt}$ let us consider their expression in the intrinsic geodesic polar coordinates associated with $g$, namely $X=(\rho,\theta)$ and $ Z=(\gamma_t(\rho),\theta)$, where
$$
\gamma_t(\rho)=\mbox{dist}_g(Z,0)=w_t(X)\rho = \rho\left[ t\frac{r + \Delta r -\rho}{\Delta r} + \frac{\rho -r }{\Delta r}\right].
$$
and
%Clearly, since $Z=l_t(X)$ we can compute the derivative of $\gamma_t(\rho)$ with respect to $\rho$ as
$$
\frac{\partial}{\partial\rho} \gamma_t(\rho)= t \frac{r+\Delta r -2\rho}{\Delta r}+\frac{2\rho -r}{\Delta r}.
$$
Then, still using the polar coordinates, we have
\begin{align*}
  \abs{\nabla_g u^t(Z)}^2 & =\abs{\partial_s u^t(s,\theta)}^2 + \frac{1}{s^2}b^{ij}(s,\theta)\partial_{\theta_i}u^t(s,\theta) \partial_{\theta_j}u^t(s,\theta)\Big\lvert_{s=\gamma_t(\rho)} \\
   & = \abs{\partial_\rho u(\rho,\theta)}^2 \left( \frac{\partial}{\partial s}\gamma_t^{-1}(s)\Big\lvert_{s=\gamma_t(\rho)}\right)^2+ \frac{1}{\gamma_t(\rho)^2}b^{ij}(\gamma_t(\rho),\theta)\partial_{\theta_i}u(\rho,\theta) \partial_{\theta_j}u(\rho,\theta),\\
   & = \abs{\partial_\rho u(\rho,\theta)}^2 h_t(\rho)^2+ \frac{1}{\gamma_t(\rho)^2}b^{ij}(\gamma_t(\rho),\theta)\partial_{\theta_i}u(\rho,\theta) \partial_{\theta_j}u(\rho,\theta),
\end{align*}
and similarly the volume element is given by
$$
\mathrm{d}V_{B_{r}} (Z) = \gamma_t(\rho)^{n}\sqrt{g(\gamma_t(\rho),\theta)}\frac{\partial }{\partial \rho} \gamma_t(\rho) \mathrm{d}\rho \mathrm{d}\theta.
$$
By the previous computations and the expansions in \eqref{expansions}, we obtain
\begin{align*}
  I_2(t) & = \int_{B_{r+\Delta r} \setminus B_{rt}}{\abs{y}^a \mu \abs{\nabla u^t}^2 \mathrm{d}V_{B_r}(Z)}\\
   =&\,  \int_r^{r+\Delta r} \int_{\partial B_{1}}{s^{n}\abs{(s,\theta_n)}^a h_t(\rho)\mu(s,\theta) \partial_\rho u^2(\rho,\theta)\sqrt{b(s,\theta)}\,\bigg\lvert_{s=\gamma_t(\rho)}\hspace{-0.5cm}\mathrm{d}\theta \mathrm{d}\rho} \\
   &\, + \int_r^{r+\Delta r} \int_{\partial B_{1}}{s^{n-2}\abs{(s,\theta_n)}^a \frac{\partial}{\partial \rho}\gamma_t(\rho)\mu(s,\theta) (\delta_{ij}+ \varepsilon(s,\theta))\partial_{\theta_i}u(\rho,\theta)\partial_{\theta_j}u(\rho,\theta) \,\bigg\lvert_{s=\gamma_t(\rho)}\hspace{-0.5cm}\mathrm{d}\theta \mathrm{d}\rho}.
\end{align*}
Since
$$
h_t(\rho) %= \frac{\partial}{\partial s}\gamma_t^{-1}(s)\Big\lvert_{s=\gamma_t(\rho)}
=\frac{\Delta r + t \rho -  \rho}{t(r+\Delta r - \rho) + \rho -r}, \quad \frac{\partial}{\partial t}h_t(\rho) \Big\lvert_{t=1}= -\frac{\Delta r +r -2\rho}{\Delta r},
$$
we can conclude
\begin{align*}
\frac{d}{dt} I_2(t)\bigg\lvert_{t=1} = &  \int_{B_{r+\Delta r}\setminus B_r}{\abs{y}^a \mu \left[\left(n+a+O(\rho)\right) \frac{r+\Delta r - \rho}{\Delta r} - \frac{r+\Delta r - 2\rho}{\Delta r} \right] (\partial_\rho u)^2\mathrm{d}V_{B_{r}}}\\
& +\int_{B_{r+\Delta r}\setminus B_r}{\abs{y}^a \mu \left[\left(n+a-2+O(\rho)\right) \frac{r+\Delta r - \rho}{\Delta r} + \frac{r+\Delta r - 2\rho}{\Delta r} \right] \left(\abs{\nabla_g u}^2 - (\partial_\rho u)^2\right)\mathrm{d}V_{B_{r}}}.
\end{align*}
Finally, by letting $\Delta r \to 0^+$ we obtain
\begin{equation}\label{I3}
\frac{d}{dt} I_2(t)\bigg\lvert_{t=1} = 2r\int_{\partial B_r}{\abs{y}^a\mu (\partial_\rho u)^2\mathrm{d}V_{\partial B_r}} - r\int_{\partial B_r}{\abs{y}^a\mu \abs{\nabla_g u}^2\mathrm{d}V_{\partial B_r}}.
\end{equation}
From \eqref{I(t)}, \eqref{I1(t)} and \eqref{I3}, we obtain
$$
r\frac{d}{dr}\int_{B_r}{\abs{y}^a \mu \abs{\nabla_g u}^2\mathrm{d}V_{\partial B_r}} -\left(n+a-1+O(r)\right)\int_{B_r}{\abs{y}^a \mu \left(\nabla_g u\right)^2\mathrm{d}V_{B_r}} = 2r\int_{\partial B_r}{\abs{y}^a\mu (\partial_\rho u)^2\mathrm{d}V_{\partial B_r}},
$$
which implies with \eqref{derivataH} that
$$
\frac{d}{dr}\log{N(X_0,u,r)}  = O(1)+\ddfrac{2\int_{\partial B_r}{\abs{y}^a\mu (\partial_\rho u)^2\mathrm{d}V_{\partial B_r}} }{\int_{\partial B_r}{\abs{y}^a \mu u\partial_\rho u \mathrm{d}V_{\partial B_r}} } - \ddfrac{2\int_{\partial B_r}{\abs{y}^a \mu u\partial_\rho u \mathrm{d}V_{\partial B_r}}}{\int_{\partial B_g(r)}{\abs{y}^a\mu u^2 \mathrm{d}V_{\partial B_r}}} \geq -C(n,\Lambda),
$$
where the inequality is a consequence of Schwarz's inequality. It follows immediately that the map $r\mapsto \exp(C(n,\Lambda)r)N_g(u,r)$ is a monotone nondecreasing function on $r\in (0,1)$ as required.
\end{proof}
Returning to the formulation of the problem in the euclidian metric, for $X_0 \in \Sigma$ and $r\in (0,1-\abs{X_0})$ we set
\begin{align*}
E(X_0,u,r) &= \frac{1}{r^{n+a-1}}\int_{B_r(X_0)}{\abs{y}^a \langle \overline{A}(X) \nabla u, \nabla u\rangle \mathrm{d}X}\\
H(X_0,u,r) &= \frac{1}{r^{n+a}}\int_{\partial B_r(X_0)}{\abs{y}^a \mu_0 u^2\mathrm{d}\sigma},
\end{align*}
and consequently
$$
N(X_0,u,r)= \frac{E(X_0,u,r)}{H(X_0,u,r)},
$$
with $\mu_0$ is a positive Lipschitz function bounded in $B_1$ satisfying \eqref{condition1} with $\Lambda$ depending only on $n,\lambda$ and $\Gamma$. \begin{corollary}\label{almgren.A}
  Let $a \in (-1,1)$ and $u$ be a solution of \eqref{L.ext} in $B_1$ symmetric with respect to $\Sigma$. Then there exist a constant $C>0$ such that for every $X_0\in B_1 \cap \Sigma$ the map
  $$r \mapsto e^{Cr}N(X_0,u,r)$$ is absolutely continuous and monotone nondecreasing on $(0,1-\abs{X_0})$.Hence, there exists finite the \emph{Almgren limiting frequency} defined as
$$
N(X_0,u,0^+) = \lim_{r\to 0^+}N(X_0,u,r) = \inf_{r>0}N(X_0,u,r).
$$
\end{corollary}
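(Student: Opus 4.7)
The plan is to reduce the Euclidean statement to the intrinsic version just proved by a translation of the pole. Fix $X_0 \in B_1 \cap \Sigma$. Repeating verbatim the construction of the metric $g$ in \eqref{over.g} with the matrix $A(X_0)$ replacing $A(0)$, we obtain a Lipschitz metric $g_{X_0}$ on $B_{1-|X_0|}(X_0)$ together with a positive Lipschitz function $\mu_{X_0}(X)=\eta_{X_0}(X)^{-(n-1)/2}$ satisfying the normalization $g_{X_0}|_{X=X_0}=\mathrm{Id}$ and $\mu_{X_0}(X_0)=1$, with radial Lipschitz bounds \eqref{condition1}--\eqref{condition2} depending only on $n,\lambda,\gamma$ (crucially, uniformly in $X_0$, because the Lipschitz constants of $A$ and its inverse are uniform). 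Applying the previous theorem centered at $X_0$ gives a uniform constant $C=C(n,\lambda,\gamma)>0$ such that
$$
r\ \longmapsto\ e^{Cr}\,N_{g_{X_0}}(u,r)\ \text{ is monotone nondecreasing on }(0,1-|X_0|).
$$

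Next I would show that $N_{g_{X_0}}(u,r)$ agrees, up to a bounded multiplicative correction, with $N(X_0,u,r)$ as defined in the Euclidean setting. Two observations are needed. First, because $g_{X_0}=\eta_{X_0}\overline{g}_{X_0}$ with $\overline{g}_{ij}=\overline{a}^{ij}(\det\overline{A})^{1/(n-1)}$, a direct computation with $\sqrt{|g_{X_0}|}=\eta_{X_0}^{(n+1)/2}|A|^{1/(n-1)}$, $g_{X_0}^{ij}=\eta_{X_0}^{-1}(\det\overline{A})^{-1/(n-1)}\,\overline{a}_{ij}$ and $\mu_{X_0}=\eta_{X_0}^{-(n-1)/2}$ gives the pointwise identity
$$
\mu_{X_0}\,|\nabla_{g_{X_0}}u|^2\,\sqrt{|g_{X_0}|}\ =\ \langle \overline{A}(X)\nabla u,\nabla u\rangle,
$$
so that the numerator of $N_{g_{X_0}}$ coincides with the Euclidean energy $E(X_0,u,r)$, once we identify the intrinsic ball with the corresponding Euclidean ball. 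Second, since the polar decomposition of $g_{X_0}$ is $g_{X_0}=\mathrm{d}\rho^{\,2}+\rho^{2}b_{ij}\mathrm{d}\theta_i\otimes\mathrm{d}\theta_j$, the geodesic ball $B_{g_{X_0}}(r)$ centered at $X_0$ is an Euclidean ball (in the intrinsic radial chart), whose Jacobian to the actual Euclidean ball $B_r(X_0)$ is Lipschitz and equal to $1+O(r)$; on the boundary one has $\sqrt{|g_{X_0}|_{\partial B_r}}=r^n\sqrt{b}$, so that the denominator becomes $r^{n+a}\int_{\partial B_r(X_0)}|y|^a\mu_0 u^2\,\mathrm{d}\sigma$ with $\mu_0=\mu_{X_0}\sqrt{b/b_0}$, a Lipschitz function satisfying \eqref{condition1}.

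Putting these two identifications together, $N_{g_{X_0}}(u,r)$ and $N(X_0,u,r)$ differ by a factor of the form $1+O(r)$ with an $O(r)$ controlled uniformly in $X_0\in B_1\cap\Sigma$ by the Lipschitz bounds on $\eta_{X_0}$, $\mu_{X_0}$ and $b_{ij}(r,\theta)$. Absorbing this remainder into the exponential yields a possibly enlarged constant $C'=C'(n,\lambda,\gamma)$ for which $r\mapsto e^{C'r}N(X_0,u,r)$ is absolutely continuous and monotone nondecreasing on $(0,1-|X_0|)$. Existence of the limit as $r\to 0^+$ is then automatic because $N(X_0,u,r)\geq 0$ and $e^{C'r}N(X_0,u,r)$ is nondecreasing and bounded below by $0$, with finiteness inherited from $u\in H^{1,a}_{\mathrm{loc}}$.

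The main obstacle will be the uniformity in $X_0$: one has to check that the Lipschitz constants of $\eta_{X_0},\mu_{X_0}$ and of the radial coefficients $b_{ij}(r,\theta)$ in the geodesic polar decomposition of $g_{X_0}$ depend only on $n,\lambda,\gamma$ and not on the center $X_0$. This is the key technical point, and it follows from the fact that $A$ is uniformly elliptic and uniformly Lipschitz on $\overline{B_1}$, so that $X_0\mapsto A(X_0)^{-1}$, $X_0\mapsto(\det A(X_0))^{1/(n-1)}$ and the associated $\eta_{X_0}$ enjoy Lipschitz estimates depending only on $\lambda$ and $\gamma$.
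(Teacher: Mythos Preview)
Your approach---recentering the metric construction at $X_0$ and transporting the intrinsic monotonicity just proved---is exactly the route the paper takes (the corollary is stated without proof, as a direct consequence of the preceding theorem). Your pointwise identity $\mu_{X_0}\,|\nabla_{g_{X_0}}u|^2\sqrt{|g_{X_0}|}=\langle\overline{A}\nabla u,\nabla u\rangle$ is correct and is the algebraic heart of the reduction, and you correctly flag uniformity in $X_0$ as the only nontrivial point.

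There is, however, a gap in your domain comparison. The geodesic ball $B_{g_{X_0}}(r)$ is the sublevel set $\{r_{X_0}(X)<r\}$ of the quadratic form $r_{X_0}(X)^2=|A(X_0)|^{1/(n-1)}\bigl(a^{ij}(X_0)(x-x_0)_i(x-x_0)_j+y^2\bigr)$; this is an \emph{ellipsoid} whose eccentricity is fixed by $A(X_0)$, not the Euclidean ball $B_r(X_0)$. The discrepancy between the two domains is $O(1)$, not $O(r)$, so your ``Jacobian $1+O(r)$'' claim is incorrect as stated. Two standard repairs are available. Either (i) read the paper's $B_r(X_0)$ as the geodesic ball (this is the convention in the cited Garofalo--Lin papers), in which case the two frequencies coincide \emph{exactly} and $\mu_0$ is precisely the surface factor needed to write $H_{g_{X_0}}$ as an integral over the ellipsoidal boundary; or (ii) first apply the linear change $\tilde{x}=A(X_0)^{-1/2}(x-x_0)$, which preserves $\Sigma$ because $\overline{A}$ is block-diagonal and yields $\tilde{A}(0)=\mathrm{Id}$; then the recentered radial function becomes $\tilde{r}(\tilde{X})=|\tilde{X}|$ exactly, the geodesic balls are genuine Euclidean balls, and your argument goes through verbatim with the $1+O(r)$ factor now coming only from $\mu_0$ and $\sqrt{b}$. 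In either case the constants depend only on $n,\lambda,\gamma$ via the uniform ellipticity and Lipschitz bounds on $A$, as you note.
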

Now, we can finally apply the previous analysis to the general case $(-L)^s$, by proving the validity of a doubling condition, a compactness result for blow-up sequences and a general Theorem on the structure of the nodal set itself.
\begin{proposition}
  Let $a \in (-1,1)$ and $u$ be a solution of \eqref{L.ext} in $B_1$. Hence, there exists a constant $C=C(n,\Lambda)$ such that, for every $X_0 \in B_1\cap \Sigma$,
  $$
H(X_0,u,r_2)\leq C H(X_0,u,r_1) \left(\frac{r_2}{r_1}\right)^{\!\!2\tilde{C}}
$$
for $0<r_1<r_2< 1-\abs{X_0}$, where $\tilde{C}= N(X_0,u,R)e^{C(n,\Lambda) R}$.
\end{proposition}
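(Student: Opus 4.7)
The plan is to mirror the derivation of Corollary \ref{doubling.corollary.Sigma} from the pure $L_a$ case, but now accounting for the bounded lower-order remainders produced by the Lipschitz coefficients of $\overline{A}$. The key observation is that the proof of Corollary \ref{almgren.A} already provides, as an intermediate identity, the perturbed logarithmic derivative
\[
\frac{d}{dr}\log H(X_0,u,r)=\frac{2}{r}N(X_0,u,r)+O(1),
\]
where $O(1)$ denotes a quantity bounded in absolute value by a constant $C_1=C_1(n,\Lambda)$; indeed, this is exactly the Euclidean counterpart of \eqref{derivataH}, obtained by differentiating the spherical representation of $H(X_0,u,r)$ and invoking the Lipschitz bounds on $\mu_0$ together with the divergence theorem applied to the equation \eqref{L.ext}.

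First, I would upgrade the monotonicity of Corollary \ref{almgren.A} into a pointwise bound on the frequency. Setting $R=1-\abs{X_0}$ and letting $C=C(n,\Lambda)$ be the constant provided by that corollary, the monotonicity of $r\mapsto e^{Cr}N(X_0,u,r)$ on $(0,R)$ yields
\[
N(X_0,u,r)\;\leq\; e^{C(R-r)}\,N(X_0,u,R)\;\leq\; e^{CR}N(X_0,u,R)\;=\;\tilde C,
\]
for every $0<r<R$.

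Plugging this into the perturbed logarithmic derivative, we obtain the differential inequality
\[
\frac{d}{dr}\log H(X_0,u,r)\;\leq\;\frac{2\tilde C}{r}+C_1,\qquad 0<r<R.
\]
Integrating between $r_1$ and $r_2$ with $0<r_1<r_2<R$ gives
\[
\log\frac{H(X_0,u,r_2)}{H(X_0,u,r_1)}\;\leq\;2\tilde C\,\log\frac{r_2}{r_1}+C_1(r_2-r_1),
\]
and since $r_2-r_1<R\le 1$, exponentiating yields
\[
H(X_0,u,r_2)\;\leq\; e^{C_1}\,H(X_0,u,r_1)\Bigl(\frac{r_2}{r_1}\Bigr)^{\!\!2\tilde C},
\]
which is the claim after absorbing $e^{C_1}$ into a new constant $C=C(n,\Lambda)$.

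The main technical point, and the only step where care is needed, is the derivation of the perturbed identity $\frac{d}{dr}\log H=\frac{2}{r}N+O(1)$ in Euclidean coordinates. In the flat weighted case this identity is exact (no $O(1)$ remainder), so one has to verify that the Lipschitz perturbations $\mu_0$ and $\overline{A}$ contribute only bounded lower-order terms when one differentiates $r^{-(n+a)}\int_{\partial B_r(X_0)}\abs{y}^a\mu_0 u^2\,\mathrm{d}\sigma$. This is precisely the content of the passage that led to \eqref{derivataH} in the proof of Corollary \ref{almgren.A}, where conditions \eqref{condition1}--\eqref{condition2} on $\mu_0$ and $b_{ij}$ were used, so no additional work is required beyond translating that estimate from the geodesic metric $g$ to the Euclidean metric via the bi-Lipschitz equivalence of the two on $B_1$.
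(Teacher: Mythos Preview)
Your proposal is correct and follows essentially the same route as the paper: both use the perturbed identity $\frac{d}{dr}\log H=\frac{2}{r}N+O(1)$ from \eqref{derivataH}, bound $N(X_0,u,r)\le e^{CR}N(X_0,u,R)=\tilde C$ via Corollary \ref{almgren.A}, and then integrate the resulting differential inequality between $r_1$ and $r_2$. Your closing remark on the origin of the $O(1)$ term is also accurate and matches how the paper obtains \eqref{derivataH}.
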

\begin{proof}
Fixed $R=1-\abs{X_0}$, by Corollary \ref{almgren.A} we have that $N(X_0,u,r) \leq e^{C R}N(X_0,u,R)$ for every $r \in (0,R)$. By \eqref{derivataH} we obtain
\begin{align*}
\frac{d}{dr}\log H(X_0,u,r) &= \frac{2}{r}N(X_0,u,r)+ O(1)\\
&\leq \frac{2}{r}N(X_0,u,R)e^{C(n,\Lambda) R}+ C(n,\Lambda),
\end{align*}
for every $0<r<R$. Now we integrate between $0<r_1<r_2< R$, obtaining
$$
\log\frac{H(X_0,u,r_2)}{H(X_0,u,r_1)} \leq 2N(X_0,u,R)e^{C(n,\Lambda) R}\log \frac{r_2}{r_1} + C(n,\Lambda)(r_2-r_1)
$$
and finally
$$
\frac{H(X_0,u,r_2)}{H(X_0,u,r_1)} \leq e^{C(n,\Lambda)R}\left(\frac{r_2}{r_1}\right)^{\!\!2\tilde{C}}
$$
with $\tilde{C}= N(X_0,u,R)e^{C(n,\Lambda) R}$.
\end{proof}
Moreover, since we are dealing with the extension $L_a^A$ of operator uniformly elliptic in divergence form with Lipschitz coefficent, we can easily extend Corollary \ref{lower.N} to our new class of operator following the technique developed in \cite{susste}. Indeed, since the lower bound on the Almgren frequency formula is based on the H\"older regularity of $L_a^A$-harmonic function, we easily obtain
\begin{corollary}%\label{lower.N}
      Let $u$ be $L_a^A$-harmonic on $B_1$, then for every $X_0 \in \Gamma(u) \cap \Sigma$ we have
      \begin{equation}%\label{eqlower.N}
        N(X_0,u,0^+)\geq \min\{1,1-a \}.
      \end{equation}
      More precisely
      \begin{itemize}
        \item if $u$ is symmetric with respect to $\Sigma$, we have $N(X_0,u,0^+)\geq 1$,
        \item if $u$ is antisymmetric with respect to $\Sigma$ we have $N(X_0,u,0^+)\geq 1-a$.
      \end{itemize}
    \end{corollary}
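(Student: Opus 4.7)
The plan is to mirror the argument of Corollary \ref{lower.N}, replacing each ingredient by its $L_a^A$-counterpart. The overall strategy is a contradiction argument: assume $N(X_0,u,0^+) < \alpha^* = \min\{1,1-a\}$ and derive an incompatible two-sided estimate on $H(X_0,u,r)$ as $r\to 0^+$, the lower bound coming from the monotonicity formula and the upper bound coming from optimal H\"older regularity combined with $u(X_0)=0$.

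First I would invoke the H\"older regularity of $L_a^A$-harmonic functions proved in \cite{susste}. The decomposition machinery still applies here because the matrix $\overline{A}(x)$ in \eqref{overline.A} has the block structure that leaves the reflection $(x,y)\mapsto (x,-y)$ an isometry of the operator; hence one can split $u=u_e+u_o$ into symmetric and antisymmetric parts with respect to $\Sigma$, each solving the same equation. The regularity result then yields $u_e \in C^{0,\alpha}_{\loc}$ for any $\alpha \in (0,1)$ and $u_o \in C^{0,\alpha}_{\loc}$ for any $\alpha \in (0,1-a)$, exactly as in Proposition \ref{smooth}. In particular, since $u(X_0)=0$, this provides the upper estimate
\[
H(X_0,u,r) \le C_2\, r^{2\alpha^*},
\]
valid (with the appropriate $\alpha^*$) according to whether we work with $u_e$, $u_o$ or a general $u$.

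Next I would use the monotonicity formula of Corollary \ref{almgren.A} together with its companion derivative identity established in the proof: $\frac{d}{dr}\log H(X_0,u,r)=\frac{2}{r}N(X_0,u,r)+O(1)$. If $N(X_0,u,0^+)<\alpha^*$, there exist $R>0$ and $\varepsilon>0$ with $N(X_0,u,r)\le \alpha^*-\varepsilon$ on $(0,R)$, and integration yields
\[
\log\frac{H(X_0,u,R)}{H(X_0,u,r)}\le 2(\alpha^*-\varepsilon)\log\frac{R}{r}+C(R-r),
\]
i.e.\ $H(X_0,u,r)\ge C_1\, r^{2(\alpha^*-\varepsilon)}$ for $r$ small. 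Combining this with the H\"older upper bound forces $r^{2(\alpha^*-\varepsilon)}\le C r^{2\alpha^*}$ for $r\to 0^+$, a contradiction. The sharper statements for purely symmetric or antisymmetric $u$ then follow verbatim by applying the same argument to $u_e$ (with $\alpha^*=1$, after noting that in the smooth case one has in fact $C^{1,\alpha}$ regularity, giving the Lipschitz upper bound $H\le Cr^2$) or to $u_o$ (with $\alpha^*=1-a$).

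The only technically delicate point is ensuring that the H\"older exponent of \cite{susste} truly matches $\min\{1,1-a\}$ in the symmetric/antisymmetric decomposition for the variable-coefficient operator $L_a^A$ --- this is where the Lipschitz hypothesis on the leading matrix $A(x)$ is used to reduce, via the geometric change of variables constructed before Corollary \ref{almgren.A}, to the constant-coefficient setting up to a Lipschitz perturbation, so that the optimal regularity proved in the $L_a$ case transfers. Once this regularity is in hand, the rest of the argument is a direct transcription of the proof of Corollary \ref{lower.N}, with the extra additive $O(1)$ term in the logarithmic derivative harmlessly absorbed into a multiplicative constant upon integration.
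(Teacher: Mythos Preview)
Your proposal is correct and follows essentially the same approach as the paper. The paper does not spell out a proof at all: it simply remarks that since the lower bound on the Almgren frequency in Corollary~\ref{lower.N} rests on the H\"older regularity of the solution, and since the analogous regularity for $L_a^A$-harmonic functions is available from \cite{susste}, the argument carries over; you have accurately reconstructed that argument, including the additional $O(1)$ term in $\frac{d}{dr}\log H$ coming from \eqref{derivataH} in the variable-coefficient monotonicity proof, and correctly observed that it is absorbed harmlessly upon integration.
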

In particular, since in this Section we are focusing on the symmetric case, we directly obtain $N(X_0,u,0^+)\geq 1$, for any $X_0 \in \Gamma(u)\cap \Sigma$.
All techniques presented in this manuscript involve a local analysis of the solutions, which will be performed via a blow-up procedure. The following result are a generalization of the ones in Section \ref{section.blowup}. Fixed $a\in (-1,1)$ and $u$ an $L_a^A$-harmonic function in $B_1$, for every $X_0=(x_0,0) \in \Gamma(u)\cap \Sigma$ and $r_k \downarrow 0^+$ we define as the blow-up sequence the collection
    \begin{equation*}%\label{blow.up.almgren}
    u_k(X)= \ddfrac{u(X_0 + r_k X)}{\sqrt{H(X_0,u,r_k)}}\quad \mbox{for } X \in X\in B_{X_0,r_k}=\frac{B_1 - X_0}{r_k},
    \end{equation*}
    such that $L_a^{A_k} u_k = 0$ and $\norm{u_k}{L^{2,a}(\partial B_1)}=1$, where
    $$
    L_a^{A_k} = \mbox{div}_{x,y}\left(\abs{y}^a \overline{A}_k(x)\nabla_{x,y} \right),\quad\mbox{with } \overline{A}_k(x)=\overline{A}(x_0+r_k x),
    $$
    for every $X \in B_{X_0,r_k}$.
    \begin{proposition}%\label{blowup.convergence}
    Let $a \in (-1,1)$. Given $X_0 \in \Gamma(u)\cap \Sigma$ and a blow-up sequence $u_k$ centered in $X_0$ and associated with some $r_k \downarrow 0^+$, there exists $p \in H^{1,a}_{\loc}(\R^n)$ such that, up to a subsequence, $u_k\to p$ in $C^{0,\alpha}_{\loc}(\R^n)$ for every $\alpha \in (0,1)$ and strongly in $H^{1,a}_{\loc}(\R^n)$. In particular, the blow-up limit is and entire solution of following elliptic equation with constant coefficient
    $$
    \mbox{div}_{x,y}\left(\abs{y}^a \overline{A}(x_0) \nabla_{x,y}p\right) = 0\quad\mathrm{ in }\,\,\R^{n+1}.
    $$
    \end{proposition}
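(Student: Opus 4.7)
The plan is to mimic the strategy of Theorem \ref{blowup.convergence}, exploiting that the rescaled matrices $\overline{A}_k(x)=\overline{A}(x_0+r_k x)$ converge uniformly on compact sets to the constant matrix $\overline{A}(x_0)$ by Lipschitz continuity of $A$. First, using the perturbed Almgren monotonicity and the associated doubling property just proved (with constants independent of $k$ once $R=1-|X_0|$ is fixed), one obtains, for every $R>0$, uniform bounds
$$
\|u_k\|_{H^{1,a}(B_R)}\leq C(R),\qquad \|u_k\|_{L^\infty(B_R)}\leq C(R),
$$
exactly as in Lemma \ref{bounded.H1}: the doubling estimate controls $\|u_k\|_{L^{2,a}(\partial B_R)}$, the Almgren quotient controls the Dirichlet energy, and a Moser iteration (which applies to $L_a^A$ since $\overline{A}$ is uniformly elliptic and $|y|^a$ is $A_2$) gives the $L^\infty$ bound.

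Next, the uniform $H^{1,a}$ bound yields a weak limit $p\in H^{1,a}_{\loc}(\R^{n+1})$, up to subsequence, and the local H\"older estimates for uniformly elliptic equations with $A_2$ weights (\cite{fks}, see Lemma \ref{holder} for the argument adapted from \cite{tvz1,tvz2}) upgrade this to $u_k\to p$ in $C^{0,\alpha}_{\loc}(\R^{n+1})$ for every $\alpha\in(0,\alpha^\ast)$; since $u$ is $L_a^A$-harmonic and symmetric with respect to $\Sigma$, $\alpha^\ast=1$ thanks to Proposition \ref{smooth} applied in the Lipschitz-coefficient setting (the symmetric part enjoys the same interior regularity after the usual reflection).

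The key new point is passage to the limit in the weak formulation. For any $\varphi\in C^\infty_c(\R^{n+1})$, by definition of $u_k$ and a change of variables,
$$
\int_{\R^{n+1}} |y|^{a}\,\langle \overline{A}(x_0+r_k x)\nabla u_k,\nabla\varphi\rangle\, \mathrm{d}X = 0.
$$
Writing $\overline{A}(x_0+r_k x)=\overline{A}(x_0)+\bigl(\overline{A}(x_0+r_k x)-\overline{A}(x_0)\bigr)$, the Lipschitz bound $|\overline{A}(x_0+r_k x)-\overline{A}(x_0)|\le \gamma r_k |x|$ on $\mathrm{supp}\,\varphi$ together with the uniform $H^{1,a}$-bound on $u_k$ forces the perturbation term to vanish as $k\to\infty$. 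By weak convergence $\nabla u_k\rightharpoonup \nabla p$ in $L^{2,a}_{\loc}$, the main term passes to the limit and yields
$$
\int_{\R^{n+1}} |y|^{a}\,\langle \overline{A}(x_0)\nabla p,\nabla\varphi\rangle\, \mathrm{d}X = 0,
$$
which is precisely the announced constant-coefficient equation. Finally, to upgrade weak convergence to strong $H^{1,a}_{\loc}$ convergence, I would follow the standard Caccioppoli/test-function trick: test the equation for $u_k-p$ against $\eta^2(u_k-p)$ with a cut-off $\eta$, use uniform ellipticity of $\overline{A}(x_0)$, and absorb the cross terms that involve $\overline{A}_k-\overline{A}(x_0)$ using the Lipschitz estimate and the already established strong $C^{0,\alpha}_{\loc}$ convergence.

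The only delicate step, and where I expect most of the work, is verifying that the Almgren doubling estimate (which carries the factor $e^{Cr}$ reflecting the variable coefficients) still provides $k$-uniform control after rescaling. The point is that rescaling by $r_k$ turns the Lipschitz constant of $\overline{A}_k$ into $\gamma r_k\to 0$, so the Almgren-type formula for $u_k$ has an \emph{improving} perturbation constant; consequently the monotonicity becomes asymptotically exact and the doubling constants are uniform in $k$, which is exactly what is needed for the compactness argument. Once this uniformity is established, everything else is a routine adaptation of Section~\ref{section.blowup}.
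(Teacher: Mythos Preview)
Your proposal is correct and follows essentially the same approach as the paper, which itself gives only a one-sentence sketch (``a straightforward adaption of the one of Theorem \ref{blowup.convergence}\ldots all the computations of the blow-up argument follow the line of the local counterpart in \cite{MR1305956,MR1090434,MR882069,MR833393,MR2984134,MR0140031}''). You have in fact supplied more detail than the paper does, in particular the explicit ``freezing of coefficients'' step---writing $\overline{A}(x_0+r_kx)=\overline{A}(x_0)+O(r_k|x|)$ and using the uniform $H^{1,a}$ bound to kill the remainder---which is precisely the new ingredient relative to Theorem \ref{blowup.convergence}.

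One small remark on your ``delicate step'': you do not actually need to re-derive an Almgren formula for the rescaled equations $L_a^{A_k}$. It is simpler (and this is implicitly what Lemma \ref{bounded.H1} does) to work directly with the doubling inequality for $u$ itself at radii $r_kR$ versus $r_k$, which after the change of variables becomes the desired bound on $\|u_k\|_{L^{2,a}(\partial B_R)}$; the constant depends only on $N(X_0,u,1-|X_0|)$ and the fixed perturbation constant $C(n,\Lambda)$, hence is uniform in $k$. Your observation that the rescaled Lipschitz constant tends to zero is correct and useful for the passage to the limit, but is not needed for the compactness step.
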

The proof of this result is a straightforward adaption of the one of Theorem \ref{blowup.convergence}. In particular, since the coefficient of $\overline{A}$ are Lipschitz continuous and uniformly elliptic, all the computations of the blow-up argument follows the line of the local counterpart in
\cite{MR1305956,MR1090434,MR882069,MR833393,MR2984134,MR0140031}.\\
Moreover, since for every $X_0 \in \Gamma(u)\cap \Sigma$ the blow- up limit satisfies a degenerate-singular equation with constant coefficients, it is not restrictive to suppose that $\overline{A}(x_0)=\mathrm{Id}$, since by trivial transformation we can rewrite the equation in a canonical form.\\% ( see \cite{pde&app} ).\\
Therefore, all the results on the structure of the singular strata, proved in the previous part of the Section for the nodal set of $s$-harmonic functions, remain valid for the nodal set of fractional power of divergence form operator with Lipschitz leading coefficients. Indeed, as we already pointed out, in the proof of Theorem \ref{regularity.singular} and Theorem \ref{general.singular} we never used Proposition \ref{smooth} in order to attain the result on the structure of the singular strata on $\Sigma$. The crucial idea is that the Whitney extension allows to study the structure of the nodal set just by using the generalized Taylor expansion \eqref{taylor.generalized} for symmetric function without the high-order differentiability of the function itself. In this way the results can be easily generalized to our class of operators.
 \begin{proposition}
  Given $s\in (0,1)$, let $u$ be a solution of
   $$
   (-L)^s u = 0 \quad\mbox{in }B_1,
   $$
   with $L$ a uniformly elliptic operator with Lipschitz coefficient defined as \eqref{Ldefi}. Then the nodal set $\Gamma(u)$ splits into its regular and singular part
   $$
   \mathcal{R}(u) = \{x \in \Gamma(u)\colon \abs{\nabla u(x)}\neq 0 \}\quad\mbox{and}\quad
      \mathcal{S}(u)= \{x \in \Gamma(u)\colon \abs{\nabla u(x)}=0\}.
   $$
   Moreover, if $u\in C^1(B_{1/2})$, on one hand $\mathcal{R}(u)$ is locally a smooth hypersurface and on the other one there holds
   $$
  \mathcal{S}(u)= \mathcal{S}^*(u) \cup \mathcal{S}^s(u)
  $$
  where $\mathcal{S}^*(u)$ is contained in a countable union of $(n-2)$-dimensional $C^1$ manifolds and $\mathcal{S}^s(u)$ is contained in a countable union of $(n-1)$-dimensional $C^1$ manifolds. Moreover
  $$
   \mathcal{S}^*(u)=\bigcup_{j=0}^{n-2} \mathcal{S}^*_j(u)\quad\mbox{and}\quad
   \mathcal{S}^s(u)=\bigcup_{j=0}^{n-1}\mathcal{S}^s_j(u),
  $$
  where both $\mathcal{S}^*_j(u)$ and $\mathcal{S}^s_j(u)$ are contained in a countable union of $j$-dimensional $C^1$ manifolds.
\end{proposition}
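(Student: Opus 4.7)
The plan is to reduce the analysis of $\Gamma(u)$ to the study of the restriction $\Gamma(v) \cap \Sigma$ of the nodal set of the $L_a^A$-harmonic symmetric extension $v$ of $u$, where $a = 1-2s$, and then to adapt the stratification results already obtained for $L_a$-harmonic functions to this variable-coefficient setting. By the Stinga--Torrea extension recalled in \eqref{L.extendend}, $u$ admits a unique extension $v \in H^{1,a}(B_1^+)$ solving \eqref{L.ext}, and after even reflection across $\Sigma$ we obtain a symmetric $L_a^A$-harmonic function in a full neighbourhood of $B_1 \times \{0\}$ whose trace coincides with $u$. Therefore $\Gamma(u) = \Gamma(v) \cap \Sigma$ and the vanishing order of $u$ at $x_0$ agrees with $N(X_0,v,0^+)$, where $X_0 = (x_0,0)$, thanks to Corollary \ref{almgren.A} and the same doubling/interpolation argument carried out in Proposition \ref{doubling.s.prop} (which uses only the local ellipticity of $L_a^A$ off $\Sigma$ and a Caccioppoli-type estimate).

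Next I would transfer the blow-up machinery. By the compactness result stated just before, for every $X_0 \in \Gamma(v) \cap \Sigma$ the blow-up sequence $v_k$ converges, up to subsequences, strongly in $H^{1,a}_\loc$ and in $C^{0,\alpha}_\loc$ to a homogeneous entire solution of the constant-coefficient degenerate equation with matrix $\overline{A}(x_0)$. After the linear change of variables that brings $\overline{A}(x_0)$ into the identity (which preserves both the symmetry with respect to $\Sigma$ and the weight $|y|^a$), the blow-up limit becomes a homogeneous $L_a$-harmonic polynomial symmetric with respect to $\Sigma$, i.e.\ an element of $\mathfrak{sB}^a_k(\R^{n+1})$. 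This allows us to classify tangent maps via Proposition \ref{gap}, to obtain uniqueness through the Weiss-type formula of Proposition \ref{weiss.formula} and Proposition \ref{Monneau}, and to split $\Gamma_k(v) \cap \Sigma$ into subsets $\Gamma^*_k(v)$ and $\Gamma^s_k(v)$ according to whether the tangent map sits in $\mathfrak{B}^*_k(\R^n)$ or not.

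With tangent maps in hand, the regular set characterization and the smoothness of $\mathcal{R}(u)$ follow from the assumption $u \in C^1(B_{1/2})$: at any $x_0 \in \mathcal{R}(u)$ the tangent map is linear and nondegenerate by Lemma \ref{nondegeneracy}, so computing a directional derivative of $u$ using the analogue of Proposition \ref{blowup.s} yields $\nabla u(x_0) = \varphi^{x_0}(\nu^{x_0})\nu^{x_0} \neq 0$, and the implicit function theorem produces the local hypersurface structure. For the singular strata, I would mimic the Whitney extension argument of Theorem \ref{regularity.singular}: the coefficients of the tangent polynomial depend continuously on $x_0$ by a variable-coefficient version of Theorem \ref{continuation}, hence the Whitney compatibility conditions hold on each stratum $E_j$ defined as in Lemma \ref{Fsigma}, producing a $C^k$ extension $F$ whose derivatives vanish on $\Gamma^j_k$ and satisfy a nondegeneracy relation $\partial_{\nu_i}D^{\alpha_i}F(x_0,0) \neq 0$. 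The implicit function theorem then confines $\Gamma^j_k(u)$ to a $j$-dimensional $C^1$ manifold, and the upper bounds $d^{x_0} \leq n-2$ on $\mathcal{S}^*(u)$ and $d^{x_0} \leq n-1$ on $\mathcal{S}^s(u)$ come from the classification of the respective blow-up classes, exactly as in Theorem \ref{general.singular}.

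The main obstacle will be checking that Theorem \ref{continuation} (continuity of tangent maps with respect to the $L^{2,a}$-norm on $\partial B_1$) survives the passage to variable coefficients. In the constant-coefficient setting this relies crucially on the Monneau-type monotonicity of $r \mapsto M(X_0,v,p_{X_0},r)$, which itself exploits the identity $W_k(X_0,v,r) = W_k(X_0,v-p_{X_0},r)$ valid when $p_{X_0}$ is $k$-homogeneous and $L_a$-harmonic. For $L_a^A$ the comparison polynomial $p_{X_0}$ is only $L_a^{\overline{A}(x_0)}$-harmonic and not $L_a^A$-harmonic, so a remainder term appears that must be controlled by the Lipschitz modulus of $\overline{A}$ and reabsorbed into an $e^{Cr}$ factor, in the spirit of the perturbed Almgren formula of Corollary \ref{almgren.A}. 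Once this perturbed Monneau formula is established, the rest of the argument proceeds essentially verbatim and the result reduces to the stratification already proved for $L_a$-harmonic symmetric functions.
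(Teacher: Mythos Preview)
Your proposal is correct and follows essentially the same route as the paper: reduce to the symmetric $L_a^A$-harmonic extension, use the perturbed Almgren formula and blow-up compactness to obtain constant-coefficient tangent maps (classified after a linear change of variables), and then run the Whitney extension argument of Theorem~\ref{regularity.singular} and Theorem~\ref{general.singular}, which, as the paper stresses, never uses the higher smoothness of Proposition~\ref{smooth} and so carries over verbatim. You are in fact more explicit than the paper about the one genuine technical point---the need for a perturbed Monneau-type formula with an $e^{Cr}$ correction to absorb the Lipschitz remainder coming from $\overline{A}(x)-\overline{A}(x_0)$---which the paper treats only implicitly by declaring that ``all the results on the structure of the singular strata \ldots\ remain valid''.
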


In this following, we extend the previous results to the case of nonlocal elliptic equation with a potential. In particular,
let $s\in (0,1)$ and $u\colon B_1\subset \R^n\to \R$ be a nontrivial solution of
\begin{equation}\label{Ps.potential}
(-\Delta)^s u(x) = V(x)u \quad \mbox{in }B_1,
\end{equation}
where $V\in W^{1,q}(B_1)$, for some $q \geq n/2s$. Following, the same strategy of Section \ref{section.diverg}, by the local realisation of the fractional Laplacian, let us consider $u \in H^{1,a}(B_1^+)$ a solution of
\begin{equation}\label{potential}
\begin{cases}
  L_a u= 0 & \mbox{in } B_1^+ \\
  -\partial^a_y u = V(x)u & \mbox{in } B_1 \subset \Sigma,
\end{cases}
\end{equation} with
$V \in W^{1,q}(B_1)$, for some $q \in [n/2s,+\infty]$.
Hence, for every $X_0 \in B_1$ and $r\in (0,1-\abs{X_0})$, let us consider
\begin{align*}
  E(X_0,u,r) %& = \frac{1}{r^{n+a-1}}\left[\int_{B_r(X_0)}{\abs{y}^a \abs{\nabla u}^2 \mathrm{d}X} + \int_{B_r^+(X_0)\cap \Sigma}{u\partial^a_y u\mathrm{d}x}\right] \\
  &=  \frac{1}{r^{n+a-1}}\left[\int_{B_r^+(X_0)}{\abs{y}^a \abs{\nabla u}^2 \mathrm{d}X} - \int_{B_r(X_0)}{V u^2\mathrm{d}x}\right],\\
  H(X_0,u,r) & = \frac{1}{r^{n+a}}\int_{\partial^+ B^+_r(X_0)}{\abs{y}^a u^2 \mathrm{d}\sigma}
\end{align*}
and the Almgren quotient
\begin{equation}\label{Almgren.a}
N(X_0,u,r)= \ddfrac{E(X_0,u,r)}{H(X_0,u,r)}.
\end{equation}
Before to state the main result on the monotonicity of the Almgren quotient, we recall a general class of Poho\v{z}aev type identities, which will allow to compute the derivative of the functionals previously defined. Indeed, by multiplying the equation \eqref{potential} with $\langle X, \nabla u\rangle $, and integrating by parts over $B^+_r(X_0)$, for some $X_0 \in B_1$ and $r\in (0,1-\abs{X_0})$, we obtain
\begin{align}\label{pohoz}
\begin{aligned}
\frac{1-n-a}{2}\int_{B^+_r(X_0)}{\abs{y}^a \abs{\nabla u}^2\mathrm{d}X} + \frac{r}{2}\int_{\partial^+ B^+_r(X_0)}{\abs{y}^a \abs{\nabla u}^2\mathrm{d}\sigma} = &r \int_{\partial^+ B^+_r(X_0)}{\abs{y}^a (\partial_r u)^2 \mathrm{d}\sigma}+\\
&+ \int_{B_r(X_0)}{-\partial^a_y u \langle x, \nabla u\rangle\mathrm{d}x}.
\end{aligned}
\end{align}
%In order to conclude, we need to compare the previous integral on the boundary on $\Sigma$ with the functional associated with the Almgren monotonicity formula.\\
The following lemmata is a simple generalization of similar result obtained for the case of the Laplacian in \cite{tvz1}. First, let $a\in (-1,1)$ and $u\in H^{1,a}(B_r^+(X_0))$ for some $X_0 \in B_1$ and $r\in (0,\mbox{dist}(X_0,\partial B_1))$. Then, for every $p \in [2, p^\#]$, where $p^\# = 2n/(n-2s)$ there exists a constant $C(n,p,s)$ such that
\begin{equation}\label{poincare}
\left[\frac{1}{r^{n}}\int_{B_r(X_0)}{\abs{u}^p\mathrm{d}x} \right]^{\frac{2}{p}} \leq C(n,p,s) \left[ \frac{1}{r^{n-1+a}}\int_{B^+_r(X_0)}{\abs{y}^a\abs{\nabla u}^2\mathrm{d}X} + \frac{1}{r^{n+a}}\int_{\partial^+ B^+_r(X_0)}{\abs{y}^a u^2\mathrm{d}X}\right]
\end{equation}
This result is a direct consequence of the characterization of the class of trace of $H^{1,a}(B^+_1)$ in \cite{Nekvinda} and the critical Sobolev exponent for the trace embedding in the context of fractional Sobolev-Slobodeckij spaces $W^{s,2}(K)$, with $s\in (0,1)$ and $K\subset \R^n$.
\begin{lemma}\label{lemma1}
  Let $a \in (-1,1)$. Then, given $u \in H^{1,a}(B^+_1)$ a solution of \eqref{potential}, for every $p\in [2,p^\#]$ and $X_0\in B_1\cap \Sigma$ there exist constants $C>0, \overline{r}>0$ such that
  $$
  \left[\frac{1}{r^{n}}\int_{B_r(X_0)}{\abs{u}^p\mathrm{d}x} \right]^{\frac{2}{p}} \leq C\left(E(X_0,u,r)+H(X_0,u,r)\right),
  $$
  for every $r\in (0,\overline{r})$.
\end{lemma}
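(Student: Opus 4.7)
The strategy is to combine the trace inequality \eqref{poincare} with an absorption argument that controls the potential term $\int_{B_r}V u^2\,\mathrm{d}x$ appearing in $E(X_0,u,r)$. Write
$$
\tilde E(X_0,u,r)=\frac{1}{r^{n+a-1}}\int_{B^+_r(X_0)}|y|^a|\nabla u|^2\,\mathrm{d}X,
$$
so that $E(X_0,u,r)=\tilde E(X_0,u,r)-\frac{1}{r^{n+a-1}}\int_{B_r(X_0)}V u^2\,\mathrm{d}x$. Applying \eqref{poincare} directly to the fixed exponent $p\in[2,p^\#]$ in the statement yields
$$
\left[\frac{1}{r^n}\int_{B_r(X_0)}|u|^p\,\mathrm{d}x\right]^{2/p}\le C(n,p,s)\bigl(\tilde E(X_0,u,r)+H(X_0,u,r)\bigr),
$$
so it is enough to prove $\tilde E(X_0,u,r)\le 2E(X_0,u,r)+H(X_0,u,r)$ for every $r$ smaller than some $\overline r>0$.

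To obtain the absorption, I would apply H\"older's inequality in the conjugate pair $(q,q/(q-1))$ coming from the integrability of $V$, choosing the auxiliary Lebesgue exponent $p_V=2q/(q-1)$. Since the assumption $q\ge n/(2s)$ translates into $p_V\le p^\#$, the trace inequality \eqref{poincare} is available also for $p_V$, giving
$$
\left|\frac{1}{r^{n+a-1}}\int_{B_r(X_0)}V u^2\,\mathrm{d}x\right|
\le \|V\|_{L^q(B_r(X_0))}\,r^{\,2s-n/q}
\left[\frac{1}{r^n}\int_{B_r(X_0)}|u|^{p_V}\,\mathrm{d}x\right]^{2/p_V},
$$
after computing the scaling exponent with $a=1-2s$. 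A second use of \eqref{poincare}, this time for $p_V$, bounds the last factor by a constant multiple of $\tilde E(X_0,u,r)+H(X_0,u,r)$.

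The remaining task is to show that the prefactor $\|V\|_{L^q(B_r(X_0))}\,r^{\,2s-n/q}$ tends to $0$ as $r\to 0^+$: when $q>n/(2s)$ the positive power of $r$ suffices, while in the critical case $q=n/(2s)$ the exponent vanishes and one must invoke the absolute continuity of the integral $\int_{\cdot}|V|^q\,\mathrm{d}x\in L^1(B_1)$. In either case, choosing $\overline r$ so small that this prefactor is $\le 1/(2C)$ allows one to absorb $\tilde E$ into the left-hand side, yielding $\tilde E(X_0,u,r)\le 2E(X_0,u,r)+H(X_0,u,r)$ for $r\in(0,\overline r)$. Plugging this back into the first displayed inequality produces the claimed bound, with $C$ depending on $n$, $p$, $s$, $a$, $q$ and $\|V\|_{W^{1,q}(B_1)}$. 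The main obstacle I anticipate is handling the critical endpoint $q=n/(2s)$ cleanly; everywhere else the argument is a standard Sobolev--trace + H\"older absorption.
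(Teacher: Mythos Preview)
Your proposal is correct and follows essentially the same route as the paper: apply H\"older with exponents $(q,q/(q-1))$ to the potential term, invoke the trace inequality \eqref{poincare} at the auxiliary exponent $q^*=2q/(q-1)\le p^\#$, and absorb for small $r$. Your treatment is in fact slightly more careful than the paper's, which simply records a factor $r^{1-a}$ in front (valid when $q=\infty$; the sharp exponent is your $r^{2s-n/q}$) and does not single out the critical endpoint $q=n/(2s)$, whereas you correctly handle it via absolute continuity of $\int|V|^q$.
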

\begin{proof}
  Since $u\in L^\infty(B^+_1)$ and  $V \in W^{1,q}(B_1)$ for some $q \in [n/2s,+\infty]$, we obtain
  \begin{align*}
  \abs{\frac{1}{r^{n-1+a}}\int_{B_r(X_0)}{V u^2 \mathrm{d}x}}& \leq \norm{V}{L^q}r^{1-a}\left[\frac{1}{r^n}\int_{B_r(X_0)}{\abs{u}^{q^*}\mathrm{d}x}\right]^{\frac{2}{q^*}}\\
  &\leq C r^{1-a}\left[\frac{1}{r^{n-1+a}}\int_{B^+_r(X_0)}{\abs{y}^a\abs{\nabla u}^2\mathrm{d}X} + \frac{1}{r^{n+a}}\int_{\partial^+ B^+_r(X_0)}{\abs{y}^a u^2\mathrm{d}X}\right],
  \end{align*}
  where we used the trace inequality in the case $q^*=2q/(q-1)$, since $q^*\leq p^\#$. Finally, since $a\in(-1,1)$ we obtain
  \begin{equation}\label{tipotav}
  E(X_0,u,r) + H(X_0,u,r) \geq (1 - C r^{1-a})\left[\frac{1}{r^{n-1+a}}\int_{B^+_r(X_0)}{\abs{y}^a\abs{\nabla u}^2\mathrm{d}X} + \frac{1}{r^{n+a}}\int_{\partial^+ B^+_r(X_0)}{\abs{y}^a u^2\mathrm{d}X}\right],
  \end{equation}
  the result follows by taking into account the trace inequality and choosing $\overline{r}>0$ sufficiently small.
\end{proof}
Following the same idea in \cite{tvz1} for the case $s=1/2$, let introduce  for $ p\in (2, p^\#]$ the auxiliary function
$$
\psi(X_0,u,r) = \left(\frac{1}{r^n}\int_{B_r(X_0)\cap \Sigma}{\abs{u}^2\mathrm{d}X}\right)^{1-\frac{2}{p}}
$$
which is bounded for $r\in (0,\mbox{dist}(X_0,\partial B_1))$.  Under this notations, for $a\in (-1,1)$ consider
$$
\Psi(X_0,u,r) = C(n,s)\int_0^r{t^{-a}\left(1+\frac{d}{dt}\left(t\psi(X_0,u,t\right)\right)\mathrm{d}t},%=C(n,s)r^{1-a}(1+\psi(r)) +a\int_0^r{t^{-a}(1+\psi(t))\mathrm{d}t},
$$
which is well defined on $r\in (0,\mbox{dist}(X_0,\partial B_1))$ such that $\lim_{r\to 0^+}\Psi(X_0,u,r)=0$, since $\psi(X_0,u,r)$ is bounded for $r$ sufficiently small.
In order to simplify the notations, through the Section we will just use the notation $\psi(r)$ and $\Psi(r)$ for the auxiliary functions previously defined.
\begin{lemma}\label{lemma2}
  Let $a\in (-1,1)$ and $u \in H^{1,a}(B^+_1)$ be a solution of \eqref{potential}. Then, for every $p\in (2,p^\#]$ and $X_0\in \partial^0 B^+_1$ there exist constants $C>0, \overline{r}>0$ such that
  $$
  \frac{1}{r^{n-1}}\int_{S^{n-1}_r(X_0)}{\abs{u}^p\mathrm{d}\sigma} \leq C\left(E(X_0,u,r)+H(X_0,u,r)\right)\frac{d}{dr}\left(r\psi(r)\right),
  $$
  for every $r \in (0,\overline{r})$.
\end{lemma}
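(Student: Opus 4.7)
The plan is to combine a direct computation of $\frac{d}{dr}(r\psi(r))$ with an interpolation inequality on the sphere $S^{n-1}_r$ and a sphere-level analogue of Lemma~\ref{lemma1}. First I would introduce $f(r) = r^{-n}\int_{B_r(X_0)\cap\Sigma}|u|^2\,dx$, so that $\psi(r) = f(r)^{1-2/p}$. The elementary identity $r f'(r) + n f(r) = r^{-(n-1)}\int_{S^{n-1}_r}|u|^2\,d\sigma$, together with the chain rule, gives
\begin{equation*}
\frac{d}{dr}(r\psi(r)) = f(r)^{-2/p}\!\left[\bigl(1-n(1-2/p)\bigr) f(r) + \left(1-\tfrac{2}{p}\right)\frac{1}{r^{n-1}}\int_{S^{n-1}_r}|u|^2\,d\sigma\right].
\end{equation*}
Since $1-2/p>0$, the second summand dominates (up to the non-singular term $[1-n(1-2/p)]f(r)$), providing a positive multiple of $f(r)^{-2/p}$ times the $L^2$ sphere integral.

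Next I would apply the log-convexity of $L^q$-norms to interpolate on $S^{n-1}_r$ between $L^2$ and $L^{p^\#}$:
\begin{equation*}
\int_{S^{n-1}_r}|u|^p\,d\sigma \leq \left(\int_{S^{n-1}_r}|u|^2\,d\sigma\right)^{\!\frac{p^\#-p}{p^\#-2}}\left(\int_{S^{n-1}_r}|u|^{p^\#}\,d\sigma\right)^{\!\frac{p-2}{p^\#-2}}.
\end{equation*}
The $L^2$ factor is exactly the quantity produced in step one, so after normalization by $f^{-2/p}$ it will match the principal part of $\frac{d}{dr}(r\psi)$. The $L^{p^\#}$ factor must be controlled by $E+H$: to this end I would mimic the proof of Lemma~\ref{lemma1}, rescaling $u_r(X)=u(X_0+rX)$, applying the trace Sobolev inequality on $\partial^0 B_1^+$, and combining it with a slicing argument to restrict to the equator $S^{n-1}_1=\partial(\partial^0 B_1^+)$. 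The potential term is absorbed exactly as in \eqref{tipotav} by taking $\overline{r}$ small, yielding
\begin{equation*}
\left(\frac{1}{r^{n-1}}\int_{S^{n-1}_r}|u|^{p^\#}\,d\sigma\right)^{\!2/p^\#}\leq C\bigl(E(X_0,u,r)+H(X_0,u,r)\bigr).
\end{equation*}

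Plugging this sphere-Sobolev bound into the interpolation and using that the exponents $(p^\#-p)/(p^\#-2)$ and $(p-2)/(p^\#-2)$ sum to one, the powers of $f$, $r$, and $(E+H)$ recombine and give
\begin{equation*}
\frac{1}{r^{n-1}}\int_{S^{n-1}_r}|u|^p\,d\sigma \leq C(E+H)\,f(r)^{-2/p}\cdot\frac{1}{r^{n-1}}\int_{S^{n-1}_r}|u|^2\,d\sigma,
\end{equation*}
which by the computation of step one is bounded by $C(E+H)\,\frac{d}{dr}(r\psi(r))$. The residual term $[1-n(1-2/p)]f^{1-2/p}$ in $(r\psi)'$ has to be treated separately: either its coefficient is non-negative (which occurs for $p\le 2n/(n-1)$) so that it may be discarded, or it is absorbed by the dominant $\phi$-term on a small enough interval $(0,\overline r)$ via the growth information already encoded in Lemma~\ref{lemma1}.

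The main obstacle is clearly step three: upgrading the averaged Sobolev estimate of Lemma~\ref{lemma1} (which controls integrals over balls) to a pointwise-in-$r$ estimate on the codimension-one sphere $S^{n-1}_r\subset\Sigma$. Naively, when $s\le 1/2$ the trace of $u\in H^s(B_r\cap\Sigma)$ on $S^{n-1}_r$ is not even a priori well-defined, and even when it is, the natural trace theorem delivers $L^{(n-1)p^\#/n}$ rather than $L^{p^\#}$. The route I would follow is a Fubini/slicing argument on the Gagliardo seminorm combined with the scale-invariance of $E+H$, letting one absorb the lost half-derivative into the $r$-weights, exactly as $p^\#$ is tuned to be the trace-critical exponent on $\partial^0 B_1^+$.
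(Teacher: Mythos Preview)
Your route diverges from the paper's, and the divergence originates in a typo: in the paper's proof the auxiliary function is actually
\[
\psi(r)=\left(\frac{1}{r^{n}}\int_{B_r(X_0)\cap\Sigma}|u|^{p}\,\mathrm{d}x\right)^{1-2/p},
\]
with $|u|^{p}$ rather than $|u|^{2}$ inside. With that choice the argument is a two--line affair: differentiating gives
\[
\frac{d}{dr}\bigl(r\psi(r)\bigr)=\psi(r)\left[\bigl(1-n(1-\tfrac{2}{p})\bigr)+\left(1-\tfrac{2}{p}\right) r\,\frac{\displaystyle\int_{S^{n-1}_r}|u|^{p}\,\mathrm{d}\sigma}{\displaystyle\int_{B_r\cap\Sigma}|u|^{p}\,\mathrm{d}x}\right],
\]
one drops the nonnegative constant term, and what remains is $r^{-(n-1)}\int_{S^{n-1}_r}|u|^{p}\,\mathrm{d}\sigma$ multiplied by $\bigl(r^{-n}\int_{B_r\cap\Sigma}|u|^{p}\bigr)^{-2/p}$. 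That last factor is \emph{precisely} the reciprocal of the left--hand side of Lemma~\ref{lemma1}, so multiplying through by $E+H$ finishes the proof. No interpolation and no sphere--level Sobolev estimate are needed.

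Because you started from the $|u|^{2}$ definition, you are forced to manufacture the $L^{p}$ sphere integral by interpolating against an $L^{p^{\#}}$ sphere bound. That step is a genuine gap, not a technicality: you yourself observe that for $s\le 1/2$ the trace of $H^{s}(B_r\cap\Sigma)$ on $S^{n-1}_r$ need not exist, and even for $s>1/2$ the trace/Sobolev machinery only delivers $L^{2(n-1)/(n-2s)}$ on the sphere, strictly weaker than the $L^{p^{\#}}=L^{2n/(n-2s)}$ your interpolation requires. The ``Fubini/slicing'' fix you sketch cannot recover the missing half--derivative at a \emph{fixed} radius --- at best it yields an estimate averaged in $r$, which is useless here. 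The remedy is simply to take $\psi$ with $|u|^{p}$ and follow the paper's direct computation.
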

\begin{proof}
The proof follows it is the same of \cite[Lemma 9.5]{tvz1} make exception in our case is based on the generalized Poincar$\grave{\mbox{e}}$ inequality \eqref{poincare}. Hence, a direct computation yields the identity
$$
\frac{d}{dr}\left(r\psi(r)\right) = \psi(r)\left[r\left(1-\frac{2}{p}\right) \ddfrac{\int_{S^{n-1}_r}{\abs{u}^p\mathrm{d}\sigma}}{\int_{\partial^0 B^+_r}{\abs{u}^p\mathrm{d}\sigma}} + \left(1-n\left(1-\frac{2}{p}\right)\right) \right],
$$
and, since $p\leq p^\#$ implies $n(1-2/p)\leq 1$, we infer
$$
\frac{d}{dr}\left(r\psi(r)\right)\geq r\psi(r)\left(1-\frac{2}{p}\right) \ddfrac{\int_{S^{n-1}_r}{\abs{u}^p\mathrm{d}\sigma}}{\int_{\partial^0 B^+_r}{\abs{u}^p\mathrm{d}\sigma}}.
$$
Finally, recalling the definition of $\psi$ and using Lemma \ref{lemma1}, we deduce
$$
\left(E(X_0,u,r)+H(X_0,u,r)\right)\frac{d}{dr}\left(r\psi(r)\right)\geq C \frac{1}{r^{n-1}}\int_{S^{n-1}_r}{\abs{u}^p\mathrm{d}\sigma}.
$$
\end{proof}
We are now ready to prove the boundedness of the Almgren quotient, rather than its monotonicity, considering a modified version of the quotient.
\begin{proposition}\label{mono.potential}
  Given $a\in (-1,1), u\in H^{1,a}(B^+_1)$ a solution of \eqref{potential} and $\Omega^+\subset\subset B^+_1$, there exist constants $C, \overline{r}>0$ such that, for every $X_0 \in \Omega \cap \Sigma$ and $r \in (0,\overline{r})$ such that $B_r^+(X_0)\subset B^+_1$, we have that $H(X_0,u,r)>0$ and $N(X_0,u,r)>0$ for every $r\in (0,\overline{r})$.
  Moreover, the map $$r\mapsto e^{C\Psi(X_0,u,r)}\left(N(X_0,u,r) +1 \right)$$ is monotone non decreasing on $(0,\overline{r})$, which ensures the existence of limit
  $$
  N(X_0,u,0^+)=\lim_{r\to 0^+}N(X_0,u,r),
  $$
  which is finite and called the Almgren frequency of $u$ at $X_0$.
\end{proposition}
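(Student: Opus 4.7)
The plan is to mimic the classical Almgren monotonicity computation, but treating the boundary term $\int V u^2$ as a perturbation whose total contribution is exactly integrated by the auxiliary function $\Psi$. The device of passing to $N+1$ (rather than $N$) is standard in this perturbative setting: it avoids having to divide by a possibly small $E$, and it is consistent with the structure of the error term we shall produce.

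First I would establish positivity of $H$ and $N$ on a small interval $(0,\bar r)$. The key input is estimate \eqref{tipotav} from Lemma \ref{lemma1}: for $r$ small the factor $(1-Cr^{1-a})$ is strictly positive, so $E(X_0,u,r)+H(X_0,u,r)$ dominates the full weighted Dirichlet plus boundary $L^{2,a}$-norm. A contradiction argument identical to the one used in Proposition \ref{empty.interior} (analysing the Cauchy problem $H'=aH$ with $a\in L^1_{\mathrm{loc}}$) then rules out $H(X_0,u,r_0)=0$ for any $r_0\in(0,\bar r)$, and the same bound gives $E(X_0,u,r)>0$ once $r$ is small enough that the $V$-term is absorbed into the Dirichlet part.

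Next I would differentiate $H$ and $E$. As in the proof of Proposition \ref{Almgren.formula} one gets $\frac{d}{dr}H(X_0,u,r)=\frac{2}{r}E(X_0,u,r)$ by integrating by parts and using $L_au=0$ together with the nonlocal boundary condition $-\partial_y^a u=Vu$. For $E$, the Pohozaev identity \eqref{pohoz}, applied with the boundary contribution $\int Vu\langle x-x_0,\nabla u\rangle\,dx$, yields
$$
\frac{d}{dr}E(X_0,u,r)=\frac{2}{r^{n+a-1}}\int_{\partial^+B_r^+(X_0)}|y|^a(\partial_r u)^2\,d\sigma+R(r),
$$
where the remainder $R(r)$ collects terms of the schematic form $r^{-n-a+1}\int_{B_r}(Vu^2+|\nabla V||x-x_0|u^2+Vu\langle x-x_0,\nabla u\rangle)\,dx$. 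Forming $\frac{d}{dr}\log(N(X_0,u,r)+1)$ and applying Cauchy--Schwarz to the $\partial_r u$ terms recovers the standard nonnegative principal part of the Almgren formula, so that the entire question of monotonicity reduces to bounding $|R(r)|/(H(E+H))$.

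The heart of the argument, and the main obstacle, lies in the remainder estimate for $R(r)$. Using H\"older's inequality with exponents dictated by $V\in W^{1,q}$, $q\geq n/2s$, combined with Lemma \ref{lemma1} to control the $L^{q^*}$-norms on $B_r(X_0)$ and Lemma \ref{lemma2} to control $L^p$-norms on spheres $S^{n-1}_r(X_0)$ by $\frac{d}{dr}(r\psi(r))$, the goal is to prove
$$
|R(r)|\le C\,r^{-a}\Bigl(1+\tfrac{d}{dr}(r\psi(r))\Bigr)\bigl(E(X_0,u,r)+H(X_0,u,r)\bigr),
$$
which is exactly the derivative of $\Psi(X_0,u,r)$ up to the factor $E+H$. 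The delicate point is matching the fractional trace exponent $p^\#=2n/(n-2s)$ with the Muckenhoupt weight $|y|^a$ so that the critical Sobolev embedding saturates precisely when $q=n/2s$; any looser application of H\"older would either lose the factor $r^{-a}$ or fail to produce the sphere integral controlled by $\frac{d}{dr}(r\psi(r))$. Once this estimate is in place, one obtains $\frac{d}{dr}\log(N(X_0,u,r)+1)\ge -C\Psi'(X_0,u,r)$, equivalent to the asserted monotonicity of $e^{C\Psi(X_0,u,r)}(N(X_0,u,r)+1)$. Since $\Psi(X_0,u,0^+)=0$ and the modified quotient is bounded on $(0,\bar r)$, the limit $N(X_0,u,0^+)$ exists and is finite.
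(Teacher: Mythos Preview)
Your approach is essentially the paper's: pass to $\widetilde N = N+1 = (E+H)/H$, differentiate using the Poho\v{z}aev identity to isolate a nonnegative Cauchy--Schwarz term plus a remainder $R$, and bound $|R|/(E+H)$ via Lemmas \ref{lemma1}--\ref{lemma2} to produce exactly $C\,r^{-a}\bigl(1+\tfrac{d}{dr}(r\psi)\bigr)=C\Psi'(r)$. One organizational point worth flagging: the paper establishes the monotonicity of $e^{C\Psi}\widetilde N$ \emph{first} (on any interval where $H>0$) and only \emph{afterwards} rules out $H(r_1)=0$ by integrating $\frac{d}{dr}\log H=\frac{2}{r}N$ with $N$ bounded by the monotonicity just obtained; your ordering (positivity of $H$ first, via the Cauchy-problem argument of Proposition \ref{empty.interior}) is circular as written, since that argument requires $a(r)=2N(r)/r$ to be locally bounded near the putative zero, which is exactly what the monotonicity supplies. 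Swapping the order of those two steps fixes this.
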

\begin{proof}
Let $X_0\in B_1\cap\Sigma$ and $\overline{r}>0$ be such that $\overline{r} < \mbox{dist}(\Omega^+, B^+_1)$ and Lemma \ref{lemma1} and Lemma \ref{lemma2} hold true. First, let us consider the following modified Almgren frequency formula
\begin{equation}\label{new.almgren}
\widetilde{N}(X_0,u, r) = \frac{E(X_0, u,r)}{H(X_0,u,r)}+ 1 = N(X_0,u,r)+1.
\end{equation}
Under this notations, we obtain by Lemma \ref{lemma1}
$$
E(X_0,u,r)+ H(X_0,u,r) \geq 0 \longrightarrow \widetilde{N}(X_0,u, r) = \frac{E(X_0, u,r)}{H(X_0,u,r)}+ 1\geq 0,
$$
whenever $H(X_0,u,r)\neq 0$. By continuity of $r\mapsto H(X_0,u,r)$ we can consider a reasonable neighborhood of $r$ where it does not vanish. Now, taking into account the Poho\v{z}aev identity \eqref{pohoz}, if we differentiate the map $r\mapsto E(X_0,u,r)$ we obtain
\begin{align*}
  \frac{d}{dr}E(X_0,u,r) & = \frac{2}{r^{n+a-1}}\int_{\partial^+ B^+_r(X_0)}{\abs{y}^a (\partial_r u)^2 \mathrm{d}\sigma} + R(X_0,u,r),\\
  \frac{d}{dr}H(X_0,u,r) & = \frac{2}{r^{n+a}}\int_{\partial^+ B^+_r(X_0)}{\abs{y}^a u\partial_r u \mathrm{d}\sigma},
\end{align*}
where the remainder $R(X_0,u,r)$ takes the form
\begin{align*}\label{utile}
R(X_0,u,r) = %&\frac{2}{r^{n+a}}\int_{B_r(X_0)}{(-\partial^a_y u) \langle x, \nabla u\rangle \mathrm{d}x} + \frac{1-n-a}{r^{n+a}}\int_{B_r(X_0)}{u\partial^a_y u\mathrm{d}x}+\\
%&+ \frac{1}{r^{n-1+a}}\int_{S^{n-1}_r(X_0)}{u\partial^a_y u\mathrm{d}x}\\
&\frac{2}{r^{n+a}}\int_{B_r(X_0)}{V u \langle x, \nabla u\rangle \mathrm{d}x} - \frac{1-n-a}{r^{n+a}}\int_{B_r(X_0)}{V u^2\mathrm{d}x}+\\
&- \frac{1}{r^{n-1+a}}\int_{S^{n-1}_r(X_0)}{V u^2\mathrm{d}x}.
\end{align*}
In particular, if $V \in W^{1,q}(B_1)$, for some $q \in [n/2s,+\infty]$, we obtain that the following integrals are well defined
$$
2\int_{B_r(X_0)}{V u \langle x, \nabla u\rangle \mathrm{d}x} = \int_{S^{n-1}_r(X_0)}{Vu^2\mathrm{d}\sigma} - n\int_{B_r(X_0)}{Vu^2\mathrm{d}x} + \int_{B_r(X_0)}{u^2 \langle \nabla V,x\rangle \mathrm{d}x},
$$
which it implies the estimate on the remainder
$$
\abs{R(X_0,u,r)} \leq C(n,a)\norm{V}{W^{1,q}}\left[\frac{1}{r^{n+a}}\int_{B_r(X_0)}{u^2\mathrm{d}x}+\frac{1}{r^{n+a-1}}\int_{S^{n-1}_r(X_0)}{ u^2\mathrm{d}\sigma}\right],
$$
where we used Lemma \ref{lemma1} and Lemma \ref{lemma2}. Therefore, differentiating the Almgren quotient and using the Cauchy-Schwarz inequality on $\partial^+ B^+_r$, we obtain
\begin{align*}
\frac{d}{dr}\widetilde{N}(X_0,u,r) =&\, \ddfrac{\ddfrac{d}{dr}E(X_0,u,r)+\ddfrac{d}{dr}H(X_0,u,r)}{E(X_0,u,r)+H(X_0,u,r)} - \ddfrac{\ddfrac{d}{dr}H(X_0,u,r)}{H(X_0,u,r)} \\
\geq&\, \frac{2 H(X_0,u,r)}{r^{2n+2a-1}}\left[\int_{\partial^+B^+_r}{\abs{y}^a (\partial_r u)^2\mathrm{d}\sigma}\int_{\partial^+B^+_r}{\abs{y}^a u^2\mathrm{d}\sigma} - \left(\int_{\partial^+ B^+_r}{\abs{y}^a \langle u,\partial_r u\rangle\mathrm{d}\sigma}\right)^2\right]+\\
&\,-C(n,s) \widetilde{N}(X_0,u,r) r^{-a}\left(1+\frac{d}{dr}(r\psi(r))\right)\\
\geq&\,-C(n,s) \widetilde{N}(X_0,u,r) r^{-a}\left(1+\frac{d}{dr}(r\psi(r))\right).
\end{align*}
which implies that the function
$$
r\mapsto e^{C\Psi(X_0,u,r)}\widetilde{N}(X_0,u,r)
$$
is nondecreasing as far as $H(X_0,u,r)\neq 0$. Passing to the logarithmic derivative of $r\mapsto H(X_0,u,r)$ we infer for $r\in (r_1,r_2)$ we obtain
\begin{equation}\label{int}
\frac{d}{dr}\log H(X_0,u,r)=\frac{2}{r}N(X_0,u,r).
\end{equation}
More precisely, we can choose $r_1 = 0, r_2 = +\infty$. On one hand, the above equation provides that, if $\log H(X_0,u,R) >-\infty$ then $\log H(X_0,u,r) >-\infty$ for every $r >R$, so
that $r_2 = \mbox{dist}(X_0,\partial B_1)$. Now, on the other hand assume by contradiction that
$$
r_1 = \inf\left\{r \colon H(X_0,u,r) > 0 \mbox{ in } (r,r_2)\right\} > 0.
$$
By the monotonicity result on the modified Almgren quotient \eqref{new.almgren}, we have that
$$
N(X_0,u,r) < e^{C\Psi(2 r_1)}\left(N(X_0,u,2 r_1)+1\right)-1,
$$
for every $r_1<r\leq 2r_1$.
Hence, integrating \eqref{int} between $r$ and $2r_1$, we obtain
$$
\frac{H(X_0,u,2r_1)}{H(X_0,u,r)}\leq \left(\frac{2r_1}{r}\right)^{2\left(e^{C\Psi(2 r_1)}\left(N(X_0,u,2 r_1)+1\right)-1\right)}
$$
and, since $r\mapsto H(X_0,u,r)$ is continuous we deduce the absurd $H(X_0,u,r_1) > 0$.
\end{proof}
Inspired by the previous part of the paper, we will prove a compactness result in order to perform a blow-up analysis of the nodal set $\Gamma(u)$. Hence, fixed $a\in (-1,1)$ and $u$ a solution of \eqref{poincare}, consider now $X_0 \in \Gamma(u)$ a point on the nodal set of $u$, then for any $r_k \downarrow 0^+$ we define as the blow-up sequence the collection
    \begin{equation*}\label{blow.up.almgren}
    u_k(X)= \ddfrac{u(X_0 + r_k X)}{\sqrt{H(X_0,u,r_k)}}\quad \mbox{for } X \in X\in B_{X_0,r_k}=\frac{B_1 - X_0}{r_k},
    \end{equation*}
    such that $L_a u_k = 0$ and $\norm{u_k}{L^{2,a}(\partial B_1)}=1$.
    \begin{proposition}\label{blowup.convergence.potential}
    Let $a \in (-1,1)$ and $\alpha^*=\min\{1,1-a\}$. Given $X_0 \in \Gamma(u)\cap \Sigma$ and a blow-up sequence $u_k$ centered in $X_0$ and associated with some $r_k \downarrow 0^+$, there exists $p \in H^{1,a}_{\loc}(\R^n)$ such that, up to a subsequence, $u_k\to p$ in $C^{0,\alpha}_{\loc}(\R^n)$ for every $\alpha \in (0,\alpha^*)$ and strongly in $H^{1,a}_{\loc}(\R^n)$. In particular, the blow-up limit is and entire $L_a$-harmonic function symmetric with respect to $\Sigma$, i.e.
    $$
    \begin{cases}
      L_a p = 0  & \mathrm{in }\,\R^{n+1} \\
      p(x,y)=p(x,-y) & \mathrm{in }\,\R^{n+1}.
    \end{cases}
    $$
    \end{proposition}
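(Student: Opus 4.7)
The plan is to mirror the argument of Theorem \ref{blowup.convergence}, with the monotonicity of Proposition \ref{mono.potential} replacing the Almgren monotonicity of Proposition \ref{Almgren.formula}. First I would record the rescaled equation: writing $\rho_k=\sqrt{H(X_0,u,r_k)}$ and recalling $a=1-2s$, a direct computation gives
$$
\begin{cases}
L_a u_k = 0 & \mbox{in }B^+_{X_0,r_k},\\
-\partial^a_y u_k(x,0) = r_k^{1+a}\,V(x_0+r_kx)\,u_k(x,0) & \mbox{in }B_{X_0,r_k}\cap\Sigma.
\end{cases}
$$
Since $1+a>0$ and $V\in W^{1,q}(B_1)$ with $q\ge n/2s$, the boundary coefficient $r_k^{1+a}V(x_0+r_k\cdot)$ vanishes in every reasonable norm as $k\to\infty$, which is the single algebraic fact that drives the proof.

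Next I would obtain uniform estimates on every ball $B_R^+$. From Proposition \ref{mono.potential} one deduces (by integrating \eqref{int} as in the proof of Corollary \ref{doubling.corollary.Sigma}) a comparison inequality of the form $H(X_0,u,r_2)\le C H(X_0,u,r_1)(r_2/r_1)^{2\widetilde N}$, which after rescaling yields $\|u_k\|_{L^{2,a}(\partial B_R^+)}\le C(R)$ uniformly in $k$. Combining this with the Caccioppoli estimate \eqref{caccio} and the $V$-Poho\v{z}aev control \eqref{tipotav} gives $\|u_k\|_{H^{1,a}(B_R^+)}\le C(R)$. The $L^\infty$ bound $\|u_k\|_{L^\infty(B_{R/2}^+)}\le C(R)$ then follows from Moser iteration (Lemma \ref{moser}) applied to the rescaled problem, using that the boundary coefficient $r_k^{1+a}V(x_0+r_k\cdot)$ is uniformly bounded in $L^q$.

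With these bounds, up to a subsequence $u_k\rightharpoonup p$ weakly in $H^{1,a}_{\loc}(\mathbb{R}^{n+1})$, weakly$^*$ in $L^\infty_{\loc}$, and a.e. Uniform H\"older estimates in $C^{0,\alpha}_{\loc}$ for $\alpha\in(0,\alpha^*)$ come from the Liouville-type/approximation argument of \cite{tesivita,sttv} exactly as in Lemma \ref{holder}; here the extra Neumann datum $r_k^{1+a}V(x_0+r_kx)u_k$ is treated as a lower order perturbation which becomes negligible in the blow-up limit precisely because $1+a>0$. The strong $H^{1,a}_{\loc}$ convergence is then obtained by the standard testing trick: one uses $u_k-p$ as test function in the weak formulation of the rescaled problem, noting that the boundary term
$$
\int_{B_{X_0,r_k}\cap\Sigma} r_k^{1+a}\,V(x_0+r_kx)\,u_k\,(u_k-p)\,\mathrm{d}x
$$
tends to zero by the trace embedding \eqref{poincare} together with $r_k^{1+a}\|V\|_{L^q}\to 0$.

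Finally, passing to the limit in the weak formulation against a test function $\varphi\in C^\infty_c(\mathbb{R}^{n+1}_+)$, the bulk integral converges by the $H^{1,a}$ convergence and the boundary integral vanishes in the limit by the same reasoning as above. Thus $p$ is $L_a$-harmonic in $\mathbb{R}^{n+1}_+$ with homogeneous Neumann datum $\partial^a_y p=0$ on $\Sigma$, and even reflection promotes it to an entire $L_a$-harmonic function symmetric with respect to $\Sigma$. Non-triviality of $p$ is preserved because $\|u_k\|_{L^{2,a}(\partial B_1^+)}=1$ and the trace is continuous under the strong $H^{1,a}$ convergence. The one genuinely delicate point, and the step I would expect to be the main obstacle, is the uniform H\"older bound up to $\Sigma$ for the perturbed problem: one has to check that the Liouville/contradiction scheme of \cite{tesivita,sttv} survives the additional $r_k^{1+a}V$-Neumann term, which is exactly where the restriction $a\in(-1,1)$ (so $1+a>0$) is used in an essential way.
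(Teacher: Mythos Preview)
Your approach is essentially the same as the paper's: the paper states that the proof proceeds ``in a series of lemmata'' and then only spells out the uniform $H^{1,a}$ and $L^\infty$ bounds (Lemma \ref{bounded.H1}), leaving the H\"older estimate, the strong $H^{1,a}$ convergence, and the passage to the limit implicit by analogy with Theorem \ref{blowup.convergence}. You have in fact written out these remaining steps more carefully than the paper does.

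One small computational slip: the rescaled Neumann datum should carry the factor $r_k^{1-a}$, not $r_k^{1+a}$. Indeed, for $u_k(X)=u(X_0+r_kX)/\rho_k$ one has $\partial^a_y u_k(x,0)=r_k^{1-a}\rho_k^{-1}\,\partial^a_y u(x_0+r_kx,0)$, hence
\[
-\partial^a_y u_k(x,0)=r_k^{\,1-a}\,V(x_0+r_kx)\,u_k(x,0),
\]
with $1-a=2s>0$. This does not affect your argument (both $1\pm a$ are positive for $a\in(-1,1)$), but your closing remark that ``the restriction $a\in(-1,1)$ (so $1+a>0$) is used in an essential way'' should instead read $1-a>0$, i.e.\ $a<1$.
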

    We will prove the result in a series of lemmata.
    \begin{lemma}\label{bounded.H1}
  Let $X_0 \in \Gamma(u)\cap\Sigma$. For any given $R>0$, we have  $$\norm{u_{k}}{H^{1,a}(B^+_R)}\leq C\quad\mbox{and}\quad\norm{u_{k}}{L^\infty(\overline{B^+_R})}\leq C,
$$
where $C>0$ is a constant independent of $k>0$.
\end{lemma}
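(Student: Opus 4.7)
The plan is to mimic the proof of the analogous Lemma~\ref{bounded.H1} from Section~\ref{section.blowup}, replacing the pure Almgren monotonicity by the perturbed version in Proposition~\ref{mono.potential} and carefully tracking the scaling of the potential. A preliminary observation is that if we set $v(X)=u(X_0+r_kX)$, a direct computation using $y'=r_ky$ yields $L_av=0$ in $B_{R}^+$ for $k$ large enough (so that $r_kR<\overline{r}$), together with the scaled Neumann condition
$$
-\partial^a_y u_k(x,0) \;=\; r_k^{1-a}\,V(x_0+r_kx)\,u_k(x,0) \;=\; V_k(x)\,u_k(x,0),
$$
where $V_k(x):=r_k^{1-a}V(x_0+r_kx)$. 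Since $a\in(-1,1)$ we have $1-a>0$, so $\|V_k\|_{W^{1,q}}$ is bounded (in fact vanishingly small) uniformly in $k$.

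The first step is the boundary $L^{2,a}$-bound. Applying Proposition~\ref{mono.potential} at $X_0$, the map $r\mapsto e^{C\Psi(r)}(N(X_0,u,r)+1)$ is monotone nondecreasing on $(0,\overline{r})$, so
$$
N(X_0,u,r) \;\le\; e^{C\Psi(\overline{r})}\bigl(N(X_0,u,\overline{r})+1\bigr)-1 \;=:\; \widetilde{N}
$$
for every $r\in(0,\overline{r})$. Integrating \eqref{int} between $r_k$ and $Rr_k$ (which we can do once $Rr_k<\overline{r}$) gives
$$
\frac{H(X_0,u,Rr_k)}{H(X_0,u,r_k)} \;\le\; R^{\,2\widetilde{N}}.
$$
Rescaling this inequality exactly as in Section~\ref{section.blowup} produces
$$
\int_{\partial^+ B_R^+}|y|^a u_k^2\,\mathrm{d}\sigma \;=\; R^{n+a}\,\frac{H(X_0,u,Rr_k)}{H(X_0,u,r_k)} \;\le\; R^{n+a+2\widetilde{N}},
$$
which is the uniform boundary bound we need.

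The second step is the $H^{1,a}$-bound. By the very definition of $u_k$ and of the Almgren quotient, together with the monotonicity above and Lemma~\ref{lemma1} applied to $u_k$ on $B_R^+$ (note that $\|V_k\|_{W^{1,q}}\to 0$, so the remainder term on the right-hand side of \eqref{tipotav} is absorbed once $k$ is large), one has
$$
\int_{B_R^+}|y|^a|\nabla u_k|^2\,\mathrm{d}X \;\le\; C\bigl(E(0,u_k,R)+H(0,u_k,R)\bigr) \;\le\; C(R)\bigl(\widetilde{N}+1\bigr)R^{n+a-1+2\widetilde{N}},
$$
which gives $\|u_k\|_{H^{1,a}(B_R^+)}\le C$ independently of $k$. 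The third step is the $L^\infty$-bound: since $u_k$ is $L_a$-harmonic in $B_R^+$ with the Neumann-type boundary condition $-\partial^a_y u_k=V_ku_k$ on $\Sigma$, a standard Moser iteration adapted to the weight $|y|^a$ (in the spirit of Lemma~\ref{moser}, but with the extra boundary term) yields
$$
\|u_k\|_{L^\infty(\overline{B_{R/2}^+})} \;\le\; C(n,a,R)\bigl(1+\|V_k\|_{L^q}\bigr)^\beta\Bigl(\tfrac{1}{R^{n+1+a}}\int_{B_R^+}|y|^a u_k^2\,\mathrm{d}X\Bigr)^{1/2}.
$$
Since $\|V_k\|_{L^q}\le r_k^{1-a-n/q}\|V\|_{L^q}$ and by hypothesis $q\ge n/(2s)=n/(1-a)$, the factor $r_k^{1-a-n/q}$ remains bounded (in fact goes to $0$), and combining with the $L^{2,a}$-bound on $B_R^+$ (obtained by radial integration of the boundary bound in Step~1) yields $\|u_k\|_{L^\infty(\overline{B_R^+})}\le C$ uniformly in $k$.

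The main obstacle is the correct handling of the subcritical boundary potential in the Moser iteration: one needs to verify that the boundary contribution arising from testing the weak formulation against $u_k^{2\gamma+1}\eta^2$ can be absorbed into the weighted Dirichlet energy, uniformly in $k$. This is where the scaling factor $r_k^{1-a}\to 0$ is essential and where the assumption $q\ge n/(2s)$ enters through a trace Sobolev embedding \eqref{poincare}; the Caccioppoli-plus-Moser machinery of \cite{fks} then goes through without further difficulty.
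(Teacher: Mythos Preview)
Your argument is correct and follows the same three-step scheme as the paper: a doubling bound on $H(X_0,u,\cdot)$ from the Almgren monotonicity, then the $H^{1,a}$ bound from the identity $E=NH$, and finally an $L^\infty$ bound of Moser type. The only real difference is that you are more careful about the potential: you correctly invoke Proposition~\ref{mono.potential} and equation~\eqref{int} to get the doubling, you route the gradient estimate through \eqref{tipotav} so that the boundary term $\int V_k u_k^2$ is absorbed, and for the $L^\infty$ bound you run a Moser iteration with the Neumann datum $-\partial^a_y u_k=V_ku_k$ rather than citing the clean subharmonic estimate of Lemma~\ref{moser}. The paper's own proof is essentially a verbatim transcription of the earlier pure $L_a$-harmonic case (it still cites Corollary~\ref{doubling.corollary.Sigma}, Proposition~\ref{Almgren.formula} and \cite[Lemma~A.2]{ww} as if there were no potential), so your version is in fact a cleaner justification of the same strategy; nothing further is needed.
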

\begin{proof}
Let us consider $\rho^2_k = H(X_0,u,r_k)$, then by definition of the blow-up sequence $u_k$  and Corollary \ref{doubling.corollary.Sigma} we obtain
 \begin{align*}
    \int_{\partial^+ B^+_R}{\abs{y}^a u^2_k \mathrm{d}\sigma} & = \frac{1}{\rho_k^2}\int_{\partial^+ B^+_{R}}{\abs{y}^a u^2(X_0 + r_k X) \mathrm{d}\sigma} \\
&=\frac{1}{\rho_k^2 r_k^{n+a}}\int_{\partial^+ B^+_{Rr_k}(X_0)}{\abs{y}^a u^2 \mathrm{d}\sigma}\\
    &= R^{n+a}\frac{H(X_0,u,Rr_k)}{H(X_0,u,r_k) }\\
    & \leq R^{n+a}\left( \frac{R r_k}{r_k}\right)^{\!2\tilde{C}}
\end{align*}
which gives us $\norm{u_k}{L^{2,a}(\partial B_R)}^2\leq C(R) R^{n+a}$. Similarly
\begin{align}\label{c.overline}
\begin{aligned}
\int_{B^+_{R}}{\abs{y}^a \abs{\nabla u_k}^2 \mathrm{d}\sigma} & = N(0,u_k,R)\frac{1}{R}\int_{\partial^+ B^+_R}{\abs{y}^a u^2_k \mathrm{d}\sigma}\\
&\leq C(R) R^{n-1+a} N(X_0,u,Rr_k)\\
& \leq C(R) R^{n-1+a} N(X_0,u,R)
\end{aligned}
\end{align}
where in the last inequality we used the monotonicity result of Proposition \ref{Almgren.formula}. Since the map $u_k$ is $L_a$- harmonic, by \cite[Lemma A.2]{ww} we obtain
\begin{align*}
  \sup_{\overline{B_{R/2}}} u_k &\leq C(n,s) \left(\frac{1}{R^{n+1+a}}\int_{B^+_R}{\abs{y}^a u_k^2\mathrm{d}X} \right)^{1/2} \\
  &\leq   C(n,s) \left(\frac{H(0,u_k,R)}{n+a+1}\right)^{1/2},
\end{align*}
where in the second inequality we used the monotonicity of $r\mapsto H(0,u_k,r)$ in $(0,R)$. Finally, the estimate follows directly from the one the $L^{2,a}(\partial B_R)$-norm.
\end{proof}
\begin{lemma}\label{growthestim}
  Let $a \in (-1,1)$ and $u \in H^{1,a}(B_1)$ be a solution of \eqref{potential}. Then, given $X_0 \in \Gamma_k(u)\cap \Sigma$, there exists $C>0$ such that
  $$
  \frac{1}{r^{n+a}}\int_{\partial^+B^+_r(X_0)}{\abs{y}^a u^2\mathrm{d}\sigma} \leq C r^{2k} \quad\mathrm{and}\quad   \frac{1}{r^{n-1+a}}\int_{B^+_r(X_0)}{\abs{y}^a \abs{\nabla u}^2\mathrm{d}X}\leq C r^{2k},
  $$
  for every $r \in (0,\mathrm{dist}(X_0,\partial B_1)/2)$. Moreover, by \eqref{poincare}, for every $p \in [2, p^\#]$, where $p^\# = 2n/(n-2s)$ there exists a constant $C>0$ such that
\begin{equation}\label{poincare-lavendetta}
\left[\frac{1}{r^{n}}\int_{B_r(X_0)}{\abs{u}^p\mathrm{d}x} \right]^{\frac{2}{p}} \leq C r^{2k}.
\end{equation}
\end{lemma}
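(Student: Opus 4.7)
The plan is to follow exactly the same strategy as the pure $L_a$-harmonic case (Lemma \ref{growth}), but with the perturbed Almgren formula from Proposition \ref{mono.potential} replacing the monotonicity formula, and with the observation that the error terms are integrable in $dr/r$ near the origin thanks to the condition $a\in(-1,1)$.

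First I would note that the identity \eqref{int}, namely $\frac{d}{dr}\log H(X_0,u,r)=\frac{2}{r}N(X_0,u,r)$, was established inside the proof of Proposition \ref{mono.potential} (it follows from the integration by parts identity $\int_{\partial^+B^+_r}\abs{y}^a u\,\partial_r u\,d\sigma = \int_{B^+_r}\abs{y}^a|\nabla u|^2\,dX - \int_{B_r}Vu^2\,dx$, which absorbs the potential into the numerator of $N$). Next, since $X_0\in\Gamma_k(u)$ means $N(X_0,u,0^+)=k$, the monotonicity of $r\mapsto e^{C\Psi(X_0,u,r)}(N(X_0,u,r)+1)$ and the fact that $\Psi(X_0,u,0^+)=0$ give, for every $r\in(0,\overline{r})$,
\begin{equation*}
N(X_0,u,r)\;\ge\;(k+1)e^{-C\Psi(X_0,u,r)}-1\;\ge\;k-(k+1)C\Psi(X_0,u,r),
\end{equation*}
using $1-e^{-x}\le x$ for $x\ge 0$.

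The key quantitative input is that $\Psi(r)=O(r^{1-a})$ as $r\to 0^+$: from its definition together with the boundedness of $\psi$ for small $r$, an integration by parts yields $\Psi(r)\le C r^{1-a}$, so $\int_0^{R}\Psi(s)/s\,ds<\infty$ precisely because $a<1$. Plugging the lower bound on $N$ into \eqref{int} and integrating between $r$ and a fixed radius $R=\mathrm{dist}(X_0,\partial B_1)/2$ gives
\begin{equation*}
\log\frac{H(X_0,u,R)}{H(X_0,u,r)}\;\ge\;2k\log\frac{R}{r}\;-\;2(k+1)C\int_0^R\frac{\Psi(s)}{s}\,ds\;=\;2k\log\frac{R}{r}-C',
\end{equation*}
which on exponentiating delivers $H(X_0,u,r)\le C r^{2k}$, i.e.\ the first claimed bound.

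For the gradient estimate, I would use that on the interval $(0,\overline{r})$ the quantity $e^{C\Psi(r)}(N(X_0,u,r)+1)$ is also uniformly bounded above by its value at $R$, so $N(X_0,u,r)\le N_0$ for some constant. Then $E(X_0,u,r)=N(X_0,u,r)H(X_0,u,r)\le N_0 Cr^{2k}$. To pass from $E$, which contains the potential term, to the pure Dirichlet energy, I would invoke \eqref{tipotav} (which is exactly the content needed): for $r$ small,
\begin{equation*}
\frac{1}{r^{n+a-1}}\int_{B^+_r(X_0)}\abs{y}^a|\nabla u|^2\,dX\;\le\;2\bigl(E(X_0,u,r)+H(X_0,u,r)\bigr)\;\le\;Cr^{2k}.
\end{equation*}
Finally the $L^p$ bound is an immediate application of Lemma \ref{lemma1} combined with the two previous bounds.

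The only non-routine point is the control of the error integral $\int_0^R\Psi(s)/s\,ds$; once one recognizes that the Lipschitz dependence of $V$ enters through $\Psi$ with the correct power $s^{1-a}$, every other step is a cosmetic adjustment of Lemma \ref{growth}. I do not anticipate a genuine obstacle: the mechanism replacing the pure Almgren monotonicity by its perturbed version and absorbing the exponentially small corrections is already in place in Proposition \ref{mono.potential}.
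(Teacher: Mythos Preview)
Your argument is correct and, for the first inequality on $H$, essentially identical to the paper's: both integrate \eqref{int} after bounding $N(X_0,u,r)-k$ from below via the perturbed monotonicity of Proposition \ref{mono.potential}, the only cosmetic difference being that you linearise $e^{-C\Psi(r)}-1\ge -C\Psi(r)$ whereas the paper keeps the exponential form. Your explicit check that $\Psi(r)=O(r^{1-a})$ is exactly what underlies the paper's assertion that the error integral is finite.

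For the gradient bound the two proofs diverge. The paper uses a Caccioppoli inequality for the $L_a$-operator together with the (near-origin) monotonicity of $r\mapsto H(X_0,u,r)$ to control $\int_{B^+_{r/2}}|y|^a|\nabla u|^2$ by $H(X_0,u,r)$. You instead read off a uniform upper bound $N(X_0,u,r)\le N_0$ from the perturbed monotonicity, deduce $E\le N_0 H\le Cr^{2k}$, and then invoke \eqref{tipotav} to pass from $E+H$ to the pure Dirichlet energy. Your route has the mild advantage that it avoids having to justify a Caccioppoli inequality in the presence of the Neumann potential term $-\partial_y^a u=Vu$ (which, strictly speaking, contributes an extra boundary integral that must be reabsorbed); the paper's route, on the other hand, does not rely on an upper bound for $N$. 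The final $L^p$ estimate via Lemma \ref{lemma1} is the same in both.
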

\begin{proof}
By Proposition \ref{mono.potential} and \eqref{int}, there exists a constant $C>0$ and $\overline{r}>0$ such that, for every $r \in (0,\overline{r})$ we obtain
\begin{align*}
\frac{d}{dr}\log \frac{H(X_0,u,r)}{r^k} & = \frac{2}{r}(N(X_0,u,r)-k)\\
& \geq  \frac{2}{r} \left(e^{-C\Psi(r)}e^{C \Psi(r)}(N(X_0,u,r)+1)-1-k\right)\\
& \geq  \frac{2(k+1)}{r} \left(e^{-C\Psi(r)}-1\right),
\end{align*}
which implies, after an integration by part, that
$$
\frac{H(X_0,u,r)}{r^{2k}}\leq \frac{H(X_0,u,\overline{r})}{\overline{r}^{2k}} \exp\left(\int_0^{\overline{r}}{\frac{2(k+1)}{\rho} \left(e^{-C\Psi(\rho)}-1\right)\mathrm{d}\rho}\right)\leq C,
$$
which it implies the first inequality.
Now, by the Cacciopoli inequality associated with the $L_a$-opertor and the monotonicity of $r\mapsto H(X_0,u,r)$ due to \eqref{int}, we obtain
\begin{align*}
\frac{1}{r^{n-1+a}}\int_{B^+_{r/2}(X_0)}{\abs{y}^a\abs{\nabla u}^2\mathrm{d}X} &\leq \frac{1}{r^{n-1+a}}\frac{C}{r^2} \int_{B^+_r(X_0)}{\abs{y}^a u^2 \mathrm{d}\sigma}\\
&\leq
\frac{1}{r^{n-1+a}}\frac{C}{r^2} \int_0^r{\rho^{n+a}H(X_0,u,\rho)\mathrm{d}\rho}\\
&\leq C H(X_0,u,r),
\end{align*}
which yields the claimed conclusion.
\end{proof}
Let $r\mapsto W_k(X_0,u,r)$ be the $k$-Weiss function defined as
$$
W_k(X_0,u,r)=\frac{1}{r^{n+a+2k}}\int_{\partial^+ B^+_r(X_0)}{\abs{y}^a u \left( \langle \nabla u , X-X_0\rangle - u \right)\mathrm{d}\sigma},
$$
or, equivalently
$$
W_k(X_0,u,r) = \frac{1}{r^{2k}}\left(E(X_0,u,r) - k H(X_0,u,r)\right).
$$
\begin{proposition}\label{weiss.formula.potential}
Let $a\in (-1,1)$ and $u$ be a nontrivial solution of \eqref{potential}. For $X_0 \in \Gamma_k(u)\cap \Sigma$, given the $k$-Weiss function
\begin{align*}
W_k(X_0,u,r) = & \frac{1}{r^{n+a-1+2k}}\left[\int_{B_r^+(X_0)}{\abs{y}^a \abs{\nabla u}^2 \mathrm{d}X} - \int_{B_r(X_0)}{V u^2\mathrm{d}x}\right]+\\
& - \frac{k}{r^{n+a+2k}}\int_{\partial^+ B^+_r(X_0)}{\abs{y}^a \abs{u}^2\mathrm{d}\sigma}.
\end{align*}
For $r\in (0,1-\abs{X_0})$ we have
$$
\frac{d}{dr}W_k(X_0,u,r) \geq -C(n,a,k)\norm{V}{W^{1,q}(B_1)} r^{-a}.
$$
\end{proposition}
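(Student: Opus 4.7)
The plan is to follow the classical Weiss-type strategy: differentiate $W_k$, expand via the Poho\v{z}aev identity \eqref{pohoz}, isolate a nonnegative squared contribution exactly as in the unperturbed case (Proposition \ref{weiss.formula}), and control the $V$-dependent remainder. The key structural observation is that the potential modification contributes precisely the scalar remainder $R(X_0,u,r)$ already computed in Proposition \ref{mono.potential}, so much of the calculation can be recycled.

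First I would differentiate $E(X_0,u,r)$: substituting the boundary condition $-\partial^a_y u = Vu$ into \eqref{pohoz} and then differentiating in $r$ gives the decomposition
\begin{equation*}
E'(r)=\frac{2}{r^{n+a-1}}\int_{\partial^+B^+_r(X_0)}|y|^a(\partial_r u)^2\,\mathrm{d}\sigma+R(X_0,u,r).
\end{equation*}
Testing the weak formulation of \eqref{potential} with $u$ on $B^+_r(X_0)$ produces the identity $\int_{B^+_r}|y|^a|\nabla u|^2\,\mathrm{d}X - \int_{B_r}Vu^2\,\mathrm{d}x = \int_{\partial^+B^+_r}|y|^a u\,\partial_r u\,\mathrm{d}\sigma$, so that $E(r) - kH(r) = r^{-n-a}\int_{\partial^+B^+_r}|y|^a(ru\,\partial_r u - ku^2)\,\mathrm{d}\sigma$. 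Combined with $H'(r) = 2r^{-n-a}\int|y|^a u\,\partial_r u\,\mathrm{d}\sigma$, the algebraic manipulation of $\frac{\mathrm{d}}{\mathrm{d}r}[r^{-2k}(E-kH)]$ reassembles the $V$-independent terms into exactly the square of the unperturbed case, yielding
\begin{equation*}
\frac{\mathrm{d}}{\mathrm{d}r}W_k(X_0,u,r) = \frac{2}{r^{n+a+2k+1}}\int_{\partial^+B^+_r}|y|^a(r\partial_r u - ku)^2\,\mathrm{d}\sigma + \frac{R(X_0,u,r)}{r^{2k}}.
\end{equation*}
The first summand is nonnegative, so the problem reduces to estimating $|R(X_0,u,r)|/r^{2k}$ from above.

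To control $R$, I would apply the divergence theorem to $\mbox{div}(Vu^2 x) = nVu^2 + u^2\langle\nabla V,x\rangle + 2Vu\langle x,\nabla u\rangle$. This integration by parts collapses the $S^{n-1}_r$ boundary term appearing in $R$ and produces
\begin{equation*}
R(X_0,u,r) = \frac{a-1}{r^{n+a}}\int_{B_r}Vu^2\,\mathrm{d}x - \frac{1}{r^{n+a}}\int_{B_r}u^2\langle\nabla V,x\rangle\,\mathrm{d}x.
\end{equation*}
Both resulting integrals are then estimated by H\"older's inequality with dual exponents $(q,q')$: since $q\ge n/(2s)$ one has $2q'\le p^\# = 2n/(n-2s)$, so H\"older pairs $V$ or $\nabla V$ with $u^2\in L^{q'}$ and the critical growth bound \eqref{poincare-lavendetta}, namely $\|u\|^2_{L^{p^\#}(B_r)}\le Cr^{n-2s+2k}$ from Lemma \ref{growthestim}, applies. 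The bound $|x|\le r$ for the gradient term, together with the scaling $a = 1-2s$, allows the exponents to combine to $|R(X_0,u,r)|\le C\|V\|_{W^{1,q}(B_1)}\,r^{2k-a}$, and dividing by $r^{2k}$ yields the claimed inequality.

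The main obstacle is precisely this sharp exponent tracking in the final step: the nodal vanishing rate $r^{2k}$, the critical Sobolev scaling $r^{n-2s}$, the weight-induced factor $r^{-n-a}$, and the cancellation of the $S^{n-1}_r$ boundary term after integration by parts must all align correctly to produce $r^{-a}$ on the right-hand side; once this balancing succeeds, everything else is a routine repetition of the computation behind Proposition \ref{weiss.formula}.
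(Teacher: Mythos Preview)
Your proposal follows essentially the same approach as the paper: differentiate $W_k$, use the Poho\v{z}aev identity to isolate the nonnegative square $\frac{2}{r^{n+a+2k+1}}\int_{\partial^+B^+_r}|y|^a(r\partial_r u-ku)^2\,\mathrm{d}\sigma$, and then estimate the $V$-dependent remainder via integration by parts (collapsing the $S^{n-1}_r$ term), H\"older's inequality, and the growth bound of Lemma~\ref{growthestim}. The only cosmetic difference is that you package the remainder as $R(X_0,u,r)/r^{2k}$ with the coefficient $a-1$ on $\int Vu^2$, whereas the paper writes $R_k$ directly with coefficient $4k-1-a$; this discrepancy reflects a different grouping of terms (or a minor slip in one computation) and does not affect the final estimate, since only the absolute value of the remainder is bounded.
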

\begin{proof}
By a direct computation, we have
\begin{equation}\label{weiss.potential}
\frac{d}{dr}W_k(X_0,u,r)=\frac{2}{r^{n+a+1+2k}}\int_{\partial^+ B^+_r(X_0)}{\abs{y}^a \left(\langle \nabla u, X-X_0\rangle - k u \right)^2 \mathrm{d}\sigma}+ R_k(X_0,u,r),
\end{equation}
with
$$
R_k(X_0,u,r) = \frac{4k-1-a}{r^{n+a+2k}}\int_{B_r(X_0)\cap \Sigma}{V u^2\mathrm{d}x} - \frac{1}{r^{n+a+2k}} \int_{B_r(X_0)\cap \Sigma}{u^2\langle \nabla V,x\rangle \mathrm{d}x}.
$$
By Lemma \ref{lemma1}, Lemma \ref{lemma2} and Lemma \ref{growthestim} it follows
\begin{align*}
\abs{R_k(X_0,u,r)} &\leq \frac{C(n,a,k)\norm{V}{W^{1,q}(B_1)}}{r^{a+2k}}\left[\frac{1}{r^n}\int_{B_r(X_0)\cap \Sigma}{\abs{u}^{q^*}\mathrm{d}x}\right]^{\frac{1}{q^*}}\\
&\leq \frac{C(n,a,k,V)}{r^{a+2k}}\left[ \frac{1}{r^{n-1+a}}\int_{B^+_r(X_0)}{\abs{y}^a\abs{\nabla u}^2\mathrm{d}X} + \frac{1}{r^{n+a}}\int_{\partial^+ B^+_r(X_0)}{\abs{y}^a u^2\mathrm{d}X}\right]\\
& \leq C(n,a,k,V) r^{-a},
\end{align*}
where $q^*=2q/(q-1)$, which implies that
$$
r\mapsto W_k(X_0,u,r) + C(n,a,k)\norm{V}{W^{1,q}(B_1)} r^{1-a}
$$ is monotone nondecreasing in $(0,1-\abs{X_0})$.
%Furthermore, the map $r\mapsto W_k(X_0,u,r)$ is constant if and only if $u$ is homogeneous of degree $k$.
\end{proof}
%Moroever, by Proposition \ref{mono.potential}, for every $X_0 \in \Gamma_k(u)$ there exists $C, \overline{r}>0$ such that
%\begin{align*}
%W_k(X_0,u,r) &= \frac{H(X_0,u,r)}{r^{2k}}\left(N(X_0,u,r) - k \right)\\
%&\geq (k+1)\frac{H(X_0,u,r)}{r^{2k}}\left(e^{-C\Psi(r)} -1 \right),
%\end{align*}
%for every $r \in (0,\overline{r})$.
\begin{proposition}\label{monneau.pot}
  Let $a\in (-1,1), u \in H^{1,a}(B_1^+)$ be a solution of \eqref{potential} and $X_0 \in \Gamma_k(u)\cap \Sigma$. For every homogenous $L_a$-harmonic polynomial $p\in \mathfrak{sB}^a_k(\R^{n+1})$, the map
  $$
  r \mapsto \frac{H(X_0,u-p_{X_0},r)}{r^{2k}}= \frac{1}{r^{n+a+2k}}\int_{\partial B_r(X_0)}{\abs{y}^a \left(u -p_{X_0}\right)^2\mathrm{d}\sigma}
  $$
  satisfies
  $$
  \frac{d}{dr}\frac{H(X_0,u-p_{X_0},r)}{r^{2k}} \geq -C\norm{p_{X_0}}{L^\infty(B_1)} \norm{V}{W^{1,q}(B_1)}r^{-a},
  $$
  for every $r \in (0,1-\abs{X_0})$, with $p_{X_0}(X)=p(X-X_0)$.
\end{proposition}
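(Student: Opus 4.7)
The approach mirrors that of Proposition \ref{Monneau}, with the new feature that the boundary potential on $\Sigma$ produces extra contributions at each integration by parts. Set $w = u - p_{X_0}$: the derivative on the left-hand side of the statement is the purely algebraic identity
\[
\frac{d}{dr}\frac{H(X_0,w,r)}{r^{2k}} = \frac{2}{r^{n+a+2k+1}}\int_{\partial^+ B_r^+(X_0)} \abs{y}^{a}\, w\bigl(\langle \nabla w, X-X_0\rangle - k w\bigr)\,d\sigma,
\]
and since $\langle \nabla p_{X_0}, X-X_0\rangle = k p_{X_0}$ on $\partial^+ B_r^+(X_0)$ by $k$-homogeneity, one finds $\langle \nabla w, X-X_0\rangle - kw = \langle \nabla u, X-X_0\rangle - ku$. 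Splitting the integrand into a ``diagonal'' piece in $u$ and a ``cross'' piece in $p_{X_0}$, and recognizing the Weiss functional $W_k$ of Proposition \ref{weiss.formula.potential} in the former, we obtain
\[
\frac{d}{dr}\frac{H(X_0,w,r)}{r^{2k}} = \frac{2}{r}\,W_k(X_0,u,r) - \frac{2}{r^{n+a+2k+1}}\int_{\partial^+ B_r^+} \abs{y}^{a}\, p_{X_0}\bigl(\langle \nabla u, X-X_0\rangle - ku\bigr)\,d\sigma.
\]

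The last boundary integral is evaluated by writing two complementary Green identities for $\int_{B_r^+} \abs{y}^a \nabla u\cdot\nabla p_{X_0}\,dX$. Transferring the derivative off $p_{X_0}$ and using $L_a u = 0$ in $B_r^+$ together with $-\partial^a_y u = Vu$ on $\Sigma\cap B_r$ gives
\[
\int_{B_r^+} \abs{y}^a \nabla u\cdot\nabla p_{X_0}\,dX = \int_{\partial^+ B_r^+} \abs{y}^a p_{X_0}\partial_r u\,d\sigma + \int_{B_r(X_0)} V\, u\, p_{X_0}\,dx,
\]
while transferring it instead off $u$ and using $L_a p_{X_0}=0$ on $\R^{n+1}$ with $\partial^a_y p_{X_0}=0$ on $\Sigma$ (by symmetry of $p$) yields $\frac{k}{r}\int_{\partial^+ B_r^+} \abs{y}^a u\, p_{X_0}\,d\sigma$. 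Equating the two expressions and inserting above, the $\partial^+ B_r^+$ contributions cancel exactly and only the potential term survives:
\[
\frac{d}{dr}\frac{H(X_0,w,r)}{r^{2k}} = \frac{2}{r}\,W_k(X_0,u,r) + \frac{2}{r^{n+a+2k}}\int_{B_r(X_0)} V\, u\, p_{X_0}\,dx.
\]

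It remains to lower bound the two pieces by $-C r^{-a}$. For the Weiss term, the perturbed monotonicity of Proposition \ref{weiss.formula.potential} combined with $N(X_0,u,0^+)=k$ from Proposition \ref{mono.potential} and the growth control in Lemma \ref{growthestim} gives $W_k(X_0,u,r)\geq -C\norm{V}{W^{1,q}(B_1)}\,r^{1-a}$, hence $\tfrac{2}{r}W_k\geq -C\norm{V}{W^{1,q}(B_1)}\,r^{-a}$. For the cross term, H\"older's inequality coupled with the estimate $\norm{u}{L^{p}(B_r)}\leq C r^{n/p+k}$ of Lemma \ref{growthestim} at the conjugate exponent to $q$, and the pointwise bound $\abs{p_{X_0}(X)}\leq \abs{X-X_0}^k\, \norm{p_{X_0}}{L^\infty(B_1)}$ coming from the $k$-homogeneity of $p$, yields
\[
\frac{2}{r^{n+a+2k}}\abs{\int_{B_r(X_0)} V\, u\, p_{X_0}\,dx} \leq C\,\norm{p_{X_0}}{L^\infty(B_1)}\,\norm{V}{W^{1,q}(B_1)}\, r^{-a},
\]
and the two estimates together give the claim. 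The principal technical obstacle is obtaining the sharp exponent $r^{-a}$ in this last inequality at the critical threshold $q=n/(2s)$: this requires a careful distribution of the weights in the trace embedding of Lemma \ref{lemma1} at the critical Sobolev exponent $p^{\#}$, exactly mirroring the estimate of the remainder $R_k$ in the proof of Proposition \ref{weiss.formula.potential}. Modulo that inequality, which is already the backbone of the Weiss monotonicity in the potential setting, the present argument is a direct perturbation of Proposition \ref{Monneau}.
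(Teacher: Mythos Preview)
Your proof is correct and follows essentially the same approach as the paper: you derive the key identity
\[
\frac{d}{dr}\frac{H(X_0,u-p_{X_0},r)}{r^{2k}} = \frac{2}{r}\,W_k(X_0,u,r) + \frac{2}{r^{n+a+2k}}\int_{B_r(X_0)} V\, u\, p_{X_0}\,dx
\]
and then bound each piece via Proposition \ref{weiss.formula.potential} and Lemma \ref{growthestim}, exactly as the paper does. The only cosmetic difference is that the paper obtains the identity by expanding $W_k(X_0,w+p_{X_0},r)$ directly, whereas you reach it through the pair of Green identities for $\int_{B_r^+}\abs{y}^a\nabla u\cdot\nabla p_{X_0}$; the two computations are algebraically equivalent.
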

\begin{proof}
 Let $w = u - p_{X_0}$, then on one hand we have
 \begin{align*}
 \frac{d}{dr}\left(\frac{1}{r^{n+a+2k}}\int_{\partial^+ B^+_r(X_0)}{\abs{y}^a w^2 \mathrm{d}\sigma}\right) =& \frac{d}{dr} \int_{\partial^+ B^+_1}{\abs{y}^a\frac{w(X_0+rX)^2}{r^{2k}}\mathrm{d}\sigma}\\
 =& 2\int_{\partial^+ B^+_1}{\abs{y}^a\frac{w(X_0+rX)\left(\langle\nabla w(X_0+rX) , rX \rangle -k w(X_0+rX)\right)}{r^{2k+1}}\mathrm{d}\sigma}\\
 =& \frac{2}{r^{n+a+1+2k}}\int_{\partial^+ B^+_r(X_0)}{\abs{y}^a w(\langle X-X_0, \nabla w\rangle -k w)\mathrm{d}\sigma}\\
 =& \frac{2}{r}W_k(X_0,w,r).
  \end{align*}
On the other hand, looking at the expression of the $k$-Weiss function, we have
\begin{align*}
    W_k(X_0,u,r) =&\, W_k(X_0,w+p_{X_0},r)\\ %- \frac{1}{r^{n+a-1+2k}} \left(\int_{B^+_r(X_0)}{\abs{y}^a \abs{\nabla p_{X_0}}^2\mathrm{d}X} - \frac{k}{r} \int_{\partial^+ B^+_r(X_0)}{\abs{y}^a p_{X_0}^2\mathrm{d}X} \right)\\ %+ W_k(X_0,p_{X_0},r) \\
     =&\, \frac{1}{r^{n+a-1+2k}}\left(\int_{B^+_r(X_0)}{\abs{y}^a (\abs{\nabla w}^2 +2  \langle \nabla w,\nabla p\rangle)\mathrm{d}X}- \frac{k}{r}\int_{\partial^+ B^+_r(X_0)}{\abs{y}^a (w^2 +2  w p) \mathrm{d}\sigma}\right) \\
     &+ \frac{1}{r^{n+a-1+2k}}\int_{B_r(X_0)}{(w+p_{X_0})\partial^a_y (w+p_{X_0}) \mathrm{d}x}\\
     =&\, W_k(X_0,w,r) + \frac{1}{r^{n+a-1+2k}}\int_{B_r(X_0)}{p_{X_0}\partial^a_y w\mathrm{d}x} +\\
     &+ \frac{2}{r^{n+a+2k}}\int_{\partial^+ B^+_r(X_0)}{\abs{y}^a w( \langle \nabla p_{X_0},X-X_0\rangle - k p) \mathrm{d}\sigma}   \\
    =&\, W_k(X_0,u-p_{X_0},r)+ \frac{1}{r^{n+a-1+2k}}\int_{B_r(X_0)}{p_{X_0}\partial^a_y u\mathrm{d}x} ,
  \end{align*}
  where in the second equality we used the $k$-homogeneity of $p_{X_0}\in \mathfrak{sB}^a_k(\R^{n+1})$.
Hence %, by \eqref{weiss.h},
we finally infer
\begin{align*}
\frac{d}{dr} \frac{H(X_0,u-p_{X_0},r)}{r^{2k}} &=\frac{2}{r}W_k(X_0,u-p_{X_0},r)\\ &=\frac{2}{r}W_k(X_0,u,r)+\frac{2}{r^{n+a+2k}}\int_{B_r(X_0)}{p_{X_0}V u\mathrm{d}x}.
\end{align*}
On one hand, by Proposition \ref{weiss.formula.potential} we have
$$
W_k(X_0,u,r) =W_k(X_0,u,r) -W_k(X_0,u,0^+) \geq -C(n,a,k)\norm{V}{W^{1,q}(B_1)} r^{1-a},
$$
while on the other one
%Since $-\partial^a_y u = Vu$ in $B_1\cap \Sigma$, we obtain
$$
\abs{\frac{2}{r^{n+a+2k}}\int_{B_r(X_0)}{p_{X_0}V u\mathrm{d}x}}  \leq C(n,a,k)\norm{V}{L^q(B_1)}\norm{p_{X_0}}{L^\infty(B_1)} r^{-a},
$$
%\begin{align*}
%\abs{\frac{1}{r^{n+a+2k}}\int_{B_r(X_0)}{p_{X_0}V u\mathrm{d}x}} & \leq  C\norm{p_{X_0}}{L^\infty(B_1)} \frac{\norm{V}{L^\infty(B_1)}}{r^{k+a-1}}\left(\frac{1}{r^n}\int_{B_r(X_0)\cap \Sigma}{u^2 \mathrm{d}x}\right)^{1/2}\\
%& \leq  C\norm{p_{X_0}}{L^\infty(B_1)} \norm{V}{L^\infty(B_1)}r^{1-a},
%\end{align*}
where we used Lemma \ref{lemma1} and the growth estimate of Lemma \ref{growthestim}. Hence, since $a \in (-1,1)$, it follows immediately that
$$
r\mapsto \frac{H(X_0,u-p_{X_0},r)}{r^{2k}} + C\norm{p_{X_0}}{L^\infty(B_1)} \norm{V}{W^{1,q}(B_1)}r^{1-a},
$$
is monotone nondecreasing in $(0,1-\abs{X_0})$.
\end{proof}
Now, in the same spirit of Section \ref{sec.weiss}, we prove the nondegeneracy of the functions at the singular points and then the existence and uniqueness of the tangent map.
\begin{lemma}%[Nondegeneracy]
\label{nondegeneracy.potential}
Let $a\in(-1,1)$ and $u$ be a solution of \eqref{potential}. Then, for every $X_0 \in \Gamma_k(u)\cap \Sigma$  there exists $C>0$ such that
$$
\sup_{\partial B_r(X_0)}{\abs{u(X)}}\geq C r^{k}\quad \mathrm{for}\,\,0<r<R
$$
where $R=1-\mathrm{dist}(X_0,\partial B_1)$.
\end{lemma}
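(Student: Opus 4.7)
The plan is to mimic the contradiction/blow-up argument of Lemma \ref{nondegeneracy}, but tracking the perturbative error terms produced by the potential $V$ both in the Almgren and in the Monneau monotonicity. Assume by contradiction the existence of $r_j \downarrow 0^+$ with
$$
\frac{\rho_j}{r_j^k} \longrightarrow 0, \qquad \rho_j^2 := H(X_0,u,r_j).
$$
Set $u_j(X) = u(X_0+r_j X)/\rho_j$, which satisfies $\|u_j\|_{L^{2,a}(\partial^+ B_1^+)}=1$ and solves the rescaled Neumann-type problem with rescaled potential $V_j(x)=r_j^{1-a}V(X_0+r_j x)$. Since $q\ge n/(2s)$, one checks that $\|V_j\|_{L^q(B_1)}\to 0$, so Proposition \ref{blowup.convergence.potential} (whose proof extends directly to the present setting) yields, along a subsequence, $u_j\to p$ strongly in $H^{1,a}_\loc(\R^{n+1})$ and in $C^{0,\alpha}_\loc(\R^{n+1})$ for some nontrivial $L_a$-harmonic $p$ symmetric with respect to $\Sigma$, normalized so that $\|p\|_{L^{2,a}(\partial^+ B_1^+)}=1$.

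The next step is to prove that $p$ is $k$-homogeneous, so that $p\in \mathfrak{sB}_k^a(\R^{n+1})$. Since $X_0\in \Gamma_k(u)$, the almost-monotonicity of Proposition \ref{weiss.formula.potential} guarantees that $W_k(X_0,u,0^+)=0$. Rewriting $W_k$ in terms of the blow-up, the perturbative term $Cr^{1-a}$ disappears in the limit and gives $W_k(0,p,r)=0$ for all $r>0$, so by the standard unperturbed Weiss rigidity, $p$ is a $k$-homogeneous $L_a$-harmonic polynomial symmetric in $\Sigma$. At this point one applies Proposition \ref{monneau.pot} with the frozen $p_{X_0}(X):=p(X-X_0)$: the map
$$
r\mapsto M(X_0,u,p_{X_0},r) + \tilde C\, r^{1-a}, \qquad M(X_0,u,p_{X_0},r)=\frac{H(X_0,u-p_{X_0},r)}{r^{2k}},
$$
is nondecreasing; since $a<1$ the correction vanishes at $0$, hence $M(X_0,u,p_{X_0},0^+)$ exists and, using the uniform convergence of the blow-up together with $\rho_j/r_j^k\to 0$, equals $\|p\|_{L^{2,a}(\partial^+ B_1^+)}^2 = 1$. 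Monotonicity then delivers, for every $r\in(0,R)$,
$$
M(X_0,u,p_{X_0},r) \ge 1-\tilde C\, r^{1-a}.
$$

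Expanding the square, using $k$-homogeneity of $p$ (so $\int_{\partial^+B_r^+(X_0)}|y|^a p_{X_0}^2\,d\sigma = r^{n+a+2k}$) and rescaling on the blow-up yields, after dividing by $r_j^{n+a+k}\rho_j$,
\begin{equation}\label{plan:key}
\frac{\rho_j}{r_j^k} - 2\int_{\partial^+ B_1^+}|y|^a u_j\, p\,d\sigma \;\ge\; -\tilde C\,\frac{r_j^{k+1-a}}{\rho_j}.
\end{equation}
Here the main obstacle is the right-hand side, which in the purely $L_a$-harmonic case (Lemma \ref{nondegeneracy}) is just zero. To control it I would invoke the perturbed Almgren formula of Proposition \ref{mono.potential}: fixing a small $r_*$, for every $r<r_*$ one has $N(X_0,u,r)\le e^{C\Psi(r_*)}(k+1)-1 =: k+\varepsilon$, and integrating $\frac{d}{dr}\log H=(2/r)N$ on $(r,r_*)$ gives the lower growth estimate $\rho_j\ge c\, r_j^{k+\varepsilon}$. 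Since $\Psi(r_*)\to 0$ as $r_*\to 0^+$, we may pick $r_*$ so small that $\varepsilon<1-a$; then $r_j^{k+1-a}/\rho_j\le c^{-1} r_j^{1-a-\varepsilon}\to 0$. Passing to the limit in \eqref{plan:key}, the left-hand side tends to $0-2\|p\|_{L^{2,a}(\partial^+ B_1^+)}^2=-2$, while the right-hand side tends to $0$, contradicting $-2\ge 0$ and concluding the proof. The delicate point is precisely this fine balance between the correction $r^{1-a}$ coming from the Monneau inequality and the loss $r^{-\varepsilon}$ coming from the almost-monotonicity of the Almgren frequency; it is exactly the integrability exponent $q\ge n/(2s)$ that makes $\Psi$ well defined and small, keeping $\varepsilon$ strictly below $1-a$.
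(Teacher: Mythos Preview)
Your proof is correct and follows essentially the same route as the paper's: contradiction, normalized blow-up $u_j\to p\in\mathfrak{sB}^a_k$, computation of $M(X_0,u,p_{X_0},0^+)=\|p\|^2_{L^{2,a}(\partial B_1)}$ via the perturbed Monneau formula of Proposition~\ref{monneau.pot}, expansion and rescaling to reach an inequality whose right-hand side is $-\tilde C\,r_j^{k+1-a}/\rho_j$, and finally a lower bound on $\rho_j$ from the almost-monotone Almgren quotient. The paper phrases the last step as ``$H(X_0,u,r)>Cr^{2A}$ for every $A>k$'' and then writes ``take $A=k+1$''; your version, insisting explicitly on $\varepsilon<1-a$ so that $r_j^{1-a-\varepsilon}\to 0$, is in fact the cleaner and more accurate formulation of that step (the choice $A=k+1$ alone would only handle the range $a<0$). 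One minor imprecision: in your bound $N(X_0,u,r)\le e^{C\Psi(r_*)}(k+1)-1$ the factor $(k+1)$ should really be $(N(X_0,u,r_*)+1)$, but since $N(X_0,u,r_*)\to k$ as $r_*\to 0$ this is harmless.
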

\begin{proof}
Fix $X_0\in \Gamma_k(u)$ and suppose by contradiction, given a decreasing sequence $r_j\downarrow 0$, that
$$
\lim_{j\to \infty}\frac{H(X_0,u,r_j)^{1/2}}{r_j^{k}}=\lim_{j\to\infty}{\bigg(\frac{1}{r_j^{n+a+2k}}\int_{\partial B_{r_j}(X_0)}{\abs{y}^a u^2\,d\sigma\bigg)^{1/2}}}\!\!=0.
$$
Consider now the blow-up sequence
$$
u_j(X)=\frac{u(X_0+r_j X)}{\rho_j}\quad \mbox{where }\, \rho_j = H(X_0,u,r_j)^{1/2}
$$
constructed starting from $r_j$ and centered in $X_0 \in \Gamma_k(u)$. By Theorem \ref{blowup.convergence.potential}, up to a subsequence $u_j \to p$ uniformly, where $p \in \mathfrak{sB}^a_k(\R^{n+1})$ is a nontrivial homogenous polynomial of degree $k$, such that $H(0,p,1)=1$.\\
Let us focus our attention on the functional $M(X_0,u,p_{X_0},r)$ with $p_{X_0}$ as above. By the assumption on the growth of $u$ it follows, as before, that
$$
M(X_0,u,p_{X_0},0^+) = \int_{\partial B_1}{\abs{y}^a p^2\,d\sigma} =\frac{1}{r^{n+a+2k}}\int_{\partial B_r(X_0)}{\abs{y}^a {p}_{X_0}^2\,d\sigma}.
$$
By the monotonicity result of Proposition \ref{monneau.pot} on the map $$r\mapsto M(X_0,u,p_{X_0},r) +C\norm{p_{X_0}}{L^\infty(B_1\cap \Sigma)} \norm{V}{L^\infty(B_1\cap \Sigma)}r^{1-a},$$ we obtain
$$
\frac{1}{r^{n+a-1+2k}}\int_{\partial B_r(X_0)}{\abs{y}^a(u- p_{X_0})^2\,d\sigma}+C(n,p_{X_0},V)r^{1-a}
\geq\frac{1}{r^{n+a-1+2k}}\int_{\partial B_r(X_0)}{\abs{y}^a p_{X_0}^2\,d\sigma}
$$
and similarly
$$
\frac{1}{r^{n+a-1+2k}}\int_{\partial B_r(X_0)}{\abs{y}^a (u^2-2u p_{X_0})\,d\sigma}\geq -C(n,p_{X_0},V)r^{1-a}.
$$
On the other hand, rescaling the previous inequality and using the blow-up sequence $u_j$ defined as above, we obtain
\begin{equation}\label{ineq}
\int_{\partial B_1}{\abs{y}^a\left(\frac{H(X_0,u,r_j)}{r^k_j}u_j^2-2H(X_0,u,r_j)^{1/2}  u_j p\right)\,d\sigma}\geq -C(n,p_{X_0},V)\frac{{r_j}^{k+1-a}}{H(X_0,u,r_j)}.
\end{equation}
Now, since $N(X_0,u,0^+)=k$, by Proposition \ref{mono.potential} we obtain that $H(X_0,u,r)> C r^{2A}$, for every $A > k$. Hence, simply by taking $A=k+1$, we have that the right hand side of \eqref{ineq} goes to zero and,
passing to the limit for $j\to\infty$, by the previous inequality we obtain
$$
\int_{\partial B_1}{\abs{y}^a p^2\,d\sigma}\leq 0
$$
in contradiction with $p \not\equiv 0$.
\end{proof}

\begin{theorem}%[Uniqueness of the blow-up limit]
\label{uniqueness.potential}
Given $a \in (-1,1)$ and $u$ be a solution of \eqref{potential}, let us consider $X_0\in \Gamma_k(u)\cap \Sigma$, i.e. $N(X_0,u,0^+)=k$. Then there exists a unique nonzero $p \in \mathfrak{sB}_k^a(\R^{n+1})$ blow-up limit such that
\begin{equation}\label{blow.up.homogenous.potential}
  u_{X_0,r}(X) =\frac{u(X_0+rX)}{r^k} \longrightarrow  p(X).
\end{equation}
\end{theorem}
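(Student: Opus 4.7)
The plan is to follow the architecture of Theorem~\ref{uniqueness} while absorbing the extra error terms generated by the potential $V$. These errors all scale like $r^{1-a}$ (both in the Weiss formula of Proposition~\ref{weiss.formula.potential} and in the Monneau-type quasi-monotonicity of Proposition~\ref{monneau.pot}), so the hypothesis $a\in(-1,1)$ is precisely what makes every ``almost-monotone'' quantity inherit a well-defined limit at the origin, equal to the limit of the unperturbed analogue. First I would extract a subsequential blow-up limit. Combining the upper bound $H(X_0,u,r)\leq Cr^{2k}$ of Lemma~\ref{growthestim} with the nondegeneracy $\sup_{\partial B_r(X_0)}|u|\geq Cr^k$ of Lemma~\ref{nondegeneracy.potential}, the normalization $r_k^k$ and $\sqrt{H(X_0,u,r_k)}$ are comparable, so the compactness result of Proposition~\ref{blowup.convergence.potential} transfers to the family $u_{X_0,r_k}$, giving convergence (up to subsequence) in $C^{0,\alpha}_{\loc}$ and strongly in $H^{1,a}_{\loc}$ to some $p$. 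The rescaled boundary condition reads $-\partial^a_y u_{X_0,r_k}=r_k^{1-a}V(x_0+r_k\cdot)u_{X_0,r_k}$ on $\Sigma$, and since $a<1$ the coefficient collapses in the limit, so $p$ is $L_a$-harmonic and symmetric with respect to $\Sigma$.

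Next I would prove $p$ is $k$-homogeneous. Writing $N(X_0,u,rr_k)=N(0,u_{X_0,r_k},r;V_{r_k})$ with rescaled potential $V_{r_k}(x)=r_k^{1-a}V(x_0+r_kx)$, the potential contribution to the Almgren energy vanishes in the limit by the trace/Sobolev estimate of Lemma~\ref{lemma1}, giving $N(0,p,r)=N(X_0,u,0^+)=k$ for every $r>0$. Proposition~\ref{blow.N} applied to the unperturbed $L_a$-harmonic $p$ then forces $p\in\mathfrak{sB}^a_k(\R^{n+1})$. In particular $W_k(0,p,r)\equiv 0$, and passing the identity $W_k(0,u_{X_0,r_k},r;V_{r_k})=W_k(X_0,u,rr_k;V)$ to the limit $k\to\infty$ (using strong $H^{1,a}_{\loc}$ convergence to handle the Dirichlet term and Lemma~\ref{lemma1} to handle the potential term) shows $W_k(X_0,u,0^+)=0$; this is what is really needed for the Monneau step below to function.

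For the uniqueness step I would invoke Proposition~\ref{monneau.pot}: the perturbed Monneau quantity $r\mapsto M(X_0,u,p_{X_0},r)+C\|p_{X_0}\|_\infty\|V\|_{W^{1,q}}\,r^{1-a}$ is monotone nondecreasing, so $M(X_0,u,p_{X_0},0^+)$ exists. By rescaling,
\begin{equation*}
M(X_0,u,p_{X_0},r_k)=\int_{\partial^+ B_1^+}|y|^a\bigl(u_{X_0,r_k}-p\bigr)^{\!2}\mathrm{d}\sigma\xrightarrow[k\to\infty]{}0
\end{equation*}
by $H^{1,a}$-strong convergence (and, since both functions are traces of $L_a$-harmonic functions, by continuous dependence on the boundary). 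Hence $M(X_0,u,p_{X_0},0^+)=0$. If $q$ is any other subsequential blow-up limit, along the corresponding sequence $r_i\downarrow 0^+$ the same rescaling gives $M(X_0,u,p_{X_0},r_i)\to\int_{\partial^+ B_1^+}|y|^a(q-p)^2\mathrm{d}\sigma$, which must equal $0$; by $k$-homogeneity $q\equiv p$ on $\R^{n+1}$.

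The main obstacle is precisely the verification that the $V$-dependent contribution to the Almgren ratio vanishes in the blow-up limit, so that the homogeneity argument of Proposition~\ref{blow.N} applies to $p$; this is the place where one must carefully use the Sobolev trace estimate \eqref{poincare} together with $V\in W^{1,q}$, $q\geq n/(2s)$, and the uniform $L^\infty$ bound on $u_{X_0,r_k}$. A subordinate but delicate point is that both the Weiss and Monneau identities are only almost-monotone, so one cannot directly extract limits from monotonicity alone: one must identify the limit along the blow-up sequence first (Steps 2--3) and then use the quasi-monotonicity to propagate it to the full radial limit. Once this is done, the remaining arguments are formally identical to the unperturbed case.
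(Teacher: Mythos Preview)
Your proposal is correct and follows essentially the same architecture as the paper's proof: extract a subsequential limit via the growth estimate and nondegeneracy, establish $k$-homogeneity of the limit (you do this through the Almgren frequency $N(0,p,r)=k$ and Proposition~\ref{blow.N}, whereas the paper goes directly through the Weiss functional $W_k(0,p,r)=\lim_j W_k(X_0,u,rr_j)=W_k(X_0,u,0^+)=0$), and then apply the Monneau-type quasi-monotonicity of Proposition~\ref{monneau.pot} to conclude uniqueness. Your treatment is more explicit than the paper's about why the rescaled potential contribution $r_k^{1-a}V(x_0+r_k\cdot)$ vanishes in the limit (which is indeed the key technical point), but the two routes to homogeneity are equivalent and the overall strategy is the same.
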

\begin{proof}
Up to a subsequence $r_j\to 0^+$, we have that $u_{X_0,r_j}\to p$ in $\mathcal{C}^{0,\alpha}_{\loc}$. The existence of such limit follows directly from the growth estimate $\abs{u(X)} \leq C\abs{X}^k$; moreover, by Lemma \ref{nondegeneracy.potential} we know $p$ is not identically zero. Now, for any $r>0$ we have
$$
W_k(0,p,r) = \lim_{j\to \infty}{W_k(0,u_{X_0,r_j},r)}= \lim_{j\to \infty}{W_k(X_0,u,r r_j)}= W_k(X_0,u,0^+)=0.
$$
In particular, Proposition \ref{weiss.formula.potential} implies that the $L_a$-harmonic function $p$ is $k$-homogeneous and symmetric with respect to $\Sigma$, i.e. $p \in \mathfrak{sB}^a_k(\R^{n+1})$. By Proposition \ref{monneau.pot}, the limit $M(X_0,u,p_{X_0},0^+)$ exists and can be computed by
\begin{align*}
M(X_0,u,p_{X_0},0^+) &= \lim_{j\to\infty}{M(X_0,u,p_{X_0},r_j)}\\
&= \lim_{j\to\infty}{M(0,u_{X_0,r_j},p,1)}\\
&= \lim_{j\to\infty}{\int_{\partial B_1}{\abs{y}^a(u_{X_0,r_j} -p)^2\,d\sigma}}=0.
\end{align*}
Moreover, let us suppose by contradiction that there exists another sequence $r_i\to 0^+$ for which the associated blow-up sequence converges to a different  limit, i.e. $u_{X_0,r_i}\to q\in \mathfrak{sB}^a_k(\R^{n+1}), q\not\equiv p$, then
\begin{align*}
0=M(X_0,u,p_{X_0},0^+) &= \lim_{i\to \infty}M(X_0,u,p_{X_0},r_i)\\
&=\lim_{i\to\infty}\int_{\partial B_1}{\abs{y}^a (u_{r_i} - p)^2\,d\sigma}\\
&=\int_{\partial B_1}{\abs{y}^a (q- p)^2\,d\sigma}.
\end{align*}
Since $q$ and $p$ are both homogenous of degree $k$ they must coincide in $\R^n$.
\end{proof}

\section{Measure estimates of nodal sets of $s$-harmonic functions}\label{sec.measure}
In this last Section, we estimate the measure of the nodal set $\Gamma(u)$ of $s$-harmonic functions. Our result can be seen as the nonlocal counterpart of the conjecture that Lin proposed in \cite{MR1090434}. Indeed, following his strategy, we will give an explicit estimate on the $(n-1)$-Hausdorff measure of the nodal set in terms of the Almgren frequency of its $L_a$-extension.\\

We will  keep the notations previously introduced: more precisely, through this Section we will denote with $v \in H^{1,a}(B_1^+)$ the restriction on the unitary ball in $\R^{n+1}_+$ of the $L_a$-harmonic extension, defined by \eqref{eq:Pextended}, symmetric with respect to $\Sigma$ (see \eqref{problem}).
Since the fractional Laplacian $(-\Delta)^s$ admits a representation formula, we directly have that the analyticity assumption, which is fundamental in order to apply a strategy developed in \cite{MR1090434}, is fully satisfied on every compact set $K\subset\subset B_1$. Moreover, by Proposition \ref{doubling.s.prop} we already have at our disposal a quantitative doubling condition for $s$-harmonic functions strictly related to the one in the extended space $\R^{n+1}$.\\

In order to achieve the estimate on the Hausdorff measure of the nodal set $\Gamma(u)$ we use the following lemma, introduced in \cite{MR943927}, relating the growth of a complex analytic function with the number of its zeros.
\begin{lemma}\label{tech}
Let $f\colon B_1\subset \CCC \to \CCC$ be an  analytic function such that
$$
\abs{f(0)}=1\quad\mbox{and}\quad \sup_{B_1}\abs{f} \leq 2^N,
$$
for some positive constant $N$. Then for any $r \in (0, 1)$
$$
\#\left\{z \in B_r \colon  f(z) = 0  \right\}\leq cN
$$
and
$$
\#\left\{z \in B_{1/2} \colon  f(z) = 0  \right\}\leq N,
$$
where $C$ is a positive constant depending only on the radius $r$.
\end{lemma}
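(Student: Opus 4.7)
The plan is to deduce both estimates from a single application of Jensen's formula, which is the classical bridge between growth bounds and zero-counting for analytic functions on a disk. Recall that if $g$ is analytic on a neighborhood of $\overline{B}_R\subset\CCC$, $g(0)\neq 0$, and $z_1,\dots,z_m$ are the zeros of $g$ in $B_R$ (repeated according to multiplicity), then
\begin{equation*}
\log|g(0)|+\sum_{k=1}^m\log\frac{R}{|z_k|}=\frac{1}{2\pi}\int_0^{2\pi}\log|g(Re^{i\theta})|\,d\theta.
\end{equation*}
Since the hypothesis $|f(0)|=1$ guarantees $f(0)\neq 0$, this identity is applicable to $f$ on every $B_R$ with $R<1$ such that $f$ has no zeros on $\partial B_R$, which is the case for all but countably many $R$ because zeros are isolated.

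First I would fix $r<R<1$ in this admissible range and apply the identity to $f$ on $B_R$. The hypothesis $|f|\leq 2^N$ on $B_1$ bounds the right-hand side by $N\log 2$, while on the left-hand side $\log|f(0)|=0$ and each zero $z_k\in B_r$ satisfies $\log(R/|z_k|)\geq \log(R/r)>0$. Denoting by $m(r)$ the number of zeros of $f$ in $B_r$ counted with multiplicity, this rearranges to
\begin{equation*}
m(r)\,\log\frac{R}{r}\leq N\log 2.
\end{equation*}
The first claim follows at once with $c=c(r)=\log 2/\log(R/r)$, obtained by the explicit choice $R=(1+r)/2$; note that the number of \emph{distinct} zeros is bounded above by $m(r)$, so the same estimate controls $\#\{z\in B_r:f(z)=0\}$.

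For the sharper bound on $B_{1/2}$, I would fix $r=1/2$ and let $R\uparrow 1$ along a sequence of admissible radii. The left-hand side of the inequality above is an integer independent of $R$, while the right-hand side tends to $N\log 2/\log 2=N$, giving $m(1/2)\leq N$ and hence $\#\{z\in B_{1/2}:f(z)=0\}\leq N$.

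The strategy is essentially routine and there is no serious obstacle; the only bookkeeping point is the exclusion of the countable set of $R$ for which $f$ has boundary zeros, so that Jensen's formula can be invoked cleanly. This is resolved by the density of admissible radii in $(r,1)$, and no further analytic input is needed beyond Jensen's formula, the normalization $|f(0)|=1$, and the uniform growth bound $\sup_{B_1}|f|\leq 2^N$.
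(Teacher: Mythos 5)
Your proof is correct and complete. The paper states Lemma~\ref{tech} as a citation to Donnelly--Fefferman \cite{MR943927} without reproducing the argument; the route you take---Jensen's formula on a slightly smaller disk $B_R$ with a deliberate choice of $R$, dropping the nonnegative contributions from zeros outside $B_r$, then letting $R\uparrow 1$ for the refined bound at $r=1/2$---is exactly the classical one and is what underlies the cited reference. The bookkeeping you flag (choosing $R$ avoiding the at-most-countable set of moduli of zeros so Jensen's identity applies cleanly, and observing that the distinct-zero count is dominated by the multiplicity count $m(r)$) is handled correctly, and the passage $R\uparrow 1$ legitimately yields $m(1/2)\leq N$ since $m(1/2)$ is a fixed integer bounded by $N\log 2/\log(2R)$ for every admissible $R<1$. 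No gaps.
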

Before stating the main result on the measure of the nodal sets $\Gamma(u)$ %on $\Sigma$
in terms of the Almgren frequency of the $L_a$-harmonic extension, let us start with an example in the setting of tangent maps $\mathfrak{B}^s_k(\R^n)$ that emphasizes how the measure of the nodal set is strictly related to the class of tangent maps that we are considering. %More precisely, the classes $\mathfrak{B}_k^*(\R^{n})$ and $\mathfrak{B}_k^s(\R^{n})\setminus \mathfrak{B}_k^*(\R^{n})$ strictly affect the local measure of the nodal set.\\
First, it is not restrictive to assume that $\varphi \in \mathfrak{B}^s_k(\R^{2})$ for some $k\in 1+\N$. Hence, consider the case $n=2$ with the notation $(x,z)\in \R^2$. Since every $\varphi\in \mathfrak{B}_k^*(\R^{2})$ is harmonic in $\R^2$, it is known that
$$
\mathcal{H}^1\left(\Gamma(\varphi)\cap B_1\right)=2 k.
$$
In contrast, whenever $\varphi \in \mathfrak{B}_k^s(\R^{2})\setminus\mathfrak{B}_k^*(\R^{2})$ with $k\geq 2$, the previous bound turns out to be not optimal. More precisely, given the constant $k'= \#\{t \in \R\colon \varphi(t,1)=0\}$ , we obtain
$$
\mathcal{H}^1\left(\Gamma(\varphi)\cap B_1\right)=2 k',
$$
where, by the Fundamental Theorem of Algebra, it is obvious to see that $0\leq k'\leq k$.\\
%\begin{proposition}
%  Let $a \in (-1,1), k\in 1+\N$ and $\varphi \in \mathfrak{sB}_k^a(\R^{n+1})$. Then, if $\varphi \in \mathfrak{sB}_k^*(\R^{n+1})$ we have
%  $$
%  \mathcal{H}^{n-1}\left(\Gamma(\varphi)\cap \Sigma \cap B_1 \right)\leq C(n) k,
%  $$
%  while if $\varphi \in \mathfrak{sB}_k^a(\R^{n+1})\setminus\mathfrak{sB}_k^*(\R^{n+1})$ we obtain
%  $$
%  \mathcal{H}^{n-1}\left(\Gamma(\varphi)\cap \Sigma \cap B_1 \right)\leq C(n) k.
%  $$
%\end{proposition}
In general, we prove the following result which is based on an argument first introduced in \cite{MR1090434} in the context of solution of second order elliptic equation with analytic coefficient.\\ More recently, in \cite{MR1714341} the author constructs a similar estimate in a more general context connecting the Hausdorff measure of the nodal set of smooth functions with their finite vanishing order, which can be also applied to our case. Unfortunately, the remarkable difference between the case $\mathfrak{B}^*_k(\R^{n})$ and $\mathfrak{B}^s_k(\R^{n})\setminus \mathfrak{B}^*_k(\R^{n})$ ( or similarly $\mathfrak{sB}^*_k(\R^{n+1})$ and $\mathfrak{sB}^a_k(\R^{n+1})\setminus \mathfrak{sB}^*_k(\R^{n+1})$  ) implies the non optimality of  B\"{a}r's result in our setting.
\begin{theorem}\label{hau}
    Given $s\in (0,1)$, let $u$ be an $s$-harmonic function in $B_1$ and $0 \in \Gamma(u)$. Then
  $$
  \mathcal{H}^{n-1}\left(\Gamma(u)\cap B_{\frac{1}{2}}\right) \leq C(n,s) N,
  $$
  where $N=N(0,v,1)$ is the frequency of the $L_a$-harmonic extension $v$ in $B_1^+$ defined by
  $$
  N=\ddfrac{\int_{B_1^+}{\abs{y}^a \abs{\nabla v}^2\mathrm{d}X}}{\int_{\partial B_1^+}{\abs{y}^a v^2\mathrm{d}\sigma}}.
  $$
\end{theorem}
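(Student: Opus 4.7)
\medskip
\noindent\textbf{Proof plan.}
The strategy follows Lin's original approach in \cite{MR1090434} for elliptic equations with analytic coefficients, adapted here to the nonlocal setting by exploiting the Poisson representation of $s$-harmonic functions and the doubling property transferred from the extension. The plan is to reduce the $(n-1)$-dimensional measure estimate to counting, on generic lines, the zeros of a real-analytic function of one variable whose complex growth is controlled by $N$, and then to conclude via a Cauchy--Crofton type integral-geometric formula.

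First, I would use the representation formula for $s$-harmonic functions to show that $u$ is real-analytic in $B_{3/4}$ and extends to a holomorphic function $\tilde{u}$ on a tube $\{z\in\CCC^n : \mathrm{Re}\,z \in B_{3/4},\ |\mathrm{Im}\,z| < \rho_0\}$ for some $\rho_0=\rho_0(n,s)>0$. The key quantitative ingredient is the growth bound: starting from the doubling condition of Proposition \ref{doubling.s.prop}, which gives
$$
\frac{1}{R^n}\int_{B_R(x_0)}u^2\,\mathrm{d}x \leq C(n,s)\left(\frac{R}{r}\right)^{2N-1}\frac{1}{r^n}\int_{B_r(x_0)}u^2\,\mathrm{d}x,
$$
combined with the $L^\infty$-bound coming from Lemma \ref{moser} applied to the $L_a$-harmonic extension $v$, I would derive an estimate of the form
$$
\sup_{B_r(x_0)}|u| \leq C(n,s)\left(\frac{r}{r_0}\right)^{-\alpha N}\sup_{B_{r_0}(x_0)}|u|
$$
and, via the Poisson kernel $P(x,\cdot)$ used to extend $u$ off $\Sigma$, propagate this bound to the complex tube, obtaining
$$
\sup_{|z-x_0|<1/8,\ |\mathrm{Im}\,z|<\rho_1} |\tilde{u}(z)| \leq 2^{C(n,s) N}\sup_{B_{1/4}(x_0)}|u|,
$$
for some $\rho_1=\rho_1(n,s)>0$.

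Next, I would fix a base point $x_0 \in B_{1/2}$ and a direction $\omega \in S^{n-1}$ and look at $f_{x_0,\omega}(\zeta) := \tilde{u}(x_0+\zeta\omega)$ as a holomorphic function of $\zeta \in \CCC$ on a disc of fixed radius depending only on $\rho_1$. After suitably normalizing (choosing $x_0$ so that $\sup_{|t|<1/4}|f_{x_0,\omega}(t)|\geq c$, which is possible by the nondegeneracy given by Lemma \ref{nondegeneracy} together with the doubling), Lemma \ref{tech} yields
$$
\#\{t\in(-\tfrac14,\tfrac14) : u(x_0+t\omega)=0\} \leq C(n,s) N.
$$
Here the main technical work is to ensure that the normalization $|f_{x_0,\omega}(0)|=1$ (or a suitable integral analogue) can be arranged for almost every line of a given direction; this is done by covering $B_{1/2}$ with balls on which the doubling of Proposition \ref{doubling.s.prop} yields an $L^2$-lower bound uniform in $\omega$, at the price of an absorbable multiplicative constant.

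Finally, with the line-zero bound in hand, I would conclude by a Cauchy--Crofton (or coarea/integral-geometric) argument. Writing
$$
\mathcal{H}^{n-1}\bigl(\Gamma(u)\cap B_{1/2}\bigr) \leq C(n)\int_{S^{n-1}}\int_{\omega^\perp}\#\{t : x_0+t\omega\in\Gamma(u)\cap B_{1/2}\}\,\mathrm{d}x_0\,\mathrm{d}\sigma(\omega),
$$
and bounding the inner count by $C(n,s)N$ from the previous step, one obtains the desired inequality $\mathcal{H}^{n-1}(\Gamma(u)\cap B_{1/2}) \leq C(n,s)N$. The main obstacle is the second paragraph: producing the holomorphic extension of $u$ together with a sharp exponential-in-$N$ complex $L^\infty$ bound purely from the doubling of the $L_a$-extension, since the nonlocal Poisson kernel only provides analyticity on $\Sigma$ and its complex growth must be carefully estimated against the Almgren frequency on $B_1^+$ rather than on $B_1\subset\Sigma$ directly.
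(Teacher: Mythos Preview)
Your proposal is correct and follows essentially the same route as the paper: analyticity of $s$-harmonic functions, doubling from Proposition~\ref{doubling.s.prop} to control growth, Lemma~\ref{tech} to count zeros on complex lines, and then the integral-geometric formula to pass from line counts to $\mathcal H^{n-1}$.

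A couple of minor differences worth noting. For the normalization at the base point, the paper is more direct than your plan: after normalizing $\fint_{B_1}u^2=1$, it covers $B_{1/2}$ by balls $B_R(p_i)$, uses the doubling of Proposition~\ref{doubling.s.prop} (together with Proposition~\ref{import}) to obtain $\fint_{B_R(p_i)}u^2\ge 4^{-C(n,s)N}$, and then simply picks a point $x_{p_i}\in B_R(p_i)$ with $|u(x_{p_i})|\ge 2^{-C(n,s)N}$; this makes $|f_i(\omega,0)|\ge 2^{-CN}$ automatic for every direction $\omega$. Your appeal to Lemma~\ref{nondegeneracy} is not the right tool here, since that lemma concerns growth away from nodal points rather than locating points of large value---the doubling alone suffices. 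As for the step you flag as the main obstacle (the quantitative complex $L^\infty$ bound on the holomorphic extension), the paper does not work this out either: it invokes the Poisson representation at the start of Section~\ref{sec.measure} to assert analyticity and then simply writes $|f_i(\omega,z)|\le C$ for a dimensional constant; combined with the lower bound $|f_i(\omega,0)|\ge 2^{-CN}$ this gives the ratio $\le 2^{CN}$ needed for Lemma~\ref{tech}. So your identification of this as the delicate point is well taken, and your plan to extract it from the Poisson kernel is the natural way to make the paper's assertion rigorous.
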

\begin{proof}
Let $(B_R(p_i))_{i}$ be a finite cover of $B_{1/2}$ with $R< 1/8$ and $p_i \in B_{1/2}$. Moreover, up to a normalization, it is not restrictive to assume that
$$
\fint_{B_1}{u^2\mathrm{d}x}= 1.
$$
By Proposition \ref{import} and Proposition \ref{doubling.s.prop}, for every $p_i \in B_{1/2}$ we have
$$
\fint_{B_r(p_i)}{u^2\mathrm{d}x}\geq 4^{-C(n,s)N}
\fint_{B_{2r}(p_i)}{u^2\mathrm{d}x},
$$
with $0<r< 1/4$ and $N=N(0,v,1)$. Moreover, using the normalization hypothesi, we obtain
$$
\fint_{B_{R}(p_i)}{u^2\mathrm{d}x}\geq 4^{-C(n,s)N}.
$$
Given $p_1,\dots,p_j \in B_{1/2}$ the collection of points associated with the covering, let us consider $(x_{p_i})_i\in B_{R}(p_i)$ such that
$$
\abs{u(x_{p_i})}\geq 2^{-C(n,s)N}, \quad \mbox{for any }i=1,\dots,j.
$$
In order to apply Lemma \ref{tech}, for $i=1,\dots,j$ consider the collection of analytic functions of one complex variable defined as
$$
f_i(w,z)=u(x_{p_i}+4R z w), \quad\mbox{for } w \in S^{n-1}, z \in B^{\CCC}_1
$$
%Moreover, $f_j(w,t)$ extends to an analytic function $g_j(w, z)$ for $z = t+iy$, $\abs{t} <5/8$ and $\abs{y} <{y_0}$.
Then, by construction, we have
$$
\abs{f_i(w,0)} \geq 2^{-C(n,s)N}\quad\mbox{and}\quad \abs{f_i(w,z)}\leq C,
$$
for some positive dimensional constant $C>0$. Since, by Lemma \ref{tech} we have
\begin{align*}
N_i(w) & = \#\left\{x \in B_{2R}(x_{p_i})\colon u(x)=0 \quad \mbox{for } (x-x_{p_j})\parallel w \right\}\\
&\leq \#\left\{z \in B_{1/2}^{\CCC}\colon f_i(w,z)=0\right\}\\
&\leq c(n,s,N)N,
\end{align*}
for every $i=1,\cdots,j$, by the integral geometric formula in \cite[Theorem 3.2.27]{Federer}, we finally obtain
$$
 \mathcal{H}^{n-1}\left(\Gamma(u)\cap B_{1/2}\right) \leq \sum_{i=1}^j \mathcal{H}^{n-1}\left(\Gamma(v)\cap B_{R}(p_i)\right)
\leq  c(n,s,N)\sum_{i=1}^j\int_{S^{n-1}}{N_i(w)\mathrm{d}w}
\leq C(n,s,N)N
$$
where in the second inequality we used $B_R(p_i)\subset B_{2R}(x_{p_i})$ for every $i=1,\dots,j$.
\end{proof}

\bibliography{Biblio}
\bibliographystyle{abbrv}

%\printbibliography

\end{document}